\font\calli=rsfs10 at 12 pt
\numberwithin{equation}{section}
\newtheorem{thm}{Theorem}[section]
\newtheorem*{thm*}{Theorem}
\newtheorem{cor}[thm]{Corollary}
\newtheorem*{cor*}{Corollary}
\newtheorem{lem}[thm]{Lemma}
\newtheorem*{lem*}{Lemma}
\newtheorem{prop}[thm]{Proposition}
\newtheorem*{prop*}{Proposition}
\newtheorem*{conjecture*}{Conjecture}
\newtheorem*{fact*}{Conjecture}
\newtheorem*{criterion*}{Criterion}
\newtheorem*{algorithm*}{Algorithm}
\newtheorem*{ax*}{Axiom}
\newtheorem*{assumption*}{Assumption}
\newtheorem*{question*}{Question}
\theoremstyle{remark}
\newtheorem{rem}[thm]{Remark}
\newtheorem*{rem*}{Remark}
\newtheorem{rems}[thm]{Remarks}
\newtheorem*{rems*}{Remarks}
\newtheorem*{claim*}{Claim}
\newtheorem*{exercise*}{Exercise}
\newtheorem*{note*}{Note}
\newtheorem{notation}[thm]{Notation}
\newtheorem*{notation*}{Notation}
\newtheorem*{summary*}{Summary}
\newtheorem*{acknowledgement*}{Acknowledgement}
\newtheorem*{conclusion*}{Conclusion}
\theoremstyle{definition}
\newtheorem{defn}[thm]{Definition}
\newtheorem*{defn*}{Definition}
\newtheorem*{example*}{Example}
\newtheorem{examples}[thm]{Examples}
\newtheorem*{examples*}{Examples}
\newtheorem*{problem*}{Problem}
\newtheorem*{xca*}{Exercise}
\newtheorem*{xcas*}{Exercises}
\newtheorem*{condition*}{Condition}
\author[Cluckers]
{Raf Cluckers}
\address{Univ.~Lille, CNRS, UMR 8524 - Laboratoire Paul Painlev\'e, F-59000 Lille, France, and,
KU Leuven, Department of Mathematics, B-3001 Leuven, Belgium}
\email{Raf.Cluckers@univ-lille.fr}
\urladdr{http://rcluckers.perso.math.cnrs.fr/}
\author[Comte]{Georges Comte}
\address{Universit\'e Savoie Mont Blanc, LAMA,
CNRS UMR 5127,
F-73000 Chamb\'ery, France}
\email{georges.comte@univ-smb.fr}
\urladdr{http://gcomte.perso.math.cnrs.fr/}
\author[Servi]{Tamara Servi}
\address{Institut de Math\'ematiques de Jussieu -- Paris Rive Gauche \\
	Universit\'{e} Paris Cit\'{e} and Sorbonne Universit\'{e}, CNRS, IMJ-PRG, F-75013 Paris, France}
\email{tamara.servi@imj-prg.fr}
\urladdr{http://www.logique.jussieu.fr/~servi/index.html}
\begin{document}
\title[Fourier and Mellin transforms of power-constructible functions]{Parametric Fourier and Mellin transforms of power-constructible functions}
%%%%%%%%%%%
\begin{abstract}    
%%We enrich certain geometric classes of functions to make them stable under both parametric Fourier and Mellin transforms and under parametric integration. %, using geometry and analysis. % (continuously uniformly distributed functions).

%%This combines the strengths of \cite{cmrs:mellin_constructible} and \cite{ccmrs:integration-oscillatory}, which has only Mellin (resp.~only Fourier). % transforms. 

%In \cite{cmrs:mellin_constructible}, the class of power-constructible
%functions was introduced %, consisting of certain complex-valued functions defined on subanalytic sets, and,
%and its stability under parametric Mellin transforms and parametric integrals was obtained.  %, as well as . % and under parametric integration. % and power-logarithmic asymptotics were studied.
%In \cite{ccmrs:integration-oscillatory}, classes which are stable under parametric Fourier transforms and parametric integrals
%were introduced.   
%To study parametric Fourier and Mellin transforms in a single framework, 
%%More precisely, w
We enrich the class of power-constructible functions, introduced in \cite{cmrs:mellin_constructible}, to  
a class $\mathcal{C}^{\mathcal{M,F}}$ of algebras of functions which contains all complex powers of subanalytic functions, their parametric Mellin and Fourier transforms,  
and which is stable under parametric integration.
%% This is achieved by adding basic transcendental functions. 
By describing a 
set of generators of a special prepared form we deduce information 
on the asymptotics and on the loci of integrability of the functions of $\mathcal{C}^{\mathcal{M,F}}$. We furthermore identify a subclass $\mathcal{C}^{\mathbb{C},\mathcal{F}}$ of $\mathcal{C}^{\mathcal{M,F}}$ which is the smallest class containing all power-constructible functions and stable under parametric Fourier transforms and right-composition with subanalytic maps. This class is also stable under parametric integration, under taking pointwise and $\text{L}^p$-limits, and under parametric Fourier-Plancherel transforms. Finally, we give a full asymptotic expansion in the power-logarithmic scale, uniformly in the parameters, for functions in $\mathcal{C}^{\mathbb{C},\mathcal{F}}$. 
%We use o-minimality, subanalytic geometry, and the theory of continuously uniformly distributed functions.
%Both geometry and analysis play key roles in the proofs.
\setcounter{tocdepth}{2}
\tableofcontents{}
\end{abstract}

\subjclass[2000]{26B15; 14P15; 32B20; 42B20; 42A38 (Primary) 03C64; 14P10; 33B10 (Secondary).}
\maketitle
\begin{acknowledgement*}
The third author would like to thank the Fields Institute for Research
in Mathematical Sciences for its hospitality and financial support,
as part of this work was done while at its Thematic Program on Tame
Geometry and Applications in 2022. The first author was partially
supported by KU Leuven IF C16/23/010 and the Labex CEMPI (ANR-11-LABX-0007-01).
\end{acknowledgement*}

\section{Introduction\label{sec:Introduction}}
 
Understanding integrals is at the heart of many mathematical problems, and often brings together challenges from both geometry and analysis. Indeed, integration is a transcendental process usually applied to functions naturally arising from basic geometric problems, and as such, having remarkable properties one aims to preserve. The present work is in the same spirit as Liouville's theorem on elementary integrals and its recent variants by Pila and Tsimerman (see  \cite{pila2022axschanuel}); it concerns rich classes of functions whose parametric integrals are of a somewhat similar nature as the original functions. 

To be more accurate, two types of problems may be considered in this spirit. 

The first consists in describing a  class of functions, possibly the smallest one, stable under parametric integration and containing a given class of functions. 
%%%
For instance, in the context of real o-minimal geometry, this kind of problem has been addressed for the class of semialgebraic and subanalytic functions. Indeed, in \cite{lr:int, clr,cluckers-miller:stability-integration-sums-products,cluckers_miller:loci_integrability} it has been proved that
the class $\mathcal{C}$
of constructible functions (that is to say the functions which are polynomials in globally subanalytic functions and their logarithms) form the smallest class of real-valued functions 
%(defined on globally subanalytic sets) 
which contains all globally subanalytic functions and which is stable under parametric integration. 
In \cite{kaiser:integration_semialgebraic_nash} a proper subclass of $\mathcal{C}$ is introduced. This class is based on Nash functions (and their anti-derivatives) and turns out to be a small class of functions stable under parametric integration and containing the semialgebraic functions. Furthermore this class is suitable for studying families of periods as parametric integrals in the viewpoint of \cite{kontsevitch_zagier:periods} (see \cite{kaiser2023periods}).
In a similar spirit, we fully describe here the smallest class $\mathcal{C}^{\mathbb{C},\mathcal{F}}$ of functions which contains all complex powers and complex exponentials (of module one) of globally subanalytic functions, and stable under parametric integration (a natural framework for studying families of exponential periods, see \cite[Section 4.3]{kontsevitch_zagier:periods}).

The second type of problems, addressed here for the class $\mathcal{C}^{\mathcal{M,F}}$, still consists in describing a class of functions containing a given class of functions and stable under parametric integration, but we additionally require our class to be stable under other analytic key operations like Fourier and Mellin transforms. A hard part of this program consists then in finding the geometric properties preserved by the parametric integration process and our analytic transformations. The challenge here comes from the fact that by the action of these analytic transformations we leave the convenient framework of o-minimal geometry by introducing, via Mellin transforms, the (meromorphic) dependence on a complex parameter $s$. 
%
%The present framework pursues this, starting from a collection of piecewise analytic functions, and sits in between the class of semialgebraic functions on the one hand, and all measurable functions on the other. To place ourselves in between these two extremes, with good geometric and analytic properties, we use the formalism of o-minimality, and more precisely, of (globally) subanalytic sets and functions.
%

 Finally, let us note that several formalisms of motivic (and uniform $p$-adic) integration have a similar flavor and set-up. Such classes can then, for example, be used to define tame classes of distributions, which are at the same time stable under Fourier transform and analytically (wave front) holonomic \cite{AizC,AizCRS}.

%The basic class from which we start here is the class of (globally) real subanalytic functions, and, as explained above, we describe  how to enrich this class in order to obtain a class stable under
% parametric integration, Fourier and Mellin transforms. Our class has to contain the class $\mathcal{C}$, and turned out to be rich enough to contain certain smooth functions with prescribed subanalytic compact support.

% In this paper we will not consider smaller classes than $\mathcal{C}$, but instead we build larger classes with more properties than only stability under parametric integration, like stability under further analytic operations. 

\medskip

In this work, our starting point is the class $\mathcal{C}^{\mathbb{C}}$
of power-constructible functions, defined and studied in \cite{cmrs:mellin_constructible}, which
extends $\mathcal{C}$ by including complex powers of globally subanalytic
functions. This class includes complex-valued oscillatory functions,
hence we leave the realm of o-minimality, but many tame geometric
and analytic properties are preserved, such as stability under parametric
integration and well-understood (convergent) power-logarithmic asymptotics.
In \cite{ccmrs:integration-oscillatory} we studied the parametric
Fourier transforms of constructible functions (thus equally leaving
the realm of real geometry) and described a class containing such
transforms and stable under parametric integration. In \cite{cmrs:mellin_constructible}
we studied Mellin transforms of power-constructible functions and
showed that stability under parametric integration is preserved.

In the current paper we combine the action of parametric Fourier and
Mellin transforms on the class $\mathcal{C}^{\mathbb{C}}$ of power-constructible
functions. We define a system $\mathcal{C}^{\mathcal{M},\mathcal{F}}$
of $\mathbb{C}$-algebras containing all such transforms and stable
under parametric integration (see Definition \ref{def: C^M,F-1} and Theorem \ref{thm: Stability of C^M,F}). We describe a set of generators of a
particular prepared form which allows us to prove the stability under
parametric integration and deduce information about the asymptotics
at infinity in a chosen variable of the functions of the class. We furthermore
identify a subclass $\mathcal{C}^{\mathbb{C},\mathcal{F}}$ (see Definition \ref{def: C^F}) of $\mathcal{C}^{\mathcal{M},\mathcal{F}}$
which is the smallest class containing $\mathcal{C}^{\mathbb{C}}$
and stable under parametric Fourier transform and right-composition with subanalytic maps, and give, for the functions
of this class, asymptotic expansions in the power-logarithmic scale  (Theorem \ref{thm: asympt exp}),
in a chosen variable $y$ and uniformly in the other variables $x$
(which serve as parameters and range in a given globally subanalytic
set). We also deduce the stability of the class $\mathcal{C}^{\mathbb{C},\mathcal{F}}$
under taking pointwise and $\text{L}^{p}$-limits, and under the Fourier-Plancherel transform (Theorems \ref{prop:limits}, \ref{prop:complete}, \ref{cor:Plancherel}).\\

The main geometric tools for achieving this program come from o-minimality (see \cite{vdd:o_minimal_real_analytic}) and, more precisely, from the geometry of subanalytic sets and functions. They consist in resolution results in the form of preparation theorems, in the spirit of \cite{paru:lip1}, \cite{lr:int} or \cite{miller_dan:preparation_theorem_weierstrass_systems}. The key analytic tool we use is %, apart from classical ubiquitous tools like integration by parts,
the theory of continuously uniformly distributed modulo one functions (c.u.d. mod 1, for short), building on  \cite{Weyl:Gleichverteilung,kuipers_niederreiter:uniform_distributions_sequences,ccmrs:integration-oscillatory}, and its uniform variants. One of the deep challenges comes from the oscillatory nature of functions in $\mathcal{C}^{\mathbb{C},\mathcal{F}}$ and $\mathcal{C}^{\mathcal{M},\mathcal{F}}$, which imposes a careful study of the integration loci (see Definition \ref{def: stability for functions on X} and Theorem \ref{thm: interpolation}). This is where the interaction between the theory of c.u.d. mod 1 functions and the geometry of subanalytic sets comes into play.
\\

The paper is organized as follows.

In Section \ref{sec: context} we introduce the classes $\mathcal{C}^{\mathbb{C},\mathcal{F}}$
and $\mathcal{C}^{\mathcal{M},\mathcal{F}}$, and state our main results
(Theorems \ref{thm: stability of C^F} and \ref{thm: Stability of C^M,F}).
The class $\mathcal{C}^{\mathbb{C},\mathcal{F}}$ is a collection
of functions defined on globally subanalytic sets, the class $\mathcal{C}^{\mathcal{M},\mathcal{F}}$
is a collection of functions that also depend on a complex parameter
$s$, which is only allowed to range in a vertical open strip with
bounded width. However, it is possible to extend a function on a given strip
to a larger strip (Proposition \ref{prop: C^M,F has extension}).

In Section \ref{sec: Generators}, we choose suitable generators for
$\mathcal{C}^{\mathcal{M},\mathcal{F}}$ as an abelian group, which allow us to prove the extension result (see Section \ref{subsec:proof2.20}).

In Section \ref{sec: Strategy} we identify two
special types of generators, \emph{strongly integrable} and \emph{monomial} (see Definition \ref{def: superintegrable and monomial genrators}), and
show that their parametric integrals still belong to the class $\mathcal{C}^{\mathcal{M},\mathcal{F}}$ (Corollary \ref{cor: integration of strongly integrable} and Proposition \ref{prop: integration of monomial generators}).
In Section \ref{subsec: Proof strategy} we give the proof of Theorems
\ref{thm: stability of C^F} and \ref{thm: Stability of C^M,F}, assuming Theorem \ref{thm: interpolation}, which is a precise form of Theorem \ref{thm: Stability of C^M,F} when integrating over only one variable.   
 
Sections \ref{sec: Preparation} and \ref{sec: stability} are devoted
to the proof of Theorem \ref{thm: interpolation}, which requires
both subanalytic resolution of singularities and preparation techniques,
and non-compensation results based on the theory of c.u.d. mod 1 functions. 

In Section \ref{sec:Asymptotic-expansions-and} we study
the asymptotics of the functions in the power-logarithmic scale 
and prove the stability of the class
$\mathcal{C}^{\mathbb{C},\mathcal{F}}$ under pointwise limits (Theorems \ref{thm: asympt exp} and \ref{prop:limits}).  

In Section \ref{sec:Plancherel} we prove the $\text{L}^{p}$-completeness of the class
$\mathcal{C}^{\mathbb{C},\mathcal{F}}$ and its stability under the parametric Fourier-Plancherel transform  (Theorems \ref{prop:complete} and \ref{cor:Plancherel}).  

\section{Context, definitions and main results\label{sec: context}}

A subset $X$ of $\mathbb{R}^{m}$ is globally subanalytic if it is
the image under the canonical projection from $\mathbb{R}^{m+n}$
to $\mathbb{R}^{m}$ of a globally semianalytic subset of $\mathbb{R}^{m+n}$
(i.e. a subset $Y\subseteq\mathbb{R}^{m+n}$ such that, in a neighborhood
of every point of $\mathbb{P}^{1}\left(\mathbb{R}\right)^{m+n}$,
$Y$ is described by finitely many analytic equations and inequalities).
Equivalently, $X$ is definable in the o-minimal structure $\mathbb{R}_{\text{an}}$
(see for example \cite{vdd:d}). Thus, the logarithm $\log:\left(0,+\infty\right)\longrightarrow\mathbb{R}$
and the power map $x^{y}:\left(0,+\infty\right)\times\mathbb{R}\longrightarrow\mathbb{R}$
are functions whose graph is not subanalytic, but they are definable
in the o-minimal structure $\mathbb{R}_{\mathrm{an},\exp}$ (see for
example \cite{dmm:exp}).

Throughout this paper $X\subseteq\mathbb{R}^{m}$ will be a globally
subanalytic set (from now on, just \textquotedblleft \emph{subanalytic}
\emph{set}\textquotedblright , for short). Denote by $\mathcal{S}\left(X\right)$
the collection of all subanalytic functions on $X$, i.e. all the
functions of domain $X$ whose graph is a subanalytic set, and let
$\mathcal{S}_{+}\left(X\right)=\left\{ f\in\mathcal{S}\left(X\right):\ f\left(X\right)\subseteq\left(0,+\infty\right)\right\} $.
\begin{notation}
\label{notation: class G}Whenever we fix, for every $m\in\mathbb{N}$
and $X\subseteq\mathbb{R}^{m}$ subanalytic, a collection $\mathcal{G}\left(X\right)$
of real- or complex-valued functions defined on $X$, we denote by
$\mathcal{G}$ the \emph{system} of all collections $\mathcal{G}\left(X\right)$.
For instance, $\mathcal{S}$ is the system of collections of all subanalytic
functions defined on subanalytic sets:
\[
\mathcal{S}=\left\{ \mathcal{S}\left(X\right):\ X\subseteq\mathbb{R}^{m}\text{ subanalytic},\ m\in\mathbb{N}\right\} .
\]
\end{notation}
\begin{defn}
\label{def: S^C and exp(iS)}For $X\subseteq\mathbb{R}^{m}$ subanalytic,
define
\begin{align*}
\mathcal{S}_{+}^{\mathbb{C}}\left(X\right) & =\left\{ f^{\alpha}:\ f\in\mathcal{S}_{+}\left(X\right),\ \alpha\in\mathbb{C}\right\} ,\\
\log\mathcal{S}_{+}\left(X\right) & =\left\{ \log f:\ f\in\mathcal{S}_{+}\left(X\right)\right\} ,\\
\text{e}^{\text{i}\mathcal{S}}\left(X\right) & =\left\{ \text{e}^{\text{i}f}:\ f\in\mathcal{S}\left(X\right)\right\} .
\end{align*}
\end{defn}
A function defined on $X$ and taking its values in $\mathbb{C}$
is called a \emph{complex-valued subanalytic function} if its real
and imaginary parts are in $\mathcal{S}\left(X\right)$. For example,
if $f\in\mathcal{S}\left(X\right)$ is bounded (i.e. for all $x\in X,$
$\left|f\left(x\right)\right|\leq M$, for some $M>0$), then $\text{e}^{\text{i}f}$
is a complex-valued subanalytic function. If such a bounded $f$ is furthermore strictly
positive (i.e. $f\in\mathcal{S}_{+}\left(X\right)$) and bounded away
from zero (i.e. for all $x\in X,\ f\left(x\right)\geq m$, for some
$m>0$), then $\log f$ is a real-valued subanalytic function and
for all $\alpha\in\mathbb{C}$, $f^{\alpha}$ is a complex-valued
subanalytic function.
\begin{defn}
\label{def: stability for functions on X}Let $\mathcal{G}$ be a
system as in Notation \ref{notation: class G}. For $h\in\mathcal{G}\left(X\times\mathbb{R}^{n}\right)$,
the \emph{integration locus} of $h$ on $X$ is the set
\[
\text{Int}\left(h;X\right)=\left\{ x\in X:\ y\longmapsto h\left(x,y\right)\in L^{1}\left(\mathbb{R}^{n}\right)\right\} .
\]
We say that $\mathcal{G}$ is \emph{stable under parametric integration}
if for all $h\in\mathcal{G}\left(X\times\mathbb{R}^{n}\right)$ there
exists $H\in\mathcal{G}\left(X\right)$ such that
\[
\forall x\in\text{Int}\left(h;X\right),\ H\left(x\right)=\int_{\mathbb{R}^{n}}h\left(x,y\right)\text{d}y.
\]
Finally, define $\mathcal{G}\left(X\times\mathbb{R}^{n}\right)_{\text{int}}=\left\{ h\in\mathcal{G}\left(X\times\mathbb{R}^{n}\right):\ \text{Int}\left(h;X\right)=X\right\} $.
\end{defn}
Thus, for example, $h\in\mathcal{G}\left(\left(X\times\mathbb{R}\right)\times\mathbb{R}\right)_{\text{int}}$
means that for all $\left(x,y\right)\in X\times\mathbb{R},\ t\longmapsto h\left(x,y,t\right)\in L^{1}\left(\mathbb{R}\right)$,
whereas $h\in\mathcal{G}\left(X\times\mathbb{R}^{2}\right)_{\text{int}}$
means that for all $x\in X,\ \left(y,t\right)\longmapsto h\left(x,y,t\right)\in L^{1}\left(\mathbb{R}^{2}\right)$.

Next, we introduce the parametric Fourier transform acting on a system
$\mathcal{G}$ as in Notation \ref{notation: class G}.
\begin{defn}
Let $h\in\mathcal{G}\left(X\times\mathbb{R}\right)_{\text{int}}$.
Define the \emph{parametric Fourier transform of $h$} as the function
\[
\mathcal{F}\left[h\right]:X\times\mathbb{R}\ni\left(x,t\right)\longmapsto\int_{\mathbb{R}}h\left(x,y\right)\text{e}^{-2\pi\text{i}ty}\text{d}y
\]
and the \emph{fixed frequency parametric Fourier transform of} $h$
as the function obtained from $\mathcal{F}\left[h\right]$ by fixing
$t=-\frac{1}{2\pi}$, i.e.
\[
\mathfrak{f}\left[h\right]:X\ni x\longmapsto\int_{\mathbb{R}}h\left(x,y\right)\text{e}^{\text{i}y}\text{d}y.
\]
\end{defn}
\begin{notation}
\label{notation: chi, open over}The letter $\chi$ will be used for
characteristic functions. Thus, if $A\subseteq\mathbb{R}^{n}$, then
$\chi_{A}$ will be the characteristic function of the set $A$.

We will often work in restriction to subanalytic cells $A\subseteq X\times\mathbb{R}$,
for some $X\subseteq\mathbb{R}^{m}$ subanalytic. If $x\in X$, then
$A_{x}$ denotes the fiber of $A$ over $x$, i.e. the set $\left\{ y\in\mathbb{R}:\ \left(x,y\right)\in A\right\} $.
As $X$ serves as a space of parameters (we will never integrate with
respect to the variables $x$ ranging in $X$), we are allowed to
partition $X$ into subanalytic cells, replace $X$ by one of the
cells of the partition and work disjointly in restriction to such
a cell. In particular, we may always assume that $X$ is itself a
subanalytic cell, and that all cells in $X\times\mathbb{R}$ project
onto $X$. Moreover, we will always concentrate on cells $A$ which
are \emph{open over }$X$ (see \cite[Definition 3.1]{cmrs:mellin_constructible}),
as these are the only cells whose fibers give a nonzero contribution
when integrating a function defined on $X\times\mathbb{R}$ with respect
to its last variable.
\end{notation}

\subsection{\label{subsec: Fourier transf}Fourier transforms of power-constructible
functions}

In \cite{ccmrs:integration-oscillatory} we constructed the smallest
system of $\mathbb{C}$-algebras containing $\mathcal{S}\cup\text{e}^{\text{i}\mathcal{S}}$
and stable under parametric integration. Such a system contains in
particular the parametric Fourier transforms of all subanalytic functions.
The first aim of this paper is to extend such a construction to describe
the smallest system $\mathcal{C}^{\mathbb{C},\mathcal{F}}$ containing
$\mathcal{S}_{+}^{\mathbb{C}}\cup\text{e}^{\text{i}\mathcal{S}}$
and stable under parametric integration. For this, our starting point
is the system $\mathcal{C}^{\mathbb{C}}$ of \emph{power-constructible
functions} defined in \cite{cmrs:mellin_constructible}. Let us recall
its definition and main properties.
\begin{thm}[{\cite[Definition 2.2  and Theorem 2.4]{cmrs:mellin_constructible}}]
\label{thm: power-constructible def and stability}For $X\subseteq\mathbb{R}^{m}$
subanalytic, let $\mathcal{C}^{\mathbb{C}}\left(X\right)$ be the
$\mathbb{C}$-algebra generated by $\mathcal{S}_{+}^{\mathbb{C}}\left(X\right)\cup\log\mathcal{S}_{+}\left(X\right)$.
The system $\mathcal{C}^{\mathbb{C}}$ of power-constructible functions
is the smallest system of $\mathbb{C}$-algebras containing $\mathcal{S}_{+}^{\mathbb{C}}$
and stable under parametric integration.
\end{thm}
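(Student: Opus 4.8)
The statement to prove is Theorem~\ref{thm: power-constructible def and stability}, which asserts two things about the system $\mathcal{C}^{\mathbb{C}}$: that it is stable under parametric integration, and that it is the \emph{smallest} such system containing $\mathcal{S}_+^{\mathbb{C}}$. Since this is cited as \cite[Definition 2.2 and Theorem 2.4]{cmrs:mellin_constructible}, the real content is the stability assertion; minimality is comparatively soft.

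\medskip

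\textbf{Plan of proof.} The minimality part I would dispatch first and quickly. Any system of $\mathbb{C}$-algebras $\mathcal{G}$ that contains $\mathcal{S}_+^{\mathbb{C}}$ must, being closed under the algebra operations, contain the $\mathbb{C}$-algebra generated by $\mathcal{S}_+^{\mathbb{C}}$; and if it is moreover stable under parametric integration, then since $\log f = \int_1^{f} t^{-1}\,dt$ can be realized as a parametric integral of a function built from powers of subanalytic functions (integrating $y \mapsto \chi_{[1,f(x)]}(y)\cdot y^{-1}$, or a suitably prepared version thereof, over $y \in \mathbb{R}$), every $\log f$ with $f \in \mathcal{S}_+(X)$ lies in $\mathcal{G}(X)$. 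Hence $\mathcal{G}(X)$ contains the $\mathbb{C}$-algebra generated by $\mathcal{S}_+^{\mathbb{C}}(X) \cup \log\mathcal{S}_+(X)$, which is $\mathcal{C}^{\mathbb{C}}(X)$. So once stability of $\mathcal{C}^{\mathbb{C}}$ itself is established, minimality follows.

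\medskip

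\textbf{Stability under parametric integration.} This is the heart of the matter and proceeds by the standard \emph{preparation-and-induction} scheme used for constructible functions, adapted to allow complex exponents. By Fubini it suffices to integrate out one variable at a time, so one reduces to showing: for $h \in \mathcal{C}^{\mathbb{C}}(X\times\mathbb{R})$, the function $x \mapsto \int_{\mathbb{R}} h(x,y)\,dy$ (on the integration locus) lies in $\mathcal{C}^{\mathbb{C}}(X)$. The first step is a structure/preparation theorem for the generators: after partitioning $X\times\mathbb{R}$ into finitely many subanalytic cells that are open over $X$ (cf.\ Notation~\ref{notation: chi, open over}), on each such cell one can write every generator, and hence $h$, as a finite sum of terms of the form
\[
a(x)\,\bigl|y - \theta(x)\bigr|^{\lambda}\,u(x,y)^{\mu}\,\log^{k}\bigl|y-\theta(x)\bigr|\cdot(\text{bounded subanalytic unit}),
\]
where $a \in \mathcal{C}^{\mathbb{C}}(X)$, $\theta$ is a subanalytic center, $\lambda \in \mathbb{C}$, $\mu$ a tuple of complex exponents of prepared subanalytic units $u$, and $k \in \mathbb{N}$ — the prepared form in the spirit of \cite{lr:int, miller_dan:preparation_theorem_weierstrass_systems, paru:lip1}. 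After a subanalytic change of variable $y \mapsto y - \theta(x)$ (allowed since we integrate over all of $\mathbb{R}$ and centers are subanalytic) and a further cell decomposition separating the regions $|y|$ small, $|y|$ large, one is reduced to integrating, over an interval with subanalytic endpoints, a single monomial $y^{\lambda}(\log y)^{k}$ times a convergent series in nonnegative powers of subanalytic functions of $(x,y)$. Now one integrates term by term: each $\int y^{\lambda}(\log y)^{k}\,dy$ is elementary and produces, upon evaluation at the (subanalytic) endpoints, expressions of the form (endpoint)$^{\lambda+1}(\log(\text{endpoint}))^{j}$ — again a power of a subanalytic function times a logarithm of one — unless $\lambda = -1$, in which case the antiderivative of $y^{-1}(\log y)^{k}$ is $(\log y)^{k+1}/(k+1)$, still of the required form. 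One must check the series obtained after integration still converges and that interchange of sum and integral is legitimate, which is where the preparation (uniform control of the units and their exponents) does its work; this is exactly the mechanism by which $\log\mathcal{S}_+$ is forced into the class and why no genuinely new functions appear. Assembling the finitely many cells and the finitely many monomial contributions gives $H \in \mathcal{C}^{\mathbb{C}}(X)$.

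\medskip

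\textbf{Main obstacle.} The delicate point — and the one that distinguishes this from the classical constructible case — is the bookkeeping of the complex exponents through the preparation and the term-by-term integration: one must ensure that the exponents $\lambda+1$, $\mu$ arising after integrating remain in $\mathbb{C}$ (trivial) but, more importantly, that the ``resonance'' case $\lambda \in \{-1,-2,\dots\}$ (where integrating a power produces a logarithm or a pole that must cancel against companion terms) is handled so that the output genuinely lies in the $\mathbb{C}$-algebra generated by $\mathcal{S}_+^{\mathbb{C}} \cup \log\mathcal{S}_+$ and not merely in some formal extension; and that the integration locus $\mathrm{Int}(h;X)$ is itself a set on which the resulting $H$ is defined and subanalytic-compatible. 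This is precisely the analysis carried out in \cite{cmrs:mellin_constructible}, and I would simply invoke the preparation theorem there and run the term-by-term integration, keeping careful track of which monomials contribute poles that force the appearance of logarithmic terms.
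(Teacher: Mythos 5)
The paper does not prove this statement: Theorem \ref{thm: power-constructible def and stability} is imported verbatim from \cite[Definition 2.2 and Theorem 2.4]{cmrs:mellin_constructible}, so there is no in-paper proof to compare against. Judged on its own terms, your outline follows the strategy actually used in that reference. Your minimality argument is correct and essentially complete: any system of $\mathbb{C}$-algebras containing $\mathcal{S}_{+}^{\mathbb{C}}$ and stable under parametric integration contains $\log f=\int_{\mathbb{R}}\chi_{[1,f(x)]}(y)\,y^{-1}\,\mathrm{d}y$ for $f\geq1$ (hence all of $\log\mathcal{S}_{+}$ after the reduction $\log f=\log\max(f,1)-\log\max(1/f,1)$), and therefore contains the algebra generated by $\mathcal{S}_{+}^{\mathbb{C}}\cup\log\mathcal{S}_{+}$; one should only add that the integrand must be assembled from everywhere-positive subanalytic functions, e.g.\ $\chi_{[1,f(x)]}(y)\cdot\max(y,1)^{-1}$, and that $\chi_{A}$ lies in the algebra generated by $\mathcal{S}_{+}^{\mathbb{C}}$ because every real subanalytic function is a difference of elements of $\mathcal{S}_{+}$.

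For stability, the preparation-plus-term-by-term-integration scheme you describe is the right one, but your sketch underplays the point that is genuinely new relative to the real constructible case and that your ``main obstacle'' paragraph only brushes against: identifying the integration locus. After preparation, a fiber integral over an unbounded interval involves a finite sum $\sum_{j}c_{j}(x)\,y^{\lambda_{j}}(\log y)^{k_{j}}$ with $\Re(\lambda_{j})=-1$ but distinct imaginary parts, and such a sum is integrable at $+\infty$ if and only if all $c_{j}(x)=0$; this requires a non-compensation argument for the oscillatory monomials $y^{\mathrm{i}\sigma}$ (the analogue of \cite[Proposition 3.4]{cmrs:mellin_constructible}), not merely the bookkeeping of the resonance $\lambda\in\{-1,-2,\dots\}$ that you emphasize. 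Without that lemma one cannot show that $\mathrm{Int}(h;X)$ is a zero locus of functions in the class, nor that the interpolating $H$ agrees with the integral exactly there. Since you explicitly defer this analysis to the reference, the proposal is acceptable as a reconstruction of a cited result, but it remains an outline rather than a self-contained proof.
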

A natural candidate for the smallest system containing $\mathcal{S}_{+}^{\mathbb{C}}\cup\text{e}^{\text{i}\mathcal{S}}$
and stable under parametric integration would be the system $\mathcal{C}^{\mathbb{C},\text{i}\mathcal{S}}$
of $\mathbb{C}$-algebras $\mathcal{C}^{\mathbb{C},\text{i}\mathcal{S}}\left(X\right)$
generated by $\mathcal{C}^{\mathbb{C}}\left(X\right)\cup\text{e}^{\text{i}\mathcal{S}}\left(X\right)$.
However, we will show (see Corollary \ref{cor: naive not enough})
that such a system is not stable under parametric integration. This
motivates the following definition.
\begin{defn}
\label{def: C^F}Consider the fixed frequency parametric Fourier operator
$\mathfrak{f}$ acting on $\mathcal{C}^{\mathbb{C}}$:
\[
\mathfrak{f}\left[g\right]\left(x\right)=\int_{\mathbb{R}}g\left(x,y\right)\text{e}^{\text{i}y}\text{dy}\ \ \ \left(g\in\mathcal{C}^{\mathbb{C}}\left(X\times\mathbb{R}\right)_{\text{int}}\right).
\]
Define
\[
\mathcal{C}^{\mathbb{C},\mathcal{F}}\left(X\right)=\left\{ \mathfrak{f}\left[g\right]:\ g\in\mathcal{C}^{\mathbb{C}}\left(X\times\mathbb{R}\right)_{\text{int}}\right\} .
\]
\end{defn}
\begin{rem}
\label{rem: 1 is a transcendental}Notice that $\mathcal{C}^{\mathbb{C},\mathcal{F}}\left(X\right)$
is a $\mathbb{C}$-module and that $1=\mathfrak{f}\left[\frac{\mathrm{i}}{2}\chi_{\left[\pi,2\pi\right]}\right]$.
In particular, $\mathcal{C}^{\mathbb{C}}\left(X\right)\subseteq\mathcal{C}^{\mathbb{C},\mathcal{F}}\left(X\right)$.
At this stage, it is not clear whether $\mathcal{C}^{\mathbb{C},\mathcal{F}}\left(X\right)$
is a $\mathbb{C}$-algebra.

Note also that $\mathcal{C}^{\mathbb{C},\mathcal{F}}$ is stable under
right-composition with subanalytic maps: if $X\subseteq\mathbb{R}^{m},Y\subseteq\mathbb{R}^{n}$
are subanalytic sets, $G:Y\longrightarrow X$ is a subanalytic map
and $h\in\mathcal{C}^{\mathbb{C},\mathcal{F}}\left(X\right)$, then
$h\circ G\in\mathcal{C}^{\mathbb{C},\mathcal{F}}\left(Y\right)$.
\end{rem}
Our first result is the following.
\begin{thm}
\label{thm: stability of C^F}The system $\mathcal{C}^{\mathbb{C},\mathcal{F}}$
is stable under parametric integration. It is a system of $\mathbb{C}$-algebras,
and indeed the smallest such system containing $\mathcal{S}_{+}^{\mathbb{C}}\cup\mathrm{e}^{\mathrm{i}\mathcal{S}}$
and stable under parametric integration. It is also the smallest such
system containing $\mathcal{C}^{\mathbb{C}}$ and stable under the
parametric Fourier transform and right-composition with subanalytic
maps.
\end{thm}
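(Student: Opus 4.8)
The plan is to separate the two \emph{soft} minimality assertions, which reduce quickly to the analogous property of $\mathcal{C}^{\mathbb{C}}$ (Theorem \ref{thm: power-constructible def and stability}), from the two \emph{substantial} assertions — stability under parametric integration and the fact that we have a system of $\mathbb{C}$-algebras — whose proof I would route through the one‑variable integration result, Theorem \ref{thm: interpolation}. First I record the elementary inclusions. By Remark \ref{rem: 1 is a transcendental} we have $\mathcal{C}^{\mathbb{C}}\subseteq\mathcal{C}^{\mathbb{C},\mathcal{F}}$, and for $f\in\mathcal{S}(X)$ one computes $\mathrm{e}^{\mathrm{i}f}=\mathfrak{f}\!\left[\tfrac{\mathrm{i}}{2}\chi_{A}\right]$ with $A=\{(x,y):\pi\le y-f(x)\le 2\pi\}$ subanalytic, so $\mathrm{e}^{\mathrm{i}\mathcal{S}}\subseteq\mathcal{C}^{\mathbb{C},\mathcal{F}}$; since $\mathrm{e}^{\mathrm{i}f}\mathrm{e}^{\mathrm{i}g}=\mathrm{e}^{\mathrm{i}(f+g)}$, the $\mathbb{C}$-algebra generated by $\mathcal{S}_{+}^{\mathbb{C}}\cup\mathrm{e}^{\mathrm{i}\mathcal{S}}$ is just the $\mathcal{C}^{\mathbb{C}}$-span of $\mathrm{e}^{\mathrm{i}\mathcal{S}}$, hence is contained in $\mathcal{C}^{\mathbb{C},\mathcal{F}}$. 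The minimality claims are then formal, \emph{granting} the positive assertions. If $\mathcal{D}$ is a system of $\mathbb{C}$-algebras containing $\mathcal{S}_{+}^{\mathbb{C}}\cup\mathrm{e}^{\mathrm{i}\mathcal{S}}$ and stable under parametric integration, then $\mathcal{D}\supseteq\mathcal{C}^{\mathbb{C}}$ by Theorem \ref{thm: power-constructible def and stability}; applying stability of $\mathcal{D}$ to $(x,y)\mapsto g(x,y)\mathrm{e}^{\mathrm{i}y}$ (which lies in $\mathcal{D}(X\times\mathbb{R})_{\mathrm{int}}$ since $|g(x,y)\mathrm{e}^{\mathrm{i}y}|=|g(x,y)|$) for $g\in\mathcal{C}^{\mathbb{C}}(X\times\mathbb{R})_{\mathrm{int}}$ gives $\mathfrak{f}[g]\in\mathcal{D}(X)$, so $\mathcal{D}\supseteq\mathcal{C}^{\mathbb{C},\mathcal{F}}$. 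Likewise, if $\mathcal{D}$ is a system of $\mathbb{C}$-algebras containing $\mathcal{C}^{\mathbb{C}}$, stable under the parametric Fourier transform and under right-composition with subanalytic maps, then $\mathcal{F}[g]\in\mathcal{D}(X\times\mathbb{R})$ for $g\in\mathcal{C}^{\mathbb{C}}(X\times\mathbb{R})_{\mathrm{int}}$, and composing with the subanalytic map $x\mapsto(x,-\tfrac{1}{2\pi})$ yields $\mathfrak{f}[g]\in\mathcal{D}(X)$; again $\mathcal{D}\supseteq\mathcal{C}^{\mathbb{C},\mathcal{F}}$. (Stability of $\mathcal{C}^{\mathbb{C},\mathcal{F}}$ itself under right-composition is in Remark \ref{rem: 1 is a transcendental}.)

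It remains to prove that $\mathcal{C}^{\mathbb{C},\mathcal{F}}$ is stable under parametric integration, is a system of $\mathbb{C}$-algebras, and is stable under the parametric Fourier transform. For stability under parametric integration I would induct on the number $n$ of integration variables. Given $h\in\mathcal{C}^{\mathbb{C},\mathcal{F}}(X\times\mathbb{R}^{n})$ and $x\in\mathrm{Int}(h;X)$, Fubini's theorem lets me peel off the last variable: for a.e.\ $y'\in\mathbb{R}^{n-1}$ one has $(x,y')\in\mathrm{Int}(h;X\times\mathbb{R}^{n-1})$, so by the case $n=1$ over the parameter space $X\times\mathbb{R}^{n-1}$ there is $H_{1}\in\mathcal{C}^{\mathbb{C},\mathcal{F}}(X\times\mathbb{R}^{n-1})$ with $H_{1}(x,y')=\int_{\mathbb{R}}h(x,y',y_{n})\,\mathrm{d}y_{n}$ for such $y'$; since $H_{1}(x,\cdot)$ then agrees a.e.\ with an $L^{1}$-function we get $x\in\mathrm{Int}(H_{1};X)$ and $\int_{\mathbb{R}^{n-1}}H_{1}(x,y')\,\mathrm{d}y'=\int_{\mathbb{R}^{n}}h(x,y)\,\mathrm{d}y$, and the inductive hypothesis finishes the step. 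The base case $n=1$ is exactly the content I would take from Theorem \ref{thm: interpolation}: writing $h=\mathfrak{f}[g]$ with $g\in\mathcal{C}^{\mathbb{C}}((X\times\mathbb{R})\times\mathbb{R})_{\mathrm{int}}$, one must produce $H\in\mathcal{C}^{\mathbb{C},\mathcal{F}}(X)$ with $H(x)=\int_{\mathbb{R}}\int_{\mathbb{R}}g(x,y,u)\mathrm{e}^{\mathrm{i}u}\,\mathrm{d}u\,\mathrm{d}y$ on the integration locus. The tempting shortcut — apply Fubini once more to get $\int_{\mathbb{R}}\bigl(\int_{\mathbb{R}}g(x,y,u)\,\mathrm{d}y\bigr)\mathrm{e}^{\mathrm{i}u}\,\mathrm{d}u$ and invoke stability of $\mathcal{C}^{\mathbb{C}}$ — is valid only under absolute convergence, which in general fails precisely because oscillation in $u$ can make $y\mapsto h(x,y)$ integrable while $(y,u)\mapsto g(x,y,u)$ is not; this is the same phenomenon behind the non-stability of the naive system $\mathcal{C}^{\mathbb{C},\mathrm{i}\mathcal{S}}$ (Corollary \ref{cor: naive not enough}). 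Resolving it is where subanalytic preparation of $g$ and the non-compensation estimates from the theory of c.u.d.\ mod $1$ functions enter, as packaged in Theorem \ref{thm: interpolation} (through the strongly integrable and monomial generators of Definition \ref{def: superintegrable and monomial genrators}, Corollary \ref{cor: integration of strongly integrable} and Proposition \ref{prop: integration of monomial generators}); this is the main obstacle.

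Granting stability under parametric integration, the algebra property follows from the convolution identity $\mathfrak{f}[g_{1}]\cdot\mathfrak{f}[g_{2}]=\mathfrak{f}\bigl[(x,w)\mapsto\int_{\mathbb{R}}g_{1}(x,y)g_{2}(x,w-y)\,\mathrm{d}y\bigr]$: the double integral $\iint g_{1}(x,y)g_{2}(x,z)\mathrm{e}^{\mathrm{i}(y+z)}\,\mathrm{d}y\,\mathrm{d}z$ converges absolutely (Tonelli, as each $g_{j}(x,\cdot)\in L^{1}$), so the linear change of variables $(y,z)\mapsto(y,w)=(y,y+z)$ and Fubini are legitimate; and $(x,w,y)\mapsto g_{1}(x,y)g_{2}(x,w-y)$ lies in $\mathcal{C}^{\mathbb{C}}$ (product of right-compositions with subanalytic maps), its parametric integral in $y$ agrees a.e.\ with an element of $\mathcal{C}^{\mathbb{C}}(X\times\mathbb{R})_{\mathrm{int}}$ by stability of $\mathcal{C}^{\mathbb{C}}$ and Young's inequality for the $L^{1}$-bound, whence $\mathfrak{f}[g_{1}]\mathfrak{f}[g_{2}]\in\mathcal{C}^{\mathbb{C},\mathcal{F}}(X)$. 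Finally, stability under the parametric Fourier transform: for $h\in\mathcal{C}^{\mathbb{C},\mathcal{F}}(X\times\mathbb{R})_{\mathrm{int}}$ the substitution $y\mapsto -w/(2\pi t)$ gives $\mathcal{F}[h](x,t)=\tfrac{1}{2\pi|t|}\,\mathfrak{f}\bigl[(x,t,w)\mapsto h(x,-\tfrac{w}{2\pi t})\bigr](x,t)\in\mathcal{C}^{\mathbb{C},\mathcal{F}}(X\times\mathbb{R}^{*})$, using stability of $\mathcal{C}^{\mathbb{C},\mathcal{F}}$ under right-composition with subanalytic maps and its (now established) $\mathcal{C}^{\mathbb{C}}$-module structure, while $\mathcal{F}[h](x,0)=\int_{\mathbb{R}}h(x,y)\,\mathrm{d}y\in\mathcal{C}^{\mathbb{C},\mathcal{F}}(X)$ by the integration result; gluing the two pieces over the subanalytic partition $\{t\neq 0\}\sqcup\{t=0\}$, which is harmless since $\mathcal{C}^{\mathbb{C},\mathcal{F}}$ is closed under multiplication by characteristic functions of subanalytic sets and under finite sums, gives $\mathcal{F}[h]\in\mathcal{C}^{\mathbb{C},\mathcal{F}}(X\times\mathbb{R})$. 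Apart from the base case $n=1$ of the integration statement, all the remaining ingredients — the two minimality arguments, the convolution computation, the change of variables for $\mathcal{F}$, and the (subanalytic) bookkeeping of integration loci — are comparatively routine.
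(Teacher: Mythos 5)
Your proposal is correct and follows essentially the same route as the paper: the Fubini induction reduces everything to the one-variable case packaged in Theorem \ref{thm: interpolation}, the minimality claims are formal consequences of Theorem \ref{thm: power-constructible def and stability} and the definition of $\mathcal{C}^{\mathbb{C},\mathcal{F}}$, and the algebra property comes from an absolutely convergent double integral exactly as in Corollary \ref{cor: algebra} (the paper applies Proposition \ref{prop: integrability of naive integrable gen} directly to $(x,y,t)\mapsto g_{1}(x,y)g_{2}(x,t)\mathrm{e}^{\mathrm{i}(y+t)}$ rather than passing through the convolution, but this is the same computation). Your substitution argument for $\mathcal{F}$-stability also works, though it can be shortened by noting that $h(x,y)\mathrm{e}^{-2\pi\mathrm{i}ty}$ already lies in $\mathcal{C}^{\mathbb{C},\mathcal{F}}(X\times\mathbb{R}^{2})$ (since $\mathrm{e}^{-2\pi\mathrm{i}ty}\in\mathrm{e}^{\mathrm{i}\mathcal{S}}$) and integrating in $y$ with parameters $(x,t)$ gives $\mathcal{F}[h]$ without any case distinction at $t=0$.
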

Subsequently, we derive results on asymptotic expansions, pointwise limits, $\text{L}^{p}$-limits, and the Fourier-Plancherel transform for the class $\mathcal{C}^{\mathbb{C},\mathcal{F}}$. Such results are stated and proven in Sections \ref{sec:Asymptotic-expansions-and} and \ref{sec:Plancherel}.

\subsection{\label{subsec: Fourier and Mellin}Parametric Mellin and Fourier
transforms of power-constructible functions}

We now turn our attention to the Mellin transform.
\begin{defn}
\label{def: mellin transf}Let $\Sigma\subseteq\mathbb{C}$ be an
open set. For $h\in\mathcal{C}^{\mathbb{C}}\left(X\times[0,+\infty)\right)$
such that for all $s\in\Sigma$ and for all $x\in X$, the function
$y\longmapsto y^{s-1}h\left(x,y\right)$ belongs to $L^{1}\left([0,+\infty)\right)$,
the \emph{parametric Mellin transform of }$h$ \emph{on }$\Sigma$
is the function
\[
\mathcal{M}_{\Sigma}\left[h\right]:\Sigma\times X\ni\left(s,x\right)\longmapsto\int_{0}^{+\infty}y^{s-1}h\left(x,y\right)\text{d}y.
\]
\end{defn}
In \cite{cmrs:mellin_constructible} we studied the parametric Mellin
transforms of power-constructible functions: we constructed a system
$\mathcal{C}^{\mathcal{M}}$ containing such transforms and stable
under parametric integration (see Definition \ref{def: C^M-1} and
Theorem \ref{thm: C_M stability} below). The second aim of this work
is to construct a system containing both the parametric Mellin transforms
and the parametric Fourier transforms of power-constructible functions,
and stable under parametric integration. As the Mellin transform introduces
a new complex variable $s$, the domains of the functions we consider
will be suitable subsets of $\mathbb{C}\times$$\mathbb{R}^{m}$,
rather than just subsets of $\mathbb{R}^{m}$. The notions of integration
locus, parametric integral transform and stability under parametric
integration need to be made precise in this new context, which is
what we do next.  

\medskip{}

In what follows, we will consider several collections of functions
defined on sets of the form $\Sigma\times X$, where $\Sigma$ is
a suitable subset of $\mathbb{C}$ and $X$ is a subanalytic subset
of $\mathbb{R}^{m}$, for some $m\in\mathbb{N}$. We will study the
action of some integral operators on these collections of functions,
and, more generally, the nature of the parametric integrals of such
functions. Let us fix some notation.
\begin{defn}
\label{def: strip}An open vertical strip of bounded width in $\mathbb{C}$
is a set of the form
\[
\Sigma=\left\{ s\in\mathbb{C}:\ p<\Re\left(s\right)<q\right\} ,
\]
where $p,q\in\mathbb{R}$ and $p<q$. For short, we will say that
$\Sigma$ is a \emph{strip}.
\end{defn}
\begin{notation}
\label{notation: class D}Given a strip $\Sigma\subseteq\mathbb{C}$
and a subanalytic set $X\subseteq\mathbb{R}^{m}$, let $\mathcal{D}_{\Sigma}\left(X\right)$
be a collection of complex-valued functions such that for all $h\in\mathcal{D}_{\Sigma}\left(X\right)$
there is a closed discrete set $P\subseteq\mathbb{C}$ such that the
domain of $h$ contains $\left(\Sigma\setminus P\right)\times X$
(we say that $h$ \emph{has no poles outside }$P$). Denote by $\mathcal{D}_{\Sigma}$
the system $\left\{ \mathcal{D}_{\Sigma}\left(X\right):\ X\subseteq\mathbb{R}^{m}\ \text{subanalytic, }m\in\mathbb{N}\right\} $.

Suppose furthermore that the collection $\left\{ \mathcal{D}_{\Sigma}:\ \Sigma\subseteq\mathbb{C}\ \text{strip}\right\} $
has the \emph{extension property}: for every subanalytic set $X\subseteq\mathbb{R}^{m}$,
given any two strips $\Sigma,\Sigma'$ such that $\Sigma\subseteq\Sigma'$
and a closed discrete set $P\subseteq\mathbb{C}$ and $h\in\mathcal{D}_{\Sigma}\left(X\right)$
without poles outside $P$, there exists $h'\in\mathcal{D}_{\Sigma'}\left(X\right)$
without poles outside $P$ such that $h'\restriction\left(\Sigma\setminus P\right)\times X=h$.
Define $\mathcal{D}\left(X\right)$ as the direct limit of $\left\{ \mathcal{D}_{\Sigma}\left(X\right):\ \Sigma\subseteq\mathbb{C}\ \text{strip}\right\} $
and $\mathcal{D}=\left\{ \mathcal{D}\left(X\right):\ X\subseteq\mathbb{R}^{m}\text{ subanalytic},\ m\in\mathbb{N}\right\} $.
For $h\in\mathcal{D}\left(X\right)$ and a closed discrete set $P\subseteq\mathbb{C}$,
we say that $h$ has no poles outside $P$ if this is the case for
some representative of $h$ on each strip $\Sigma$.
\end{notation}
\begin{defn}
\label{def: locus}Given $h\in\mathcal{D}_{\Sigma}\left(X\times\mathbb{R}^{n}\right)$
without poles outside some closed discrete set $P\subseteq\mathbb{C}$,
define the \emph{integration locus} of $h$ as
\[
\text{Int}\left(h;\left(\Sigma\setminus P\right)\times X\right)=\left\{ \left(s,x\right)\in\left(\Sigma\setminus P\right)\times X:\ y\longmapsto h\left(s,x,y\right)\in L^{1}\left(\mathbb{R}^{n}\right)\right\}
\]
and we set
\begin{align*}
\mathcal{D}_{\Sigma}\left(X\times\mathbb{R}^{n}\right)_{\text{int}}= & \{h\in\mathcal{D}_{\Sigma}\left(X\times\mathbb{R}^{n}\right):\ \text{Int}\left(h;\left(\Sigma\setminus P\right)\times X\right)=\left(\Sigma\setminus P\right)\times X,\\
 & \text{for some closed\ discrete }P\subseteq\mathbb{C}\}.
\end{align*}
\end{defn}
We consider the following parametric integral transforms acting on
$\mathcal{D}$, where the word \emph{generalized} refers to the fact
that, unlike the case of the corresponding classical transforms, we
allow the operator to act on functions for which the integral transform
is not everywhere defined.
\begin{defn}
\label{def: gen Mellin, Fourier transforms}Let $\mathcal{D}$ be
as in Notation \ref{notation: class D} and $h\in\mathcal{D}_{\Sigma}\left(X\times\mathbb{R}\right)$
be without poles outside some closed discrete set $P\subseteq\mathbb{C}$.
\begin{itemize}
\item Let $\chi_{+}$ be the characteristic function of the half-line $[0,+\infty)$
and
\[
\widetilde{h}\left(s,x,y\right)=\chi_{+}\left(y\right)y^{s-1}h\left(s,x,y\right).
\]
The \emph{generalized parametric Mellin transform} of $h$ is the
function defined on\\
$\text{Int}\left(\widetilde{h};\left(\Sigma\setminus P\right)\times X\right)$
given by
\[
\mathcal{M}\left[h\right]\left(s,x\right)=\int_{0}^{+\infty}y^{s-1}h\left(s,x,y\right)\text{d}y.
\]
The integration kernel of this transform is the function $\left(s,y\right)\longmapsto\chi_{+}\left(y\right)y^{s-1}$.
\item The \emph{generalized parametric Fourier transform} of $h$ is the
function defined on \\
$\text{Int}\left(h;\left(\Sigma\setminus P\right)\times X\right)\times\mathbb{R}$
given by
\[
\mathcal{F}\left[h\right]\left(s,x,t\right)=\int_{\mathbb{R}}h\left(s,x,y\right)\text{e}^{-2\pi\text{i}ty}\text{d}y.
\]
The integration kernel is the function $\left(t,y\right)\longmapsto\text{e}^{-2\pi\text{i}ty}$.
\item The \emph{generalized fixed frequency} \emph{parametric Fourier transform}
of $h$ is the function defined on $\text{Int}\left(h;\left(\Sigma\setminus P\right)\times X\right)$
given by
\[
\mathfrak{f}\left[h\right]\left(s,x\right)=\mathcal{F}\left[h\right]\left(s,x,-\frac{1}{2\pi}\right)=\int_{\mathbb{R}}h\left(s,x,y\right)\text{e}^{\text{i}y}\text{d}y.
\]
The integration kernel is the function $y\longmapsto\text{e}^{\text{i}y}$.
\end{itemize}
\end{defn}
For each of these operators, the elements of the pairs $\left(s,x\right)$
for which the parametric transform of $h$ is defined are called the
\emph{parameters} of the transform.

\begin{defn}
\label{def: stable under transform} Let $\mathcal{D}$ be as in Notation
\ref{notation: class D}.
\begin{itemize}
\item $\mathcal{D}$ is \emph{stable under the generalized parametric Mellin
transform} if for all $\Sigma$ and $X$, for all $h\in\mathcal{D}_{\Sigma}\left(X\times\mathbb{R}\right)$
without poles outside some closed discrete set $P\subseteq\mathbb{C}$
there are a closed discrete set $P'\subseteq\mathbb{C}$ such that
$P\subseteq P'\subseteq\mathbb{C}$, and a function $H\in\mathcal{D}_{\Sigma}\left(X\right)$
without poles outside $P'$ such that, if $\widetilde{h}\left(s,x,y\right)=\chi_{+}\left(y\right)y^{s-1}h\left(s,x,y\right)$,
then
\[
\forall\left(s,x\right)\in\text{Int}\left(\widetilde{h};\left(\Sigma\setminus P'\right)\times X\right),\ H\left(s,x\right)=\mathcal{M}\left[h\right]\left(s,x\right).
\]
\item $\mathcal{D}$ is \emph{stable under the generalized parametric Fourier
transform} if for all $\Sigma$ and $X$, for all $h\in\mathcal{D}_{\Sigma}\left(X\times\mathbb{R}\right)$
without poles outside some closed discrete set $P\subseteq\mathbb{C}$
there are a closed discrete set $P'\subseteq\mathbb{C}$ such that
$P\subseteq P'\subseteq\mathbb{C}$, and a function $H\in\mathcal{D}_{\Sigma}\left(X\times\mathbb{R}\right)$
without poles outside $P'$ such that
\[
\forall\left(s,x,t\right)\in\text{Int}\left(h;\left(\Sigma\setminus P'\right)\times X\right)\times\mathbb{R},\ H\left(s,x,t\right)=\mathcal{F}\left[h\right]\left(s,x,t\right).
\]
\end{itemize}
\end{defn}
\begin{defn}
\label{def: stability under integration general}A system $\mathcal{D}$
as in Notation \ref{notation: class D} is \emph{stable under parametric
integration} if for every strip $\Sigma\subseteq\mathbb{C}$ and every
subanalytic set $X\subseteq\mathbb{R}^{m}$, given $h\in\mathcal{D}_{\Sigma}\left(X\times\mathbb{R}^{n}\right)$
without poles outside some closed discrete set $P\subseteq\mathbb{C}$
there exists a closed discrete set $P'\subseteq\mathbb{C}$ such that
$P\subseteq P'$ and $P'\setminus P$ is contained in a finitely generated
$\mathbb{Z}$-lattice, and there exists a function $H\in\mathcal{D}_{\Sigma}\left(X\right)$
without poles outside $P'$ such that
\[
\forall\left(s,x\right)\in\text{Int}\left(h;\left(\Sigma\setminus P'\right)\times X\right),\ H\left(s,x\right)=\int_{\mathbb{R}^{n}}h\left(s,x,y\right)\text{d}y.
\]
\end{defn}

\subsubsection{\label{subsec: parametric power constructible}Parametric power-constructible
functions}

Recall the following definitions and results from \cite{cmrs:mellin_constructible}.
\begin{defn}
\label{def: C^M-1}$\ $
\begin{itemize}
\item ($1$-bounded subanalytic maps) For $N\in\mathbb{N}$, we let $\mathcal{S}_{c}^{N}\left(X\right)$
be the collection of all maps $\psi:X\longrightarrow\mathbb{R}^{N}$
with components in $\mathcal{S}\left(X\right)$, such that $\overline{\psi\left(X\right)}$
is contained in the closed polydisk of $\mathbb{R}^{N}$ centered
at zero and of radius $1$. The members of the collection $\mathcal{S}_{c}\left(X\right)=\bigcup_{N\in\mathbb{N^{\times}}}\mathcal{S}_{c}^{N}\left(X\right)$
are called \emph{1-bounded} subanalytic maps defined on $X$.
\item (Strongly convergent series) Let $\mathcal{E}$ be the field of meromorphic
functions $\xi:\mathbb{C}\longrightarrow\mathbb{C}$ and denote by
$D^{N}$ the closed polydisk of radius $\frac{3}{2}$ and center $0\in\mathbb{R}^{N}$.
Given a formal power series $F=\sum_{I}\xi_{I}\left(s\right)Z^{I}\in\mathcal{E}\left\llbracket Z\right\rrbracket $
in $N$ variables $Z$ and with coefficients $\xi_{I}\in\mathcal{E}$,
we say that $F$ \emph{converges strongly} if there exists a closed
discrete set $P\left(F\right)\subseteq\mathbb{C}$ (called the \emph{set
of poles} of $F$) such that:
\begin{itemize}
\item for every $s_{0}\in\mathbb{C}\setminus P\left(F\right)$, the power
series $F\left(s_{0},Z\right)\in\mathbb{C}\left\llbracket Z\right\rrbracket $
converges in a neighbourhood of $D^{N}$ (thus $F$ defines a function
on $\left(\mathbb{C}\setminus P\left(F\right)\right)\times D^{N}$);
\item for every $s_{0}\in\mathbb{C}$ there exists $m=m\left(s_{0}\right)\in\mathbb{N}$
such that for all $z_{0}\in D^{N}$, the function $\left(s,z\right)\longmapsto\left(s-s_{0}\right)^{m}F\left(s,z\right)$
has a holomorphic extension on some complex neighbourhood of $\left(s_{0},z_{0}\right)$;
\item $P\left(F\right)$ is the set of all $s_{0}\in\mathbb{C}$ such that
the minimal such $m\left(s_{0}\right)$ is strictly positive.
\end{itemize}
\item (\textbf{Parametric strong functions}) Given a closed discrete set
$P\subseteq\mathbb{C}$, a function $\Phi:\left(\mathbb{C}\setminus P\right)\times X\longrightarrow\mathbb{C}$
is called a \emph{parametric strong function} on $X$ if there exist
a $1$-bounded subanalytic map $\psi\in\mathcal{S}_{c}^{N}\left(X\right)$
and a strongly convergent series $F=\sum_{I}\xi_{I}\left(s\right)Z^{I}\in\mathcal{E}\left\llbracket Z\right\rrbracket $
with $P\left(F\right)\subseteq P$ such that,
\[
\forall\left(s,x\right)\in\left(\mathbb{C}\setminus P\right)\times X,\ \Phi\left(s,x\right)=F\circ\left(s,\psi\left(x\right)\right)=\sum_{I}\xi_{I}\left(s\right)\left(\psi\left(x\right)\right)^{I}.
\]
Define $\mathcal{A}\left(X\right)$ as the collection of all parametric
strong functions on $X$. If $\Phi\in\mathcal{A}\left(X\right)$ has
no poles outside $P\subseteq\mathbb{C}$, then for all $s\in\mathbb{C}\setminus P,\ x\longmapsto\Phi\left(s,x\right)$
is bounded. If furthermore for all $s\in\mathbb{C}\setminus P,\ x\longmapsto\Phi\left(s,x\right)$
is bounded away from zero then we call $\Phi$ a \emph{parametric
strong unit}. A parametric strong function which happens not to depend
on the variable $s$ is called a \emph{subanalytic strong function}.
\item (Parametric powers) For $X\subseteq\mathbb{R}^{m}$ subanalytic, define
the \emph{parametric powers of $\mathcal{S}$ }on $X$ as the functions
in the collection
\[
\mathcal{P}\left(\mathcal{S}_{+}\left(X\right)\right)=\{P_{f}:\mathbb{C}\times X\longrightarrow\mathbb{C}\text{\ such\ that}\ P_{f}\left(s,x\right)=f\left(x\right)^{s},\ \text{for\ some\ }f\in\mathcal{S}_{+}\left(X\right)\}.
\]
\item (\textbf{Parametric power-constructible functions}) If $X\subseteq\mathbb{R}^{0}$,
then define $\mathcal{C}^{\mathcal{M}}\left(X\right)=\mathcal{E}$.
If $X\subseteq\mathbb{R}^{m}$, with $m>0$, then we let $\mathcal{C}^{\mathcal{M}}\left(X\right)$
be the $\mathcal{A}\left(X\right)$-algebra generated by $\mathcal{C}^{\mathbb{C}}\left(X\right)\cup\mathcal{P}\left(\mathcal{S}_{+}\left(X\right)\right)$.
The system $\mathcal{C}^{\mathcal{M}}$ is the collection of algebras
of \emph{parametric power-constructible functions}. Every function
$h\in\mathcal{C}^{\mathcal{M}}\left(X\right)$ can be written on $\left(\mathbb{C}\setminus P\right)\times X$
(for some closed discrete $P\subseteq\mathbb{C}$) as a closed discrete
sum of \emph{generators} of the form
\begin{equation}
\Phi\left(s,x\right)\cdot g\left(x\right)\cdot f\left(x\right)^{s},\label{eq:generator-C^M}
\end{equation}
where $g\in\mathcal{C}^{\mathbb{C}}\left(X\right),\ f\in\mathcal{S}_{+}\left(X\right)$
and $\Phi\in\mathcal{A}\left(X\right)$ has no poles outside $P$.
Here the word \textquotedblleft parametric\textquotedblright{} refers
to the variable $s\in\mathbb{C}$ seen as a new complex parameter
(alongside the real parameters $x\in X$).
\end{itemize}
\end{defn}
The functions in $\mathcal{C}^{\mathcal{M}}$ have a domain of the
form $\left(\mathbb{C}\setminus P\right)\times X$. We are interested
in studying functions defined on domains of the form $\left(\Sigma\setminus P\right)\times X$,
where $\Sigma$ is a strip. For this, we define $\mathcal{C}_{\Sigma}^{\mathcal{M}}\left(X\right)$
as the collection of all restrictions to $\Sigma\times X$ of functions
in $\mathcal{C}^{\mathcal{M}}\left(X\right)$ and thus form the systems
$\mathcal{A},\mathcal{P}\left(\mathcal{S}_{+}\right)$ and $\mathcal{C}^{\mathcal{M}}$,
proceeding as in Notation \ref{notation: class D} (note that, since
the functions in these collections are defined on the whole of $\mathbb{C}$
and not just on strips, the two definitions of $\mathcal{C}^{\mathcal{M}}$
coincide). With this notation we immediately derive from \cite{cmrs:mellin_constructible}
the following result.
\begin{thm}[{\cite[Theorem 2.16 and Corollary 2.18]{cmrs:mellin_constructible}}]
\label{thm: C_M stability}The system $\mathcal{C}^{\mathcal{M}}$
is stable under parametric integration. Moreover, $\mathcal{C}^{\mathcal{M}}$
is the smallest system of $\mathcal{A}$-algebras containing $\mathcal{\mathcal{C}}^{\mathbb{C}}$
and stable under the generalized parametric Mellin transform.
\end{thm}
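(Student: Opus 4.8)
\emph{Strategy and reduction.}
The statement has two halves -- stability under parametric integration and minimality -- which I treat in turn; the only substantial external inputs are subanalytic cell decomposition, the preparation theorem for power-constructible functions in one variable (in the spirit of \cite{paru:lip1,lr:int,miller_dan:preparation_theorem_weierstrass_systems}), and the already established stability of $\mathcal C^{\mathbb C}$ under parametric integration. For stability under parametric integration, Fubini's theorem and an induction on $n$ reduce the problem to integrating over a single real variable $y$, so let $h\in\mathcal C^{\mathcal M}(X\times\mathbb R)$. Write $h$ on $(\mathbb C\setminus P)\times X\times\mathbb R$ as a closed discrete sum of generators $\Phi(s,x,y)\,g(x,y)\,f(x,y)^{s}$ with $\Phi\in\mathcal A$, $g\in\mathcal C^{\mathbb C}$, $f\in\mathcal S_{+}$. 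The strong convergence built into $\mathcal A$ supplies, on each fixed vertical line, a uniform bound on the tails of the series defining the $\Phi$'s, which lets one integrate $h$ termwise; so it suffices to treat a single generator. Partitioning $X\times\mathbb R$ into subanalytic cells open over $X$ and preparing in $y$, I may assume we work over a cell $A$ with centre $\theta\in\mathcal S(X)$ on which $f$ and all subanalytic ingredients of $g$ are prepared, so that $f(x,y)=|y-\theta(x)|^{a}\,c(x)\,U(x,y)$ with $a\in\mathbb Q$, $c\in\mathcal S_{+}(X)$ and $U$ a subanalytic unit. Then $f^{s}=|y-\theta|^{as}\,c^{s}\,U^{s}$, where $c^{s}\in\mathcal P(\mathcal S_{+})$ and $U^{s}$ expands as a strongly convergent series in the $1$-bounded variables defining $U$ (this is exactly why $\mathcal A$ is defined as it is), hence $U^{s}\in\mathcal A(A)$. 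So on $A$ the generator is a closed discrete sum of terms
\[
\Psi(s,x)\,\varphi(x)\,|y-\theta(x)|^{\mu(s)}\bigl(\log|y-\theta(x)|\bigr)^{k},
\]
with $\Psi\in\mathcal A(X)$, $\varphi\in\mathcal C^{\mathbb C}(X)$, $k\in\mathbb N$, and $\mu(s)=\lambda s+\nu$, $\lambda\in\mathbb Q$, $\nu\in\mathbb C$ (the constant $\nu$ absorbing the complex exponents coming from $g$).

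\emph{Termwise integration and reassembly.}
The fibre $A_{x}$ is an interval whose endpoints are $\pm\infty$ or subanalytic functions of $x$; using the elementary antiderivatives of $y\mapsto y^{\mu}(\log y)^{k}$ one integrates each term over $A_{x}$ explicitly. The result is again a finite sum of expressions of the same shape, in which evaluation at a finite endpoint $\eta(x)$ produces factors $|\eta(x)-\theta(x)|^{\mu(s)}(\log|\eta(x)-\theta(x)|)^{j}\in\mathcal C^{\mathcal M}(X)$, while repeated integration introduces factors $(\mu(s)+1)^{-r}=(\lambda s+\nu+1)^{-r}$; the latter are meromorphic in $s$ and, multiplied into $\Psi$, still give an element of $\mathcal A(X)$, but with the new pole $s=-(\nu+1)/\lambda$ (when $\lambda\neq 0$). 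Since the infinitely many terms of the series defining the $\mathcal A$-parts share the same $y$-monomial data, only finitely many such new poles arise, so the pole set $P'$ is closed and discrete with $P'\setminus P$ contained in a finitely generated $\mathbb Z$-lattice. The integration locus is handled as for $\mathcal C^{\mathbb C}$: the $\mathcal A$-factors are bounded on each fixed vertical line, so convergence of the fibre integral is governed by the monomial and power-constructible parts (conditions of the form $\Re(\lambda s+\nu)>-1$), and one checks that the constructed $H$ equals $\int_{\mathbb R}h(s,x,y)\,\mathrm{d}y$ precisely on $\mathrm{Int}(h;(\Sigma\setminus P')\times X)$. Reassembling over the finitely many cells and over the closed discrete sum gives $H\in\mathcal C^{\mathcal M}(X)$ as required; specializing to $s$-free data recovers, and uses, stability of $\mathcal C^{\mathbb C}$.

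\emph{Minimality.}
First, $\mathcal C^{\mathcal M}$ is itself stable under the generalized parametric Mellin transform: the kernel $\chi_{+}(y)\,y^{s-1}=\chi_{+}(y)\,y^{-1}\,y^{s}$ lies in $\mathcal C^{\mathcal M}$ (on $(0,+\infty)$, $y\mapsto y^{-1}\in\mathcal S_{+}$ and $y\mapsto y^{s}\in\mathcal P(\mathcal S_{+})$), so $\widetilde h(s,x,y)=\chi_{+}(y)\,y^{s-1}\,h(s,x,y)\in\mathcal C^{\mathcal M}(X\times\mathbb R)$ whenever $h\in\mathcal C^{\mathcal M}(X\times\mathbb R)$, and $\mathcal M[h]=\int_{\mathbb R}\widetilde h\,\mathrm{d}y\in\mathcal C^{\mathcal M}(X)$ by the first part. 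Conversely, let $\mathcal D$ be a system of $\mathcal A$-algebras containing $\mathcal C^{\mathbb C}$ and stable under the generalized parametric Mellin transform; since $\mathcal C^{\mathcal M}(X)$ is the $\mathcal A(X)$-algebra generated by $\mathcal C^{\mathbb C}(X)\cup\mathcal P(\mathcal S_{+}(X))$, it suffices to show $\mathcal P(\mathcal S_{+})\subseteq\mathcal D$. Given $f\in\mathcal S_{+}(X)$, the function $h=\chi_{\{(x,y):\,0<y<f(x)\}}$ has subanalytic graph, hence $h\in\mathcal C^{\mathbb C}(X\times\mathbb R)\subseteq\mathcal D(X\times\mathbb R)$; on the strip $\Sigma=\{0<\Re(s)<q\}$ one has $\mathrm{Int}(\widetilde h;\Sigma\times X)=\Sigma\times X$ and $\mathcal M[h](s,x)=\int_{0}^{f(x)}y^{s-1}\,\mathrm{d}y=f(x)^{s}/s$. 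Stability of $\mathcal D$ gives $H\in\mathcal D_{\Sigma}(X)$ with $H(s,x)=f(x)^{s}/s$; as $s\in\mathcal A(X)$ and $\mathcal D_{\Sigma}(X)$ is an $\mathcal A(X)$-algebra, $f^{s}=sH\in\mathcal D_{\Sigma}(X)$, and the extension property promotes this to $f^{s}\in\mathcal D(X)$. Hence $\mathcal P(\mathcal S_{+})\subseteq\mathcal D$ and $\mathcal C^{\mathcal M}\subseteq\mathcal D$.

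\emph{Main obstacle.}
The delicate point is the bookkeeping through the integration step: one must check that integrating an \emph{infinite} closed discrete sum of prepared generators whose $\mathcal A$-coefficients already carry poles again yields a closed discrete sum of $\mathcal C^{\mathcal M}$-generators with a closed discrete pole set $P'$ for which $P'\setminus P$ sits inside a single finitely generated $\mathbb Z$-lattice, and to extract enough uniformity in the parameters to pin down the integration locus exactly. This is where the rationality of the exponents produced by subanalytic preparation and the strong convergence encoded in $\mathcal A$ do the essential work.
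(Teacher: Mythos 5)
This theorem is imported into the paper verbatim from \cite{cmrs:mellin_constructible} (Theorem 2.16 and Corollary 2.18 there) and is not reproved here, so there is no internal proof to compare against; your sketch has to be measured against that source. Its architecture does match the cited proof: Fubini reduction to one variable, subanalytic preparation of $f$ and of the data of $g$ on cells open over $X$, expansion of $U^{s}$ and of the $\mathcal{A}$-factors into strongly convergent series so that each term is a monomial $|y-\theta(x)|^{\mu(s)}(\log|y-\theta(x)|)^{k}$ times a coefficient in $s$ and $x$, explicit antidifferentiation producing the meromorphic factors $(\mu(s)+1)^{-r}$ and hence the new poles on a finitely generated $\mathbb{Z}$-lattice, and, for minimality, recovering $f^{s}$ as $s\cdot\mathcal{M}\left[\chi_{\{0<y<f(x)\}}\right]$ inside any system of $\mathcal{A}$-algebras containing $\mathcal{C}^{\mathbb{C}}$ and stable under the generalized Mellin transform. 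The minimality half is complete and correct as written.

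Two points in the integration half are asserted where the source has to prove genuine lemmas. First, identifying $\mathrm{Int}(h;\cdot)$ with the common vanishing locus of the coefficients of the non-integrable monomials requires a non-compensation statement for finite sums $\sum_{j}c_{j}(s,x)\,y^{\mu_{j}(s)}(\log y)^{k_{j}}$ with pairwise distinct data: a priori the integration locus of the sum could be strictly larger than the intersection of the loci of its terms. This is \cite[Proposition 3.4]{cmrs:mellin_constructible}, and the present paper leans on exactly that proposition again in the proof of Theorem \ref{thm: interpolation}; ``one checks'' undersells it. Second, the claim that $U^{s}\in\mathcal{A}$ (and more generally that the prepared unit raised to the power $s$ expands as a strongly convergent series with entire coefficients $\binom{s}{k}$ on the polydisk of radius $\frac{3}{2}$) is precisely the lemma that justifies the radius in the definition of strong convergence; it needs quantitative control of the unit and of $\binom{s}{k}$, not just the formal binomial expansion. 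Finally, note that in Definition \ref{def: C^M-1} the sum of generators is finite (``closed discrete'' qualifies the pole set $P$, not the sum), so your worry about integrating an infinite sum of generators does not arise; the only infinite summation is inside the $\mathcal{A}$-factors, where you should also treat separately the exceptional case $\mu(s)\equiv-1$, whose antiderivative is $(\log y)^{k+1}/(k+1)$ rather than a power.
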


\subsubsection{\label{subsec: Main results}Parametric Fourier transforms of parametric
power-constructible functions}

Our next goal is to define a system containing both the parametric
Fourier and the parametric Mellin transforms of power-constructible
functions.
\begin{defn}
\label{def: C^M,F-1}Let $\Sigma\subseteq\mathbb{C}$ be a strip.
Consider the fixed frequency parametric Fourier operator $\mathfrak{f}$
acting on $\mathcal{C}_{\Sigma}^{\mathcal{M}}$:
\[
\mathfrak{f}\left[h\right]\left(s,x\right)=\int_{\mathbb{R}}h\left(s,x,y\right)\text{e}^{\text{i}y}\text{dy}\ \ \ \left(h\in\mathcal{C}_{\Sigma}^{\mathcal{M}}\left(X\times\mathbb{R}\right)_{\text{int}}\right).
\]
If $h$ has no poles outside some closed discrete set $P\subseteq\mathbb{C}$,
then so does $\mathfrak{f}\left[h\right]$. Define
\[
\mathcal{C}_{\Sigma}^{\mathcal{M},\mathcal{F}}\left(X\right)=\left\{ \mathfrak{f}\left[h\right]:\ h\in\mathcal{C}_{\Sigma}^{\mathcal{M}}\left(X\times\mathbb{R}\right)_{\text{int}}\right\} .
\]
It is a $\mathcal{C}_{\Sigma}^{\mathcal{M}}\left(X\right)$-module.
\end{defn}
We will show in Section \ref{sec: Generators} that the functions
in the above collection can be extended to the whole complex plane,
in the sense of Notation \ref{notation: class D}:
\begin{prop}
\label{prop: C^M,F has extension}The collection $\left\{ \mathcal{C}_{\Sigma}^{\mathcal{M},\mathcal{F}}\left(X\right):\ \Sigma\ \text{strip}\right\} $
has the extension property.
\end{prop}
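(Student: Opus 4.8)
The plan is to reduce the extension property for $\mathcal{C}^{\mathcal{M},\mathcal{F}}_\Sigma$ to the corresponding statement for $\mathcal{C}^{\mathcal{M}}_\Sigma$, using the fact (Theorem~\ref{thm: C_M stability}, or rather the underlying construction in \cite{cmrs:mellin_constructible}) that $\mathcal{C}^{\mathcal{M}}$ itself is already a system of functions defined on all of $\left(\mathbb{C}\setminus P\right)\times X$, so that the collection $\left\{\mathcal{C}^{\mathcal{M}}_{\Sigma}:\ \Sigma\text{ strip}\right\}$ trivially has the extension property (one simply takes the unrestricted representative). Let me unwind the definitions: an element of $\mathcal{C}^{\mathcal{M},\mathcal{F}}_\Sigma(X)$ is of the form $\mathfrak{f}[h]$ for some $h\in\mathcal{C}^{\mathcal{M}}_\Sigma(X\times\mathbb{R})_{\mathrm{int}}$, i.e.\ the restriction to $\Sigma$ of an element $\widehat h\in\mathcal{C}^{\mathcal{M}}(X\times\mathbb{R})$ such that $\widehat h$ has no poles outside some closed discrete $P$ and $\mathrm{Int}(\widehat h;(\mathbb{C}\setminus P)\times X)\supseteq(\Sigma\setminus P)\times X$. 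Given a larger strip $\Sigma'\supseteq\Sigma$, the naive candidate for the extension is $\mathfrak{f}[\widehat h\restriction\Sigma']$, but this only makes sense if $\widehat h\restriction\Sigma'\times X\times\mathbb{R}$ still lies in $\mathcal{C}^{\mathcal{M}}_{\Sigma'}(X\times\mathbb{R})_{\mathrm{int}}$ — that is, if the $L^1$-integrability in $y$ persists over the enlarged range of $s$. This is \emph{not automatic}, and it is the crux of the matter.

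\medskip

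**First** I would establish a structural normal form: using Definition~\ref{def: C^M-1}, write $h$ on $(\Sigma\setminus P)\times X\times\mathbb{R}$ as a finite sum of generators $\Phi(s,x,y)\,g(x,y)\,f(x,y)^s$ with $\Phi\in\mathcal{A}(X\times\mathbb{R})$, $g\in\mathcal{C}^{\mathbb{C}}(X\times\mathbb{R})$, $f\in\mathcal{S}_+(X\times\mathbb{R})$. Since the $x$-variable plays no role in the integration and we are free to partition $X$ into subanalytic cells (Notation~\ref{notation: chi, open over}), I would invoke preparation for subanalytic functions (in the spirit of \cite{paru:lip1,lr:int,miller_dan:preparation_theorem_weierstrass_systems}) to put $f$ and $g$ simultaneously into prepared form in the variable $y$ on each cell $A\subseteq X\times\mathbb{R}$ open over $X$: roughly, $f(x,y)=a(x)\,|y-\theta(x)|^{\mu}\,U$ and $g$ a finite combination of terms $b(x)\,|y-\theta(x)|^{\nu}\log^k|y-\theta(x)|\,V$ with $U,V$ subanalytic strong units close to $1$. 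On such a cell, the exponent of $y$ (near the relevant endpoint, e.g.\ $|y|\to\infty$ or $y\to\theta(x)$) is an affine function of $s$ of the form $\mathrm{Re}(s)\mu+\nu+\mathrm{(const)}$, so the condition ``$h(s,x,\cdot)\in L^1$'' translates into finitely many strict inequalities on $\mathrm{Re}(s)$ (together with the behaviour of the $\log$ powers at the boundary cases). The hypothesis $h\in\mathcal{C}^{\mathcal{M}}_\Sigma(X\times\mathbb{R})_{\mathrm{int}}$ says these inequalities hold throughout $\Sigma\setminus P$; but since they are affine in $\mathrm{Re}(s)$ and $\Sigma$ is a strip $\{p<\mathrm{Re}(s)<q\}$, each such inequality, holding on the whole open interval $(p,q)$, actually forces a closed inequality on $[p,q]$ and hence need not extend to a larger strip.

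\medskip

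**The main obstacle**, therefore, is that integrability genuinely may fail on a strictly larger strip, so one cannot literally extend $\mathfrak{f}[h]$ by enlarging the strip of $h$. The fix — and this is the real content of the proposition — is to replace $h$ by a cleverly chosen $h'\in\mathcal{C}^{\mathcal{M}}_{\Sigma'}(X\times\mathbb{R})_{\mathrm{int}}$ with $\mathfrak{f}[h']\restriction\Sigma=\mathfrak{f}[h]$, exploiting that the Fourier kernel $\mathrm{e}^{\mathrm{i}y}$ gives room to trade integrability for oscillation. Concretely, I expect one proceeds per cell and per ``dangerous'' endpoint: where the exponent of $y$ degrades near $y\to\theta(x)$, one subtracts and adds a correcting term (a Taylor-type polynomial in $(y-\theta(x))$ times a suitable $\mathcal{C}^{\mathcal{M}}$-factor) whose Fourier integral against $\mathrm{e}^{\mathrm{i}y}$ is explicitly an element of $\mathcal{C}^{\mathcal{M}}$ (indeed computable via the parametric Mellin transform, by stability from Theorem~\ref{thm: C_M stability}), rendering the remainder integrable on the larger strip; where the problem is at $|y|\to\infty$, one integrates by parts in $y$ against $\mathrm{e}^{\mathrm{i}y}$, each integration by parts lowering the exponent of $y$ by $1$ at the cost of boundary terms that again lie in $\mathcal{C}^{\mathcal{M}}_{\Sigma'}$ (here using that the derivative of a prepared subanalytic function stays subanalytic of controlled form, and that $\partial_y(f^s)=sf^{s-1}\partial_y f$ introduces only an innocuous factor of $s\in\mathcal{E}$). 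After finitely many such corrections — bounded in number because only finitely many affine-in-$\mathrm{Re}(s)$ thresholds lie between $\Sigma$ and $\Sigma'$ — one arrives at $h'$ with $\mathrm{Int}(h';(\Sigma'\setminus P')\times X)=(\Sigma'\setminus P')\times X$ for a closed discrete $P'\supseteq P$ (the new poles coming from the Mellin-type correction terms, and confined to a finitely generated $\mathbb{Z}$-lattice as in Definition~\ref{def: stability under integration general}), and with $\mathfrak{f}[h']=\mathfrak{f}[h]$ on the original $(\Sigma\setminus P)\times X$ since the correction terms were added and subtracted. Setting the desired extension to be $\mathfrak{f}[h']\in\mathcal{C}^{\mathcal{M},\mathcal{F}}_{\Sigma'}(X)$ then completes the proof; I would carry out the cell-by-cell correction as the technical heart, deferring the routine verification that each intermediate object stays in $\mathcal{C}^{\mathcal{M}}$ to the generator calculus already developed.
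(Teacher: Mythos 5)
Your proposal is correct and takes essentially the same route as the paper: after decomposing $\mathfrak{f}[h]$ into prepared generators, the finite-endpoint contributions are absorbed into $\mathcal{C}^{\mathcal{M}}$-coefficients (which extend to all of $\mathbb{C}$ for free, via Theorem \ref{thm: C_M stability}), and the only genuine obstruction is the power of $y$ at $+\infty$, which the paper also removes by finitely many integrations by parts against $\mathrm{e}^{\sigma\mathrm{i}y}$ — finitely many because the bounded width of $\Sigma'$ leaves only finitely many critical affine-in-$\Re(s)$ exponents. The boundary terms and remainders you describe are exactly the generators the paper produces, so no gap.
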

Thanks to the above proposition, we may define the system
\[
\mathcal{C}^{\mathcal{M},\mathcal{F}}=\left\{ \mathcal{C}^{\mathcal{M},\mathcal{F}}\left(X\right):\ X\subseteq\mathbb{R}^{m}\text{ subanalytic},\ m\in\mathbb{N}\right\} .
\]

Our main stability result is the following.
\begin{thm}
\label{thm: Stability of C^M,F}The system $\mathcal{C}^{\mathcal{M},\mathcal{F}}$
is stable under parametric integration. It is a system of $\mathbb{C}$-algebras,
containing $\mathcal{C}^{\mathbb{C}}\cup\mathrm{e}^{\mathrm{i}\mathcal{S}}$,
and stable under generalized parametric Mellin and Fourier transforms.
\end{thm}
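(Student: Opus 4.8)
The plan is to reduce the multi-variable stability-under-integration statement to the one-variable case, and to obtain the remaining closure properties (algebra structure, stability under the generalized Mellin and Fourier transforms, and containment of $\mathcal{C}^{\mathbb{C}}\cup\mathrm{e}^{\mathrm{i}\mathcal{S}}$) as corollaries of the generator description. First I would set up the generators of $\mathcal{C}_{\Sigma}^{\mathcal{M},\mathcal{F}}$: by Definition \ref{def: C^M,F-1} a function in $\mathcal{C}_{\Sigma}^{\mathcal{M},\mathcal{F}}(X)$ is $\mathfrak{f}[h]$ for $h\in\mathcal{C}_{\Sigma}^{\mathcal{M}}(X\times\mathbb{R})_{\mathrm{int}}$, and using the generator form \eqref{eq:generator-C^M} of parametric power-constructible functions together with the cell decomposition and ``open over $X$'' reductions of Notation \ref{notation: chi, open over}, I would write each such $h$ as a closed discrete sum of prepared generators (this is the content of Section \ref{sec: Generators}, which I may invoke). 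Applying $\mathfrak{f}$ termwise, the functions of $\mathcal{C}^{\mathcal{M},\mathcal{F}}(X)$ become closed discrete sums of ``Fourier generators''; I would record that $1\in\mathcal{C}^{\mathcal{M},\mathcal{F}}(X)$ (as in Remark \ref{rem: 1 is a transcendental}, $1=\mathfrak{f}[\tfrac{\mathrm{i}}{2}\chi_{[\pi,2\pi]}]$), hence $\mathcal{C}^{\mathcal{M}}\subseteq\mathcal{C}^{\mathcal{M},\mathcal{F}}$, and in particular $\mathcal{C}^{\mathbb{C}}\cup\mathrm{e}^{\mathrm{i}\mathcal{S}}\subseteq\mathcal{C}^{\mathcal{M},\mathcal{F}}$ (the latter because $\mathrm{e}^{\mathrm{i}f}\in\mathcal{C}^{\mathbb{C}}(X)\subseteq\mathcal{C}^{\mathcal{M}}(X)$ whenever $f$ is bounded subanalytic, and the general case follows by a subanalytic cell decomposition bounding $f$ on each cell).

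Next I would prove stability under parametric integration. Given $h\in\mathcal{C}^{\mathcal{M},\mathcal{F}}_{\Sigma}(X\times\mathbb{R}^n)$ without poles outside $P$, I would peel off one variable at a time: it suffices to integrate a single real variable, because the system is defined uniformly in the parameters and an $n$-fold iterated integral can be assembled from $n$ single integrations, provided each intermediate function lands back in the system — here the ``$(\cdot)_{\mathrm{int}}$'' bookkeeping and Fubini on the integration locus must be handled exactly as in the proof of Theorem \ref{thm: stability of C^F} and in \cite{cmrs:mellin_constructible,ccmrs:integration-oscillatory}. The one-variable statement is precisely Theorem \ref{thm: interpolation}, which I am allowed to assume (the introduction explicitly says Theorems \ref{thm: stability of C^F} and \ref{thm: Stability of C^M,F} are proven in Section \ref{subsec: Proof strategy} assuming Theorem \ref{thm: interpolation}). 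So the core of this part is the reduction: write $h=\mathfrak{f}[\tilde h]$ with $\tilde h\in\mathcal{C}^{\mathcal{M}}_{\Sigma}((X\times\mathbb{R}^n)\times\mathbb{R})_{\mathrm{int}}$, apply Fubini to interchange the $\mathrm{e}^{\mathrm{i}y}$-integral with the $\mathbb{R}^n$-integral on the integration locus, thereby exhibiting $\int_{\mathbb{R}^n}h(s,x,y)\,dy$ (for parameters in the locus) as $\mathfrak{f}$ applied to a parametric integral of a $\mathcal{C}^{\mathcal{M}}$-function, and finally invoke Theorem \ref{thm: C_M stability} (stability of $\mathcal{C}^{\mathcal{M}}$ under parametric integration) to conclude that this inner integral is again in $\mathcal{C}^{\mathcal{M}}$, so its $\mathfrak{f}$-image is in $\mathcal{C}^{\mathcal{M},\mathcal{F}}$; the control $P'\setminus P$ inside a finitely generated $\mathbb{Z}$-lattice is inherited from Theorem \ref{thm: C_M stability}.

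Granting stability under parametric integration, the remaining assertions follow. For the algebra structure, I would show the product of two Fourier generators $\mathfrak{f}[h_1]\cdot\mathfrak{f}[h_2]$ lies in $\mathcal{C}^{\mathcal{M},\mathcal{F}}$: writing it as a double integral $\int\!\!\int h_1(s,x,y_1)h_2(s,x,y_2)\mathrm{e}^{\mathrm{i}(y_1+y_2)}\,dy_1\,dy_2$ and changing variables $u=y_1+y_2$, this is $\mathfrak{f}$ of a parametric integral over one variable of a $\mathcal{C}^{\mathcal{M}}$-function, hence in $\mathcal{C}^{\mathcal{M},\mathcal{F}}$ by the stability just proven; since $\mathcal{C}^{\mathcal{M},\mathcal{F}}(X)$ is already a $\mathcal{C}^{\mathcal{M}}_{\Sigma}(X)$-module (Definition \ref{def: C^M,F-1}) and contains $1$, it is a $\mathbb{C}$-algebra. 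Stability under the generalized parametric Fourier transform is immediate from the definitions once one checks the frequency $t$ can be absorbed: $\mathcal{F}[h](s,x,t)=\int h(s,x,y)\mathrm{e}^{-2\pi\mathrm{i}ty}\,dy$, and after the substitution $y\mapsto -y/(2\pi)$ (and a sign/scaling bookkeeping, working cellwise so that $t$ ranges in a subanalytic set where it has constant sign) this becomes, up to a $\mathcal{S}_+^{\mathbb{C}}$-factor, a fixed-frequency transform $\mathfrak{f}$ applied to an element of $\mathcal{C}^{\mathcal{M}}$ on the enlarged parameter space $X\times\mathbb{R}$, hence is in $\mathcal{C}^{\mathcal{M},\mathcal{F}}(X\times\mathbb{R})$; stability under the generalized Mellin transform follows the same way from Theorem \ref{thm: C_M stability} after observing that $\widetilde{h}(s,x,y)=\chi_+(y)y^{s-1}h(s,x,y)$ applied to a Fourier generator is again, via Fubini, $\mathfrak{f}$ of a $\mathcal{C}^{\mathcal{M}}$-function. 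The main obstacle is Theorem \ref{thm: interpolation} itself — the one-variable stability, whose proof (Sections \ref{sec: Preparation}–\ref{sec: stability}) requires subanalytic preparation together with the c.u.d.\ mod $1$ non-compensation estimates to control the oscillatory integrals and the integration loci — but since the excerpt permits assuming it, the remaining difficulty here is purely the Fubini/locus bookkeeping needed to make each reduction rigorous uniformly in $s$ off the pole set.
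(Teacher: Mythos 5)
Your overall architecture matches the paper's: reduce to integration in a single variable, take Theorem \ref{thm: interpolation} as the one-variable input, iterate via Fubini, and derive the algebra structure (via $\mathfrak{f}[h_1]\mathfrak{f}[h_2]=\int\!\!\int h_1(s,x,y_1)h_2(s,x,y_2)\mathrm{e}^{\mathrm{i}(y_1+y_2)}\mathrm{d}y_1\mathrm{d}y_2$, which \emph{is} jointly absolutely convergent since $\|h_1\|_1\|h_2\|_1<\infty$) and the Mellin/Fourier stability as consequences. However, the step you single out as ``the core of this part'' is precisely the step that fails, and it is the reason the paper needs Sections \ref{sec: Preparation}--\ref{sec: stability} at all. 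Writing $h=\mathfrak{f}[\tilde h]$ only guarantees that $t\mapsto\tilde h(s,x,y,t)$ lies in $L^{1}(\mathbb{R})$ for each fixed $(s,x,y)$, and that $y\mapsto h(s,x,y)$ lies in $L^{1}(\mathbb{R}^{n})$ on the integration locus; it does \emph{not} give joint integrability of $\tilde h$ in $(y,t)$, which is what Fubini requires to interchange the kernel integral with the $\mathbb{R}^{n}$-integral. The decay of $\mathfrak{f}[\tilde h]$ in $y$ typically comes from oscillatory cancellation in $t$ and is invisible at the level of $|\tilde h|$. Concretely, take
\[
\tilde h\left(y,t\right)=y^{-3/4}\,t^{-5/4}\,\chi_{\left\{ t>y>1\right\} }\left(y,t\right)\in\mathcal{C}^{\mathbb{C}}\left(\mathbb{R}\times\mathbb{R}\right)_{\mathrm{int}}.
\]
Then $\mathfrak{f}[\tilde h]\left(y\right)=y^{-3/4}\int_{y}^{+\infty}t^{-5/4}\mathrm{e}^{\mathrm{i}t}\,\mathrm{d}t=O\left(y^{-2}\right)$ by one integration by parts, so $h=\mathfrak{f}[\tilde h]\in L^{1}(\mathbb{R})$ and its integration locus is everything; yet $\int\!\!\int|\tilde h|\,\mathrm{d}y\,\mathrm{d}t=4\int_{1}^{+\infty}y^{-1}\mathrm{d}y=+\infty$, so the interchange is not licensed and Theorem \ref{thm: C_M stability} cannot be applied to the inner integral. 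This is exactly why the paper first splits each prepared generator into a \emph{strongly integrable} part (where your Fubini argument is valid by definition; Corollary \ref{cor: integration of strongly integrable}) and a \emph{monomial} part (handled by integration by parts in $t$ and then in $y$; Lemma \ref{lem: int by parts} and Proposition \ref{prop: integration of monomial generators}), with the non-compensation input needed to identify the integration locus. Since you are permitted to assume Theorem \ref{thm: interpolation}, the fix is simply to delete this ``direct'' reduction and rely on iterating Theorem \ref{thm: interpolation} one variable at a time, with the locus bookkeeping of \cite[pp. 31--32]{cmrs:mellin_constructible} --- which is what the paper does.

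A secondary slip: your justification of $\mathrm{e}^{\mathrm{i}\mathcal{S}}\subseteq\mathcal{C}^{\mathcal{M},\mathcal{F}}$ (``the general case follows by a subanalytic cell decomposition bounding $f$ on each cell'') does not work, since an unbounded subanalytic function is not bounded on the cells of any finite decomposition, and $\mathrm{e}^{\mathrm{i}f}$ for unbounded $f$ genuinely lies outside $\mathcal{C}^{\mathcal{M}}$. The correct (and easy) argument is the translation trick of Lemma \ref{lem: gen contenu}: $\mathrm{e}^{\mathrm{i}\varphi\left(x\right)}=\mathfrak{f}\bigl[\tfrac{\mathrm{i}}{2}\chi_{\left[\pi+\varphi\left(x\right),2\pi+\varphi\left(x\right)\right]}\left(y\right)\bigr]$. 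The remaining parts of your proposal (the convolution argument for products, absorbing the frequency $t$ into the parameter space for $\mathcal{F}$, and reducing $\mathcal{M}$ to parametric integration of $\chi_{+}y^{s-1}h$) are correct and agree with Corollary \ref{cor: algebra} and Section \ref{subsec: Proof strategy}.
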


\section{\label{sec: Generators}Generators of $\mathcal{C}^{\mathcal{M},\mathcal{F}}$ and proof of the extension result}

\subsection{Generators of $\mathcal{C}^{\mathcal{M},\mathcal{F}}$} 
In this section we choose a set of generators for $\mathcal{C}_{\Sigma}^{\mathcal{M},\mathcal{F}}\left(X\right)$
as an additive group, of a special form, which is suitable for proving
Proposition \ref{prop: C^M,F has extension} and Theorem \ref{thm: Stability of C^M,F}.

First, we recall some definitions from \cite{cmrs:mellin_constructible}.
\begin{defn}
\label{def: notation from Mellin, cells psi}Let $X\subseteq\mathbb{R}^{m}$
be a subanalytic cell and
\begin{equation}
B=\left\{ \left(x,y\right):\ x\in X,\ a\left(x\right)<y<b\left(x\right)\right\} ,\label{eq:A_theta=00003DB}
\end{equation}
where $a,b:X\longrightarrow\mathbb{R}$ are analytic subanalytic functions
with $1\leq a\left(x\right)<b\left(x\right)\ \text{for all }x\in X$,
and $b$ is allowed to be $\equiv+\infty$. We say that $B$ has\emph{
bounded $y$-fibers} if $b<+\infty$ and \emph{unbounded $y$-fibers}
if $b\equiv+\infty$.
\begin{itemize}
\item A $1$-bounded subanalytic map $\psi:B\longrightarrow\mathbb{R}^{M+2}\in\mathcal{S}_{c}^{M+2}\left(B\right)$
is \emph{$y$-prepared} if it has the form
\begin{equation}
\psi\left(x,y\right)=\left(c\left(x\right),\left(\frac{a\left(x\right)}{y}\right)^{\frac{1}{d}},\left(\frac{y}{b\left(x\right)}\right)^{\frac{1}{d}}\right),\label{eq: psi}
\end{equation}
where $d\in\mathbb{N}\setminus\left\{ 0\right\} $. \\
If $b\equiv+\infty$, then we will implicitly assume that the last
component is missing and hence $\psi:B\longrightarrow\mathbb{R}^{M+1}$.
\item A subanalytic strong unit $U\in\mathcal{S}\left(B\right)$ is \emph{$\psi$-prepared}
if there exists a strongly convergent series $F\in\mathbb{R}\left\llbracket Z\right\rrbracket $
such that $U=F\circ\psi$. If $B$ has unbounded $y$-fibers, then
the \emph{nested $\psi$-prepared form} of $U$ is
\begin{equation}
U\left(x,y\right)=\sum_{k}b_{k}\left(x\right)\left(\frac{a\left(x\right)}{y}\right)^{\frac{k}{d}},\label{eq: nester sub strong unit}
\end{equation}
where the subanalytic functions $b_{k}$ are bounded and $b_{0}$
does not vanish on $X$.
\item A parametric strong function $\Phi\in\mathcal{A}_{\Sigma}\left(B\right)$
is \emph{$\psi$-prepared} if there exists a strongly convergent series
$F=\sum\xi_{I}\left(s\right)Z^{I}\in\mathcal{E}_{\mathbb{}}\left\llbracket Z\right\rrbracket $
such that
\begin{equation}
\forall\left(s,x,y\right)\in\left(\Sigma\setminus P\left(F\right)\right)\times B,\ \Phi\left(s,x,y\right)=F\circ\left(s,\psi\left(x,y\right)\right).\label{eq: prep param strong}
\end{equation}
If $B$ has unbounded $y$-fibers, then the \emph{nested $\psi$-prepared
form} of $\Phi$ is
\begin{align}
 & \Phi\left(s,x,y\right)=\sum_{k}\xi_{k}\left(s,x\right)\left(\frac{a\left(x\right)}{y}\right)^{\frac{k}{d}},\text{\ where}\ \xi_{k}\left(s,x\right)\in\mathcal{A}_{\Sigma}\left(X\right).\label{eq:eq: nested prep form unb}
\end{align}
\item A subanalytic function $\varphi\in\mathcal{S}\left(B\right)$ is \emph{prepared}
if there are $\omega\in\mathbb{Z}$, an analytic function $\varphi_{0}\in\mathcal{S}\left(X\right)$
and a $\psi$-prepared subanalytic strong unit $U$ such that
\begin{equation}
\varphi\left(x,y\right)=\varphi_{0}\left(x\right)y^{\frac{\omega}{d}}U\left(x,y\right).\label{eq: prepared subanalytic}
\end{equation}
\end{itemize}
\end{defn}
In order to choose suitable generators for $\mathcal{C}^{\mathcal{M},\mathcal{F}}$,
we first need to introduce two additional classes of functions.
\begin{defn}
\label{def: naive and generators}Let $\Sigma\subseteq\mathbb{C}$
be a strip and $X\subseteq\mathbb{R}^{m}$ be a subanalytic set. Let
$B$ be as in \eqref{eq:A_theta=00003DB}.
\begin{itemize}
\item Let $\mathcal{C}_{\Sigma}^{\mathcal{M},\text{i}\mathcal{S}}\left(X\right)$
be the additive group generated by the functions of the form
\begin{equation}
g\text{e}^{\text{i}\varphi}\ \ \ \left(g\in\mathcal{C}_{\Sigma}^{\mathcal{M}}\left(X\right),\varphi\in\mathcal{S}\left(X\right)\right).\label{eq: naive gen}
\end{equation}
It is a $\mathbb{C}$-algebra.
\item A \emph{transcendental element} is a function of the form
\[
\left(\Sigma\setminus P\right)\times X\ni\left(s,x\right)\longmapsto\gamma\left(s,x\right)=\int_{\mathbb{R}}\chi_{B}\left(x,y\right)y^{\lambda\left(s\right)}\left(\log y\right)^{\mu}\Phi\left(s,x,y\right)\text{e}^{\sigma\text{i}y}\text{d}y,
\]
where $\sigma\in\left\{ +,-\right\} ,\mu\in\mathbb{N},\Phi$ is a
$\psi$-prepared parametric strong function (as in \eqref{eq: prep param strong},
with $\psi$ as in \eqref{eq: psi}) without poles outside the closed
discrete set $P\subseteq\mathbb{C}$ and $\lambda\left(s\right)={\displaystyle \frac{\ell s+\eta}{d}}$,
for some $\ell\in\mathbb{Z},\eta\in\mathbb{C}$ and the same $d$
appearing in \eqref{eq: psi}. If $B$ has unbounded $y$-fibers,
then we require that for all $s\in\Sigma,\ \Re\left(\lambda\left(s\right)\right)<-1$.
\\
We let $\Gamma_{\Sigma}\left(X\right)$ be the collection of all transcendental
elements on $\Sigma\times X$.\\
Thus, a \emph{generator} (as an additive group) of the $\mathcal{C}_{\Sigma}^{\mathcal{M},\text{i}\mathcal{S}}\left(X\right)$-module
generated by the set $\Gamma_{\Sigma}\left(X\right)$ is a function
of the form
\begin{equation}
T=g\text{e}^{\text{i}\varphi}\gamma\ \ \ \left(g\in\mathcal{C}_{\Sigma}^{\mathcal{M}}\left(X\right),\varphi\in\mathcal{S}\left(X\right),\gamma\in\Gamma_{\Sigma}\left(X\right)\right).\label{eq:generator}
\end{equation}
\end{itemize}
Notice that $1=\mathfrak{f}\left[\frac{\mathrm{i}}{2}\chi_{\left[\pi,2\pi\right]}\right]\in\Gamma_{\Sigma}\left(X\right)$.
In particular, \eqref{eq: naive gen} is an instance of \eqref{eq:generator}.
\end{defn}
\begin{lem}
\label{lem: gen contenu}Let $T$ be a generator as in \eqref{eq:generator},
without poles outside some closed discrete set $P\subseteq\mathbb{C}$.
There exists $h\in\mathcal{C}_{\Sigma}^{\mathcal{M}}\left(X\times\mathbb{R}\right)_{\mathrm{int}}$
without poles outside $P$ such that $T=\mathfrak{f}\left[h\right]$.
In particular,
\[
\mathcal{C}_{\Sigma}^{\mathcal{M},\mathrm{i}\mathcal{S}}\left(X\right),\Gamma_{\Sigma}\left(X\right)\subseteq\mathcal{C}_{\Sigma}^{\mathcal{M},\mathcal{F}}\left(X\right).
\]
\end{lem}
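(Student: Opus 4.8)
The plan is to exhibit, for each generator $T=g\mathrm{e}^{\mathrm{i}\varphi}\gamma$, an explicit function $h\in\mathcal{C}_{\Sigma}^{\mathcal{M}}\left(X\times\mathbb{R}\right)_{\mathrm{int}}$ with $\mathfrak{f}\left[h\right]=T$, by absorbing the factors $g$, $\mathrm{e}^{\mathrm{i}\varphi}$ and the transcendental integrand of $\gamma$ into a single integrand. Write out $\gamma$ as $\int_{\mathbb{R}}\chi_{B}\left(x,y\right)y^{\lambda\left(s\right)}\left(\log y\right)^{\mu}\Phi\left(s,x,y\right)\mathrm{e}^{\sigma\mathrm{i}y}\,\mathrm{d}y$. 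The natural candidate is
\[
h\left(s,x,y\right)=g\left(x\right)\,\chi_{B}\left(x,\sigma y\right)\left(\sigma y\right)^{\lambda\left(s\right)}\left(\log\left(\sigma y\right)\right)^{\mu}\Phi\left(s,x,\sigma y\right)\,\mathrm{e}^{\mathrm{i}\varphi\left(x\right)},
\]
so that a change of variables $y\mapsto\sigma y$ together with the identity $\mathfrak{f}\left[h\right]\left(s,x\right)=\int_{\mathbb{R}}h\left(s,x,y\right)\mathrm{e}^{\mathrm{i}y}\,\mathrm{d}y$ recovers $g\left(x\right)\mathrm{e}^{\mathrm{i}\varphi\left(x\right)}\gamma\left(s,x\right)=T\left(s,x\right)$. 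When $\sigma=+$ this is essentially a formality; when $\sigma=-$ one must either replace the cell $B$ by its reflection $-B$ in the $y$-variable (still a cell of the same prepared type) or, equivalently, observe that $\mathrm{e}^{-\mathrm{i}y}$ composed with $y\mapsto-y$ gives $\mathrm{e}^{\mathrm{i}y}$. Either way the resulting $h$ is a finite product of a power-constructible function $g$, a characteristic function of a subanalytic cell, a parametric power $y^{\lambda\left(s\right)}$, a subanalytic $\log$-power, a $\psi$-prepared parametric strong function, and a bounded subanalytic exponential $\mathrm{e}^{\mathrm{i}\varphi}$ — hence lies in $\mathcal{C}_{\Sigma}^{\mathcal{M}}\left(X\times\mathbb{R}\right)$ by Definition~\ref{def: C^M-1} (the factor $y^{\lambda\left(s\right)}$ being of the form $f\left(x,y\right)^{s}$ times a subanalytic power, with $f$ the coordinate function $y$ on $B$, where $y\geq 1$ so $y^{s}\in\mathcal{P}\left(\mathcal{S}_{+}\right)$).

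The two points requiring genuine verification are: (i) that $h\in\mathcal{C}_{\Sigma}^{\mathcal{M}}\left(X\times\mathbb{R}\right)_{\mathrm{int}}$, i.e.\ that for every $\left(s,x\right)$ in $\left(\Sigma\setminus P\right)\times X$ the function $y\mapsto h\left(s,x,y\right)$ is in $L^{1}\left(\mathbb{R}\right)$; and (ii) that $h$ has no poles outside $P$. For (ii), the only $s$-dependence in $h$ sits in $y^{\lambda\left(s\right)}$ (entire in $s$) and in $\Phi$, and $\Phi$ by hypothesis has no poles outside $P$; the prefactor $g$ is $s$-independent, so $h$ inherits the pole set of $\Phi$, which is contained in $P$. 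For (i), the integrability is exactly the condition that makes $\gamma$ a well-defined transcendental element: when $B$ has bounded $y$-fibers, $h\left(s,x,\cdot\right)$ is compactly supported and bounded (the strong function $\Phi$ is bounded on $B$, $\log$ and the power are bounded on a bounded cell bounded away from $0$), hence integrable; when $B$ has unbounded $y$-fibers, the defining condition $\Re\left(\lambda\left(s\right)\right)<-1$ for all $s\in\Sigma$, together with boundedness of $\Phi$ and at most polynomial-in-$\log$ growth, forces $\left|h\left(s,x,y\right)\right|\lesssim y^{\Re\left(\lambda\left(s\right)\right)}\left(\log y\right)^{\mu}$, which is integrable at $+\infty$. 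This is the only step where one actually uses the unbounded-fiber clause of Definition~\ref{def: naive and generators}, and it is routine.

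Once a single generator is handled, the general case of $\mathcal{C}_{\Sigma}^{\mathcal{M},\mathrm{i}\mathcal{S}}\left(X\right)$ and of the $\mathcal{C}_{\Sigma}^{\mathcal{M},\mathrm{i}\mathcal{S}}$-module generated by $\Gamma_{\Sigma}\left(X\right)$ follows by linearity: a general element is a finite sum $\sum_{j}T_{j}$ of generators, one writes $T_{j}=\mathfrak{f}\left[h_{j}\right]$ as above with $h_{j}$ having no poles outside $P$, and then $\sum_{j}h_{j}\in\mathcal{C}_{\Sigma}^{\mathcal{M}}\left(X\times\mathbb{R}\right)$ (the class is an $\mathcal{A}$-algebra, in particular closed under finite sums), still lies in the $_{\mathrm{int}}$ subclass since a finite sum of $L^{1}$ functions is $L^{1}$ on the common domain $\left(\Sigma\setminus P\right)\times X$, and has no poles outside $P$; finally $\mathfrak{f}\left[\sum_{j}h_{j}\right]=\sum_{j}\mathfrak{f}\left[h_{j}\right]=\sum_{j}T_{j}=T$ by linearity of $\mathfrak{f}$. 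The inclusions $\mathcal{C}_{\Sigma}^{\mathcal{M},\mathrm{i}\mathcal{S}}\left(X\right),\Gamma_{\Sigma}\left(X\right)\subseteq\mathcal{C}_{\Sigma}^{\mathcal{M},\mathcal{F}}\left(X\right)$ are then immediate from Definition~\ref{def: C^M,F-1}, using the remark (already noted after Definition~\ref{def: naive and generators}) that the functions of the form $g\mathrm{e}^{\mathrm{i}\varphi}$ and the constant $1=\mathfrak{f}\left[\tfrac{\mathrm{i}}{2}\chi_{\left[\pi,2\pi\right]}\right]$ are themselves instances of generators. I do not expect any serious obstacle here; the only mild subtlety is the bookkeeping of the $\sigma=-$ case and making sure the composition $y\mapsto\sigma y$ keeps us inside the class of prepared cells and strong functions, which it does because reflection is a subanalytic automorphism of $\mathbb{R}$ preserving the form~\eqref{eq:A_theta=00003DB} (after relabelling $a,b$).
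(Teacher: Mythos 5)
There is a genuine gap in your construction of $h$: you keep the oscillatory factor $\mathrm{e}^{\mathrm{i}\varphi\left(x\right)}$ as a multiplicative prefactor of $h$ and then assert that $h$ lies in $\mathcal{C}_{\Sigma}^{\mathcal{M}}\left(X\times\mathbb{R}\right)$ on the grounds that $\mathrm{e}^{\mathrm{i}\varphi}$ is ``a bounded subanalytic exponential''. But in a generator \eqref{eq:generator} the phase $\varphi$ is an arbitrary element of $\mathcal{S}\left(X\right)$, not a bounded one, and for unbounded $\varphi$ the function $\mathrm{e}^{\mathrm{i}\varphi}$ is neither complex-valued subanalytic nor parametric power-constructible (consider $\mathrm{e}^{\mathrm{i}/x}$ on $\left(0,1\right)$); the entire reason the classes $\mathrm{e}^{\mathrm{i}\mathcal{S}}$ and $\mathcal{C}_{\Sigma}^{\mathcal{M},\mathrm{i}\mathcal{S}}$ are introduced is that such factors take you outside $\mathcal{C}^{\mathcal{M}}$. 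So your candidate $h$ is not in $\mathcal{C}_{\Sigma}^{\mathcal{M}}\left(X\times\mathbb{R}\right)_{\mathrm{int}}$, and the substitution $y\mapsto\sigma y$ only disposes of the sign $\sigma$, not of $\varphi$.

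The missing idea is that the phase must be absorbed into the integration variable rather than carried as a coefficient. Setting $G\left(s,x,y\right)=\chi_{B}\left(x,y\right)g\left(s,x\right)y^{\lambda\left(s\right)}\left(\log y\right)^{\mu}\Phi\left(s,x,y\right)$, one has $T\left(s,x\right)=\int_{\mathbb{R}}G\left(s,x,y\right)\mathrm{e}^{\mathrm{i}\left(\varphi\left(x\right)+\sigma y\right)}\mathrm{d}y$, and the affine substitution $t=\varphi\left(x\right)+\sigma y$ yields $T=\mathfrak{f}\left[h\right]$ with $h\left(s,x,t\right)=\sigma G\bigl(s,x,\sigma\left(t-\varphi\left(x\right)\right)\bigr)$. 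This $h$ does lie in $\mathcal{C}_{\Sigma}^{\mathcal{M}}\left(X\times\mathbb{R}\right)_{\mathrm{int}}$, because $\mathcal{C}^{\mathcal{M}}$ is stable under right-composition with the subanalytic map $\left(x,t\right)\longmapsto\left(x,\sigma\left(t-\varphi\left(x\right)\right)\right)$ and this change of variables preserves integrability in the last variable; its poles are still contained in $P$ since only $g$ and $\Phi$ carry poles. Two smaller slips: $g$ belongs to $\mathcal{C}_{\Sigma}^{\mathcal{M}}\left(X\right)$, hence depends on $s$ and may itself have poles, contrary to your claim that ``the prefactor $g$ is $s$-independent'' (harmless here, since its poles are among those of $T$). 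Your integrability check in point (i) and the reduction of the final inclusions to single generators by linearity are fine once the correct $h$ is in place.
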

\begin{proof}
Let $B$ be as in \eqref{eq:A_theta=00003DB} and
\[
G\left(s,x,y\right)=\chi_{B}\left(x,y\right)g\left(s,x\right)y^{\lambda\left(s\right)}\left(\log y\right)^{\mu}\Phi\left(s,x,y\right).
\]
Then $G\in\mathcal{C}_{\Sigma}^{\mathcal{M}}\left(X\times\mathbb{R}\right)_{\text{int}}$
and
\[
T\left(s,x\right)=\int_{\mathbb{R}}G\left(s,x,y\right)\text{e}^{\text{i}\left(\varphi\left(x\right)+\sigma y\right)}\text{d}y.
\]
Thus, by a change of variables, $T=\mathfrak{f}\left[h\right]$ with
$h\left(s,x,y\right):=\sigma G\left(s,x,\sigma\left(y-\varphi\left(x\right)\right)\right)\in\mathcal{C}_{\Sigma}^{\mathcal{M}}\left(X\times\mathbb{R}\right)_{\text{int}}$.
Notice that $h$ has no poles outside $P$.
\end{proof}
\begin{lem}
\label{lem: gen contient}Let $h\in\mathcal{C}_{\Sigma}^{\mathcal{M},\mathcal{F}}\left(X\right)$.
There are a closed discrete set $P\subseteq\mathbb{C}$ and finitely
many generators $T_{1},\ldots,T_{m}$ as in \eqref{eq:generator}
such that $h,T_{1},\ldots,T_{m}$ have no poles outside $P$ and $h=\sum T_{j}$.
In particular, $\mathcal{C}_{\Sigma}^{\mathcal{M},\mathcal{F}}\left(X\right)$
can also be described as the $\mathcal{C}_{\Sigma}^{\mathcal{M},\mathrm{i}\mathcal{S}}\left(X\right)$-module
generated by the set $\Gamma_{\Sigma}\left(X\right)$ , and the functions
of the form \eqref{eq:generator} are generators of $\mathcal{C}_{\Sigma}^{\mathcal{M},\mathcal{F}}\left(X\right)$
as an additive group.
\end{lem}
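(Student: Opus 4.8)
The starting point is the definition of $\mathcal{C}_{\Sigma}^{\mathcal{M},\mathcal{F}}(X)$: given $h$ in it, write $h=\mathfrak{f}[g]$ with $g\in\mathcal{C}_{\Sigma}^{\mathcal{M}}(X\times\mathbb{R})_{\mathrm{int}}$ without poles outside a closed discrete set $P_{0}$, so $h(s,x)=\int_{\mathbb{R}}g(s,x,y)\mathrm{e}^{\mathrm{i}y}\,\mathrm{d}y$, and the task is to turn this integral into a finite sum of functions of the form \eqref{eq:generator}. First I would apply, fiberwise over $X$, a preparation theorem for functions of $\mathcal{C}^{\mathcal{M}}(X\times\mathbb{R})$ with respect to the last variable (as developed in \cite{cmrs:mellin_constructible}): this produces a finite partition of $X\times\mathbb{R}$ into subanalytic cells, among which only those open over $X$ matter, the others having fibers of measure zero, such that on each cell there is a subanalytic centre $\theta(x)$ and $g$ restricted to the cell is a finite sum of prepared terms, i.e.\ products $c(s,x)\,(y-\theta(x))^{\lambda(s)}\bigl(\log(y-\theta(x))\bigr)^{\mu}\Phi$, where $c\in\mathcal{C}_{\Sigma}^{\mathcal{M}}(X)$, $\mu\in\mathbb{N}$, $\lambda(s)=(\ell s+\eta)/d$ with $\ell\in\mathbb{Z}$ and $\eta\in\mathbb{C}$, and $\Phi$ is a $\psi$-prepared parametric strong function as in \eqref{eq: prep param strong} with $\psi$ as in \eqref{eq: psi} in the variable $y-\theta(x)$; moreover, after translating by $\theta(x)$ and, if needed, reflecting $y\mapsto -y$, the cell becomes $\{a(x)<y<b(x)\}$ with $0\le a(x)<b(x)\le+\infty$. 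The essential point is that only translations and reflections are used: the kernel $\mathrm{e}^{\mathrm{i}y}$ is merely multiplied by $\mathrm{e}^{\mathrm{i}\theta(x)}$ (with $\theta\in\mathcal{S}(X)$) and has its frequency possibly reflected, introducing a sign $\sigma\in\{+,-\}$; no rescaling, which would corrupt the kernel, is allowed.

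Expanding $\mathfrak{f}[g]$ as the resulting finite sum of integrals of prepared terms over cells, it remains to show each contribution is a finite sum of generators; I would split into cases according to the shape $\{a(x)<y<b(x)\}$. If $a(x)\ge1$, the contribution is $\mathrm{e}^{\mathrm{i}\theta(x)}c(s,x)\gamma(s,x)$ with $\gamma(s,x)=\int_{\mathbb{R}}\chi_{B}(x,y)\,y^{\lambda(s)}(\log y)^{\mu}\Phi(s,x,y)\mathrm{e}^{\sigma\mathrm{i}y}\,\mathrm{d}y$ and $B=\{a(x)<y<b(x)\}$, which is of the form \eqref{eq:generator} with $\mathcal{C}_{\Sigma}^{\mathcal{M}}(X)$-factor $c$, subanalytic phase $\theta$ and transcendental element $\gamma$; in the unbounded case $b\equiv+\infty$ one moreover has to verify $\Re(\lambda(s))<-1$ on $\Sigma$, which I address below. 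If $a(x)<1$, I split the cell at $y=1$: on $\{1\le y<b(x)\}$ (present only when $b(x)>1$) we are back to the previous case after absorbing the bounded subanalytic factor $a(x)^{1/d}$, which appears in $(a(x)/y)^{1/d}=a(x)^{1/d}(1/y)^{1/d}$, into the $c$-component of $\psi$; while on a piece $A'=\{a(x)<y<\min(1,b(x))\}$, whose fibers lie in $(0,1)$ and have length $<1$, the kernel $\mathrm{e}^{\mathrm{i}y}=\sum_{n\ge0}(\mathrm{i}y)^{n}/n!$ converges uniformly, so, integrating term by term (legitimate since $y\mapsto g(s,x,y)\in L^{1}(\mathbb{R})$, with dominating function $\mathrm{e}\,|g(s,x,\cdot)|$), the contribution becomes $\mathrm{e}^{\mathrm{i}\theta(x)}\sum_{n\ge0}\frac{\mathrm{i}^{n}}{n!}\int_{\mathbb{R}}\chi_{A'}(x,y)\,y^{n}c(s,x)\,y^{\lambda(s)}(\log y)^{\mu}\Phi(s,x,y)\,\mathrm{d}y$; by stability of $\mathcal{C}^{\mathcal{M}}$ under parametric integration (Theorem \ref{thm: C_M stability}) each integral lies in $\mathcal{C}_{\Sigma}^{\mathcal{M}}(X)$, and tracking the $n$-dependence through the substitution reducing the fiber of $A'$ to a fixed interval one checks that the series is strongly convergent, whence the contribution lies in $\mathcal{C}_{\Sigma}^{\mathcal{M},\mathrm{i}\mathcal{S}}(X)$. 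Since $1=\mathfrak{f}[\tfrac{\mathrm{i}}{2}\chi_{[\pi,2\pi]}]\in\Gamma_{\Sigma}(X)$, any element of $\mathcal{C}_{\Sigma}^{\mathcal{M},\mathrm{i}\mathcal{S}}(X)$, being a finite sum $\sum_{l}g_{l}\mathrm{e}^{\mathrm{i}\varphi_{l}}$ as in \eqref{eq: naive gen}, equals $\sum_{l}g_{l}\mathrm{e}^{\mathrm{i}\varphi_{l}}\cdot 1$, again a finite sum of generators \eqref{eq:generator}.

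Two points remain. For the unbounded cells, the surviving prepared terms satisfy $\Re(\lambda(s))<-1$ on $\Sigma$ because, $g$ being in $\mathcal{C}^{\mathcal{M}}$, its restriction to an unbounded cell is an \emph{honest} power-logarithmic asymptotic expansion as $y\to+\infty$ with no oscillation; hence the hypothesis that $y\mapsto g(s,x,y)$ is integrable for every $(s,x)$ outside the poles forces every prepared term with $\Re(\lambda(s))\ge -1$ to vanish identically (terms with distinct pairs $(\lambda,\mu)$ are asymptotically independent and cannot cancel), and the inequality, holding on the dense cofinite set $\Sigma\setminus P_{0}$, propagates to all of $\Sigma$ by continuity of the affine map $s\mapsto\Re(\lambda(s))$ on the open strip. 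Finally, collecting the finitely many closed discrete pole sets of $h$ and of the finitely many generators produced yields a common closed discrete set $P\supseteq P_{0}$ outside which $h$ and all $T_{j}$ have no poles, and by construction $h=\sum_{j}T_{j}$; this is the first assertion. Combined with Lemma \ref{lem: gen contenu}, which gives the reverse inclusions $\mathcal{C}_{\Sigma}^{\mathcal{M},\mathrm{i}\mathcal{S}}(X)\cup\Gamma_{\Sigma}(X)\subseteq\mathcal{C}_{\Sigma}^{\mathcal{M},\mathcal{F}}(X)$ and hence that every generator lies in $\mathcal{C}_{\Sigma}^{\mathcal{M},\mathcal{F}}(X)$, it follows that $\mathcal{C}_{\Sigma}^{\mathcal{M},\mathcal{F}}(X)$ coincides with the additive group generated by the functions \eqref{eq:generator}, equivalently with the $\mathcal{C}_{\Sigma}^{\mathcal{M},\mathrm{i}\mathcal{S}}(X)$-module generated by $\Gamma_{\Sigma}(X)$. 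The main obstacle is the preparation step: arranging a decomposition fine enough that, using only translations and reflections so as not to destroy the kernel $\mathrm{e}^{\mathrm{i}y}$, every piece is either of the rigid shape $B$ required in \eqref{eq:generator} or has uniformly bounded fibers amenable to the power-series expansion of $\mathrm{e}^{\mathrm{i}y}$, and correctly absorbing into the $\psi$-preparation the auxiliary bounded subanalytic data created by the extra splitting at $y=1$.
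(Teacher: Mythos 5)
Your overall strategy is the paper's: write $h=\mathfrak{f}\left[g\right]$ with $g\in\mathcal{C}_{\Sigma}^{\mathcal{M}}\left(X\times\mathbb{R}\right)_{\mathrm{int}}$, prepare $g$ in the last variable over a cell decomposition, recentre each cell open over $X$ by its subanalytic centre $\theta$ (so that the kernel only picks up the harmless factor $\mathrm{e}^{\mathrm{i}\theta\left(x\right)}$ and a sign $\sigma$), and read off each contribution as a generator. Two steps, however, do not go through as written. First, on the pieces whose fibers lie in $\left(0,1\right)$ you expand $\mathrm{e}^{\mathrm{i}y}$ in a power series and integrate term by term; the interchange is fine, but the conclusion that the resulting series $\sum_{n}\frac{\mathrm{i}^{n}}{n!}\int\chi_{A'}y^{n}g\,\mathrm{d}y$ lies in $\mathcal{C}_{\Sigma}^{\mathcal{M}}\left(X\right)$ is not justified: the class is closed under \emph{finite} sums, and an absolutely convergent series of its elements is not automatically of the required form (a parametric strong function composed with a $1$-bounded subanalytic map times generators); ``tracking the $n$-dependence'' is precisely the missing work. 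The paper resums \emph{before} integrating: on such a piece $\left(x,y\right)\longmapsto\mathrm{e}^{\mathrm{i}\left(y-\theta\left(x\right)\right)}$ is already a complex-valued subanalytic function (its real and imaginary parts are restricted analytic functions of a bounded subanalytic map), so the whole integrand lies in $\mathcal{C}_{\Sigma}^{\mathcal{M}}\left(X\times\mathbb{R}\right)_{\mathrm{int}}$ and one application of Theorem \ref{thm: C_M stability} gives the contribution as $\mathrm{e}^{\mathrm{i}\theta}G$ with $G\in\mathcal{C}_{\Sigma}^{\mathcal{M}}\left(X\right)$, an instance of \eqref{eq: naive gen}. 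Your series argument, once completed, would only reprove this observation, so you should replace it by it.

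Second, and more seriously, your justification that on unbounded cells the surviving prepared terms satisfy $\Re\left(\lambda\left(s\right)\right)<-1$ for \emph{all} $s\in\Sigma$ is incorrect. Non-compensation applied to the integrability of $y\longmapsto g\left(s,x,y\right)$ yields, for each fixed $\left(s,x\right)$ with $\Re\left(\lambda_{i}\left(s\right)\right)\geq-1$, the vanishing of the relevant coefficient \emph{at that point} only; when $\ell_{i}\neq0$ the set $\left\{ s\in\Sigma:\ \Re\left(\lambda_{i}\left(s\right)\right)\geq-1\right\} $ is a proper sub-strip, the coefficient need only vanish there, and the term can be nonzero elsewhere, so it can neither be discarded nor packaged as a transcendental element (whose definition requires $\Re\left(\lambda\right)<-1$ on all of $\Sigma$). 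There is also no inequality established on $\Sigma\setminus P_{0}$ that could ``propagate by continuity''. The device that works --- and that the paper uses for the exactly analogous difficulty in the proof of Proposition \ref{prop: C^M,F has extension} and in Lemma \ref{lem: int by parts} --- is to integrate by parts the finitely many critical power-log monomials (finitely many because $\Sigma$ has bounded width and the nested expansion of $\Phi$ lowers the exponent by $\frac{1}{d}$ at each step), integrating the exponential and differentiating the monomial: each step produces a boundary term which is naive in $y$ plus a transcendental element whose exponent has real part lowered by $1$, and after finitely many steps every remaining transcendental element satisfies the required bound on the whole of $\Sigma$. With these two repairs your argument coincides with the paper's proof.
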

Recall the notation established right after Definition 3.9 in \cite{cmrs:mellin_constructible}.
\begin{notation}
\label{notation: cell bijection B_A}Let $A\subseteq X\times\mathbb{R}$
be a subanalytic cell which is open over and projects onto $X$ (see
Notation \ref{notation: chi, open over}), let $\theta_{A}$ be its
center, so that the set $\left\{ y-\theta_{A}\left(x\right):\ \left(x,y\right)\in A\right\} $
is contained in one of the sets $\left(-\infty,-1\right),\left(-1,0\right),\left(0,1\right),\left(1,+\infty\right)$,
as in \cite[Definition 3.4]{ccmrs:integration-oscillatory}. There
are unique sign conditions $\sigma_{A},\tau_{A}\in\left\{ -1,1\right\} $
such that
\begin{equation}
A=\left\{ \left(x,y\right):\ x\in X,\ a_{A}\left(x\right)<\sigma_{A}\left(y-\theta_{A}\left(x\right)\right)^{\tau_{A}}<b_{A}\left(x\right)\right\} \label{eq:A}
\end{equation}
for some analytic subanalytic functions $a_{A},b_{A}$ such that $1\leq a_{A}\left(x\right)<b_{A}\left(x\right)\leq+\infty$.
Let
\begin{equation}
B_{A}=\left\{ \left(x,y\right):\ x\in X,\ a_{A}\left(x\right)<y<b_{A}\left(x\right)\right\} \label{eq: Atheta}
\end{equation}
and $\Pi_{A}:B_{A}\longrightarrow A$ be the bijection
\begin{equation}
\Pi_{A}\left(x,y\right)=\left(x,\sigma_{A}y^{\tau_{A}}+\theta_{A}\left(x\right)\right),\ \Pi_{A}^{-1}\left(x,y\right)=\left(x,\sigma_{A}\left(y-\theta_{A}\left(x\right)\right)^{\tau_{A}}\right).\label{eq:Ptheta}
\end{equation}
We will still denote by $\Pi_{A}$ the map $\mathbb{C}\times B_{A}\ni\left(s,x,y\right)\longmapsto\left(s,\Pi_{A}\left(x,y\right)\right)\in\mathbb{C}\times A$.
\end{notation}
\begin{rem}
\label{rem: cell at infty}By \cite[Definition 3.4(3)]{ccmrs:integration-oscillatory},
if $A$ is a cell of the form $A=\{\left(x,y\right):\ x\in X,\ y>f\left(x\right)\}$
with $f\in\mathcal{S}\left(X\right)$ and $f\geq1$, then $\sigma_{A}=\tau_{A}=1$
and $\theta_{A}=0$. Hence in this case $a_{A}=f,\ b_{A}=+\infty$
and $B_{A}=A$.
\end{rem}
\begin{proof}[Proof of Lemma \ref{lem: gen contient}]
Write $h=\mathfrak{f}\left[g\right]$, for some $g\in\mathcal{C}_{\Sigma}^{\mathcal{M}}\left(X\times\mathbb{R}\right)_{\text{int}}$ 
and apply the parametric power-constructible Preparation Theorem \cite[Proposition 4.7]{cmrs:mellin_constructible}
to $g$: this yields a cell decomposition of $X\times\mathbb{R}$
and by linearity of the integral we may concentrate on a cell $A$
which is open over $X$. 

Using Notation \ref{notation: cell bijection B_A}, if $\tau_{A}=-1$
then the set $\left\{ \left|y-\theta_{A}\left(x\right)\right|:\ \left(x,y\right)\in A\right\} $
is contained in $\left(0,1\right)$, so that $\left(x,y\right)\longmapsto\text{e}^{\text{i}\left(y-\theta_{A}\left(x\right)\right)}$
is a complex-valued subanalytic function. Hence, in this case we may
write
\[
\int_{A_{x}}g\restriction A\left(s,x,y\right)\text{e}^{\text{i}y}\text{d}y=\text{e}^{\text{i}\theta_{A}\left(x\right)}\int_{A_{x}}g\restriction A\left(s,x,y\right)\text{e}^{\text{i}\left(y-\theta_{A}\left(x\right)\right)}\text{d}y.
\]
As the integrand on the right hand side is a parametric power-constructible
function, by \cite[Theorem 2.16 and Remark 6.7]{cmrs:mellin_constructible}
there are a closed discrete set $P\subseteq\mathbb{C}$ (containing
the poles of $g$) and a parametric power-constructible function $G\in\mathcal{C}_{\Sigma}^{\mathcal{M}}\left(X\right)$
without poles outside $P$ such that $\mathfrak{f}\left[g\restriction A\right]=\text{e}^{\text{i}\theta_{A}}G$.

If $\tau_{A}=1$ then we apply the change of variables $\Pi_{A}$
under the sign of integral and, using \cite[Proposition 4.7]{cmrs:mellin_constructible},
we write $g\circ\Pi_{A}$ as a finite sum of prepared generators as
in \cite[Equation (4.8)]{cmrs:mellin_constructible}:
\begin{align*}
\int_{A_{x}}g\restriction A\left(s,x,y\right)\text{e}^{\text{i}y}\text{d}y & =\int_{a_{A}\left(x\right)}^{b_{A}\left(x\right)}g\circ\Pi_{A}\left(s,x,y\right)\frac{\partial\Pi_{A}}{\partial y}\left(x,y\right)\text{e}^{\text{i}\left(\sigma_{A}y+\theta_{A}\left(x\right)\right)}\text{d}y\\
 & =\text{e}^{\text{i}\theta_{A}\left(x\right)}\int_{a_{A}\left(x\right)}^{b_{A}\left(x\right)}\sum_{i}G_{i}\left(s,x\right)y^{\lambda_{i}\left(s\right)}\left(\log y\right)^{\mu_{i}}\Phi_{i}\left(s,x,y\right)\text{e}^{\text{i}\sigma_{A}y}\text{d}y\\
 & =\sum_{i}\text{e}^{\text{i}\theta_{A}\left(x\right)}G_{i}\left(s,x\right)\gamma_{i}\left(s,x\right).
\end{align*}
Summing up, we have written $h$ as a finite sum of generators without
poles outside some closed discrete set $P\subseteq\mathbb{C}$.
\end{proof}
Thus, from now on we will refer to the functions of the form \eqref{eq:generator}
as \emph{generators} of $\mathcal{C}_{\Sigma}^{\mathcal{M},\mathcal{F}}\left(X\right)$.

\subsection{Proof of Proposition \ref{prop: C^M,F has extension}}\label{subsec:proof2.20} 

Let $\Sigma,\Sigma'\subseteq\mathbb{C}$ be strips such that $\Sigma\subseteq\Sigma'$
and $h\in\mathcal{C}_{\Sigma}^{\mathcal{M},\mathcal{F}}\left(X\right)$
without poles outside some closed discrete set $P\subseteq\mathbb{C}$.
Write $h$ as a finite sum of generators of the form \eqref{eq:generator}, which is possible by Lemma \ref{lem: gen contient}. 
The problem is that the integrands of the transcendental elements
appearing in the generators are integrable on $\left(\Sigma\setminus P\right)\times X$
but might not be on the whole $\left(\Sigma'\setminus P\right)\times X$
(the only issue here is the integrability of a power of $y$ at $+\infty$,
as the parametric strong functions are bounded on the whole plane
$\mathbb{C}$). In this case, we need to rewrite the transcendental
elements as sums of generators on $\left(\Sigma'\setminus P\right)\times X$.
With this in mind, we may suppose that $h$ itself is a transcendental
element with unbounded $y$-fibers of the form
\[
h\left(s,x\right)=\int_{a\left(x\right)}^{+\infty}y^{\lambda\left(s\right)}\left(\log y\right)^{\mu}\Phi\left(s,x,y\right)\text{e}^{\sigma\text{i}y}\text{d}y,
\]
with $a\in\mathcal{S}\left(X\right)$ such that for all $x\in X,\ a\left(x\right)\geq1$,
and that the set $S_{0}=\{s\in\Sigma':\ \Re\left(\lambda\left(s\right)\right)<-1\}$
is a proper subset of $\Sigma'$. It follows that the above integral
is not finite on $\left(\Sigma'\setminus S_{0}\right)\times X$. Using
the strong convergence of the series defining $\Phi$ and \eqref{eq:eq: nested prep form unb},
we may rewrite, for some $k_{0}\geq0$,
\begin{align*}
h\left(s,x\right) & =\sum_{k\geq k_{0}}\xi_{k}^{c}\left(s,x\right)\left(a\left(x\right)\right)^{\frac{k}{d}}\int_{a\left(x\right)}^{+\infty}y^{\frac{\ell s+\eta-k}{d}}\left(\log y\right)^{\mu}\text{e}^{\sigma\text{i}y}\text{d}y\\
 & =\sum_{k\geq k_{0}}g_{k}\left(s,x\right)\int_{a\left(x\right)}^{+\infty}y^{\lambda_{k}\left(s\right)}\left(\log y\right)^{\mu}\text{e}^{\sigma\text{i}y}\text{d}y
\end{align*}
As the real part of the exponent $\lambda_{k}\left(s\right)$ decreases
as $k$ increases and as $\Sigma'$ has bounded width, there are only
finitely many power-log monomials which are not integrable for all
$s\in\Sigma'$. Let us concentrate on one such critical power-log
monomial and use integration by parts (where we integrate the exponential
and derive the power-log monomial):
\begin{align*}
\forall\left(s,x\right) & \in S_{0}\times X,\ \int_{a\left(x\right)}^{+\infty}y^{\lambda_{k}\left(s\right)}\left(\log y\right)^{\mu}\text{e}^{\sigma\text{i}y}\text{d}y\\
 & =\sigma\text{i}\text{e}^{\sigma\text{i}a\left(x\right)}\left(a\left(x\right)\right)^{\lambda_{k}\left(s\right)}\left(\log\left(a\left(x\right)\right)\right)^{\mu}-\int_{a\left(x\right)}^{+\infty}y^{\lambda_{k}\left(s\right)-1}\left(\log y\right)^{\mu-1}\left(\lambda_{k}\left(s\right)\log y+\mu\right)\text{e}^{\sigma\text{i}y}\text{d}y.
\end{align*}
Note that the right-hand side of the above equality is actually defined
on a strictly larger set than $S_{0}\times X$, namely on the set
$S_{1}\times X$, where $S_{1}=\left\{ s\in\Sigma':\ \Re\left(\lambda_{k}\left(s\right)\right)<0\right\} $.
Since $\Sigma'$ has bounded width, there is an integer $N_{k}\in\mathbb{N}$
such that $\Sigma'=\left\{ s\in\Sigma':\ \Re\left(\lambda_{k}\left(s\right)\right)<N_{k}-1\right\} $
and if we repeat the above procedure $N_{k_{0}}$ times for each critical
monomial, then we rewrite $h$ as a sum of generators such that the
transcendental elements are well defined on the whole $\left(\Sigma'\setminus P\right)\times X$.

\hfill{}$\qed$

\section{\label{sec: Strategy}Strongly integrable and monomial generators}

The aim of this section is to prove Theorems \ref{thm: stability of C^F}
and \ref{thm: Stability of C^M,F}, assuming a central result, Theorem \ref{thm: interpolation}, the proof of which requires extensive work carried out in the next two sections. We start by dealing with integrals of some specific functions in our class $\mathcal{C}^{\mathcal{M},\mathcal{F}}$.

\subsection{Special generators of $\mathcal{C}^{\mathcal{M},\mathcal{F}}$}\label{subsec.4.1}
We lay the foundations for the proofs of Theorems \ref{thm: stability of C^F}
and \ref{thm: Stability of C^M,F} by treating some special cases to which we will reduce
later (in Sections \ref{sec: Preparation} and \ref{sec: stability}). More precisely, we identify two special types of generators for $\mathcal{C}^{\mathcal{M},\mathcal{F}}$, strongly integrable generators and monomial generators, for which we show that their parametric integrals lie in $\mathcal{C}^{\mathcal{M},\mathcal{F}}$. In Section \ref{sec: stability}, Proposition \ref{prop: Splitting} will provide a reduction to such special generators. 

%%Of course, by their oscillatory nature, a sum of generators may have a much more complicated integration locus than an individual generator, and this will be one of the remaining challenges to end the proof of Theorem \ref{thm: interpolation} and thus of Theorems \ref{thm: stability of C^F}
%%and \ref{thm: Stability of C^M,F}; this is dealt with in section \ref{sec: stability}, partly following \cite[Proposition 3.4]{cmrs:mellin_constructible}). 
To illustrate the main ideas of this section, we start with two examples of explicit integration of very simple
generators.  
\begin{examples}
\label{example 1}Let $\Sigma=\left\{ s:\ -2<\Re\left(s\right)<1\right\} $
and $B=\left\{ \left(x,y\right):\ x\in X,\ y>a\left(x\right)\right\} $,
for some analytic $a\in\mathcal{S}\left(X\right)$ such that for all
$x\in X,\ a\left(x\right)\geq1$.
\begin{enumerate}
\item Let $D=\left\{ \left(x,y,t\right):\ \left(x,y\right)\in B,\ t>\widetilde{a}\left(x,y\right)\right\} $,
for some analytic $\widetilde{a}\in\mathcal{S}\left(B\right)$ such
that for all $\left(x,y\right)\in B,\ \widetilde{a}\left(x,y\right)\geq1$,
and $\Phi\in\mathcal{A}_{\Sigma}\left(D\right)$ be a parametric strong
function without poles outside some closed discrete set $P\subseteq\mathbb{C}$.
If
\[
g\left(s,x,y,t\right)=y^{-s-3}t^{s-2}\Phi\left(s,x,y,t\right)\chi_{D}\left(x,y,t\right),
\]
then for all $\left(s,x,y\right)\in\left(\Sigma\setminus P\right)\times X\times\mathbb{R},\ t\longmapsto g\left(s,x,y,t\right)\in L^{1}\left(\mathbb{R}\right)$,
so $h=\mathfrak{f}\left[g\right]\in\mathcal{C}_{\Sigma}^{\mathcal{M},\mathcal{F}}\left(X\times\mathbb{R}\right)$
is well defined on $\left(\Sigma\setminus P\right)\times X\times\mathbb{R}$.\\
We claim that $g\in\mathcal{C}_{\Sigma}^{\mathcal{M}}\left(X\times\mathbb{R}^{2}\right)_{\text{int}}$
and that there exist a closed discrete set $P'\supseteq P$ and a
function $H\in\mathcal{C}_{\Sigma}^{\mathcal{M},\mathcal{F}}\left(X\right)$
without poles outside $P'$ such that
\[
\forall\left(s,x\right)\in\left(\Sigma\setminus P'\right)\times X,\ H\left(s,x\right)=\int_{\mathbb{R}}h\left(s,x,y\right)\text{d}y.
\]
To see this, note that, since $\Phi$ is bounded, there is a constant
$C>0$ such that
\[
\forall\left(s,x,y\right)\in\left(\Sigma\setminus P\right)\times B,\ \int_{1}^{+\infty}t^{s-2}\left|\Phi\left(s,x,y,t\right)\right|\text{d}t<C.
\]
It follows that
\[
\forall\left(s,x,y\right)\in\left(\Sigma\setminus P\right)\times B,\ \mathfrak{f}\left[\left|g\right|\right]\leq y^{-s-3}\int_{1}^{+\infty}t^{s-2}\left|\Phi\left(s,x,y,t\right)\right|\text{d}t\leq Cy^{-s-3},
\]
so $y\longmapsto\mathfrak{f}\left[\left|g\right|\right]\left(s,x,y\right)\in L^{1}\left(\mathbb{R}\right)$
and by Tonelli's Theorem, $g\in\mathcal{C}_{\Sigma}^{\mathcal{M}}\left(X\times\mathbb{R}^{2}\right)_{\text{int}}$.
Hence, by Fubini's Theorem
\[
\int_{\mathbb{R}}h\left(s,x,y\right)\text{d}y=\int_{\mathbb{R}}t^{s-2}\text{e}^{\text{i}t}\left[\int_{\mathbb{R}}y^{-s-3}\Phi\left(s,x,y,t\right)\chi_{D}\left(x,y,t\right)\text{d}y\right]\text{d}t
\]
and the integrand $\tilde{g}$ in the inner integral belongs to $\mathcal{C}_{\Sigma}^{\mathcal{M}}\left(D\right)$
and is integrable with respect to $y$. By Theorem \ref{thm: C_M stability},
there are a closed discrete set $P'\subseteq\mathbb{C}$ containing
$P$ and a function $G\in\mathcal{C}_{\Sigma}^{\mathcal{M}}\left(X\times\mathbb{R}\right)$
without poles outside $P'$ such that
\[
\forall\left(s,x\right)\in\left(\Sigma\setminus P'\right)\times X,\ G\left(s,x,t\right)=t^{s-2}\int_{\mathbb{R}}y^{-s-3}\Phi\left(s,x,y,t\right)\chi_{D}\left(x,y,t\right)\text{d}y.
\]
Notice also that $G\in\mathcal{C}_{\Sigma}^{\mathcal{M}}\left(X\times\mathbb{R}\right)_{\text{int}}$
and that $H=\mathfrak{f}\left[G\right]$ proves the claim.
\item Consider $g\left(s,x,y\right)=y^{s}\chi_{B}\left(x,y\right)\in\mathcal{C}_{\Sigma}^{\mathcal{M}}\left(X\times\mathbb{R}\right)$
and $T\left(s,x,y\right)=g\left(s,x,y\right)\text{e}^{\text{i}y}\in\mathcal{C}_{\Sigma}^{\mathcal{M},\mathcal{F}}\left(X\times\mathbb{R}\right)$.
As
\[
\mathrm{Int}\left(g;\Sigma\times X\right)=\left\{ s\in\Sigma:\ \Re\left(s\right)<-1\right\} \times X\not=\Sigma\times X,
\]
we cannot apply the operator $\mathfrak{f}$ to $g$ in order to express
the integral of $T$ with respect to $y$. However, we claim that
there exists a function $H\in\mathcal{C}_{\Sigma}^{\mathcal{M},\mathcal{F}}\left(X\right)$
such that
\begin{equation}
\forall\left(s,x\right)\in\mathrm{Int}\left(T;\Sigma\times X\right),\ H\left(s,x\right)=\int_{a\left(x\right)}^{+\infty}y^{s}\mathrm{e}^{\mathrm{i}y}\mathrm{d}y.\label{eq:1}
\end{equation}
To show this, let us integrate by parts $y^{s}\mathrm{e}^{\mathrm{i}y}$
twice, where we integrate the exponential and derive the parametric
power:
\begin{align*}
\int y^{s}\mathrm{e}^{\mathrm{i}y}\mathrm{d}y & =y^{s}\frac{\mathrm{e}^{\mathrm{i}y}}{\mathrm{i}}-\frac{1}{\mathrm{i}}\int sy^{s-1}\mathrm{e}^{\mathrm{i}y}\mathrm{d}y\\
 & =\mathrm{i}y^{s}\mathrm{e}^{\mathrm{i}y}+sy^{s-1}\mathrm{e}^{\mathrm{i}y}-s\left(s-1\right)\int y^{s-2}\mathrm{e}^{\mathrm{i}y}\mathrm{d}y.
\end{align*}
Define
\[
H\left(s,x\right)=-\mathrm{i}\left(a\left(x\right)\right)^{s}\mathrm{e}^{\mathrm{i}a\left(x\right)}-s\left(a\left(x\right)\right)^{s-1}\mathrm{e}^{\mathrm{i}a\left(x\right)}-s\left(s-1\right)\int_{a\left(x\right)}^{+\infty}y^{s-2}\mathrm{e}^{\mathrm{i}y}\mathrm{d}y.
\]
Notice that $H$ is well defined on $\Sigma\times X$, because the
real part of the exponent in the integrand is always $<-1$ on $\Sigma$,
and that $H\in\mathcal{C}_{\Sigma}^{\mathcal{M},\mathcal{F}}\left(X\right)$,
because the last term is obtained by applying the operator $\mathfrak{f}$
to $\tilde{g}\left(s,x,y\right)=s\left(s-1\right)y^{s-2}\chi_{B}\left(s,y\right)\in\mathcal{C}_{\Sigma}^{\mathcal{M}}\left(X\times\mathbb{R}\right)_{\text{int}}$.
Note also that $H$ satisfies \eqref{eq:1}, since $\mathrm{Int}\left(T;\Sigma\times X\right)=\{s\in\Sigma:\ \Re\left(s\right)<-1\}\times X,$
and on this part of the space the exponents of the parametric powers
$y^{s},y^{s-1}$ have negative real part.
\end{enumerate}
\end{examples}
The techniques illustrated in these two examples can be generalized
and used to integrate generators of $\mathcal{C}_{\Sigma}^{\mathcal{M},\mathcal{F}}\left(X\times\mathbb{R}\right)$
of a rather simple form.

Recall that, by Lemma \ref{lem: gen contenu}, every generator of
$\mathcal{C}_{\Sigma}^{\mathcal{M},\mathcal{F}}\left(X\times\mathbb{R}\right)$
can be written as $\mathfrak{f}\left[h\right]$, for some $h\in\mathcal{C}_{\Sigma}^{\mathcal{M}}\left(\left(X\times\mathbb{R}\right)\times\mathbb{R}\right)_{\text{int}}$.
\begin{defn}
\label{def: superintegrable and monomial genrators}Let $T\in\mathcal{C}_{\Sigma}^{\mathcal{M},\mathcal{F}}\left(X\times\mathbb{R}\right)$
be a generator.
\begin{itemize}
\item $T$ is \emph{strongly integrable} if $T$ can be written as $\mathfrak{f}\left[h\right]$,
for some $h\in\mathcal{C}_{\Sigma}^{\mathcal{M}}\left(X\times\mathbb{R}^{2}\right)_{\text{int}}$.
If $B$ is a cell as in \eqref{eq:A_theta=00003DB}, we say that $T$
is strongly integrable \emph{on }$B$ if $T\chi_{B}$ is strongly
integrable.
\item $T$ is \emph{monomial} \emph{in (its last variable)} $y$ if $T$
has the form
\begin{equation}
T\left(s,x,y\right)=f\left(s,x\right)y^{\lambda\left(s\right)}\left(\log y\right)^{\mu}\text{e}^{\text{i}Q\left(x,y\right)},\label{eq: monomial generator}
\end{equation}
where $f\in\mathcal{C}_{\Sigma}^{\mathcal{M},\mathcal{F}}\left(X\right),\mu\in\mathbb{N},\lambda\left(s\right)=\frac{\ell s+\eta}{d}$
for some $\ell\in\mathbb{Z},\ \eta\in\mathbb{C},\ d\in\mathbb{N}\setminus\left\{ 0\right\} $
and $Q\in\mathcal{S}\left(X\right)\left[y^{\frac{1}{d}}\right]$ is
a polynomial in the variable $y^{\frac{1}{d}}$ with coefficients
subanalytic functions of $x$. The tuple $\left(d,\ell,\eta,\mu,Q\right)$
is called the \emph{monomial data }of $T$.
\end{itemize}
\end{defn}
\begin{rem}
\label{rem: superintegrable old fashion}Let $T=g\text{e}^{\text{i}\varphi}\gamma\in\mathcal{C}_{\Sigma}^{\mathcal{M},\mathcal{F}}\left(X\times\mathbb{R}\right)$
be a generator and write $\gamma$ as $\int_{\mathbb{R}}G\left(s,x,y,t\right)\text{e}^{\text{i}t}\text{d}t$
for some appropriate $G\in\mathcal{C}_{\Sigma}^{\mathcal{M}}\left(\left(X\times\mathbb{R}\right)\times\mathbb{R}\right)_{\text{int}}$.
Suppose that $gG\in\mathcal{C}_{\Sigma}^{\mathcal{M}}\left(X\times\mathbb{R}^{2}\right)_{\text{int}}$.
Then $T$ is strongly integrable. To see this, proceed as in Lemma
\ref{lem: gen contenu} and write $T$ as $\mathfrak{f}\left[h\right]$.
It is clear that $h\in\mathcal{C}_{\Sigma}^{\mathcal{M}}\left(X\times\mathbb{R}^{2}\right)_{\text{int}}$.
\end{rem}
Our next aim is to integrate a single generator which is of either
of the forms in Definition \ref{def: superintegrable and monomial genrators}.
\begin{prop}
\label{prop: integrability of naive integrable gen}Let $g\in\mathcal{C}_{\Sigma}^{\mathcal{M}}\left(X\times\mathbb{R}^{2}\right)_{\mathrm{int}}$
be without poles outside some closed discrete set $P\subseteq\mathbb{C}$
and $\varphi\in\mathcal{S}\left(X\times\mathbb{R}^{2}\right)$. There
exist a closed discrete set $P'\subseteq\mathbb{C}$ containing $P$
and a function $H\in\mathcal{C}_{\Sigma}^{\mathcal{M},\mathcal{F}}\left(X\right)$
without poles outside $P'$ such that
\[
\forall\left(s,x\right)\in\left(\Sigma\setminus P'\right)\times X,\ H\left(s,x\right)=\int_{\mathbb{R}^{2}}g\left(s,x,y,t\right)\mathrm{e}^{\mathrm{i}\varphi\left(x,y,t\right)}\mathrm{d}y\wedge\mathrm{d}t.
\]
Moreover, the set $P'\setminus P$ is contained in a finitely generated
$\mathbb{Z}$-lattice.
\end{prop}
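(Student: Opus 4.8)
The plan is to prove the statement with $\mathbb{R}^{2}$ replaced by $\mathbb{R}^{n}$, by induction on $n$, reducing an iterated integral to the stability of $\mathcal{C}^{\mathcal{M}}$ under parametric integration (Theorem \ref{thm: C_M stability}) and to the definition of $\mathfrak{f}$. For $n=0$ there is nothing to integrate and $g\mathrm{e}^{\mathrm{i}\varphi}$ is a generator of $\mathcal{C}_{\Sigma}^{\mathcal{M},\mathrm{i}\mathcal{S}}\left(X\right)\subseteq\mathcal{C}_{\Sigma}^{\mathcal{M},\mathcal{F}}\left(X\right)$ by Lemma \ref{lem: gen contenu}. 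For the inductive step, I would first apply subanalytic cell decomposition to $X\times\mathbb{R}^{n}$, adapted to $\varphi$ with respect to the last variable $y_{n}$: on each cell $A$ which is open over $X\times\mathbb{R}^{n-1}$, the function $\varphi$ is $C^{1}$ in $y_{n}$ and $\partial\varphi/\partial y_{n}$ is either identically $0$ or nowhere zero. Cells with lower-dimensional $y_{n}$-fibers contribute nothing to the integral, so by linearity I may replace $g$ by $g\chi_{A}$ for one such cell $A$.

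On $A$ there are two cases. If $\partial\varphi/\partial y_{n}\equiv0$, then $\varphi$ restricts to a subanalytic function $\varphi_{1}$ of $\left(x,y\right)=\left(x,y_{1},\dots,y_{n-1}\right)$ only, and
\[
\int_{\mathbb{R}}g\left(s,x,y,y_{n}\right)\mathrm{e}^{\mathrm{i}\varphi\left(x,y,y_{n}\right)}\mathrm{d}y_{n}=\mathrm{e}^{\mathrm{i}\varphi_{1}\left(x,y\right)}\int_{\mathbb{R}}g\left(s,x,y,y_{n}\right)\mathrm{d}y_{n};
\]
by Theorem \ref{thm: C_M stability} the inner integral is a function $G_{1}\in\mathcal{C}_{\Sigma}^{\mathcal{M}}\left(X\times\mathbb{R}^{n-1}\right)$ with poles in a finitely generated $\mathbb{Z}$-lattice over $P$, and Tonelli's theorem (applied to $\left|g\right|$, which lies in $L^{1}\left(\mathbb{R}^{n}\right)$ for every $\left(s,x\right)$) shows that $G_{1}$ is integrable in $y$ for all $\left(s,x\right)$, so that $\int_{\mathbb{R}^{n-1}}G_{1}\mathrm{e}^{\mathrm{i}\varphi_{1}}$ falls under the inductive hypothesis. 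If instead $\partial\varphi/\partial y_{n}$ is nowhere zero on $A$, then for each $\left(x,y\right)$ the map $y_{n}\mapsto\varphi\left(x,y,y_{n}\right)$ is a $C^{1}$ subanalytic diffeomorphism from $A_{\left(x,y\right)}$ onto an interval $I_{\left(x,y\right)}$; by o-minimality of $\mathbb{R}_{\mathrm{an}}$ its inverse $y_{n}=\theta\left(x,y,u\right)$ and the derivative $\partial\theta/\partial u$ are subanalytic. Setting $C=\left\{ \left(x,y,u\right):\ u\in I_{\left(x,y\right)}\right\} $ and
\[
\widetilde{g}\left(s,x,y,u\right)=\chi_{C}\left(x,y,u\right)g\left(s,x,y,\theta\left(x,y,u\right)\right)\left|\frac{\partial\theta}{\partial u}\left(x,y,u\right)\right|\in\mathcal{C}_{\Sigma}^{\mathcal{M}}\left(X\times\mathbb{R}^{n}\right),
\]
which has no poles outside $P$ (using that $\mathcal{C}^{\mathcal{M}}$ is closed under right-composition with subanalytic maps and under multiplication by subanalytic functions), the change of variables $y_{n}=\theta\left(x,y,u\right)$ yields $\int_{\mathbb{R}}g\,\mathrm{e}^{\mathrm{i}\varphi}\mathrm{d}y_{n}=\int_{\mathbb{R}}\widetilde{g}\left(s,x,y,u\right)\mathrm{e}^{\mathrm{i}u}\mathrm{d}u$.

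In the second case it remains to integrate this over $y$: by Fubini (justified by $\int_{\mathbb{R}^{n}}\left|\widetilde{g}\right|=\int_{\mathbb{R}^{n}}\left|g\chi_{A}\right|\leq\left\Vert g\left(s,x,\cdot\right)\right\Vert _{L^{1}\left(\mathbb{R}^{n}\right)}<\infty$ for every $\left(s,x\right)$) it equals $\int_{\mathbb{R}}G\left(s,x,u\right)\mathrm{e}^{\mathrm{i}u}\mathrm{d}u$ with $G\left(s,x,u\right)=\int_{\mathbb{R}^{n-1}}\widetilde{g}\left(s,x,y,u\right)\mathrm{d}y$; by Theorem \ref{thm: C_M stability}, $G\in\mathcal{C}_{\Sigma}^{\mathcal{M}}\left(X\times\mathbb{R}\right)$ with poles in a finitely generated $\mathbb{Z}$-lattice over $P$, and the same bound gives $G\in\mathcal{C}_{\Sigma}^{\mathcal{M}}\left(X\times\mathbb{R}\right)_{\mathrm{int}}$, so $\mathfrak{f}\left[G\right]\in\mathcal{C}_{\Sigma}^{\mathcal{M},\mathcal{F}}\left(X\right)$ is the required function on the cell $A$. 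Summing over the finitely many cells, and using that a finite union of finitely generated $\mathbb{Z}$-lattices lies in one such, produces $H$ with $P'\setminus P$ contained in a finitely generated $\mathbb{Z}$-lattice. I expect the heart of the argument to be the change of variables in the second case, which turns an arbitrary subanalytic oscillatory phase into the standard frequency $\mathrm{e}^{\mathrm{i}u}$ and relies on the monotonicity produced by cell decomposition together with subanalyticity of inverse functions; the remaining work is the Fubini--Tonelli bookkeeping needed to guarantee that $G_{1}$ and $G$ are integrable in the relevant variables for \emph{all} parameters, and the tracking of pole sets, with sets of measure zero where an iterated integral diverges being harmlessly discarded.
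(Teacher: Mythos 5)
Your proof is correct and follows essentially the same route as the paper's: a subanalytic cell decomposition making $\varphi$ either independent of, or monotonic in, an integration variable, followed by a change of variables normalizing the phase to $\mathrm{e}^{\mathrm{i}u}$, and then Fubini--Tonelli together with Theorem \ref{thm: C_M stability} to reduce to $\mathfrak{f}$ of a function in $\mathcal{C}_{\Sigma}^{\mathcal{M}}\left(X\times\mathbb{R}\right)_{\mathrm{int}}$. The only difference is organizational: you peel off one variable at a time by induction on $n$ (a harmless generalization), whereas the paper decomposes so that $\varphi$ is either independent of both variables $\left(y,t\right)$ or monotonic in one of them and treats the double integral in one step.
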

\begin{proof}
Up to decomposing $X\times\mathbb{R}^{2}$ into subanalytic cells,
we may suppose that, on each cell $A$ of base $X$ and open over
$\mathbb{R}^{m}$, either $\varphi$ does not depend on $\left(y,t\right)$,
or for one of these two variables (say, $y$), the function $y\longmapsto\varphi\left(x,y,t\right)$
is $C^{1}$ and strictly monotonic.

In the first case we factor the exponential out of the integral and
we apply Theorem \ref{thm: C_M stability} to $g$. In the second
case, up to applying the subanalytic change of variables $\left(x,y,t\right)\longmapsto\left(x,\varphi\left(x,y,t\right),t\right)$
and multiplying by the Jacobian of its inverse, we may suppose that
$\varphi\left(x,y,t\right)=y$ on $A$. Hence, by Fubini's Theorem,
\[
\int_{A}g\left(s,x,y,t\right)\mathrm{e}^{\mathrm{i}\varphi\left(x,y,t\right)}\text{d}y\wedge\text{d}t=\int_{\mathbb{R}}\text{e}^{\text{i}y}\left[\int_{\mathbb{R}}\chi_{A}\left(x,y,t\right)g\left(s,x,y,t\right)\text{d}t\right]\text{d}y.
\]
Again by Theorem \ref{thm: C_M stability}, applied to the integrand
inside the square brackets, the right-hand side of the above equation
is of the form $\mathfrak{f}\left[\tilde{g}\right]$, for some suitable
$\tilde{g}\in\mathcal{C}_{\Sigma}^{\mathcal{M}}\left(X\times\mathbb{R}\right)$
without poles outside some closed discrete set $P'\supseteq P$. We
conclude by linearity of the integral, taking the sum over the cells
of the decomposition.
\end{proof}
\begin{cor}
\label{cor: algebra}$\mathcal{C}_{\Sigma}^{\mathcal{M},\mathcal{F}}\left(X\right)$
is a $\mathbb{C}$-algebra.
\end{cor}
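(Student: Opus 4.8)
The plan is to deduce this from Proposition~\ref{prop: integrability of naive integrable gen}. Since $\mathcal{C}_{\Sigma}^{\mathcal{M},\mathcal{F}}\left(X\right)$ is, by Definition~\ref{def: C^M,F-1}, a $\mathcal{C}_{\Sigma}^{\mathcal{M}}\left(X\right)$-module, in particular a $\mathbb{C}$-module, and contains the constant function $1=\mathfrak{f}\left[\frac{\mathrm{i}}{2}\chi_{\left[\pi,2\pi\right]}\right]$, it is enough to show that $\mathcal{C}_{\Sigma}^{\mathcal{M},\mathcal{F}}\left(X\right)$ is closed under multiplication; this makes it a (unital, commutative) $\mathbb{C}$-algebra.

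\textbf{Reduction via Fubini.} First I would take two elements of $\mathcal{C}_{\Sigma}^{\mathcal{M},\mathcal{F}}\left(X\right)$ and, using Definition~\ref{def: C^M,F-1}, write them as $\mathfrak{f}\left[h_{1}\right]$ and $\mathfrak{f}\left[h_{2}\right]$ with $h_{1},h_{2}\in\mathcal{C}_{\Sigma}^{\mathcal{M}}\left(X\times\mathbb{R}\right)_{\mathrm{int}}$ without poles outside a common closed discrete set $P\subseteq\mathbb{C}$. For each $\left(s,x\right)\in\left(\Sigma\setminus P\right)\times X$ both $y\longmapsto h_{i}\left(s,x,y\right)$ lie in $L^{1}\left(\mathbb{R}\right)$, so by Tonelli the function $\left(y,t\right)\longmapsto h_{1}\left(s,x,y\right)h_{2}\left(s,x,t\right)\mathrm{e}^{\mathrm{i}\left(y+t\right)}$ lies in $L^{1}\left(\mathbb{R}^{2}\right)$, and Fubini's Theorem gives
\[
\mathfrak{f}\left[h_{1}\right]\left(s,x\right)\cdot\mathfrak{f}\left[h_{2}\right]\left(s,x\right)=\int_{\mathbb{R}^{2}}h_{1}\left(s,x,y\right)h_{2}\left(s,x,t\right)\mathrm{e}^{\mathrm{i}\left(y+t\right)}\mathrm{d}y\wedge\mathrm{d}t .
\]

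\textbf{Applying Proposition~\ref{prop: integrability of naive integrable gen}.} Next I would set $g\left(s,x,y,t\right)=h_{1}\left(s,x,y\right)h_{2}\left(s,x,t\right)$, regarded as a function on $\left(\Sigma\setminus P\right)\times X\times\mathbb{R}^{2}$ by pulling $h_{1}$ and $h_{2}$ back along the two coordinate projections $X\times\mathbb{R}^{2}\longrightarrow X\times\mathbb{R}$; a routine inspection of the generators~\eqref{eq:generator-C^M} (adjoining a dummy variable preserves membership in $\mathcal{A}$, $\mathcal{C}^{\mathbb{C}}$ and $\mathcal{S}_{+}$) shows $g\in\mathcal{C}_{\Sigma}^{\mathcal{M}}\left(X\times\mathbb{R}^{2}\right)$, and the $L^{1}$-estimate above shows $g\in\mathcal{C}_{\Sigma}^{\mathcal{M}}\left(X\times\mathbb{R}^{2}\right)_{\mathrm{int}}$. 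Also $\varphi\left(x,y,t\right)=y+t\in\mathcal{S}\left(X\times\mathbb{R}^{2}\right)$. Applying Proposition~\ref{prop: integrability of naive integrable gen} to $g$ and $\varphi$ yields a closed discrete $P'\supseteq P$ and a function $H\in\mathcal{C}_{\Sigma}^{\mathcal{M},\mathcal{F}}\left(X\right)$ without poles outside $P'$ with $H\left(s,x\right)=\int_{\mathbb{R}^{2}}g\left(s,x,y,t\right)\mathrm{e}^{\mathrm{i}\left(y+t\right)}\mathrm{d}y\wedge\mathrm{d}t$ on $\left(\Sigma\setminus P'\right)\times X$. Comparing with the displayed identity, $\mathfrak{f}\left[h_{1}\right]\cdot\mathfrak{f}\left[h_{2}\right]$ agrees with $H$ off the closed discrete set $P'$, hence, in the sense of Notation~\ref{notation: class D}, belongs to $\mathcal{C}_{\Sigma}^{\mathcal{M},\mathcal{F}}\left(X\right)$, which finishes the argument.

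\textbf{Expected difficulty.} I do not expect a genuine obstacle here: the entire analytic content --- in particular the integration-by-parts bookkeeping needed to stay inside the Fourier-transform class even when a power of $y$ fails to be integrable for every $s$ --- has already been packaged into Proposition~\ref{prop: integrability of naive integrable gen}, and this corollary merely feeds the product of two $\mathfrak{f}$-images into it through Fubini. The only mild points to keep in mind are that it is the membership of $h_{1},h_{2}$ in the subscript-$\mathrm{int}$ collections that legitimizes the use of Fubini, that adjoining a dummy variable keeps $g$ inside $\mathcal{C}^{\mathcal{M}}$, and that equality off a closed discrete set of poles is exactly what is required for membership in $\mathcal{C}_{\Sigma}^{\mathcal{M},\mathcal{F}}\left(X\right)$.
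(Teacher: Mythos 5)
Your proposal is correct and follows essentially the same route as the paper: write the two elements as $\mathfrak{f}\left[g_{1}\right],\mathfrak{f}\left[g_{2}\right]$, observe via Fubini--Tonelli that $\left(s,x,y,t\right)\longmapsto g_{1}\left(s,x,y\right)g_{2}\left(s,x,t\right)$ lies in $\mathcal{C}_{\Sigma}^{\mathcal{M}}\left(X\times\mathbb{R}^{2}\right)_{\mathrm{int}}$, and apply Proposition~\ref{prop: integrability of naive integrable gen} with $\varphi\left(x,y,t\right)=y+t$. The extra bookkeeping you include (common pole set, dummy variables, unitality via $1=\mathfrak{f}\left[\frac{\mathrm{i}}{2}\chi_{\left[\pi,2\pi\right]}\right]$) is consistent with what the paper leaves implicit.
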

\begin{proof}
Is suffices to show that if $h_{1},h_{2}\in\mathcal{C}_{\Sigma}^{\mathcal{M},\mathcal{F}}\left(X\right)$
then $h_{1}\cdot h_{2}\in\mathcal{C}_{\Sigma}^{\mathcal{M},\mathcal{F}}\left(X\right)$.
Write $h_{i}=\mathfrak{f}\left[g_{i}\right]$, for some $g_{i}\in\mathcal{C}_{\Sigma}^{\mathcal{M}}\left(X\times\mathbb{R}\right)_{\text{int}}$.
By Fubini's Theorem, $\left(s,x,y,t\right)\longmapsto g_{1}\left(s,x,y\right)\cdot g_{2}\left(s,x,t\right)\in\mathcal{C}_{\Sigma}^{\mathcal{M}}\left(X\times\mathbb{R}^{2}\right)_{\text{int}}$
so Proposition \ref{prop: integrability of naive integrable gen}
applies.
\end{proof}
Proposition \ref{prop: integrability of naive integrable gen} allows
us to integrate strongly integrable generators.
\begin{cor}
\label{cor: integration of strongly integrable}Let $T\in\mathcal{C}_{\Sigma}^{\mathcal{M},\mathcal{F}}\left(X\times\mathbb{R}\right)$
be a strongly integrable generator without poles outside some closed
discrete set $P\subseteq\mathbb{C}$. There exist a closed discrete
set $P'\subseteq\mathbb{C}$ containing $P$, such that $P'\setminus P$
is contained in a finitely generated $\mathbb{Z}$-lattice, and a
function $H\in\mathcal{C}_{\Sigma}^{\mathcal{M},\mathcal{F}}\left(X\right)$
without poles outside $P'$ such that
\[
\forall\left(s,x\right)\in\left(\Sigma\setminus P'\right)\times X,\ H\left(s,x\right)=\int_{\mathbb{R}}T\left(s,x,y\right)\text{d}y.
\]
\end{cor}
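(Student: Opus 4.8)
The plan is to read this off from Proposition \ref{prop: integrability of naive integrable gen} once the integrability conditions have been unwound; all the substantial work is already contained there (and, through it, in the stability of $\mathcal{C}^{\mathcal{M}}$ under parametric integration, Theorem \ref{thm: C_M stability}).

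First I would unwind the definition of strong integrability (Definition \ref{def: superintegrable and monomial genrators}): write $T=\mathfrak{f}\left[h\right]$ for some $h\in\mathcal{C}_{\Sigma}^{\mathcal{M}}\left(X\times\mathbb{R}^{2}\right)_{\mathrm{int}}$, which — arguing exactly as in Lemma \ref{lem: gen contenu} (cf.\ also Remark \ref{rem: superintegrable old fashion}) — may be taken without poles outside $P$. Then
\[
T\left(s,x,y\right)=\int_{\mathbb{R}}h\left(s,x,y,t\right)\mathrm{e}^{\mathrm{i}t}\,\mathrm{d}t
\]
for all $\left(s,x\right)\in\left(\Sigma\setminus P\right)\times X$ and almost every $y\in\mathbb{R}$. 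The condition $h\in\mathcal{C}_{\Sigma}^{\mathcal{M}}\left(X\times\mathbb{R}^{2}\right)_{\mathrm{int}}$ means precisely that, for each such $\left(s,x\right)$, the function $\left(y,t\right)\longmapsto h\left(s,x,y,t\right)$ lies in $L^{1}\left(\mathbb{R}^{2}\right)$, hence so does $\left(y,t\right)\longmapsto h\left(s,x,y,t\right)\mathrm{e}^{\mathrm{i}t}$. By Tonelli's theorem $\int_{\mathbb{R}}\left|T\left(s,x,y\right)\right|\,\mathrm{d}y\le\int_{\mathbb{R}^{2}}\left|h\left(s,x,y,t\right)\right|\,\mathrm{d}y\wedge\mathrm{d}t<\infty$, so $y\longmapsto T\left(s,x,y\right)$ is integrable for every $\left(s,x\right)\in\left(\Sigma\setminus P\right)\times X$ (in particular the parametric integral in the statement is everywhere defined), and by Fubini's theorem
\[
\int_{\mathbb{R}}T\left(s,x,y\right)\,\mathrm{d}y=\int_{\mathbb{R}}\left(\int_{\mathbb{R}}h\left(s,x,y,t\right)\mathrm{e}^{\mathrm{i}t}\,\mathrm{d}t\right)\mathrm{d}y=\int_{\mathbb{R}^{2}}h\left(s,x,y,t\right)\mathrm{e}^{\mathrm{i}t}\,\mathrm{d}y\wedge\mathrm{d}t.
\]

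Finally I would apply Proposition \ref{prop: integrability of naive integrable gen} with $g:=h\in\mathcal{C}_{\Sigma}^{\mathcal{M}}\left(X\times\mathbb{R}^{2}\right)_{\mathrm{int}}$ (without poles outside $P$) and $\varphi\left(x,y,t\right):=t\in\mathcal{S}\left(X\times\mathbb{R}^{2}\right)$. This yields a closed discrete set $P'\supseteq P$, with $P'\setminus P$ contained in a finitely generated $\mathbb{Z}$-lattice, and a function $H\in\mathcal{C}_{\Sigma}^{\mathcal{M},\mathcal{F}}\left(X\right)$ without poles outside $P'$ such that $H\left(s,x\right)=\int_{\mathbb{R}^{2}}h\left(s,x,y,t\right)\mathrm{e}^{\mathrm{i}t}\,\mathrm{d}y\wedge\mathrm{d}t$ for all $\left(s,x\right)\in\left(\Sigma\setminus P'\right)\times X$; comparing with the last display gives the corollary.

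There is no genuine obstacle in this step — the substance is entirely in Proposition \ref{prop: integrability of naive integrable gen}. The only point deserving a little care is the bookkeeping of integrability: one must check that the \emph{strong} integrability built into this type of generator is exactly what makes Tonelli and Fubini applicable and keeps $h$ inside $\mathcal{C}_{\Sigma}^{\mathcal{M}}\left(X\times\mathbb{R}^{2}\right)_{\mathrm{int}}$ with the pole data under control.
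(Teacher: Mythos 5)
Your proposal is correct and follows exactly the paper's own (one-line) proof: the paper derives the corollary as ``immediate from Fubini's Theorem and Proposition \ref{prop: integrability of naive integrable gen}'', and you have simply spelled out the Tonelli/Fubini bookkeeping and the choice $\varphi(x,y,t)=t$ that make that reduction work.
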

\begin{proof}
Immediate from Fubini's Theorem and Proposition \ref{prop: integrability of naive integrable gen}.
\end{proof}
Next, we consider a monomial generator and interpolate its integral
on a given cell by a function of the class $\mathcal{C}_{\Sigma}^{\mathcal{M},\mathcal{F}}$.
\begin{lem}
\label{lem: monomial gen on bounded cells}Let $B$ be a cell as in
\eqref{eq:A_theta=00003DB} with bounded $y$-fibers and let $T\in\mathcal{C}_{\Sigma}^{\mathcal{M},\mathcal{F}}\left(X\times\mathbb{R}\right)$
be a generator which is monomial in $y$ (as in \eqref{eq: monomial generator}),
without poles outside some closed discrete set $P\subseteq\mathbb{C}$.
Then $T$ is strongly integrable on $B$.
\end{lem}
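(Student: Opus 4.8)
The plan is to reduce the claim to a direct application of Corollary \ref{cor: integration of strongly integrable}'s enabling result, Proposition \ref{prop: integrability of naive integrable gen}, by showing that the defining integrand of $T\chi_B$ is already \emph{jointly} integrable (i.e.\ belongs to $\mathcal{C}_\Sigma^{\mathcal{M}}(X\times\mathbb{R}^2)_{\mathrm{int}}$), which is exactly what strong integrability on $B$ asks for. Recall that $T$ monomial in $y$ has the form $T(s,x,y)=f(s,x)y^{\lambda(s)}(\log y)^\mu\mathrm{e}^{\mathrm{i}Q(x,y)}$, with $f\in\mathcal{C}_\Sigma^{\mathcal{M},\mathcal{F}}(X)$, so by Lemma \ref{lem: gen contenu} (or the definition of $\mathcal{C}_\Sigma^{\mathcal{M},\mathcal{F}}$) we may write $f=\mathfrak{f}[F]$ for some $F\in\mathcal{C}_\Sigma^{\mathcal{M}}(X\times\mathbb{R})_{\mathrm{int}}$ without poles outside $P$. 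Then
\[
T\chi_B(s,x,y)=\Bigl(\int_{\mathbb{R}}F(s,x,t)\mathrm{e}^{\mathrm{i}t}\,\mathrm{d}t\Bigr)\chi_B(x,y)\,y^{\lambda(s)}(\log y)^\mu\mathrm{e}^{\mathrm{i}Q(x,y)},
\]
so that $T\chi_B=\mathfrak{f}[h]$ where
\[
h(s,x,y,t)=\chi_B(x,y)\,F(s,x,t)\,y^{\lambda(s)}(\log y)^\mu\mathrm{e}^{\mathrm{i}(Q(x,y)-t+t)}\ \text{suitably rewritten},
\]
and the real task is to check $h\in\mathcal{C}_\Sigma^{\mathcal{M}}(X\times\mathbb{R}^2)_{\mathrm{int}}$, i.e.\ that $(y,t)\mapsto h(s,x,y,t)$ is in $L^1(\mathbb{R}^2)$ for all $(s,x)$ outside the poles. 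Here I must be a little careful about the change of variables that turns $\mathrm{e}^{\mathrm{i}Q(x,y)}$ (with $Q$ a polynomial in $y^{1/d}$ of possibly high degree) into a single exponential $\mathrm{e}^{\mathrm{i}y'}$ in the integration variable; on a cell the map $y\mapsto Q(x,y)$ need not be monotone, but one can first partition into finitely many subcells on which $Q(x,\cdot)$ is $C^1$ and strictly monotone (or constant in $y$), exactly as in the proof of Proposition \ref{prop: integrability of naive integrable gen}, and work separately on each.

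The key observation making joint integrability work is that $B$ has \emph{bounded} $y$-fibers: the fiber $B_x=(a(x),b(x))$ with $1\le a(x)<b(x)<+\infty$. On such an interval, $y^{\lambda(s)}(\log y)^\mu$ is a bounded continuous function of $y$ — indeed $y\ge a(x)\ge1$ keeps $\log y\ge0$ and bounded above by $\log b(x)$, and $y^{\Re\lambda(s)}$ is bounded since $y$ ranges over a bounded interval bounded away from $0$ — hence $y\mapsto\chi_{B_x}(y)y^{\lambda(s)}(\log y)^\mu$ is in $L^1(\mathbb{R})$ with a bound depending (continuously, subanalytically) only on $x$ and $s$. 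Since $\bigl|\mathrm{e}^{\mathrm{i}Q(x,y)}\bigr|=1$ and $t\mapsto F(s,x,t)$ is already in $L^1(\mathbb{R})$ (because $F\in\mathcal{C}_\Sigma^{\mathcal{M}}(X\times\mathbb{R})_{\mathrm{int}}$), Tonelli's theorem gives
\[
\int_{\mathbb{R}^2}\bigl|h(s,x,y,t)\bigr|\,\mathrm{d}y\,\mathrm{d}t\le\Bigl(\int_{\mathbb{R}}\bigl|F(s,x,t)\bigr|\,\mathrm{d}t\Bigr)\Bigl(\int_{a(x)}^{b(x)}y^{\Re\lambda(s)}(\log y)^\mu\,\mathrm{d}y\Bigr)<+\infty
\]
for every $(s,x)$ with $s\notin P$, which is precisely $h\in\mathcal{C}_\Sigma^{\mathcal{M}}(X\times\mathbb{R}^2)_{\mathrm{int}}$. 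Therefore $T\chi_B=\mathfrak{f}[h]$ with $h$ jointly integrable, i.e.\ $T$ is strongly integrable on $B$ by Definition \ref{def: superintegrable and monomial genrators}.

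The main (and really only) obstacle I anticipate is the bookkeeping around the phase $Q$: because $Q\in\mathcal{S}(X)[y^{1/d}]$ can have several monomials, the substitution reducing it to a single linear phase is not global on $B$, so one must first invoke subanalytic cell decomposition to split $B$ into finitely many cells on which $y\mapsto Q(x,y)$ is strictly monotone (or $y$-independent), apply the argument above on each piece — noting that after the monotone change of variables the Jacobian is a subanalytic function and the transformed power-log factor is still bounded and integrable on the (still bounded, bounded-away-from-zero) image interval — and recombine by linearity. Everything else is a routine $L^1$ estimate using boundedness of the $y$-fibers plus $|\mathrm{e}^{\mathrm{i}Q}|=1$ and the already-known integrability of $F$ in $t$, followed by Tonelli/Fubini exactly as in the two introductory examples and in the proof of Proposition \ref{prop: integrability of naive integrable gen}.
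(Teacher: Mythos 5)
Your proposal is correct and is essentially the paper's argument unfolded: the paper simply observes that $\left|T\chi_{B}\right|$ extends continuously to the compact interval $\left[a\left(x\right),b\left(x\right)\right]$ and then invokes Remark \ref{rem: superintegrable old fashion}, which is exactly your Tonelli estimate combining boundedness of the power-log factor on the bounded $y$-fiber, $\left|\mathrm{e}^{\mathrm{i}Q}\right|=1$, and the $L^{1}$-integrability of $F$ in $t$. One remark: your anticipated ``main obstacle'' about the non-monotonicity of $y\longmapsto Q\left(x,y\right)$ is a red herring --- the phase $\mathrm{e}^{\mathrm{i}Q\left(x,y\right)}$ lives in the parameters $\left(x,y\right)$ of the auxiliary Fourier integral in $t$, so it is absorbed by the translation $t\longmapsto t-Q\left(x,y\right)$ (as in the proofs of Lemma \ref{lem: gen contenu} and Remark \ref{rem: superintegrable old fashion}), which preserves the $L^{1}$-norm in $t$ and requires no cell decomposition or monotonicity of $Q$ whatsoever.
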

\begin{proof}
For all $\left(s,x\right)\in\left(\Sigma\setminus P\right)\times X,\ y\longmapsto\left|T\left(s,x,y\right)\chi_{B}\left(x,y\right)\right|$
extends to a continuous function on $\left[a\left(x\right),b\left(x\right)\right]$.
Hence, by Remark \ref{rem: superintegrable old fashion} we are done.
\end{proof}
\begin{prop}
\label{prop: integration of monomial generators}Let $B$ be a cell
as in \eqref{eq:A_theta=00003DB} with unbounded $y$-fibers and let
$T\in\mathcal{C}_{\Sigma}^{\mathcal{M},\mathcal{F}}\left(X\times\mathbb{R}\right)$
be a generator which is monomial in $y$ (as in \eqref{eq: monomial generator}),
without poles outside some closed discrete set $P\subseteq\mathbb{C}$.
Then
\[
\mathrm{Int}\left(T\chi_{B};\left(\Sigma\setminus P\right)\times X\right)=\left\{ \left(s,x\right):\ \Re\left(\lambda\left(s\right)\right)<-1\vee\left(\Re\left(\lambda\left(s\right)\right)\geq-1\wedge f\left(s,x\right)=0\right)\right\} .
\]
Moreover there are a closed discrete set $P'\subseteq\mathbb{C}$
containing $P$, such that $P'\setminus P$ is contained in a finitely
generated $\mathbb{Z}$-lattice, and a function $H\in\mathcal{C}_{\Sigma}^{\mathcal{M},\mathcal{F}}\left(X\right)$
without poles outside $P'$ such that
\[
\forall\left(s,x\right)\in\mathrm{Int}\left(T\chi_{B};\left(\Sigma\setminus P'\right)\times X\right),\ H\left(s,x\right)=\int_{a\left(x\right)}^{+\infty}T\left(s,x,y\right)\text{d}y.
\]
\end{prop}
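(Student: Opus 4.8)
The first assertion is easy and I would dispatch it immediately: since $|\mathrm{e}^{\mathrm{i}Q(x,y)}|=1$, for fixed $(s,x)$ one has $|T(s,x,y)\chi_B(x,y)|=|f(s,x)|\,y^{\Re(\lambda(s))}(\log y)^{\mu}$ for $y>a(x)$ and $0$ otherwise, and this is in $L^1(\mathbb{R})$ precisely when $f(s,x)=0$ or $\Re(\lambda(s))<-1$, which is the asserted set.

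For the interpolation the plan is as follows. As the $x$-variables only serve as parameters, I would partition $X$ into finitely many subanalytic cells and argue on each, forming $H$ and $P'$ at the end by unions (only finitely many new poles arise, so they lie in one finitely generated $\mathbb{Z}$-lattice). Writing $Q(x,y)=\sum_j q_j(x)y^{j/d}$, I may thus assume each $q_j$ vanishes identically or is nowhere zero of constant sign, and I factor $\mathrm{e}^{\mathrm{i}q_0(x)}\in\mathcal{C}_{\Sigma}^{\mathcal{M},\mathrm{i}\mathcal{S}}(X)\subseteq\mathcal{C}_{\Sigma}^{\mathcal{M},\mathcal{F}}(X)$ (Lemma \ref{lem: gen contenu}) out of the exponential. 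If $Q$ does not depend on $y$, then $\int_{a(x)}^{+\infty}T(s,x,y)\,\mathrm{d}y=f(s,x)\mathrm{e}^{\mathrm{i}q_0(x)}\int_{a(x)}^{+\infty}y^{\lambda(s)}(\log y)^{\mu}\,\mathrm{d}y$, and for $\Re(\lambda(s))<-1$ repeated integration by parts (integrating the power, differentiating the logarithm) gives $\int_{a(x)}^{+\infty}y^{\lambda(s)}(\log y)^{\mu}\,\mathrm{d}y=\sum_{j=0}^{\mu}c_j(\lambda(s))\,a(x)^{\lambda(s)+1}(\log a(x))^{\mu-j}$, where each $c_j$ is a rational function of $\lambda(s)$ whose only pole is at $\lambda(s)=-1$. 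Since $a(x)^{\lambda(s)+1}=(a(x)^{\ell/d})^{s}\,a(x)^{(\eta+d)/d}$ is a parametric power times a complex power of a subanalytic function, the right-hand side defines $R\in\mathcal{C}_{\Sigma}^{\mathcal{M}}(X)$, and $H:=f\mathrm{e}^{\mathrm{i}q_0}R\in\mathcal{C}_{\Sigma}^{\mathcal{M},\mathcal{F}}(X)$ (Corollary \ref{cor: algebra}) works, with $P'=P\cup\{s:\lambda(s)=-1\}$; it equals the integral where $\Re(\lambda(s))<-1$ and vanishes where $f=0$. (If $\lambda\equiv-1$, then $\Re(\lambda(s))\equiv-1$, the integration locus is $\{f=0\}$, and $H:=0$ works.)

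Now suppose $Q$ depends on $y$. Let $N\ge1$ be the largest index with $q_N\not\equiv0$, set $Q_1(x,y)=\sum_{j=1}^{N}q_j(x)y^{j/d}$, and split $X$ further so that, say, $q_N>0$ (the case $q_N<0$ being analogous after a reflection $u\mapsto-u$). I would choose a subanalytic $c\ge a$ large enough that on $B'':=\{y>c(x)\}$ the derivative $\partial_yQ_1$ is dominated by its leading term, hence of constant sign, so $y\mapsto Q_1(x,y)$ is a $C^1$ subanalytic diffeomorphism onto $(u_0(x),+\infty)$ with $u_0(x)=Q_1(x,c(x))$. On the bounded cell $B':=\{a(x)<y<c(x)\}$, $T\chi_{B'}$ is strongly integrable (Lemma \ref{lem: monomial gen on bounded cells}), so Corollary \ref{cor: integration of strongly integrable} provides $H_{B'}\in\mathcal{C}_{\Sigma}^{\mathcal{M},\mathcal{F}}(X)$, with no poles outside some $P'_1$ ($P'_1\setminus P$ in a finitely generated $\mathbb{Z}$-lattice), equal to $\int_{a(x)}^{c(x)}T(s,x,y)\,\mathrm{d}y$ on all of $(\Sigma\setminus P'_1)\times X$. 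On $B''$ I substitute $u=Q_1(x,y)$, $y=\psi(x,u)$ with $\psi$ subanalytic, positive, $\to+\infty$: then for $\Re(\lambda(s))<-1$
\[
\int_{c(x)}^{+\infty}T(s,x,y)\,\mathrm{d}y=f(s,x)\,\mathrm{e}^{\mathrm{i}q_0(x)}\int_{u_0(x)}^{+\infty}G(s,x,u)\,\mathrm{e}^{\mathrm{i}u}\,\mathrm{d}u,\quad G(s,x,u)=\psi(x,u)^{\lambda(s)}(\log\psi(x,u))^{\mu}\,\partial_u\psi(x,u),
\]
and $\chi_{\{u>u_0(x)\}}G\in\mathcal{C}_{\Sigma}^{\mathcal{M}}(X\times\mathbb{R})$ because $\psi(x,u)^{\lambda(s)}=(\psi(x,u)^{\ell/d})^{s}\,\psi(x,u)^{\eta/d}$ is again a parametric power times a complex power of a subanalytic function. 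As $u\to+\infty$, $|\partial_u^{k}G(s,x,u)|=O\bigl(u^{(\Re(\lambda(s))+1)d/N-1-k}(\log u)^{\mu}\bigr)$; since $\Sigma$ has bounded width I may fix $M$ with $(\Re(\lambda(s))+1)d/N-1-M<-1$ for all $s\in\Sigma$. Integrating by parts $M$ times (integrating $\mathrm{e}^{\mathrm{i}u}$, differentiating $G$) then rewrites $\int_{u_0(x)}^{+\infty}G\,\mathrm{e}^{\mathrm{i}u}\,\mathrm{d}u$, for $\Re(\lambda(s))<-1$, as a finite sum of boundary terms $\partial_u^{k}G(s,x,u_0(x))\,\mathrm{e}^{\mathrm{i}u_0(x)}$ times polynomials in $\lambda(s)$ — which, since $\psi(x,u_0(x))=c(x)$, lie in $\mathcal{C}_{\Sigma}^{\mathcal{M}}(X)\cdot\mathcal{C}_{\Sigma}^{\mathcal{M},\mathrm{i}\mathcal{S}}(X)\subseteq\mathcal{C}_{\Sigma}^{\mathcal{M},\mathcal{F}}(X)$ with no poles in $s$ — plus a remainder $\mathrm{const}\cdot\mathfrak{f}\bigl[\chi_{\{u>u_0(\cdot)\}}\partial_u^{M}G\bigr]$ whose argument now lies in $\mathcal{C}_{\Sigma}^{\mathcal{M}}(X\times\mathbb{R})_{\mathrm{int}}$ for every $s\in\Sigma$, so this remainder lies in $\mathcal{C}_{\Sigma}^{\mathcal{M},\mathcal{F}}(X)$ with no poles in $s$. (The boundary contributions at $+\infty$ vanish when $\Re(\lambda(s))<-1$, since then $\Re(\lambda(s))<N/d-1$.) Multiplying by $f\mathrm{e}^{\mathrm{i}q_0}$ (Corollary \ref{cor: algebra}) and adding $H_{B'}$ yields $H\in\mathcal{C}_{\Sigma}^{\mathcal{M},\mathcal{F}}(X)$, defined outside $P':=P'_1$, equal to $\int_{a(x)}^{+\infty}T\,\mathrm{d}y$ on $\{\Re(\lambda(s))<-1\}$ by construction and equal to $0$ — hence again to $\int_{a(x)}^{+\infty}T\,\mathrm{d}y$ — on $\{f=0\}$; this is the whole integration locus.

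The hard part will be this last case: choosing the cutoff $c$ and straightening the polynomial phase $Q_1$ by a subanalytic change of variables, verifying that the straightened integrand — especially the parametric power $\psi^{\lambda(s)}$ — stays power-constructible, and running the integration-by-parts recursion so that after finitely many steps (bounded in terms of the width of $\Sigma$) the leftover oscillatory integral has a genuinely $L^1$ power-constructible integrand for every $s$ in the strip, with every intermediate boundary term already in the class and contributing no new poles.
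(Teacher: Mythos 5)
Your proposal is correct and follows essentially the same route as the paper: dispose of the integration locus directly, reduce to the case where $Q$ genuinely depends on $y$, straighten the phase by the subanalytic change of variables $u=Q(x,y)$ (splitting off a bounded piece handled by strong integrability), and then integrate by parts finitely many times---boundedly in terms of the width of $\Sigma$---until the oscillatory remainder has an absolutely integrable power-constructible integrand, collecting boundary terms that already lie in $\mathcal{C}_{\Sigma}^{\mathcal{M},\mathcal{F}}\left(X\right)$. The only cosmetic difference is that the paper expands the resulting strong unit as a convergent series in $z^{-1/n}$ and integrates by parts only the finitely many critical monomials, whereas you differentiate the whole straightened integrand and rely on the (correct, and justified by the same prepared form of the compositional inverse) decay of its derivatives.
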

\begin{proof}
The statement on the integration locus is immediate.

Write $Q\left(x,y\right)=\sum_{i\leq n}b_{i}\left(x\right)y^{\frac{i}{d}}$.
We may suppose that $n>0$, because otherwise we are done by Theorem
\ref{thm: C_M stability}. By o-minimality, we may suppose that for
all $x\in X,\ b_{n}\left(x\right)\not=0$. By Lemma \ref{lem: monomial gen on bounded cells}
and definable choice, we may suppose that for all $x\in X,\ y\longmapsto Q\left(x,y\right)$
is monotonic (say, strictly increasing) on $\left(a\left(x\right),+\infty\right)$.
Hence we may write $Q\left(x,y\right)=b_{n}\left(x\right)y^{\frac{n}{d}}\left(1+\varepsilon_{1}\left(x,y\right)\right)$,
where $\varepsilon_{1}\in\mathbb{R}\left\{ x\right\} \left[y^{-\frac{1}{d}}\right]$
with $\varepsilon_{1}\left(x,0\right)=0$, and the compositional inverse
has the form $\phi\left(x,z\right)=c\left(x\right)z^{\frac{d}{n}}\left(1+\varepsilon_{2}\left(x,z\right)\right)$,
for some analytic $c\in\mathcal{S}\left(X\right)$ and $\varepsilon_{2}\in\mathbb{R}\left\{ x,z^{-\frac{1}{n}}\right\} $
with $\varepsilon_{2}\left(x,0\right)=0$. Note that ${\displaystyle \frac{\partial\phi}{\partial z}\left(x,z\right)=\frac{d}{n}c\left(x\right)z^{\frac{d}{n}-1}\left(1+\varepsilon_{3}\left(x,z\right)\right)}$,
for some $\varepsilon_{3}\in\mathbb{R}\left\{ x,z^{-\frac{1}{n}}\right\} $
with $\varepsilon_{3}\left(x,0\right)=0$. Hence, on $\mathrm{Int}\left(T\chi_{B};\left(\Sigma\setminus P\right)\times X\right)$
we may write
\[
\int_{a\left(x\right)}^{+\infty}T\left(s,x,y\right)\text{d}y=\tilde{f}\left(s,x\right)\int_{Q\left(x,a\left(x\right)\right)}^{+\infty}z^{\tilde{\lambda}\left(s\right)}\left[\frac{d}{n}\log z+G\left(x,z\right)\right]^{\mu}u\left(s,x,z\right)\text{e}^{\text{i}z}\text{d}z,
\]
where $\tilde{f}\left(s,x\right)=f\left(s,x\right)\frac{d}{n}\left(c\left(x\right)\right)^{\lambda\left(s\right)+1},\ \tilde{\lambda}\left(s\right)=\frac{d}{n}\lambda\left(s\right)+\frac{d}{n}-1,\ G\left(x,z\right)=\log\left(c\left(x\right)\right)+\log\left(1+\varepsilon_{2}\left(x,z\right)\right)$
and $u\left(s,x,z\right)=\left(1+\varepsilon_{2}\left(x,z\right)\right)^{\lambda\left(s\right)}\left(1+\varepsilon_{3}\left(x,z\right)\right)$
is a parametric strong unit. Note that, for all $x\in X$, $G$ can
be expanded as a power series (with nonzero constant term) in the
variable $z^{-\frac{1}{n}}$. By expanding the power $\mu$ of the
square bracket, we may rewrite the above integral as a finite sum
of terms where the integrand has the form $z^{\tilde{\lambda}\left(s\right)}\text{e}^{\text{i}z}\cdot\left(\log z\right)^{\nu}U_{\nu}\left(s,x,z\right)$,
for some $\nu\leq\mu$ and parametric strong unit $U_{\nu}$ which
can be expanded as a series in the variable $z^{-\frac{1}{n}}$. It
follows that there are finitely many monomials in the integrand of
the form $z^{\Re\left(\tilde{\lambda}\left(s\right)\right)-\frac{k}{n}}\left(\log z\right)^{\nu}$
for which the integral is not finite. Argueing as in the proof of
Proposition \ref{prop: C^M,F has extension}, we find the function
$H$ in the statement by integration by parts.
\end{proof}

\subsection{\label{subsec: Proof strategy}Overview of the proofs of Theorems \ref{thm: stability of C^F}
and \ref{thm: Stability of C^M,F}}

The proof of Theorem \ref{thm: Stability of C^M,F}, which will be
completed in Section \ref{sec: stability}, is organized as follows:
we show that, given $h\in\mathcal{C}_{\Sigma}^{\mathcal{M},\mathcal{F}}\left(X\times\mathbb{R}\right)$
without poles outside some closed discrete set $P\subseteq\mathbb{C}$,
the domain $X\times\mathbb{R}$ can be partitioned into subanalytic
cells such that on each cell $A$, up to a subanalytic change of variables,
$h$ can be written as a finite sum of generators which are either
strongly integrable or monomial in the last variable (Proposition
\ref{prop: Splitting}). The results of the current section provide,
for each such generator $T_{i}$, a description of the integration
locus and a function $H_{i}\in\mathcal{C}_{\Sigma}^{\mathcal{M},\mathcal{F}}\left(X\right)$
without poles outside some closed discrete set $P'\supseteq P$, which
coincides with the integral of $T_{i}$ on its integration locus.
Next, we show that, up to possibly enlarging the closed discrete set
$P'$, the integration locus of $h\restriction A$ is the intersection
of the integration loci of the generators $T_{i}$ (this is done using
a non-compensation argument proven in \cite[Proposition 3.4]{cmrs:mellin_constructible}).
Thus the sum of the functions $H_{i}$ interpolates the integral of
$h\restriction A$ on its integration locus (Theorem \ref{thm: interpolation}).
Theorem \ref{thm: Stability of C^M,F} follows from Theorem \ref{thm: interpolation}
by Fubini's Theorem (with an argument spelled out in detail in \cite[pp. 31-32]{cmrs:mellin_constructible}), 
which shows that we can iterate the argument above integrating with
respect to one variable at the time.

The proof of Theorem \ref{thm: stability of C^F} is just a special
case of that of Theorem \ref{thm: Stability of C^M,F}, where all
the functions involved happen not to depend on the variable $s$.
In particular, that $\mathcal{C}^{\mathbb{C},\mathcal{F}}$ is a system
of $\mathbb{C}$-algebras containing $\text{e}^{\text{i}\mathcal{S}}$
follows from Corollary \ref{cor: algebra} and Lemma \ref{lem: gen contenu}.
It follows from stability under parametric integration and the definition
of $\mathcal{C}^{\mathbb{C},\mathcal{F}}$ that it is the smallest
system of $\mathbb{C}$-algebras containing $\mathcal{S}_{+}^{\mathbb{C}}\cup\mathrm{e}^{\mathrm{i}\mathcal{S}}$
and stable under parametric integration, and the smallest such system
containing $\mathcal{C}^{\mathbb{C}}$ and stable under the parametric
Fourier transform and right-composition with subanalytic maps.

\section{\label{sec: Preparation}Preparation}

With the aim of proving Proposition \ref{prop: Splitting}, in this
section we write every function of the class $\mathcal{C}_{\Sigma}^{\mathcal{M},\mathcal{F}}\left(X\times\mathbb{R}\right)$, piecewise and up to a subanalytic change of variables, as a finite
sum of generators of a special \emph{prepared form} which gives some
information on their integration locus. This builds further (and relies) on preparation results for subanalytic functions from \cite{paru:lip1,parusinski:preparation,lr:int,miller_dan:preparation_theorem_weierstrass_systems}
\begin{notation}
\label{notation: t-cell}Let $B\subseteq X\times\mathbb{R}$ be as
in \eqref{eq:A_theta=00003DB} Consider a cell $D\subseteq B\times\mathbb{R}$
of the form
\begin{equation}
D=\left\{ \left(x,y,t\right):\ \left(x,y\right)\in B,\ \widetilde{a}\left(x,y\right)<t<\widetilde{b}\left(x,y\right)\right\} ,\label{eq: D}
\end{equation}
where $\widetilde{a},\widetilde{b}:B\longrightarrow\mathbb{R}$ are
analytic subanalytic functions with $1\leq\widetilde{a}\left(x,y\right)<\widetilde{b}\left(x,y\right)\ \text{for all }\left(x,y\right)\in B$,
and $\widetilde{b}$ is allowed to be $\equiv+\infty$. We say that
$D$ has\emph{ bounded $t$-fibers} if $\widetilde{b}<+\infty$ and
\emph{unbounded $t$-fibers} if $\widetilde{b}\equiv+\infty$.

Suppose furthermore that $\widetilde{a},\widetilde{b},\widetilde{b}-\widetilde{a}$
have the following prepared form:
\begin{equation}
\begin{aligned} & \widetilde{a}\left(x,y\right)=a_{0}\left(x\right)y^{\frac{\alpha}{d}}u_{a}\left(x,y\right), & \widetilde{b}\left(x,y\right)=b_{0}\left(x\right)y^{\frac{\beta}{d}}u_{b}\left(x,y\right),\\
 & \widetilde{b}\left(x,y\right)-\widetilde{a}\left(x,y\right)=d_{0}\left(x\right)y^{\frac{\Delta}{d}}u_{d}\left(x,y\right),
\end{aligned}
\label{eq: prepared t-bounds}
\end{equation}
where $\alpha,\beta,\Delta\in\mathbb{N},\ a_{0},b_{0},d_{0}\in\mathcal{S}\left(X\right)$
are analytic and $u_{a},u_{b},u_{d}\in\mathcal{S}\left(B\right)$
are $\psi$-prepared subanalytic strong units (for $\psi$ as in \eqref{eq: psi}).
If $\widetilde{b}=+\infty$ we stipulate that $b_{0}=d_{0}=+\infty,\ u_{b}=u_{d}=1$
and $\beta=\Delta=0$.

Define
\begin{equation}
\varPsi\left(x,y,t\right)=\left(\psi\left(x,y\right),\left(\frac{a_{0}\left(x\right)y^{\frac{\alpha}{d}}}{t}\right)^{\frac{1}{d}},\left(\frac{t}{b_{0}\left(x\right)y^{\frac{\beta}{d}}}\right)^{\frac{1}{d}}\right),\label{eq: big psi}
\end{equation}
where if $D$ has unbounded $t$-fibers we omit the last component.
Note that $\varPsi$ is a $1$-bounded subanalytic map on $D$.
\end{notation}
\begin{defn}
\label{def: prepared generator}A generator $T\in\mathcal{C}_{\Sigma}^{\mathcal{M},\mathcal{F}}\left(X\times\mathbb{R}\right)$
without poles outside some closed discrete set $P\subseteq\mathbb{C}$
is \emph{prepared on }$B$ if for all $\left(s,x,y\right)\in\left(\Sigma\setminus P\right)\times B,$
\[
T\left(s,x,y\right)=g\left(s,x\right)y^{\lambda\left(s\right)}\left(\log y\right)^{\mu}\text{e}^{\text{i}\varphi\left(x,y\right)}\gamma\left(s,x,y\right),
\]
where $g\in\mathcal{C}_{\Sigma}^{\mathcal{M}}\left(X\right),\ \mu\in\mathbb{N},\ \varphi\in\mathcal{S}\left(B\right)$
is prepared as in \eqref{eq: prepared subanalytic} with respect to
$\psi$ as in \eqref{eq: psi}, $\lambda\left(s\right)={\displaystyle \frac{\ell s+\eta}{d}}$,
for some $\ell\in\mathbb{Z},\eta\in\mathbb{C}$ and the same $d$
appearing in \eqref{eq: psi}, and the transcendental element $\gamma\in\Gamma_{\Sigma}\left(B\right)$
has the form
\[
\gamma\left(s,x,y\right)=\int_{\mathbb{R}}\chi_{D}\left(x,y,t\right)t^{\varrho\left(s\right)}\left(\log t\right)^{\nu}\Phi\left(s,x,y,t\right)\text{e}^{\sigma\text{i}t}\text{d}t,
\]
where $D$ is as in Notation \ref{notation: t-cell}, $\sigma\in\left\{ +,-\right\} ,\ \nu\in\mathbb{N},\ \varrho\left(s\right)={\displaystyle \frac{\widetilde{\ell}s+\widetilde{\eta}}{d}}$
for some $\widetilde{\ell}\in\mathbb{Z},\widetilde{\text{\ensuremath{\eta}}}\in\mathbb{C}$
and the same $d$ appearing in \eqref{eq: psi} and $\Phi$ is a $\varPsi$-prepared
parametric strong function (with $\varPsi$ as in \eqref{eq: big psi}).
\end{defn}
Recall Notation \ref{notation: cell bijection B_A}.
\begin{prop}[Preparation]
\label{prop: preparation, 1st version} Let $h\in\mathcal{C}_{\Sigma}^{\mathcal{M},\mathcal{F}}\left(X\times\mathbb{R}\right)$.
There is a cell decomposition of $\mathbb{R}^{m+1}$ compatible with
$X$ such that for each cell $A$ that is open over $\mathbb{R}^{m}$
(which we may suppose to be of the form \eqref{eq:A}), $h\circ\Pi_{A}$
is a finite sum of prepared generators on $B_{A}$.
\end{prop}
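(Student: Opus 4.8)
The plan is to reduce a general element $h\in\mathcal{C}_{\Sigma}^{\mathcal{M},\mathcal{F}}\left(X\times\mathbb{R}\right)$ to the transcendental building blocks and then prepare each one separately. By Lemma \ref{lem: gen contient}, $h$ is a finite sum of generators $g\,\mathrm{e}^{\mathrm{i}\varphi}\,\gamma$ with $g\in\mathcal{C}_{\Sigma}^{\mathcal{M}}\left(X\times\mathbb{R}\right)$, $\varphi\in\mathcal{S}\left(X\times\mathbb{R}\right)$ and $\gamma\in\Gamma_{\Sigma}\left(X\times\mathbb{R}\right)$ a transcendental element, so $\gamma\left(s,x,y\right)=\int_{\mathbb{R}}\chi_{C}\left(x,y,t\right)t^{\varrho\left(s\right)}\left(\log t\right)^{\nu}\Phi\left(s,x,y,t\right)\mathrm{e}^{\sigma\mathrm{i}t}\,\mathrm{d}t$ for some cell $C\subseteq\left(X\times\mathbb{R}\right)\times\mathbb{R}$ and a $\psi'$-prepared parametric strong function $\Phi$. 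Since cell decomposition and subanalytic changes of variables of the form $\Pi_A$ are available and the statement is local over the cells, I would first choose a cell decomposition of $\mathbb{R}^{m+1}$ compatible with $X$ and with all the subanalytic data attached to finitely many such generators at once, and fix one cell $A$ that is open over $\mathbb{R}^m$, written in the form \eqref{eq:A}. Pulling back by $\Pi_A$ turns $A$ into a cell $B_A$ of the form \eqref{eq:A_theta=00003DB}, and $\mathrm{e}^{\mathrm{i}y}$ becomes $\mathrm{e}^{\mathrm{i}\left(\sigma_A y^{\tau_A}+\theta_A\left(x\right)\right)}$; the factor $\mathrm{e}^{\mathrm{i}\theta_A\left(x\right)}$ is a bounded complex subanalytic function of $x$ that can be absorbed into $g$, while $\sigma_A y^{\tau_A}$ is a subanalytic function of $y$ that can be absorbed into the phase $\varphi$.

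Next I would apply the classical subanalytic preparation theorem (in the form of \cite[Proposition 4.7]{cmrs:mellin_constructible} together with the preparation results \cite{paru:lip1,parusinski:preparation,lr:int,miller_dan:preparation_theorem_weierstrass_systems}) simultaneously to: the functions $g$ and $\varphi$ as functions on $B_A$; the bounds $\widetilde a,\widetilde b$ of the $t$-cell $C$ (as well as $\widetilde b-\widetilde a$) as functions on $B_A$; and the base $\psi'$ of $\Phi$. Refining the cell decomposition of $B_A$ in the $y$-variable, I may assume all of these are in prepared form relative to a single common $\psi$ of the shape \eqref{eq: psi} with a single common $d$: so $g\left(s,x,y\right)$ becomes $g_0\left(s,x\right)y^{\lambda\left(s\right)}\left(\log y\right)^{\mu}U_g\left(s,x,y\right)$ with $U_g$ a $\psi$-prepared parametric strong unit (which I then absorb into $\Phi$ by enlarging it), $\varphi\left(x,y\right)=\varphi_0\left(x\right)y^{\omega/d}U_\varphi\left(x,y\right)$ as in \eqref{eq: prepared subanalytic}, and $\widetilde a,\widetilde b,\widetilde b-\widetilde a$ in the form \eqref{eq: prepared t-bounds}. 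This also gives the map $\varPsi$ of \eqref{eq: big psi} as a $1$-bounded subanalytic map on the resulting $t$-cell $D$, and after a further refinement I can assume the base of $\Phi$ factors through $\varPsi$, i.e. $\Phi$ is $\varPsi$-prepared. The remaining exponents and powers of logarithms in the $t$-integrand of $\gamma$ are already of the required prepared shape because $\gamma\in\Gamma_\Sigma$, so after these reductions each generator is prepared on $B_A$ in the sense of Definition \ref{def: prepared generator}, and summing over the (finitely many) cells and generators finishes the proof.

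The main obstacle, I expect, is the bookkeeping needed to make all the preparations \emph{compatible}: the exponent of $y$ coming from preparing $g$, the exponent coming from preparing $\varphi$, the exponents $\alpha,\beta,\Delta$ coming from preparing the $t$-bounds $\widetilde a,\widetilde b$, and the denominators appearing in the strong-unit data of $\Phi$ must all be forced onto a single rational grid $\tfrac{1}{d}\mathbb{Z}$ with one common $d$ and one common center $c\left(x\right)$ for $\psi$; this requires iterating the preparation theorem and taking least common multiples of denominators, and one has to check that absorbing the various $\psi$-prepared strong units into a single $\varPsi$-prepared strong function $\Phi$ does not disturb the pole set $P$ (it only enlarges it within a finitely generated $\mathbb{Z}$-lattice, as in \eqref{eq:eq: nested prep form unb}) and does not affect the integrability conditions built into the definition of a transcendental element. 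A secondary subtlety is the case distinction between bounded and unbounded $y$- and $t$-fibers: when $b\equiv+\infty$ or $\widetilde b\equiv+\infty$ the corresponding components of $\psi$ and $\varPsi$ are omitted, and one must use the nested prepared forms \eqref{eq: nester sub strong unit} and \eqref{eq:eq: nested prep form unb} and keep track of the sign of $\Re\left(\lambda\left(s\right)\right)$ on $\Sigma$, exactly as in the extension argument of Section \ref{subsec:proof2.20}; but none of this is conceptually new, it is the same mechanism of preparation plus change of variables already used repeatedly in \cite{cmrs:mellin_constructible}.
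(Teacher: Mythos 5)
Your proposal is correct and follows essentially the same route as the paper: write $h$ as a finite sum of generators via Lemma \ref{lem: gen contient}, then apply the parametric power-constructible preparation theorem \cite[Proposition 4.7]{cmrs:mellin_constructible} simultaneously to all the data appearing in the generators, in the variable $t$ and then in the variable $y$, adjusting $\psi$ and $\varPsi$ to a common denominator $d$ on a refined cell decomposition. The only slip is the remark that ``$\mathrm{e}^{\mathrm{i}y}$ becomes $\mathrm{e}^{\mathrm{i}\left(\sigma_A y^{\tau_A}+\theta_A\left(x\right)\right)}$'' under $\Pi_A$ --- at this stage $h$ carries no Fourier kernel in $y$, only the subanalytic phase $\varphi$ and the kernel $\mathrm{e}^{\sigma\mathrm{i}t}$ inside the transcendental element, which is untouched by $\Pi_A$; your subsequent preparation of $\varphi\circ\Pi_A$ handles the phase correctly anyway.
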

\begin{proof}
Write $h$ as a sum of generators of the form \eqref{eq:generator}
and apply \cite[Proposition 4.7]{cmrs:mellin_constructible} simultaneously
to all the parametric power-constructible data appearing in the generators:
this produces a cell decomposition of $X\times\mathbb{R}^{2}$ compatible
with $X$, and on each cell $D$ which is open over $X\times\mathbb{R}$,
a prepared form of the data with respect to the last variable $t$,
where the coefficient functions are parametric power-constructible
functions depending on the variables $\left(x,y\right)\in X\times\mathbb{R}$.
Now apply \cite[Proposition 4.7]{cmrs:mellin_constructible} again
simultaneously to all the coefficient functions, in order to prepare
them with respect to the variable $y$ on suitable cells $B\subseteq X\times\mathbb{R}$,
thus refining the cell decomposition. This gives the wanted result,
up to trivial manipulations to adjust the definition of $\psi$ and
$\varPsi$ (see \cite[pp. 1268-70]{ccmrs:integration-oscillatory}
for the details).
\end{proof}
\begin{rem}
\label{rem: d}The proof of \cite[Proposition 4.7]{cmrs:mellin_constructible}
(and indeed that of all preparation results based on the Subanalytic
Preparation Theorem in \cite{lr:prep}) shows that it is possible
to choose the same integer $d$ appearing in Definition \ref{def: prepared generator}
for all prepared generators on all cells. Thus $d$ is a data of the
cell decomposition and not of a single prepared generator on a single
cell. We will hence call a $d$\emph{-cell decomposition }a cell-decomposition
with data $d\in\mathbb{N}\setminus\left\{ 0\right\} $ and we will
say that a generator is $d$\emph{-prepared} on one of the cells of
the composition. Similar easy manipulations show that if a generator
$T$ is $d$-prepared on a cell of a $d$-cell decomposition, then
$T$ is also $d^{2}$-prepared and the decomposition can also be considered
as a $d^{2}$-cell decomposition.
\end{rem}
Our aim is to refine the previous preparation statement so as to write
$h\circ\Pi_{A}$ as a finite sum of generators which are either strongly
integrable or monomial in $y$.

The first remark is that if we consider a cell $A$ such that $B_{A}$
has bounded $y$-fibers, then the prepared generators which appear
in $h\circ\Pi_{A}$ are strongly integrable.
\begin{prop}
\label{prop: strongly int on bounded cells}Suppose $B$ as in \eqref{eq:A_theta=00003DB}
has bounded $y$-fibers and let $T\in\mathcal{C}_{\Sigma}^{\mathcal{M},\mathcal{F}}\left(X\times\mathbb{R}\right)$
be a generator which is prepared on $B.$ Then $T$ is strongly integrable.
\end{prop}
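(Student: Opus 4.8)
The plan is to realize $T$ as $\mathfrak{f}[h]$ for a \emph{single} function $h\in\mathcal{C}_{\Sigma}^{\mathcal{M}}(X\times\mathbb{R}^{2})_{\text{int}}$, the two real variables being $y$ and the Fourier variable; by Definition~\ref{def: superintegrable and monomial genrators} this is exactly what strong integrability requires. By Definition~\ref{def: prepared generator} we may write, for $(s,x,y)\in(\Sigma\setminus P)\times B$,
\[
T(s,x,y)=g(s,x)\,y^{\lambda(s)}(\log y)^{\mu}\,\mathrm{e}^{\mathrm{i}\varphi(x,y)}\int_{\mathbb{R}}\chi_{D}(x,y,t)\,t^{\varrho(s)}(\log t)^{\nu}\,\Phi(s,x,y,t)\,\mathrm{e}^{\sigma\mathrm{i}t}\,\mathrm{d}t ,
\]
where $\Phi$ is a parametric strong function without poles outside $P$. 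I put $G(s,x,y,t)=\chi_{D}(x,y,t)\,g(s,x)\,y^{\lambda(s)}(\log y)^{\mu}\,t^{\varrho(s)}(\log t)^{\nu}\,\Phi(s,x,y,t)$; since $y,t\geq 1$ on $D$, the factors $y^{\lambda(s)}$ and $t^{\varrho(s)}$ are parametric powers of subanalytic functions, so (exactly as the corresponding step in the proof of Lemma~\ref{lem: gen contenu}) $G\in\mathcal{C}_{\Sigma}^{\mathcal{M}}(X\times\mathbb{R}^{2})$ without poles outside $P$. Then $T(s,x,y)=\mathrm{e}^{\mathrm{i}\varphi(x,y)}\int_{\mathbb{R}}G(s,x,y,t)\,\mathrm{e}^{\sigma\mathrm{i}t}\,\mathrm{d}t$, and the measure-preserving substitution $t\mapsto\sigma(u-\varphi(x,y))$ absorbs $\mathrm{e}^{\mathrm{i}(\varphi+\sigma t)}$ into $\mathrm{e}^{\mathrm{i}u}$, giving $T=\mathfrak{f}[h]$ with $h(s,x,y,u)=G(s,x,y,\sigma(u-\varphi(x,y)))$; by stability of $\mathcal{C}_{\Sigma}^{\mathcal{M}}$ under right-composition with the subanalytic map $(x,y,u)\mapsto(x,y,\sigma(u-\varphi(x,y)))$ we still have $h\in\mathcal{C}_{\Sigma}^{\mathcal{M}}(X\times\mathbb{R}^{2})$ without poles outside $P$. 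It then only remains to check $h\in(\,\cdot\,)_{\text{int}}$.

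Since the substitution preserves Lebesgue measure in $u$ for each fixed $y$, this reduces to showing $\iint_{\mathbb{R}^{2}}|G(s,x,y,t)|\,\mathrm{d}y\,\mathrm{d}t<\infty$ for every $(s,x)\in(\Sigma\setminus P)\times X$, which I would establish by Tonelli's theorem. Fix $(s,x)$. Then $g(s,x)$ is a constant and, by Definition~\ref{def: C^M-1}, $\Phi$ is bounded on $D$ for $s\notin P$, so it suffices to bound
\[
\int_{a(x)}^{b(x)}y^{\Re\lambda(s)}\,|\log y|^{\mu}\left(\int_{\widetilde{a}(x,y)}^{\widetilde{b}(x,y)}t^{\Re\varrho(s)}\,|\log t|^{\nu}\,\mathrm{d}t\right)\mathrm{d}y .
\]
If $D$ has bounded $t$-fibers, the prepared form $\widetilde{b}(x,y)=b_{0}(x)\,y^{\beta/d}\,u_{b}(x,y)$ of \eqref{eq: prepared t-bounds}, together with $\beta\geq 0$, the boundedness of the strong unit $u_{b}$, and the boundedness of the interval $(a(x),b(x))$, shows that $\widetilde{b}(x,\cdot)$ is bounded on $(a(x),b(x))$, so the inner integral is finite and bounded uniformly in $y$. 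If $D$ has unbounded $t$-fibers, then the defining condition of a transcendental element (Definition~\ref{def: naive and generators}) forces $\Re(\varrho(s))<-1$ on all of $\Sigma$, whence $\int_{\widetilde{a}(x,y)}^{+\infty}t^{\Re\varrho(s)}|\log t|^{\nu}\,\mathrm{d}t\leq\int_{1}^{+\infty}t^{\Re\varrho(s)}|\log t|^{\nu}\,\mathrm{d}t<\infty$, again uniformly in $y$. In both cases the outer integral runs over the \emph{bounded} interval $(a(x),b(x))\subseteq[1,+\infty)$ (this is precisely where the hypothesis that $B$ has bounded $y$-fibers enters), on which $y^{\Re\lambda(s)}|\log y|^{\mu}$ is bounded. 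Hence the double integral is finite, $h\in\mathcal{C}_{\Sigma}^{\mathcal{M}}(X\times\mathbb{R}^{2})_{\text{int}}$, and $T=\mathfrak{f}[h]$ is strongly integrable.

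The argument is short, and all the manipulations are one-variable versions of things already done in Lemma~\ref{lem: gen contenu} and Remark~\ref{rem: superintegrable old fashion}. The one step that needs genuine care is the integrability bookkeeping in the second paragraph: one must verify that the bound on the inner ($t$-)integral is uniform in $y$ (this rests on the explicit prepared form of $\widetilde{b}$ and on $u_{b}$ and $\Phi$ being globally bounded on $D$ once $s$ is fixed outside $P$), and split correctly into the bounded- and unbounded-$t$-fibers cases, the latter being handled by the built-in integrability condition $\Re(\varrho(s))<-1$ carried by the transcendental element.
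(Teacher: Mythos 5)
Your proof is correct and follows essentially the same route as the paper: define $G(s,x,y,t)=g\,y^{\lambda(s)}(\log y)^{\mu}\chi_{D}\,t^{\varrho(s)}(\log t)^{\nu}\Phi$, verify via Tonelli that $G\in\mathcal{C}_{\Sigma}^{\mathcal{M}}(X\times\mathbb{R}^{2})_{\text{int}}$ using the boundedness of $[a(x),b(x)]$, and then absorb $\mathrm{e}^{\mathrm{i}(\varphi+\sigma t)}$ by the change of variables of Lemma \ref{lem: gen contenu} (this last step is exactly Remark \ref{rem: superintegrable old fashion}, which the paper invokes directly). Your explicit case split on bounded versus unbounded $t$-fibers just spells out what the paper compresses into the observation that $y\mapsto\int_{\mathbb{R}}|G|\,\mathrm{d}t$ extends continuously to the compact interval $[a(x),b(x)]$.
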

\begin{proof}
Let
\[
G\left(s,x,y,t\right)=g\left(s,x\right)y^{\lambda\left(s\right)}\left(\log y\right)^{\mu}\chi_{D}\left(x,y,t\right)t^{\varrho\left(s\right)}\left(\log t\right)^{\nu}\Phi\left(s,x,y,t\right)\in\mathcal{C}_{\Sigma}^{\mathcal{M}}\left(X\times\mathbb{R}^{2}\right),
\]
so that
\begin{equation}
T\left(s,x,y\right)=\int_{\mathbb{R}}G\left(s,x,y,t\right)\text{e}^{\text{i}\left(\varphi\left(x,y\right)+\sigma t\right)}\text{d}t.\label{eq: prepared gen as integral of G}
\end{equation}
Since $\Phi$ is bounded and extends continuously to $\overline{D}$,
for all $\left(s,x\right)\in\left(\Sigma\setminus P\right)\times X,$
the function $y\longmapsto\int_{\mathbb{R}}\left|G\right|\text{d}t$
extends continuously to the closed and bounded interval $\left[a\left(x\right),b\left(x\right)\right]$
and is hence integrable on this interval. By Tonelli's Theorem, for
all $\left(s,x\right)\in\left(\Sigma\setminus P\right)\times X,\ \left(y,t\right)\longmapsto\left|G\left(s,x,y,t\right)\right|\in L^{1}\left(\mathbb{R}^{2}\right)$,
so that $G\in\mathcal{C}_{\Sigma}^{\mathcal{M}}\left(X\times\mathbb{R}^{2}\right)_{\text{int}}$
and, by Remark \ref{rem: superintegrable old fashion}, $T$ is strongly
integrable.
\end{proof}
Next, we refine the statement of Proposition \ref{prop: preparation, 1st version}
for the subclass of $\mathcal{C}_{\Sigma}^{\mathcal{M},\mathcal{F}}\left(X\times\mathbb{R}\right)$
of those functions which are \emph{naive in the last variable $y$}
(see Definition \ref{def: naive in y} below). On cells with unbounded
$y$-fibers, the functions in this subclass have easily readable asymptotics
in $y$ (see Section \ref{sec:Asymptotic-expansions-and}) and can
be written as finite sums of generators which are either strongly
integrable or monomial in $y$.
\begin{defn}
\label{def: naive in y} Let $B\subseteq X\times\mathbb{R}$ be a
subanalytic cell which is open over $X$. A generator $T\in\mathcal{C}_{\Sigma}^{\mathcal{M},\mathcal{F}}\left(B\right)$
as in \eqref{eq:generator} is \emph{naive in }$y$ if the transcendental
element $\gamma$ does not depend on $y$. Hence
\[
T=\gamma g\text{e}^{\text{i}\varphi}
\]
with $\gamma\in\mathcal{C}_{\Sigma}^{\mathcal{M},\mathcal{F}}\left(X\right),\ g\in\mathcal{C}_{\Sigma}^{\mathcal{M}}\left(B\right)$
and $\varphi\in\mathcal{S}\left(B\right)$.
\end{defn}
\begin{prop}
\label{prop: naive in y are sums of mon + ssint}Let $h\in\mathcal{C}_{\Sigma}^{\mathcal{M},\mathcal{F}}\left(X\times\mathbb{R}\right)$
be a finite sum of generators which are naive in $y$. Then Proposition
\ref{prop: preparation, 1st version} holds for $h$ with the additional
property that the prepared generators on $B_{A}$ are either monomial
in $y$ or strongly integrable.
\end{prop}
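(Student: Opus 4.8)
The plan is to run the scheme of the proof of Proposition~\ref{prop: preparation, 1st version}, exploiting that in a generator $T=\gamma g\mathrm{e}^{\mathrm{i}\varphi}$ naive in $y$ the transcendental element $\gamma\in\mathcal{C}_\Sigma^{\mathcal{M},\mathcal{F}}(X)$ only plays the role of a coefficient, so that just $g\in\mathcal{C}_\Sigma^{\mathcal{M}}(B)$ and $\varphi\in\mathcal{S}(B)$ need to be prepared with respect to $y$. I would first write $h$ as a finite sum of generators naive in $y$ and apply $\Pi_A$ (which preserves naiveness in $y$, since $g\circ\Pi_A\in\mathcal{C}_\Sigma^{\mathcal{M}}(B_A)$, $\varphi\circ\Pi_A\in\mathcal{S}(B_A)$, and $\gamma$ does not depend on $y$), then apply \cite[Proposition 4.7]{cmrs:mellin_constructible} simultaneously to all the functions $g$ and $\varphi$ occurring. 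This produces a $d$-cell decomposition of $\mathbb{R}^{m+1}$ compatible with $X$ on whose open cells $B=B_A$ each $g$ is a finite sum of terms $G_i(s,x)\,y^{\lambda_i(s)}(\log y)^{\mu_i}\Phi_i(s,x,y)$, with $G_i\in\mathcal{C}_\Sigma^{\mathcal{M}}(X)$ and $\Phi_i$ a $\psi$-prepared parametric strong function, and each $\varphi$ is prepared as in \eqref{eq: prepared subanalytic}, $\varphi(x,y)=\varphi_0(x)y^{\omega/d}U(x,y)$. By linearity I reduce to a single term $T_i=f_i(s,x)\,y^{\lambda_i(s)}(\log y)^{\mu_i}\Phi_i(s,x,y)\,\mathrm{e}^{\mathrm{i}\varphi(x,y)}$ on $B$, where $f_i:=\gamma G_i\in\mathcal{C}_\Sigma^{\mathcal{M},\mathcal{F}}(X)$ (the latter being a $\mathcal{C}_\Sigma^{\mathcal{M}}(X)$-module).

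If $B$ has bounded $y$-fibers, then for every $(s,x)$ the function $y\mapsto|T_i(s,x,y)\chi_B(x,y)|$ extends continuously to the compact interval $[a(x),b(x)]$, and arguing as in the proof of Proposition~\ref{prop: strongly int on bounded cells} (write $\gamma=\mathfrak{f}[\widetilde G]$ with $\widetilde G\in\mathcal{C}_\Sigma^{\mathcal{M}}(X\times\mathbb{R})_{\mathrm{int}}$, absorb $\mathrm{e}^{\mathrm{i}\varphi}$ by the change of variables of Lemma~\ref{lem: gen contenu}, and apply Tonelli's Theorem) one concludes that $T_i$ is strongly integrable on $B$.

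If $B$ has unbounded $y$-fibers, $B=\{(x,y):x\in X,\ y>a(x)\}$, I would use the nested $\psi$-prepared forms \eqref{eq: nester sub strong unit} and \eqref{eq:eq: nested prep form unb} to write $\varphi=Q+R$, where $Q\in\mathcal{S}(X)[y^{1/d}]$ gathers the finitely many monomials $\varphi_0(x)b_k(x)a(x)^{k/d}y^{(\omega-k)/d}$ with $\omega-k\geq 0$ (so $Q$ may be $\equiv 0$), and $R$ is the remaining tail, in which every power of $y$ is negative, so $R(x,y)\to 0$ as $y\to+\infty$. Refining the cell decomposition (the part of $B$ that gets cut off, being of the form $\{a(x)<y\leq\rho(x)\}$ with $\rho\in\mathcal{S}(X)$, has bounded $y$-fibers and is handled by the previous case), I may assume $|R|\leq 1/2$ on $B$; then $\mathrm{e}^{\mathrm{i}R}$ has the form $\sum_{k\geq 0}e_k(x)(a(x)/y)^{k/d}$ with $e_k$ bounded subanalytic, $e_0=1$, the series converging strongly, and hence $\Phi_i\mathrm{e}^{\mathrm{i}R}=\sum_{k\geq 0}\zeta_k(s,x)(a(x)/y)^{k/d}$ with $\zeta_k\in\mathcal{A}_\Sigma(X)$ and strong convergence (possibly after replacing $d$ by $d^2$, cf.\ Remark~\ref{rem: d}). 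Substituting gives $T_i=\sum_{k\geq 0}f_{i,k}(s,x)\,y^{\lambda_i(s)-k/d}(\log y)^{\mu_i}\mathrm{e}^{\mathrm{i}Q(x,y)}$ with $f_{i,k}:=f_i\,\zeta_k\,a^{k/d}\in\mathcal{C}_\Sigma^{\mathcal{M},\mathcal{F}}(X)$ (using that $\mathcal{C}_\Sigma^{\mathcal{M},\mathcal{F}}(X)$ is a $\mathbb{C}$-algebra by Corollary~\ref{cor: algebra} and contains $\mathcal{C}_\Sigma^{\mathcal{M}}(X)$), each summand being monomial in $y$ as in \eqref{eq: monomial generator}. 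Since $\Sigma$ has bounded width, $\sup_{s\in\Sigma}\Re(\lambda_i(s))<+\infty$, so there is $k_0$ with $\Re(\lambda_i(s)-k/d)<-1$ for all $s\in\Sigma$ and all $k>k_0$. I keep the finitely many monomial generators with $k\leq k_0$; for the tail $\sum_{k>k_0}$ I factor out $y^{\lambda_i(s)-(k_0+1)/d}(\log y)^{\mu_i}$, observe that what remains is a parametric strong function times $\chi_B$, and (again writing $\gamma=\mathfrak{f}[\widetilde G]$ and absorbing $\mathrm{e}^{\mathrm{i}Q}$ by a change of variables) conclude by strong convergence that the tail is a single strongly integrable generator on $B$. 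Summing over $i$ and over the cells yields the statement.

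The step I expect to be the main obstacle is the unbounded-fiber case: one must arrange, by a subanalytic shrinking of the cell, that the non-polynomial part $R$ of the oscillating phase is small enough for $\mathrm{e}^{\mathrm{i}R}$ to be a genuine prepared strong unit that can be multiplied into $\Phi_i$, and then check that the infinitely many \emph{non-critical} power--log monomials coming from the expansion of $\Phi_i\mathrm{e}^{\mathrm{i}R}$ reassemble into one function in $\mathcal{C}_\Sigma^{\mathcal{M}}(X\times\mathbb{R}^2)_{\mathrm{int}}$. This last point is exactly the mechanism of finitely many critical monomials plus an integrable tail already used in the proofs of Proposition~\ref{prop: C^M,F has extension} and of Proposition~\ref{prop: integration of monomial generators}.
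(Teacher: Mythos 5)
Your overall architecture coincides with the paper's: prepare via Proposition \ref{prop: preparation, 1st version} (observing that naiveness in $y$ survives $\Pi_A$ and the preparation), dispose of the bounded-fiber cells by Proposition \ref{prop: strongly int on bounded cells}, and on unbounded-fiber cells split the phase $\varphi=Q+R$ into a polynomial part and a decaying tail, absorb $\mathrm{e}^{\mathrm{i}R}$ into the parametric strong function, expand in powers of $y^{-1/d}$, keep the finitely many critical monomials and show the remainder is strongly integrable. The final bookkeeping (finitely many critical exponents because $\Sigma$ has bounded width, the tail reassembling into one strongly integrable generator) is exactly the paper's.

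There is, however, a genuine gap at the step you yourself flag as the main obstacle. You deduce from the pointwise bound $\left|R\right|\leq\frac{1}{2}$ (obtained by a subanalytic shrinking of the cell) that $\mathrm{e}^{\mathrm{i}R}=\sum_{k\geq0}e_{k}\left(x\right)\left(a\left(x\right)/y\right)^{k/d}$ with $e_{k}$ \emph{bounded} and the series \emph{strongly convergent}. This does not follow. Writing $R\left(x,y\right)=\sum_{j\geq1}c_{j}\left(x\right)\left(a\left(x\right)/y\right)^{j/d}$, the coefficients are $c_{j}\left(x\right)=\varphi_{0}\left(x\right)a\left(x\right)^{\omega/d}b_{j+\omega}\left(x\right)$, and the factor $\varphi_{0}\left(x\right)a\left(x\right)^{\omega/d}$ may be unbounded in $x$ even when the \emph{sum} $R$ is uniformly small on the (shrunk) cell: a bound on $\sum_{j}c_{j}\left(x\right)w^{j}$ for $w\in\left[0,1\right]$ gives no bound on the individual $c_{j}\left(x\right)$, hence none on the $e_{k}\left(x\right)$ (already $e_{1}=\mathrm{i}c_{1}$), and no strongly convergent series $F$ with the composition structure $F\circ\psi$ required of a $\psi$-prepared strong unit. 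This is precisely why the paper's proof does not argue via smallness of $R$: it distinguishes the cases $\left|\varphi_{0}\right|$ bounded from above and $\left|\varphi_{0}\right|$ bounded away from zero, and in the latter case passes to the cell $y>a\left(x\right)\varphi_{0}\left(x\right)^{d}$ and rewrites the tail in the new bounded variable $\left(a\left(x\right)\varphi_{0}\left(x\right)^{d}/y\right)^{1/d}$, whose coefficients $b_{k+\omega+1}\left(x\right)\varphi_{0}\left(x\right)^{-k}$ are bounded. Your route could be repaired by re-applying the subanalytic preparation theorem to $R$ on the shrunk cell (since $R$ is bounded there and tends to $0$ in $y$, it re-prepares with negative exponent and bounded coefficient, after which the exponential is a genuine prepared strong unit, as in the paper's case $\omega<0$), together with a compatible re-preparation of $\Phi_{i}$ and of the scale $\left(a\left(x\right)/y\right)^{1/d}$ with respect to the new lower boundary; but as written the claimed expansion of $\mathrm{e}^{\mathrm{i}R}$ is unjustified.
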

\begin{proof}
Apply Proposition \ref{prop: preparation, 1st version} to $h$: the
proof shows that this produces a cell decomposition and, for each
cell $A$, a presentation of $h\circ\Pi_{A}$ as a finite sum of prepared
generators which are themselves naive in $y$. By Proposition \ref{prop: strongly int on bounded cells},
on cells with bounded $y$-fibers the generators are strongly integrable.
Hence we may concentrate on a cell $B=B_{A}$ of the form \eqref{eq:A_theta=00003DB}
with unbounded $y$-fibers and on a generator $T$ which is prepared
on $B$ and naive in $y$. Thus $T$ has the form
\begin{equation}
T\left(s,x,y\right)=f\left(s,x\right)y^{\lambda\left(s\right)}\left(\log y\right)^{\mu}\text{e}^{\text{i}\varphi\left(x,y\right)}\Phi\left(s,x,y\right),\label{eq: naive in y in 5.7}
\end{equation}
where $f\in\mathcal{C}_{\Sigma}^{\mathcal{M},\mathcal{F}}\left(X\right),\ \lambda\left(s\right),\mu$
are as in Definition \ref{def: prepared generator} and the prepared
forms (with respect to $\psi$ as in \eqref{eq: psi}) of $\varphi\in\mathcal{S}\left(B\right)$
and $\Phi\in\mathcal{A}_{\Sigma}\left(B\right)$ are as in \eqref{eq: prepared subanalytic}
and \eqref{eq:eq: nested prep form unb}, respectively. Up to partitioning
$X$ into subanalytic cells, we may suppose that $\left|\varphi_{0}\right|$
is either bounded from above or bounded away from zero. If $\left|\varphi_{0}\right|$
is bounded and $\omega<0$ then $\text{e}^{\text{i}\varphi}$ is a
complex-valued $\psi$-prepared subanalytic strong function. If $\omega\geq0$
then write
\[
\varphi\left(x,y\right)=Q\left(x,y\right)+\varphi_{>}\left(x,y\right),
\]
where $Q\in\mathcal{S}\left(X\right)\left[y^{\frac{1}{d}}\right]$
is a polynomial in the variable $y^{\frac{1}{d}}$ with coefficients
subanalytic functions of $x$ and
\[
\varphi_{>}\left(x,y\right)=\varphi_{0}\left(x\right)\left(a\left(x\right)\right)^{\frac{\omega}{d}}\left(\frac{a\left(x\right)}{y}\right)^{\frac{1}{d}}\sum_{k\geq0}b_{k+\omega+1}\left(x\right)\left(\frac{a\left(x\right)}{y}\right)^{\frac{k}{d}}.
\]
If $\left|\varphi_{0}\right|$ is bounded from above then clearly
$\text{e}^{\text{i}\varphi_{>}}$ is a complex-valued $\psi$-prepared
subanalytic strong function. If $\left|\varphi_{0}\right|$ is bounded
away from zero, then we write
\[
\varphi_{>}\left(x,y\right)=\left(a\left(x\right)\right)^{\frac{\omega}{d}}\left(\frac{a\left(x\right)\varphi_{0}\left(x\right)^{d}}{y}\right)^{\frac{1}{d}}\sum_{k\geq0}b_{k+\omega+1}\left(x\right)\varphi_{0}\left(x\right)^{-k}\left(\frac{a\left(x\right)\varphi_{0}\left(x\right)^{d}}{y}\right)^{\frac{k}{d}}.
\]
At the price of creating a new cell with bounded $y$-fibers (which
can be dealt with by Proposition \ref{prop: strongly int on bounded cells}),
we may suppose that $y>a\left(x\right)\varphi_{0}\left(x\right)^{d}$
on $B$, so that, up to modifying the definition of $\psi$, it is
clear that $\varphi_{>}$ is bounded on $B$ and hence $\text{e}^{\text{i}\varphi_{>}}$
is a complex-valued $\psi$-prepared subanalytic strong function (we
deal in the same way with the case when $\left|\varphi_{0}\right|$
is bounded away from zero and $\omega<0$).

It follows that $\text{e}^{\text{i}\varphi_{>}}$ can be absorbed
into $\Phi$ and hence
\[
T\left(s,x,y\right)=y^{\lambda\left(s\right)}\left(\log y\right)^{\mu}\text{e}^{\text{i}Q\left(x,y\right)}\sum_{k}f_{k}\left(s,x\right)y^{-\frac{k}{d}},
\]
where $f_{k}\left(s,x\right)=f\left(s,x\right)\xi_{k}\left(s,x\right)a\left(x\right)^{\frac{k}{d}}\in\mathcal{C}_{\Sigma}^{\mathcal{M},\mathcal{F}}\left(X\right)$,
in the notation of \eqref{eq:eq: nested prep form unb}. Since $\Sigma$
has bounded width, there exists $k_{0}\in\mathbb{N}$ such that, setting
$\lambda_{k}\left(s\right)=\lambda\left(s\right)-\frac{k}{d}$, there
exists $s\in\Sigma$ such that $\Re\left(\lambda_{k}\left(s\right)\right)\geq-1$
if and only if $k\leq k_{0}$. It follows that
\[
T\left(s,x,y\right)=\sum_{k\leq k_{0}}f_{k}\left(s,x\right)y^{\lambda_{k}\left(s\right)}\left(\log y\right)^{\mu}\text{e}^{\text{i}Q\left(x,y\right)}+R\left(s,x,y\right),
\]
where the terms of the sum are monomial in $y$ and $R$ is strongly
integrable.
\end{proof}
In order to extend Proposition \ref{prop: naive in y are sums of mon + ssint}
to generators which are not naive in $y$ we need some preparatory
work.
\begin{lem}
\label{lem: cond suff ssint}Let $T$ be a prepared generator (as
in Definition \ref{def: prepared generator}) on a cell $B$ with
unbounded $y$-fibers. Suppose that, either
\begin{enumerate}
\item $D$ has unbounded $t$-fibers and $\forall s\in\Sigma,\ \Re\left(\lambda\left(s\right)\right)<-1$,
or
\item $D$ has bounded $t$-fibers and $\forall s\in\Sigma,\ \Re\left(\lambda\left(s\right)\right)+\frac{\Delta}{d}<-1\text{ and }\Re\left(\varrho\left(s\right)\right)\leq0$.
\end{enumerate}
Then $T$ is strongly integrable.
\end{lem}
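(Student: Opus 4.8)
The plan is to reproduce the argument from the proof of Proposition~\ref{prop: strongly int on bounded cells}, the new feature being that $B$ has unbounded $y$-fibers, so that integrability at $y=+\infty$ (and, in case~(1), also at $t=+\infty$) must be extracted from the two hypotheses. Write $T$ in the integral form \eqref{eq: prepared gen as integral of G}, that is, $T(s,x,y)=\int_{\mathbb{R}}G(s,x,y,t)\mathrm{e}^{\mathrm{i}(\varphi(x,y)+\sigma t)}\mathrm{d}t$ with $G(s,x,y,t)=g(s,x)\,y^{\lambda(s)}(\log y)^{\mu}\chi_{D}(x,y,t)\,t^{\varrho(s)}(\log t)^{\nu}\Phi(s,x,y,t)\in\mathcal{C}_{\Sigma}^{\mathcal{M}}(X\times\mathbb{R}^{2})$. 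By Remark~\ref{rem: superintegrable old fashion} it suffices to check that $G\in\mathcal{C}_{\Sigma}^{\mathcal{M}}(X\times\mathbb{R}^{2})_{\mathrm{int}}$, that is, that for each fixed $(s,x)\in(\Sigma\setminus P)\times X$ the function $(y,t)\mapsto G(s,x,y,t)$ lies in $L^{1}(\mathbb{R}^{2})$. Since $\Phi$ is a parametric strong function it is bounded on $D$ by some constant $C_{s}$, and since $a(x),\widetilde a(x,y)\ge1$ we have $|\log y|^{\mu}=(\log y)^{\mu}$ and $|\log t|^{\nu}=(\log t)^{\nu}$ on the relevant ranges; by Tonelli's theorem the problem then reduces to the finiteness of $\int_{a(x)}^{+\infty}y^{\Re(\lambda(s))}(\log y)^{\mu}\big(\int t^{\Re(\varrho(s))}(\log t)^{\nu}\,\mathrm{d}t\big)\mathrm{d}y$ (up to the factor $|g(s,x)|\,C_{s}$), where the inner integral runs over $(\widetilde a(x,y),+\infty)$ in case~(1) and over $(\widetilde a(x,y),\widetilde b(x,y))$ in case~(2).

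In case~(1), since $D$ has unbounded $t$-fibers and $\gamma\in\Gamma_{\Sigma}(B)$, the constraint built into the definition of a transcendental element (Definition~\ref{def: naive and generators}) gives $\Re(\varrho(s))<-1$ for all $s\in\Sigma$. Hence the inner integral is dominated by $\int_{1}^{+\infty}t^{\Re(\varrho(s))}(\log t)^{\nu}\,\mathrm{d}t<+\infty$, uniformly in $(x,y)$, and the outer integral is then a constant multiple of $\int_{a(x)}^{+\infty}y^{\Re(\lambda(s))}(\log y)^{\mu}\,\mathrm{d}y$, which converges because $\Re(\lambda(s))<-1$.

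In case~(2), $D$ has bounded $t$-fibers. Using $\Re(\varrho(s))\le0$ and $t\ge1$ one has $t^{\Re(\varrho(s))}\le1$, and $\log t\le\log\widetilde b(x,y)$ on $(\widetilde a(x,y),\widetilde b(x,y))$, so the inner integral is at most $(\log\widetilde b(x,y))^{\nu}\big(\widetilde b(x,y)-\widetilde a(x,y)\big)$. Substituting the prepared forms \eqref{eq: prepared t-bounds} and using that the subanalytic strong units $u_{b},u_{d}$ are bounded, for fixed $x$ this is bounded by $C_{x}(1+\log y)^{\nu}y^{\Delta/d}$; plugging back in, the finiteness of the double integral reduces to that of $\int_{a(x)}^{+\infty}y^{\Re(\lambda(s))+\Delta/d}(1+\log y)^{\mu+\nu}\,\mathrm{d}y$, which holds precisely because $\Re(\lambda(s))+\Delta/d<-1$. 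In either case $G\in\mathcal{C}_{\Sigma}^{\mathcal{M}}(X\times\mathbb{R}^{2})_{\mathrm{int}}$, hence $T$ is strongly integrable.

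Most of this is routine, resting on the elementary fact that $\int_{1}^{+\infty}y^{c}(\log y)^{N}\,\mathrm{d}y<+\infty$ exactly when $c<-1$. The one point that needs care is case~(2): one must use the prepared shapes of $\widetilde a,\widetilde b,\widetilde b-\widetilde a$ (and the boundedness of the corresponding strong units) to see that the $t$-integral grows at most like $y^{\Delta/d}$ up to logarithmic factors as $y\to+\infty$, which is exactly what the exponent $\Delta$ in the hypothesis, together with the assumption $\Re(\varrho(s))\le0$, is designed to control.
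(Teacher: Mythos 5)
Your proof is correct and follows essentially the same route as the paper's: reduce via Remark \ref{rem: superintegrable old fashion} and Tonelli to the joint integrability of $G$, then bound the inner $t$-integral uniformly by a constant in case (1) (using $\Re(\varrho(s))<-1$, forced by the definition of a transcendental element on a cell with unbounded $t$-fibers) and by $C_x(\log y)^{\nu}y^{\Delta/d}$ in case (2) via the prepared forms \eqref{eq: prepared t-bounds}, so that the remaining $y$-integral converges under the stated exponent hypotheses. No gaps.
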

\begin{proof}
Suppose that $T$ has no poles outside some closed discrete set $P\subseteq\mathbb{C}$.
If we write $T$ as in \eqref{eq: prepared gen as integral of G},
then it suffices to prove that $G\in\mathcal{C}_{\Sigma}^{\mathcal{M}}\left(X\times\mathbb{R}^{2}\right)_{\text{int}}$.
By Tonelli's Theorem, it is enough to prove that for all $\left(s,x\right)\in\left(\Sigma\setminus P\right)\ni X$,
the function
\[
f_{\left(s,x\right)}:y\longmapsto\int_{\mathbb{R}}\left|G\left(s,x,y,t\right)\right|\text{d}t
\]
is in $L^{1}\left(\mathbb{R}\right)$.
\begin{enumerate}
\item Let $\widetilde{b}=+\infty$. Since $\forall s\in\Sigma,\ \Re\left(\varrho\left(s\right)\right)<-1$,
there is a positive constant $M$ such that
\[
\int_{1}^{+\infty}\left|t^{\varrho\left(s\right)}\left(\log t\right)^{\nu}\Phi\left(s,x,y,t\right)\right|\mathrm{d}t\leq M.
\]
Therefore,
\[
\left|f_{\left(s,x\right)}\left(y\right)\right|\leq M\left|g\left(s,x\right)\right|\chi_{B}\left(x,y\right)y^{\lambda\left(s\right)}\left(\log y\right)^{\mu}
\]
and, as $\forall s\in\Sigma,\ \Re\left(\lambda\left(s\right)\right)<-1$,
we have that $f_{\left(s,x\right)}\in L^{1}\left(\mathbb{R}\right)$.
\item Let now $\widetilde{b}<+\infty$. Since $\forall s\in\Sigma,\ \Re\left(\varrho\left(s\right)\right)\leq0$
and $\Phi$ is bounded, there is a positive constant $M\left(x\right)$
such that
\[
\int_{\widetilde{a}\left(x,y\right)}^{\widetilde{b}\left(x,y\right)}\left|t^{\varrho\left(s\right)}\left(\log t\right)^{\nu}\Phi\left(s,x,y,t\right)\right|\mathrm{d}t\leq M\left(x\right)y^{\frac{\Delta}{d}}\left(\log y\right)^{\nu}.
\]
Therefore,
\[
\left|f_{\left(s,x\right)}\left(y\right)\right|\leq M\left(x\right)\left|g\left(s,x\right)\right|\chi_{B}\left(x,y\right)y^{\lambda\left(s\right)+\frac{\Delta}{d}}\left(\log y\right)^{\mu+\nu}
\]
and, as $\forall s\in\Sigma,\ \Re\left(\lambda\left(s\right)\right)+\frac{\Delta}{d}<-1$,
we have that $f_{\left(s,x\right)}\in L^{1}\left(\mathbb{R}\right)$.
\end{enumerate}
\end{proof}
\begin{lem}
\label{lem: int by parts}Let $T$ be a prepared generator (as in
Definition \ref{def: prepared generator}) on a cell $B$ with unbounded
$y$-fibers and let $k_{0}\in\mathbb{N}.$ Then $T$ can be rewritten
as a finite sum of prepared generators which are either naive in $y$
or such that for all $s\in\Sigma,\ \Re\left(\varrho\left(s\right)\right)<-k_{0}$.
\end{lem}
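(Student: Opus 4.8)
The plan is an iterated integration by parts in the variable $t$ inside the transcendental element of $T$, in exactly the spirit of the proof of Proposition~\ref{prop: C^M,F has extension}: each step lowers the real part of the $t$-exponent by one, and is paid for by a boundary term that will turn out to be naive in $y$. Write $T=g(s,x)\,y^{\lambda(s)}(\log y)^{\mu}\mathrm{e}^{\mathrm{i}\varphi(x,y)}\,\gamma(s,x,y)$ as in Definition~\ref{def: prepared generator}, with transcendental element $\gamma(s,x,y)=\int_{\mathbb{R}}\chi_{D}(x,y,t)\,t^{\varrho(s)}(\log t)^{\nu}\Phi(s,x,y,t)\,\mathrm{e}^{\sigma\mathrm{i}t}\,\mathrm{d}t$, where $D$ and the $1$-bounded map $\varPsi$ are as in Notation~\ref{notation: t-cell}. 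Since $\varrho(s)=(\widetilde{\ell}s+\widetilde{\eta})/d$ and $\Sigma$ has bounded width, $c:=\sup_{s\in\Sigma}\Re(\varrho(s))$ is finite; we may assume $c\geq-k_{0}$, and then take $N=\lceil c+k_{0}\rceil+1$, so that $\Re(\varrho(s)-N)<-k_{0}$ for all $s\in\Sigma$.

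The one point worth isolating is that the derivation $t\,\partial_{t}$ maps $\varPsi$-prepared parametric strong functions to $\varPsi$-prepared parametric strong functions: the only components of $\varPsi$ (see~\eqref{eq: big psi}) that involve $t$ are $(a_{0}(x)y^{\alpha/d}/t)^{1/d}$ and $(t/b_{0}(x)y^{\beta/d})^{1/d}$, on which $t\,\partial_{t}$ acts as multiplication by $-1/d$ and $+1/d$ respectively, and if a series $F=\sum_{I}\xi_{I}(s)Z^{I}$ converges strongly then so does each $Z_{j}\partial_{Z_{j}}F$. Hence $\partial_{t}\bigl(t^{\varrho(s)}(\log t)^{\nu}\Phi\bigr)$ equals $t^{\varrho(s)-1}$ times a finite sum of terms $(\log t)^{\nu'}\Phi'$ with $\nu'\leq\nu$ and $\Phi'$ a $\varPsi$-prepared parametric strong function. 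Integrating the factor $\mathrm{e}^{\sigma\mathrm{i}t}$, whose primitive $-\sigma\mathrm{i}\,\mathrm{e}^{\sigma\mathrm{i}t}$ is bounded, and iterating this $N$ times, the transcendental element $\gamma$ becomes a finite sum of boundary contributions evaluated at $t=\widetilde{a}(x,y)$ and at $t=\widetilde{b}(x,y)$ — the latter being absent when $D$ has unbounded $t$-fibers, because in that case $\Re(\varrho(s))<-1$ on $\Sigma$ by Definition~\ref{def: naive and generators} and $\Phi$ is bounded, so the integrand vanishes at $+\infty$ at every stage — each carrying a factor $t^{\varrho(s)-j}$ with $0\leq j\leq N-1$ and a $\log t$-power $\leq\nu$, plus one leftover integral $\gamma_{N}$ of the same shape as $\gamma$ but with $\varrho$ replaced by $\varrho-N$ (and $\Phi$, $\nu$ adjusted as above). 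Then $g\,y^{\lambda}(\log y)^{\mu}\mathrm{e}^{\mathrm{i}\varphi}\gamma_{N}$ is a prepared generator on $B$ with $\Re(\varrho(s)-N)<-k_{0}$ for all $s\in\Sigma$, one of the two admissible shapes.

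It remains to recognise each boundary contribution, multiplied by the prefactor $g(s,x)y^{\lambda(s)}(\log y)^{\mu}\mathrm{e}^{\mathrm{i}\varphi(x,y)}$ of $T$, as a generator naive in $y$ in the sense of Definition~\ref{def: naive in y}. Here I would insert the prepared forms $\widetilde{a}=a_{0}(x)y^{\alpha/d}u_{a}(x,y)$ and $\widetilde{b}=b_{0}(x)y^{\beta/d}u_{b}(x,y)$ from~\eqref{eq: prepared t-bounds} (with $a_{0},b_{0}>0$, since $\widetilde{a},\widetilde{b}\geq1$ and $u_{a},u_{b}$ are strong units) and expand: $\widetilde{a}^{\,\varrho(s)-j}$ is a product of a function of $(s,x)$ in $\mathcal{C}_{\Sigma}^{\mathcal{M}}(X)$ (a parametric power times a complex power of $a_{0}$), a parametric power of $y$, and the $\psi$-prepared parametric strong unit $u_{a}^{\varrho(s)-j}$; the power $(\log\widetilde{a})^{\nu_{j}}=(\log a_{0}(x)+\tfrac{\alpha}{d}\log y+\log u_{a}(x,y))^{\nu_{j}}$ is a finite sum, each summand an element of $\mathcal{C}^{\mathbb{C}}(X)$ times a power of $\log y$ times a $\psi$-prepared subanalytic strong function; substituting $t=\widetilde{a}(x,y)$ into the two $t$-components of $\varPsi$ produces $u_{a}^{-1/d}$ and $(a_{0}u_{a}/b_{0})^{1/d}y^{(\alpha-\beta)/d^{2}}$, which are $1$-bounded subanalytic functions (using $\widetilde{a}\leq\widetilde{b}$ together with the $1$-boundedness of $\varPsi$), so that $\Phi(s,x,y,\widetilde{a}(x,y))$ is $\psi'$-prepared parametric strong for the $1$-bounded map $\psi'$ obtained by adjoining those two functions to $\psi$; and $\mathrm{e}^{\sigma\mathrm{i}\widetilde{a}(x,y)}=\mathrm{e}^{\mathrm{i}\varphi'(x,y)}$ with $\varphi':=\varphi+\sigma\widetilde{a}\in\mathcal{S}(B)$. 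Multiplying all of this together, each boundary contribution times the prefactor is a finite sum of functions $\widehat{g}(s,x)\,y^{\widehat{\lambda}(s)}(\log y)^{\widehat{\mu}}\mathrm{e}^{\mathrm{i}\varphi'(x,y)}\,\widehat{\Phi}(s,x,y)$ with $\widehat{g}\in\mathcal{C}_{\Sigma}^{\mathcal{M}}(X)$, $\widehat{\mu}\in\mathbb{N}$, $\widehat{\lambda}$ affine in $s$, $\varphi'\in\mathcal{S}(B)$ and $\widehat{\Phi}$ $\psi'$-prepared parametric strong; writing it as $\gamma'\,g'\,\mathrm{e}^{\mathrm{i}\varphi'}$ with $\gamma':=\widehat{g}\in\mathcal{C}_{\Sigma}^{\mathcal{M}}(X)\subseteq\mathcal{C}_{\Sigma}^{\mathcal{M},\mathcal{F}}(X)$ and $g':=y^{\widehat{\lambda}(s)}(\log y)^{\widehat{\mu}}\widehat{\Phi}\in\mathcal{C}_{\Sigma}^{\mathcal{M}}(B)$ exhibits it as naive in $y$ (and, if the letter of Definition~\ref{def: prepared generator} is wanted, Lemma~\ref{lem: gen contient} re-expands it as a sum of prepared generators whose transcendental part does not depend on $y$). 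The $\widetilde{b}$-contributions are handled identically, via $(a_{0}y^{\alpha/d}/\widetilde{b})^{1/d}=(a_{0}/b_{0})^{1/d}u_{b}^{-1/d}y^{(\alpha-\beta)/d^{2}}$ and $(\widetilde{b}/b_{0}y^{\beta/d})^{1/d}=u_{b}^{1/d}$. Summing over the finitely many boundary contributions gives the asserted decomposition.

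The main obstacle is precisely the bookkeeping of the last paragraph: checking that substituting the prepared bounds $t=\widetilde{a}(x,y)$, $t=\widetilde{b}(x,y)$ into a $\varPsi$-prepared parametric strong function yields a $\psi'$-prepared one after a harmless enlargement of $\psi$, and that $\widetilde{a}^{\varrho(s)}$ and $\log\widetilde{a}$ distribute over the prepared form inside the allowed classes. All of this rests on the closure properties of strongly convergent series — under products, under the derivations $Z_{j}\partial_{Z_{j}}$, and under composition with (suitably adjusted) $1$-bounded subanalytic maps — as developed in~\cite{cmrs:mellin_constructible}; the integration-by-parts engine itself is routine, as in the proof of Proposition~\ref{prop: C^M,F has extension}.
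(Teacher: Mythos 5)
Your proposal is correct and follows essentially the same route as the paper's proof: iterated integration by parts in $t$ (integrating the exponential, differentiating the power-log-strong-function factor), the observation that $t\,\partial_t$ preserves $\varPsi$-prepared parametric strong functions via the derivations $Z_j\partial_{Z_j}$ on the strongly convergent series, recognition of the boundary terms at $t=\widetilde a,\widetilde b$ as generators naive in $y$ after substituting the prepared forms of the bounds and enlarging $\psi$, and an iteration count $\lceil\sup_{s\in\Sigma}\Re(\varrho(s))+k_0\rceil+1$ made finite by the bounded width of $\Sigma$. Your bookkeeping of the factors $\widetilde a^{\varrho(s)-j}$ and $(\log\widetilde a)^{\nu_j}$ is in fact slightly more explicit than the paper's, which only details why $\Phi(s,x,y,\widetilde c(x,y))$ is $\psi$-prepared.
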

\begin{proof}
Suppose that $T$ has no poles outside some closed discrete set $P\subseteq\mathbb{C}$.
Write $T$ on $B$ as
\[
g\left(s,x\right)y^{\lambda\left(s\right)}\left(\log y\right)^{\mu}\text{e}^{\text{i}\varphi\left(x,y\right)}\gamma\left(s,x,y\right),
\]
where
\[
\gamma\left(s,x,y\right)=\int_{\widetilde{a}\left(x,y\right)}^{\widetilde{b}\left(x,y\right)}t^{\varrho\left(s\right)}\left(\log t\right)^{\nu}\Phi\left(s,x,y,t\right)\text{e}^{\sigma\text{i}t}\text{d}t.
\]
Since $B$ has unbounded $y$-fibers, $\Phi$ has the following $\varPsi$-prepared
nested form (see \cite[Remark 3.7]{cmrs:mellin_constructible}) with
respect to the last three components of $\varPsi$:
\[
\Phi\left(s,x,y,t\right)=F\circ\left(s,x,\Xi\left(x,y,t\right)\right),
\]
where
\[
F\left(s,x,Y_{0},Y_{1},Y_{2}\right)=\sum_{k,m,n}\xi_{k,m,n}\left(s,x\right)Y_{0}^{k}Y_{1}^{m}Y_{2}^{n}\in\mathcal{A}_{\Sigma}\left(x\right)\left\llbracket Y_{0},Y_{1},Y_{2}\right\rrbracket
\]
is strongly convergent and
\[
\Xi\left(x,y,t\right)=\left(\left(\frac{a\left(x\right)}{y}\right)^{\frac{1}{d}},\left(\frac{a_{0}\left(x\right)y^{\frac{\alpha}{d}}}{t}\right)^{\frac{1}{d}},\left(\frac{t}{b_{0}\left(x\right)y^{\frac{\beta}{d}}}\right)^{\frac{1}{d}}\right)
\]
(the variable $Y_{2}$ and the last component of $\Xi$ are missing
if $D$ has unbounded $t$-fibers).

Fix $\left(s,x,y\right)\in\left(\Sigma\setminus P\right)\times B$
and apply integration by parts to the transcendental element $\gamma$,
where we integrate $\mathrm{e}^{\mathrm{i}\sigma t}$ and derivate
$f\left(t\right):=t^{\varrho\left(s\right)}\left(\log t\right)^{\nu}\Phi\left(s,x,y,t\right)$.
For this, write
\[
f'\left(t\right)=t^{-1}\left[\varrho\left(s\right)t^{\varrho\left(s\right)}\left(\log t\right)^{\nu}\Phi+\nu t^{\varrho\left(s\right)}\left(\log t\right)^{\nu-1}\Phi+t^{\varrho\left(s\right)}\left(\log t\right)^{\nu}\widetilde{\Phi}\right],
\]
where
\[
\widetilde{\Phi}\left(s,x,y,t\right)=\widetilde{F}\circ\left(s,x,\Xi\left(x,y,t\right)\right)\text{ with }\widetilde{F}=-\frac{1}{d}Y_{1}\frac{\partial F}{\partial Y_{1}}+\frac{1}{d}Y_{2}\frac{\partial F}{\partial Y_{2}}.
\]
In particular, $\widetilde{F}$ is strongly convergent and $\widetilde{\Phi}$
is a $\varPsi$-prepared parametric strong function. Notice that each
of the terms of $f'\left(t\right)\frac{\mathrm{e}^{\mathrm{i}\sigma t}}{\mathrm{i}\sigma}$
gives rise to a prepared generator such that the exponent of $t$
in the transcendental element is $\varrho\left(s\right)-1$. The other
terms produced by integration by parts are of the form
\begin{equation}
-\sigma\text{i}\widetilde{c}\left(x,y\right)^{\varrho\left(s\right)}\left(\log\widetilde{c}\left(x,y\right)\right)^{\nu}\mathrm{e}^{\sigma\mathrm{i}\widetilde{c}\left(x,y\right)}\Phi\left(s,x,y,\widetilde{c}\left(x,y\right)\right),\label{eq:naive terms in integration by parts}
\end{equation}
where $\widetilde{c}$ is either $\widetilde{a}$ or $\widetilde{b}$
(or the whole term is replaced by zero, if $\widetilde{b}=+\infty$,
since then in this case for all $s\in\Sigma,\ \Re\left(\varrho\left(s\right)\right)<-1<0$).
Now, if $\widetilde{c}=\widetilde{b}$, then
\[
\Xi\left(x,y,\widetilde{b}\left(x,y\right)\right)=\left(\left(\frac{a\left(x\right)}{y}\right)^{\frac{1}{d}},\left(\frac{a_{0}\left(x\right)y^{\frac{\alpha}{d}}}{b_{0}\left(x\right)y^{\frac{\beta}{d}}u_{b}\left(x,y\right)}\right)^{\frac{1}{d}},\left(u_{b}\left(x,y\right)\right)^{\frac{1}{d}}\right)
\]
and
\[
\frac{a_{0}\left(x\right)y^{\frac{\alpha}{d}}}{b_{0}\left(x\right)y^{\frac{\beta}{d}}u_{b}\left(x,y\right)}=\left[\frac{a_{0}\left(x\right)}{b_{0}\left(x\right)a\left(x\right)^{\frac{\beta-\alpha}{d}}}\right]\left(u_{b}\left(x,y\right)\right)^{-1}\left(\frac{a\left(x\right)}{y}\right)^{\frac{\beta-\alpha}{d}}.
\]
The term between square brackets is bounded on $B$, because all the
other terms are, so we can add it to the list of functions $c\left(x\right)$
in the definition of $\psi$ (see \ref{eq: psi}). The unit $u_{b}$
is $\psi$-prepared hence $\Phi\left(s,x,y,\widetilde{b}\left(x,y\right)\right)$
is a $\psi$-prepared parametric strong function. A similar calculation
shows that so is $\Phi\left(s,x,y,\widetilde{a}\left(x,y\right)\right)$.
Thus the terms \ref{eq:naive terms in integration by parts} are generators
which are naive in $y$ and prepared with respect to $\psi$.

We iterate the process to further reduce the exponent of $t$: since
$\Sigma$ has bounded width, ${\displaystyle \sup_{s\in\Sigma}}\left(\Re\left(\varrho\left(s\right)\right)\right)\in\mathbb{R}$.
Let $M:={\displaystyle \left\lfloor {\displaystyle \sup_{s\in\Sigma}}\left(\Re\left(\varrho\left(s\right)\right)\right)\right\rfloor }$.
By integrating by parts $M+k_{0}+1$ times, we can rewrite $T$ as
a finite sum of generators which are either naive in $y$ and prepared
with respect to $\psi$, or such that the exponent of $t$ in the
transcendental element $\gamma$ is $\varrho\left(s\right)-M-k_{0}-1$,
whose real part is $<-k_{0}$.
\end{proof}
\begin{prop}
\label{prop: generator sum of naive and ssint}Let $T$ be a prepared
generator (as in Definition \ref{def: prepared generator}) on a cell
$B$ with unbounded $y$-fibers and suppose that $T$ has no poles
outside some closed discrete set $P\subseteq\mathbb{C}$. Then there
exist a closed discrete set $P'\subseteq\mathbb{C}$ containing $P$
and such that $P'\setminus P$ is contained in a finitely generated
$\mathbb{Z}$-lattice, a finite partition of $B$ into subcells and,
on each subcell $B'$ which is open over $X$, finitely many generators
$T_{i}$ which are either naive in $y$ or strongly integrable on
$B'$, such that for all $\left(s,x,y\right)\in\left(\Sigma\setminus P'\right)\times B',\ T\left(s,x,y\right)=\sum T_{i}\left(s,x,y,\right)$.
\end{prop}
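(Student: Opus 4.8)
The plan is to combine Lemma \ref{lem: int by parts} (which lowers the exponent $\varrho(s)$ of the inner variable $t$ at the cost of producing some naive-in-$y$ generators) with Lemma \ref{lem: cond suff ssint} (which gives strong integrability once $\Re(\lambda(s))$ or $\Re(\varrho(s))$ is sufficiently negative relative to the data $\Delta/d$), together with the same power-by-power integration-by-parts trick in the variable $y$ already used in the proof of Proposition \ref{prop: C^M,F has extension} and in Proposition \ref{prop: integration of monomial generators}. Since $\Sigma$ has bounded width, all of $\Re(\lambda(s))$ and $\Re(\varrho(s))$ range over bounded intervals as $s$ varies in $\Sigma$, so only finitely many "critical" exponents can fail the integrability bounds of Lemma \ref{lem: cond suff ssint}, and this is what makes the finite reduction possible.

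First I would split into the two cases of Notation \ref{notation: t-cell}. If $D$ has unbounded $t$-fibers, then by Definition \ref{def: naive and generators} we already have $\Re(\varrho(s))<-1$ for all $s\in\Sigma$; applying Lemma \ref{lem: int by parts} with $k_0$ chosen so large that $-k_0$ beats any relevant threshold, we rewrite $T$ as a finite sum of generators that are either naive in $y$ (done) or have $\Re(\varrho(s))<-k_0$. For the latter, the inner $t$-integral is then uniformly bounded (as in case (1) of Lemma \ref{lem: cond suff ssint}), so $|f_{(s,x)}(y)|\le M|g(s,x)|\chi_B(x,y)y^{\Re(\lambda(s))}(\log y)^\mu$; the only obstruction to $L^1$-integrability in $y$ is finitely many power-log monomials $y^{\lambda(s)}(\log y)^\mu$ with $\Re(\lambda(s))\ge -1$, since $\Sigma$ has bounded width. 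For each such critical monomial I would integrate by parts in $y$ (integrating $\mathrm e^{\mathrm i Q}$ — after preparing $\varphi$ as $Q(x,y)+\varphi_>(x,y)$ exactly as in the proof of Proposition \ref{prop: naive in y are sums of mon + ssint}, absorbing $\mathrm e^{\mathrm i\varphi_>}$ into the strong function and possibly creating a bounded-$y$-fiber subcell handled by Proposition \ref{prop: strongly int on bounded cells}), lowering the exponent of $y$ until every remaining term satisfies $\Re(\lambda(s))<-1$ on all of $\Sigma$; the boundary terms produced are naive in $y$. This is the step where the closed discrete set $P'\supseteq P$ with $P'\setminus P$ in a finitely generated $\mathbb Z$-lattice gets enlarged (the integration-by-parts in $y$ introduces poles along the zero sets of the shifted exponents, exactly as in Proposition \ref{prop: C^M,F has extension}).

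If instead $D$ has bounded $t$-fibers, I would first run Lemma \ref{lem: int by parts} to reduce to the subcase $\Re(\varrho(s))\le 0$ on all of $\Sigma$ (the naive-in-$y$ boundary terms being set aside), and then estimate the inner integral as in case (2) of Lemma \ref{lem: cond suff ssint}, giving $|f_{(s,x)}(y)|\le M(x)|g(s,x)|\chi_B(x,y)y^{\Re(\lambda(s))+\Delta/d}(\log y)^{\mu+\nu}$. Again only finitely many critical power-log monomials $y^{\lambda(s)+\Delta/d}(\log y)^{\mu+\nu}$ with $\Re(\lambda(s))+\Delta/d\ge -1$ can fail, and I would remove each of them by the same integration-by-parts in $y$, at the cost of enlarging $P'$ within a finitely generated $\mathbb Z$-lattice and of finitely many naive-in-$y$ boundary terms and bounded-$y$-fiber subcells. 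Summing over the finite partition of $B$ and invoking Remark \ref{rem: superintegrable old fashion} (via Tonelli) to conclude strong integrability of the non-naive pieces gives the statement. The main obstacle, as in the proof of Proposition \ref{prop: naive in y are sums of mon + ssint}, is the bookkeeping in preparing $\varphi$ so that $\mathrm e^{\mathrm i\varphi_>}$ is genuinely a $\psi$-prepared (or $\varPsi$-prepared) strong function and the integration-by-parts in $y$ stays inside the class of prepared generators; once that is in place, the finiteness of the critical-monomial set follows purely from the bounded width of $\Sigma$.
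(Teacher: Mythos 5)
There is a genuine gap, at two levels. First, the central device you propose --- integration by parts in $y$ to ``remove'' the critical monomials with $\Re(\lambda(s))\geq-1$ --- is not an available operation here: the proposition asserts a \emph{pointwise} identity $T(s,x,y)=\sum T_i(s,x,y)$ on $(\Sigma\setminus P')\times B'$, and $T$ is not an integral over $y$, so there is nothing to integrate by parts in $y$. (In Propositions \ref{prop: C^M,F has extension} and \ref{prop: integration of monomial generators} the object being rewritten is itself an integral over its last variable; here only the transcendental element $\gamma(s,x,y)=\int t^{\varrho(s)}(\log t)^{\nu}\Phi\,\mathrm{e}^{\sigma\mathrm{i}t}\mathrm{d}t$ is an integral, and integration by parts in $t$ is exactly Lemma \ref{lem: int by parts}.) Moreover the critical monomials are not an obstruction at all: a term $f(s,x)y^{\lambda(s)}(\log y)^{\mu}\mathrm{e}^{\mathrm{i}Q(x,y)}$ with $\Re(\lambda(s))\geq-1$ is already naive in $y$, hence permitted by the conclusion. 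The non-integrability of such terms is handled much later (Proposition \ref{prop: integration of monomial generators} and the non-compensation argument in Theorem \ref{thm: interpolation}), not in this proposition.

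Second, your case split (bounded versus unbounded $t$-fibers) misses the actual difficulty, which is the dependence on $y$ of the $t$-integration bounds $\widetilde{a}(x,y)=a_0(x)y^{\alpha/d}u_a$, $\widetilde{b}(x,y)=b_0(x)y^{\beta/d}u_b$ and of the $\varPsi$-prepared function $\Phi(s,x,y,t)$. The paper's proof runs on the trichotomy $\alpha=\beta=0$, $\alpha>0$, and $\alpha=0<\beta$. When $\alpha=\beta=0$ one splits the $t$-integral at $a_0(x)$ and $b_0(x)$, turns the outer pieces into naive-in-$y$ generators via Theorem \ref{thm: C_M stability}, and on the middle piece expands $\Phi$ as a convergent series in $(a(x)/y)^{k/d}$, keeping finitely many naive terms and a strongly integrable tail. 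When $\alpha>0$, after Lemma \ref{lem: int by parts} one extracts $\bigl(a_0(x)y^{\alpha/d}\bigr)^{-N_0}$ from $t^{\varrho(s)}$ (absorbing $\bigl(a_0(x)y^{\alpha/d}/t\bigr)^{N_0}$ into $\Phi$), and it is this extracted negative power of $y$ --- not any manipulation of $\lambda(s)$ --- that triggers Lemma \ref{lem: cond suff ssint}. The case $\alpha=0<\beta$ combines both via a splitting of $\Phi$ into positive and negative powers of $t$. None of these mechanisms appears in your proposal, and your uniform bound $|f_{(s,x)}(y)|\leq M|g(s,x)|y^{\Re(\lambda(s))}(\log y)^{\mu}$ by itself cannot produce the required decomposition. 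The ingredients you do invoke correctly (Lemma \ref{lem: int by parts}, bounded width of $\Sigma$ giving finitely many critical exponents, the preparation $\varphi=Q+\varphi_>$) are genuinely part of the argument, but they are not assembled into a proof.
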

\begin{proof}
Recall Notation \ref{notation: t-cell}. Note that $0\leq\alpha\leq\beta$.
There are three cases:
\begin{enumerate}
\item $\alpha=\beta=0$
\item $\alpha>0$
\item $\alpha=0,\beta>0$
\end{enumerate}
Define $C:={\displaystyle \sup_{s\in\Sigma}}\left(\Re\left(\lambda\left(s\right)\right)\right)$.
\begin{enumerate}
\item The case $\beta=0$ also includes the case $\widetilde{b}\left(x,y\right)=b_{0}\left(x\right)=+\infty$.
We claim that, at the price of creating a new cell with bounded $y$-fibers
(which can be handled using Proposition \ref{prop: strongly int on bounded cells}),
we may suppose that for all $\left(x,y\right)\in B$\\
$\bullet$ $\widetilde{a}\left(x,y\right)\leq a_{0}\left(x\right)$
and $b_{0}\left(x\right)\leq\widetilde{b}\left(x,y\right)$;\\
$\bullet$ $\left|\widetilde{a}\left(x,y\right)-a_{0}\left(x\right)\right|\leq1$
and $\left|\widetilde{b}\left(x,y\right)-b_{0}\left(x\right)\right|\leq1$
\\
The proof of the claim can be found in \cite[p.1277]{ccmrs:integration-oscillatory}
and only uses basic o-minimal properties of subanalytic sets and functions.
\\
Therefore, we may write the transcendental element $\gamma$ as the
sum of three integrals, with integration bounds, respectively, $\left(\widetilde{a}\left(x,y\right),a_{0}\left(x\right)\right)$,
$\left(a_{0}\left(x\right),b_{0}\left(x\right)\right)$ and $\left(b_{0}\left(x\right),\widetilde{b}\left(x,y\right)\right)$
(if $\widetilde{b}=+\infty$, then the second integral has $+\infty$
as upper integration bound and the third integral is missing). The
integral with bounds $\left(\widetilde{a}\left(x,y\right),a_{0}\left(x\right)\right)$
can be written as
\begin{equation}
\mathrm{e}^{\mathrm{i}\sigma a_{0}\left(x\right)}\int_{a_{0}\left(x\right)}^{a_{0}\left(x\right)u_{a}\left(x,y\right)}t^{\varrho\left(s\right)}\left(\log t\right)^{\nu}\Phi\left(s,x,y,t\right)\mathrm{e}^{\sigma\mathrm{i}\left(t-a_{0}\left(x\right)\right)}\text{d}t,\label{eq: 1}
\end{equation}
where, thanks to the claim, $\mathrm{e}^{\sigma\mathrm{i}\left(t-a_{0}\left(x\right)\right)}$
is a complex-valued subanalytic function on $B$. Hence the integrand
is in $\mathcal{C}_{\Sigma}^{\mathcal{M}}\left(B\right)$ and we can
invoke Theorem \ref{thm: C_M stability} to obtain that the term containing
\eqref{eq: 1} can be written, outside some closed discrete set $P'$
containing $P$, as a generator of $\mathcal{C}_{\Sigma}^{\mathcal{M},\mathcal{F}}\left(X\times\mathbb{R}\right)$
which is naive in $y$. The integral with bounds $\left(b_{0}\left(x\right),\widetilde{b}\left(x,y\right)\right)$,
if present, is handled similarly. For the integral with bounds $\left(a_{0}\left(x\right),b_{0}\left(x\right)\right)$,
notice that now the variable $y$ only appears in the parametric strong
function $\Phi$, which we can write in nested form with respect to
the last component of $\psi$ as
\begin{equation}
\sum_{k\geq0}\widetilde{\xi}_{k}\left(s,x,t\right)\left(\frac{a\left(x\right)}{y}\right)^{\frac{k}{d}},\label{eq: series expansion in y}
\end{equation}
for some $\widetilde{\xi}_{k}\in\mathcal{A}_{\Sigma}\left(E\right)$,
where $E=\left\{ \left(x,t\right):\ x\in X,\ a_{0}\left(x\right)<t<b_{0}\left(x\right)\right\} $.
Let $k_{0}=\lceil d\left(C+1\right)\rceil+1$, so that for all $s\in\Sigma,\ \Re\left(\lambda\left(s\right)\right)-\frac{k_{0}}{d}<-1$,
and let
\[
\Phi_{>k_{0}}=\Phi-\sum_{k\leq k_{0}}\widetilde{\xi}_{k}\left(s,x,t\right)\left(\frac{a\left(x\right)}{y}\right)^{\frac{k}{d}}.
\]
Setting
\[
f_{k}\left(s,x\right)=g\left(s,x\right)\left(a\left(x\right)\right)^{\frac{k}{d}}\int_{a_{0}\left(x\right)}^{b_{0}\left(x\right)}t^{\varrho\left(s\right)}\left(\log t\right)^{\nu}\widetilde{\xi}_{k}\left(s,x,t\right)\mathrm{e}^{\sigma\mathrm{i}t}\text{d}t\in\mathcal{C}_{\Sigma}^{\mathcal{M},\mathcal{F}}\left(X\right),
\]
write the term
\[
g\left(s,x\right)y^{\lambda\left(s\right)}\left(\log y\right)^{\mu}\mathrm{e}^{\mathrm{i}\varphi\left(x,y\right)}\int_{a_{0}\left(x\right)}^{b_{0}\left(x\right)}t^{\varrho\left(s\right)}\left(\log t\right)^{\nu}\mathrm{e}^{\sigma\mathrm{i}t}\Phi\left(s,x,y,t\right)\text{d}t
\]
as the following sum of generators which are naive in $y$
\[
\sum_{k\leq k_{0}}f_{k}\left(s,x\right)y^{\lambda\left(s\right)-\frac{k}{d}}\left(\log y\right)^{\mu}\mathrm{e}^{\mathrm{i}\varphi\left(x,y\right)},
\]
plus the term
\begin{equation}
g\left(s,x\right)\left(a\left(x\right)\right)^{\frac{k_{0}}{d}}y^{\lambda\left(s\right)-\frac{k_{0}}{d}}\left(\log y\right)^{\mu}\mathrm{e}^{\mathrm{i}\varphi\left(x,y\right)}\int_{a_{0}\left(x\right)}^{b_{0}\left(x\right)}t^{\varrho\left(s\right)}\left(\log t\right)^{\nu}\Phi_{>k_{0}}\left(s,x,y,t\right)\mathrm{e}^{\sigma\mathrm{i}t}\text{d}t,\label{eq: last term}
\end{equation}
which is strongly integrable on $B$ by definition of $k_{0}$ and
since $\Phi_{>k_{0}}$ is bounded.
\item Let $N_{0}=\left\lceil \frac{dC+\Delta+d}{\alpha}\right\rceil +1$.
By Lemma \ref{lem: int by parts}, we may suppose that for all $s\in\Sigma,\ \Re\left(\varrho\left(s\right)\right)<-N_{0}$.
Write
\[
t^{\varrho\left(s\right)}=t^{\varrho\left(s\right)+N_{0}}\left(a_{0}\left(x\right)y^{\frac{\alpha}{d}}\right)^{-N_{0}}\left[\left(\frac{a_{0}\left(x\right)y^{\frac{\alpha}{d}}}{t}\right)^{\frac{1}{d}}\right]^{dN_{0}}.
\]
The rightmost term in the above formula can be absorbed into $\Phi$
and the central term can be factored out of the integral defining
the transcendental element $\gamma$. By the choice of $N_{0}$, for
all $s\in\Sigma,$
\begin{equation}
\Re\left(\lambda\left(s\right)\right)-\frac{N_{0}\alpha}{d}+\frac{\Delta}{d}<-1,\label{eq:cond on exp 0}
\end{equation}
so by Lemma \ref{lem: cond suff ssint} (either of the two conditions,
depending on the nature of the $t$-fibers of the cell $D$) $T$
is strongly integrable on $B$.
\item Let $N_{0}=\left\lceil \frac{dC+\Delta+d}{\beta}\right\rceil +1$
and $k_{0}=d\left(N_{0}-1\right)$. By Lemma \ref{lem: int by parts},
we may suppose that for all $s\in\Sigma,\ \Re\left(\varrho\left(s\right)\right)<-N_{0}$.
This implies in particular that for all $s\in\Sigma$,
\begin{align}
 & \Re\left(\varrho\left(s\right)\right)+\frac{k_{0}}{d}<-1,\label{eq: condition on exponents}\\
 & \Re\left(\varrho\left(s\right)\right)+\frac{k_{0}+1}{d}\leq0,\label{eq: cond on exp 2}\\
 & \Re\left(\lambda\left(s\right)\right)-\frac{\beta\left(k_{0}+1\right)}{d^{2}}+\frac{\Delta}{d}<-1.\label{eq:cond on exp 3}
\end{align}
First, we split $\Phi\left(s,x,y,t\right)$ into the sum of two series,
by separating the positive and the negative powers of $t$:
\[
\Phi=\sum_{k>0}\xi_{k}^{<}\left(s,x,y\right)\left(\frac{a_{0}\left(x\right)}{t}\right)^{\frac{k}{d}}+\sum_{k\geq0}\xi_{k}^{>}\left(s,x,y\right)\left(\frac{t}{b_{0}\left(x\right)y^{\frac{\beta}{d}}}\right)^{\frac{k}{d}}.
\]
Next, write
\begin{align*}
\gamma_{\leq k_{0}}\left(s,x,y\right) & =\int_{a_{0}\left(x\right)u_{a}\left(x,y\right)}^{b_{0}\left(x\right)y^{\frac{\beta}{d}}u_{b}\left(x,y\right)}t^{\varrho\left(s\right)}\left(\log t\right)^{\nu}\Phi_{\leq k_{0}}\left(s,x,y,t\right)\mathrm{e}^{\sigma\mathrm{i}t}\text{d}t,\\
\\
\gamma_{>k_{0}}\left(s,x,y\right) & =\int_{a_{0}\left(x\right)u_{a}\left(x,y\right)}^{b_{0}\left(x\right)y^{\frac{\beta}{d}}u_{b}\left(x,y\right)}t^{\varrho\left(s\right)}\left(\log t\right)^{\nu}\left(\frac{t}{b_{0}\left(x\right)y^{\frac{\beta}{d}}}\right)^{\frac{k_{0}+1}{d}}\Phi_{>k_{0}}\left(s,x,y,t\right)\mathrm{e}^{\sigma\mathrm{i}t}\text{d}t,
\end{align*}
where
\begin{align*}
\Phi_{\leq k_{0}}= & \sum_{k>0}\xi_{k}^{<}\left(s,x,y\right)\left(\frac{a_{0}\left(x\right)}{t}\right)^{\frac{k}{d}}+\sum_{k=0}^{k_{0}}\xi_{k}^{>}\left(s,x,y\right)\left(\frac{t}{b_{0}\left(x\right)y^{\frac{\beta}{d}}}\right)^{\frac{k}{d}},\\
\Phi_{>k_{0}}= & \sum_{k\geq0}\xi_{k+k_{0}+1}^{>}\left(s,x,y\right)\left(\frac{t}{b_{0}\left(x\right)y^{\frac{\beta}{d}}}\right)^{\frac{k}{d}}.
\end{align*}
\\
By \eqref{eq: condition on exponents} and linearity, we may write
$\gamma_{\leq k_{0}}$ as the sum of two integrals with upper integration
bound equal to $+\infty$ and the lower integration bounds equal to,
respectively, $a_{0}\left(x\right)u_{a}\left(x,y\right)$ and $b_{0}\left(x\right)y^{\frac{\beta}{d}}u_{b}\left(x,y\right)$.
The first integral falls within the scope of the first part of this
proof, whereas the second integral falls within the scope of the second
part of this proof.\\
It remains to consider
\begin{align}
 & T_{>k_{0}}\left(s,x,y\right):=g\left(s,x\right)y^{\lambda\left(s\right)}\left(\log y\right)^{\mu}\mathrm{e}^{\mathrm{i}\varphi\left(x,y\right)}\gamma_{>k_{0}}\left(s,x,y\right)\label{eq: last term-1}\\
 & =g\left(s,x\right)y^{\lambda\left(s\right)-\frac{\beta\left(k_{0}+1\right)}{d^{2}}}\left(\log y\right)^{\mu}\mathrm{e}^{\mathrm{i}\varphi\left(x,y\right)}\left(b_{0}\left(x\right)\right)^{-\frac{\beta\left(k_{0}+1\right)}{d^{2}}}\cdot\nonumber \\
 & \cdot\int_{a_{0}\left(x\right)u_{a}\left(x,y\right)}^{b_{0}\left(x\right)y^{\frac{\beta}{d}}u_{b}\left(x,y\right)}t^{\varrho\left(s\right)+\frac{k_{0}+1}{d}}\left(\log t\right)^{\nu}\Phi_{>k_{0}}\left(s,x,y,t\right)\mathrm{e}^{\sigma\mathrm{i}t}\text{d}t.\nonumber
\end{align}
By \eqref{eq: cond on exp 2} and \eqref{eq:cond on exp 3}, $T_{\geq k_{0}}$
satisfies the second condition in Lemma \ref{lem: cond suff ssint}.
\end{enumerate}
\end{proof}

\section{\label{sec: stability}Interpolation and stability under integration}

In this section we finish the proof of Theorem \ref{thm: stability of C^F}.
For this, it suffices to consider the $1$-dimensional case, Theorem
\ref{thm: interpolation} below (the general $n$-dimensional case
follows from Fubini's Theorem, see the end of Section \ref{subsec: Proof strategy}),
the proof of which requires an analysis of the integration locus.

With this in mind, we adapt \cite[Definition 6.1]{cmrs:mellin_constructible}
to the current setting.
\begin{defn}
\label{def: grid}Given $N,d\in\mathbb{N}\setminus\left\{ 0\right\} $
and $\left\{ \left(\ell_{i},r_{i}\right):\ 1\leq i\leq N\right\} \subseteq\mathbb{R}^{2}$,
define
\begin{align*}
\Xi_{i,0,-} & =\emptyset,\\
\Xi_{i,0,\circ} & =\left\{ s\in\Sigma:\ \ell_{i}\Re\left(s\right)+r_{i}+d<0\right\} \quad\left(1\leq i\leq N\right),\\
\Xi_{i,j,-} & =\left\{ s\in\Sigma:\ \ell_{i}\Re\left(s\right)+r_{i}+d=j-1\right\} \quad\left(1\leq i\leq N,\ j\in\mathbb{N}\setminus\left\{ 0\right\} \right),\\
\Xi_{i,j,\circ} & =\left\{ s\in\Sigma:\ j-1<\ell_{i}\Re\left(s\right)+r_{i}+d<j\right\} \quad\left(1\leq i\leq N,\ j\in\mathbb{N}\setminus\left\{ 0\right\} \right).
\end{align*}
The collection
\[
\mathcal{G}=\left\{ \Xi_{i,j,\star}:\ 1\leq i\leq N,\ j\in\mathbb{N},\ \star\in\left\{ -,\circ\right\} \right\}
\]
is called the \emph{grid} of \emph{denominator} $d$ and \emph{data}
$\left(d,\left\{ \left(\ell_{i},r_{i}\right):\ 1\leq i\leq N\right\} \right)$.
A \emph{$\mathcal{G}$-cell} is a nonempty subset $S\subseteq\Sigma$
such that
\[
\forall\Xi\in\mathcal{G},\ \Xi\cap S=\emptyset\text{\ or }S\subseteq\Xi\text{, and }S=\bigcap\left\{ \Xi\in\mathcal{G}:\ S\subseteq\Xi\right\} .
\]
Finally, given a prepared generator $T$ as in Definition \ref{def: prepared generator},
we call the tuple
\[
\left(d^{2},\left\{ \left(d\ell+\delta\widetilde{\ell},d\Re\left(\eta\right)+\delta\Re\left(\widetilde{\eta}\right)\right):\ \delta\in\left\{ 0,\alpha,\beta\right\} \right\} \right)
\]
the \emph{grid data} of $T$.
\end{defn}
\begin{rems}
\label{rems: grids}$\ $
\begin{enumerate}
\item Since $\Sigma$ has bounded width, a grid $\mathcal{G}$ induces a
finite partition $\mathcal{R}\left(\mathcal{G}\right)$ of $\Sigma$
into $\mathcal{G}$-cells, and each $\mathcal{G}$-cell is either
an open vertical substrip of $\Sigma$ or a vertical line.
\item A prepared generator $T$ generates a grid of data the grid data of
$T$. In this case, if $T$ is monomial in $y$ then on each $\mathcal{G}$-cell
$S$ the real part of the exponent of $y$ is either always $<-1$
or always $\geq-1$.
\end{enumerate}
\end{rems}
\begin{notation}
\label{not : different polynomials}Given a subanalytic set $X\subseteq\mathbb{R}^{m}$
and functions $Q_{1},Q_{2}$$\in\mathcal{S}\left(X\right)\left[y^{\frac{1}{d}}\right]$
which are polynomials in $y^{\frac{1}{d}}$ with coefficients subanalytic
functions of $x$, it is clearly possible to partition $X$ into finitely
many subanalytic cells such that for each cell $X'$, either for all
$x\in X',\ Q_{1}\left(x,\cdot\right)$ and $Q_{2}\left(x,\cdot\right)$
define the same polynomial function, or for all $x\in X',\ Q_{1}\left(x,\cdot\right)$
and $Q_{2}\left(x,\cdot\right)$ define different polynomial functions.
In this case, we will say, respectively, that $Q_{1}=Q_{2}$ on $X'$
or $Q_{1}\not=Q_{2}$ on $X'$.
\end{notation}
\begin{prop}[Splitting]
\label{prop: Splitting} Let $h\in\mathcal{C}_{\Sigma}^{\mathcal{M},\mathcal{F}}\left(X\times\mathbb{R}\right)$
be without poles outside some closed discrete set $P\subseteq\mathbb{C}$.
There are a closed discrete set $P'\subseteq\mathbb{C}$ containing
$P$ and such that $P'\setminus P$ is contained in a finitely generated
$\mathbb{Z}$-lattice, $d\in\mathbb{N}\setminus\left\{ 0\right\} $,
finite sets $J_{\mathrm{int}},J_{\mathrm{mon}}\subseteq\mathbb{N}$
and a $d$-cell decomposition (see Remark \ref{rem: d}) of $\mathbb{R}^{m+1}$
compatible with $X$ such that for each cell $A$ that is open over
$\mathbb{R}^{m}$ (which we may suppose to be of the form \eqref{eq:A}),
\begin{equation}
h\circ\Pi_{A}=\sum_{j\in J_{\mathrm{int}}}T_{j}+\sum_{j\in J_{\mathrm{mon}}}T_{j},\label{eq: splitting}
\end{equation}
 where each $T_{j}$ is a $d$-prepared generator without poles outside
$P'$ (see Definition \ref{def: prepared generator} and Remark \ref{rem: d}).
Moreover, using the notation in Definition \ref{def: superintegrable and monomial genrators},
\begin{enumerate}
\item For every $j\in J_{\mathrm{int}},\ T_{j}$ is strongly integrable
on $B_{A}$ and if $B_{A}$ has unbounded $y$-fibers, then, in the
notation of Definition \ref{def: prepared generator}, for all $s\in\Sigma,\ \Re\left(\lambda\left(s\right)\right)<-1$.
\item For every $j\in J_{\mathrm{mon}},\ T_{j}$ is monomial in $y$, with
monomial data $\left(d,\ell_{j},\eta_{j},\mu_{j},Q_{j}\right)$, where:
\begin{enumerate}
\item for all $x\in X,\ Q_{j}\left(x,0\right)=0$ and for all $i,j\in J_{\mathrm{mon}}$,
either $Q_{i}=Q_{j}$ on $X$ or $Q_{i}\not=Q_{j}$ on $X$ (see Notation
\ref{not : different polynomials});
\item the tuples $\left(\ell_{j},\eta_{j},\mu_{j},Q_{j}\right)\in\mathbb{Z}\times\mathbb{C}\times\mathbb{N}\times\mathcal{S}\left(X\right)\left[y^{\frac{1}{d}}\right]$
$\left(j\in J_{\mathrm{mon}}\right)$ are pairwise distinct;
\item there is a grid $\mathcal{G}$ such that for all $\mathcal{G}$-cell
$S$, for all $j\in J_{\mathrm{mon}}$, either $\Re\left(\frac{\ell_{j}s+\eta_{j}}{d}\right)<-1$
for all $s\in S$, or $\Re\left(\frac{\ell_{j}s+\eta_{j}}{d}\right)\geq-1$
for all $s\in S$.
\end{enumerate}
\end{enumerate}
\end{prop}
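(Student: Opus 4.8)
The plan is to assemble the preparation and reduction results of Section \ref{sec: Preparation}, in the correct order and over mutually compatible cell decompositions, and then to put the resulting monomial generators into normal form by elementary o-minimal bookkeeping. First I would apply the Preparation Proposition \ref{prop: preparation, 1st version} to $h$, obtaining a cell decomposition of $\mathbb{R}^{m+1}$ compatible with $X$ and, on each cell $A$ open over $\mathbb{R}^{m}$, a presentation of $h\circ\Pi_{A}$ as a finite sum of prepared generators on $B_{A}$. On the cells $A$ with $B_{A}$ of bounded $y$-fibers, Proposition \ref{prop: strongly int on bounded cells} shows every such generator is strongly integrable, so these are assigned to $J_{\mathrm{int}}$. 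On the cells $A$ with $B_{A}$ of unbounded $y$-fibers I would apply Proposition \ref{prop: generator sum of naive and ssint} to each prepared generator in turn: after a finite refinement of $B_{A}$ into subcells, it rewrites each generator as a finite sum of generators that are either naive in $y$ or strongly integrable on the relevant subcell, the latter again assigned to $J_{\mathrm{int}}$. Collecting all the naive-in-$y$ generators produced and feeding them into Proposition \ref{prop: naive in y are sums of mon + ssint} (which, via its proof, handles through Proposition \ref{prop: strongly int on bounded cells} any bounded-$y$-fiber cells it creates), I would refine the decomposition once more and split each into a finite sum of generators that are monomial in $y$, assigned to $J_{\mathrm{mon}}$, or strongly integrable, assigned to $J_{\mathrm{int}}$. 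At the end, on each final cell $A$, $h\circ\Pi_{A}$ is a finite sum of generators each of which is strongly integrable on $B_{A}$ or monomial in $y$.

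Three technical points close the first part. By Remark \ref{rem: d} all the prepared data occurring along the way can be taken with a single denominator $d$ (passing to a power of $d$ if necessary), so $d$ is a datum of the decomposition and every generator above is $d$-prepared; a last application of Proposition \ref{prop: preparation, 1st version} can be used to present the strongly integrable summands themselves in $d$-prepared form, which is harmless since splitting a strongly integrable summand only refines it into strongly integrable pieces (absolute integrability of the $\mathcal{C}^{\mathcal{M}}$-preimage is inherited by the cell-pieces). Next, only finitely many invocations of Theorem \ref{thm: C_M stability} are used — through Propositions \ref{prop: naive in y are sums of mon + ssint} and \ref{prop: generator sum of naive and ssint} — each enlarging the pole set only within a finitely generated $\mathbb{Z}$-lattice, so the final $P'$ satisfies that $P'\setminus P$ is contained in one finitely generated $\mathbb{Z}$-lattice. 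Finally, for property (1): in the proofs of Propositions \ref{prop: naive in y are sums of mon + ssint} and \ref{prop: generator sum of naive and ssint} the strongly integrable remainders on cells with unbounded $y$-fibers are produced exactly by extracting enough powers of $y$ (and, in Proposition \ref{prop: generator sum of naive and ssint}, of $t$): this is visible in the terms \eqref{eq: last term} and \eqref{eq: last term-1} and in the choices of $k_{0}$ and $N_{0}$ there, together with \eqref{eq:cond on exp 0} and \eqref{eq:cond on exp 3}, which force $\Re(\lambda(s))<-1$ for all $s\in\Sigma$ in the exponent of $y$ of such a generator; hence (1).

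It remains to bring the monomial generators into normal form (2). Writing a monomial generator in $y$ as in \eqref{eq: monomial generator} with $Q(x,y)=b_{0}(x)+\sum_{1\le i\le n}b_{i}(x)y^{i/d}$, I would use that $\mathrm{e}^{\mathrm{i}q}\in\mathcal{C}_{\Sigma}^{\mathcal{M},\mathcal{F}}(X)$ for every $q\in\mathcal{S}(X)$, since $\mathrm{e}^{\mathrm{i}q}=\mathfrak{f}\bigl[\tfrac{\mathrm{i}}{2}\chi_{E}\bigr]$ with $E=\{(x,y):\ \pi<y-q(x)<2\pi\}$ a subanalytic set whose $y$-fibers have bounded length, so $\tfrac{\mathrm{i}}{2}\chi_{E}\in\mathcal{C}_{\Sigma}^{\mathcal{M}}(X\times\mathbb{R})_{\mathrm{int}}$ (this is the natural extension of $1=\mathfrak{f}[\tfrac{\mathrm{i}}{2}\chi_{[\pi,2\pi]}]$ from Remark \ref{rem: 1 is a transcendental}). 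Since $\mathcal{C}_{\Sigma}^{\mathcal{M},\mathcal{F}}(X)$ is a $\mathbb{C}$-algebra by Corollary \ref{cor: algebra}, I may absorb $\mathrm{e}^{\mathrm{i}b_{0}(x)}$ into the coefficient $f$ and so assume $Q(x,0)=0$. Using Notation \ref{not : different polynomials} finitely many times refines the decomposition so that for each pair $i,j$ of monomial generators either $Q_{i}=Q_{j}$ on $X$ or $Q_{i}\ne Q_{j}$ on $X$, giving (2a); after this, summing all monomial generators sharing a common tuple $(\ell,\eta,\mu,Q)$ — their coefficients add inside $\mathcal{C}_{\Sigma}^{\mathcal{M},\mathcal{F}}(X)$ — yields finitely many monomial generators with pairwise distinct tuples, giving (2b). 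For (2c), take $\mathcal{G}$ to be a grid (Definition \ref{def: grid}) whose data encodes, for each $j\in J_{\mathrm{mon}}$, the affine form $s\mapsto\ell_{j}\Re(s)+\Re(\eta_{j})+d$ — for instance the grid of denominator $d^{2}$ with data $\bigl\{(d\ell_{j},d\Re(\eta_{j})):\ j\in J_{\mathrm{mon}}\bigr\}$ — so that, by Remarks \ref{rems: grids}, the induced partition of $\Sigma$ into $\mathcal{G}$-cells is finite and on each $\mathcal{G}$-cell $S$ the sign of $\Re\bigl(\tfrac{\ell_{j}s+\eta_{j}}{d}\bigr)+1$ is constant for each $j$, which is (2c). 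As there are finitely many cells, one common $d$, one grid $\mathcal{G}$, and one pair $J_{\mathrm{int}},J_{\mathrm{mon}}$ can be used for all of them.

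I expect the real difficulty to be organisational rather than conceptual: the genuine subtlety is to keep the successively produced cell decompositions mutually compatible while maintaining a single denominator $d$ (Remark \ref{rem: d}) and while controlling the new poles within a single finitely generated $\mathbb{Z}$-lattice — and, closely tied to this, to verify by direct inspection of the proofs of Propositions \ref{prop: naive in y are sums of mon + ssint} and \ref{prop: generator sum of naive and ssint}, rather than by a black-box citation, that their strongly integrable outputs on unbounded $y$-fiber cells can be written with $\Re(\lambda(s))<-1$ on all of $\Sigma$. The remaining bookkeeping for (2a)--(2c) is routine once the cell-decomposition tools (Notation \ref{not : different polynomials}, Remarks \ref{rems: grids}) are in place.
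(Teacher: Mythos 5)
Your proposal is correct and follows the paper's own proof in all essentials: Proposition \ref{prop: preparation, 1st version} first, then Propositions \ref{prop: strongly int on bounded cells}, \ref{prop: generator sum of naive and ssint} and \ref{prop: naive in y are sums of mon + ssint} to split each prepared generator into strongly integrable and monomial pieces, condition (1) read off from the choices of $k_{0},N_{0}$ and \eqref{eq:cond on exp 0}, \eqref{eq:cond on exp 3}, the pass to $d^{2}$ via Remark \ref{rem: d}, and the same bookkeeping for (2a)--(2b). The one point where you genuinely diverge is (2c): the paper builds the grid \emph{a priori}, from the grid data of the original prepared generators (which record both exponents $\lambda$ and $\varrho$ shifted by $\delta\in\{0,\alpha,\beta\}$), and then verifies that the exponents of the newly created monomial generators, of the form $\lambda\left(s\right)-\frac{k}{d}+\frac{\delta}{d}\left(\varrho\left(s\right)-k'\right)$, create no new $\mathcal{G}$-cells; you instead build the grid \emph{a posteriori} from the finished finite list of monomial data $\left(\ell_{j},\eta_{j}\right)$. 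For the proposition as stated your version is equally valid (and slightly simpler, since (2c) only asks for the existence of some grid); what the paper's a priori construction buys is that $\mathcal{G}$ is determined by the preparation data alone, independently of the strip, which is exactly what Remark \ref{rem: extension of the prep} exploits when it asserts that the splitting over a larger strip $\Sigma'$ uses ``the same $d,\mathcal{G}$ and $P'$''. If you intend your argument to feed into that remark later, you should either adopt the paper's ordering or check separately that your a posteriori grid is stable under enlarging $\Sigma$.
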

\begin{proof}
Apply Proposition \ref{prop: preparation, 1st version} to $h$. This
produces $d$ and a $d$-cell decomposition of $\mathbb{R}^{m+1}$
such that on each cell $A$ open over $X$, $h\circ\Pi_{A}$ is a
finite sum of prepared generators $T$. Collect the grid data of all
the prepared generators and generate the corresponding grid $\mathcal{G}$
with denominator $d^{2}$. For each cell $A$, apply Propositions
\ref{prop: generator sum of naive and ssint} and \ref{prop: naive in y are sums of mon + ssint}
to each prepared generator $T$ on $B_{A}$. This produces a refinement
of the $d$-cell decomposition and rewrites $T$ on each cell as a
finite sum of prepared generators $T'$ which are either strongly
integrable (and satisfying condition (1)) or monomial in $y$. Up
to absorbing $\text{e}^{\text{i}Q_{j}\left(x,0\right)}$ into $f_{j}\left(s,x\right)$
and up to partitioning $X$ into subanalytic cells, we may suppose
that item (2.a) in the statement of the proposition is satisfied.
Summing like terms we may also suppose that item (2.b) is satisfied.
Revisiting the proofs of Propositions \ref{prop: generator sum of naive and ssint}
and \ref{prop: naive in y are sums of mon + ssint}, which are based
on integration by parts of the transcendental elements and series
expansion of parametric strong functions on cells with unbounded $y$-fibers,
we see that if the exponents of $y$ and $t$ in the original prepared
generator $T$ are $\lambda\left(s\right)$ and $\varrho\left(s\right)$
respectively, then the exponents of $y$ in the newly created monomial
generators $T'$ have the form $\lambda\left(s\right)-\frac{k}{d}+\frac{\delta}{d}\left(\varrho\left(s\right)-k'\right)$,
for some $k,k'\in\mathbb{N}$ and $\delta\in\left\{ 0,\alpha,\beta\right\} $.
In particular, the grid generated by the grid data of the new monomial
generators does not create any new cell. By Remark \ref{rem: d} we
may rename $d^{2}$ as $d$ and adapt accordingly the definitions
of $\ell_{j},\eta_{j},\alpha_{j},\beta_{j}$, so that, by Remark \ref{rems: grids}
(2), item (2.c) in the statement of the proposition is also satisfied.
\end{proof}
\begin{thm}[Interpolation and integration locus]
\label{thm: interpolation} Let $h\in\mathcal{C}_{\Sigma}^{\mathcal{M},\mathcal{F}}\left(X\times\mathbb{R}\right)$
be without poles outside some closed discrete set $P\subseteq\mathbb{C}$.
There are a closed discrete set $P'\subseteq\mathbb{C}$ containing
$P$ and such that $P'\setminus P$ is contained in a finitely generated
$\mathbb{Z}$-lattice and a function $H\in\mathcal{C}_{\Sigma}^{\mathcal{M},\mathcal{F}}\left(X\right)$
without poles outside $P'$ such that
\[
\forall\left(s,x\right)\in\mathrm{Int}\left(h;\left(\Sigma\setminus P'\right)\times X\right),\quad\int_{\mathbb{R}}h\left(s,x,y\right)\mathrm{d}y=H\left(s,x\right).
\]
Moreover, there exists a grid $\mathcal{G}$ such that
\begin{equation}
\mathrm{Int}\left(h;\left(\Sigma\setminus P'\right)\times X\right)=\bigcup_{S\in\mathcal{\mathcal{R}\left(\mathcal{G}\right)}}\left\{ \left(s,x\right):\ s\in S\setminus P',\ \bigwedge_{j\in J_{S}}f_{j}\left(s,x\right)=0\right\} ,\label{eq:int locus}
\end{equation}
for a suitable finite set $J_{S}$ and suitable $f_{j}\in\mathcal{C}_{\Sigma}^{\mathcal{M},\mathcal{F}}\left(X\right)$
without poles outside $P'$.
\end{thm}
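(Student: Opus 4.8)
The plan is to reduce, via the Splitting Proposition~\ref{prop: Splitting}, to integrating single prepared generators of the two special types treated in Section~\ref{sec: Strategy}, and then to determine the integration locus by a non-compensation argument. Applying Proposition~\ref{prop: Splitting} to $h$ yields $d\in\mathbb{N}\setminus\{0\}$, a closed discrete $P'\supseteq P$ with $P'\setminus P$ inside a finitely generated $\mathbb{Z}$-lattice, a grid $\mathcal{G}$, and a $d$-cell decomposition of $\mathbb{R}^{m+1}$ compatible with $X$ such that on each cell $A$ open over $\mathbb{R}^m$ (which we take of the form \eqref{eq:A}) we have $h\circ\Pi_A=\sum_{j\in J_{\mathrm{int}}}T_j+\sum_{j\in J_{\mathrm{mon}}}T_j$, where the $T_j$ are $d$-prepared generators without poles outside $P'$, strongly integrable on $B_A$ for $j\in J_{\mathrm{int}}$, and monomial in $y$ with the normalisations (2.a)--(2.c) for $j\in J_{\mathrm{mon}}$. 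Since the cells partition $X\times\mathbb{R}$ up to a null set, $y\mapsto h(s,x,y)$ lies in $L^1(\mathbb{R})$ if and only if $y\mapsto(h\restriction A)(s,x,y)$ does for every such $A$, and then $\int_{\mathbb{R}}h(s,x,y)\,\mathrm{d}y=\sum_A\int_{A_x}(h\restriction A)(s,x,y)\,\mathrm{d}y$; performing the change of variables $\Pi_A$, whose Jacobian is a power of $y$ that can be absorbed into each generator (Remark~\ref{rem: d}), reduces the whole problem to integrating the generators $T_j\chi_{B_A}$ in the variable $y$.

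I would then integrate these generators individually. For $j\in J_{\mathrm{int}}$, Corollary~\ref{cor: integration of strongly integrable} provides, after possibly enlarging $P'$ inside a finitely generated $\mathbb{Z}$-lattice, a function $H_j\in\mathcal{C}_\Sigma^{\mathcal{M},\mathcal{F}}(X)$ without poles outside $P'$ equal to $\int_{\mathbb{R}}T_j(s,x,y)\chi_{B_A}(x,y)\,\mathrm{d}y$ on all of $(\Sigma\setminus P')\times X$; such a generator is integrable in $y$ for every parameter and so contributes nothing to the integration locus. For $j\in J_{\mathrm{mon}}$: if $B_A$ has bounded $y$-fibers then $T_j$ is strongly integrable on $B_A$ by Lemma~\ref{lem: monomial gen on bounded cells} and is handled as above; if $B_A$ has unbounded $y$-fibers, Proposition~\ref{prop: integration of monomial generators} gives $\mathrm{Int}(T_j\chi_{B_A};(\Sigma\setminus P')\times X)=\{(s,x):\Re(\lambda_j(s))<-1\}\cup\{(s,x):f_j(s,x)=0\}$ together with an interpolating $H_j\in\mathcal{C}_\Sigma^{\mathcal{M},\mathcal{F}}(X)$ without poles outside the (enlarged) $P'$. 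By the grid property (2.c), on each $\mathcal{G}$-cell $S$ the value $\Re(\lambda_j(s))$ is either $<-1$ throughout $S$ or $\geq-1$ throughout $S$; let $J_S$ collect those $j$ (ranging over all cells $A$ whose $B_A$ has unbounded $y$-fibers) with $\Re(\lambda_j(s))\geq-1$ on $S$, and keep the corresponding coefficient functions $f_j\in\mathcal{C}_\Sigma^{\mathcal{M},\mathcal{F}}(X)$, which have no poles outside $P'$.

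The heart of the matter is to show that, possibly after one final enlargement of $P'$ within a finitely generated $\mathbb{Z}$-lattice, $\mathrm{Int}(h\restriction A;(\Sigma\setminus P')\times X)=\bigcap_j\mathrm{Int}(T_j\chi_{B_A};(\Sigma\setminus P')\times X)$. The inclusion $\supseteq$ is immediate, a finite sum of $L^1$ functions being $L^1$. For $\subseteq$ one must rule out cancellation among the divergent parts, and this is exactly where the theory of c.u.d.\ mod $1$ functions enters: by the non-compensation result \cite[Proposition~3.4]{cmrs:mellin_constructible}, applied for fixed $(s,x)$ with $s$ outside a discrete set, a linear combination of the oscillatory power-logarithmic monomials $y^{\lambda_j(s)}(\log y)^{\mu_j}\mathrm{e}^{\mathrm{i}Q_j(x,y)}$ appearing in the $T_j$---which are pairwise essentially distinct thanks to the normalisations (2.a) and (2.b) of Proposition~\ref{prop: Splitting}---cannot be absolutely integrable near $+\infty$ unless every coefficient attached to a monomial with $\Re(\lambda_j(s))\geq-1$ vanishes; the strongly integrable pieces, being $L^1$, cannot affect this. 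Granting this, on each $\mathcal{G}$-cell $S$ a point $(s,x)$ with $s\in S\setminus P'$ belongs to the integration locus of $h\restriction A$ precisely when the $f_j(s,x)$ with $j\in J_S$ coming from $A$ all vanish; intersecting over the finitely many cells $A$ yields exactly \eqref{eq:int locus}. This non-compensation step, carried out in Sections~\ref{sec: Preparation} and~\ref{sec: stability}, is the main obstacle; the rest is bookkeeping.

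Finally, I would set $H=\sum_A\sum_j H_j$, a finite sum and hence a member of $\mathcal{C}_\Sigma^{\mathcal{M},\mathcal{F}}(X)$ (a $\mathbb{C}$-algebra by Corollary~\ref{cor: algebra}), without poles outside the final $P'$, which, only finitely many enlargements having been made, still has $P'\setminus P$ contained in a finitely generated $\mathbb{Z}$-lattice. On $\mathrm{Int}(h;(\Sigma\setminus P')\times X)$ each $H_j$ coincides with the $y$-integral of the corresponding $T_j\chi_{B_A}$---for $j\in J_S$ both sides vanish, since $f_j(s,x)=0$ there, and in all other cases this is part of the conclusions of Corollary~\ref{cor: integration of strongly integrable}, Lemma~\ref{lem: monomial gen on bounded cells} and Proposition~\ref{prop: integration of monomial generators}---so $H(s,x)=\sum_A\int_{A_x}(h\restriction A)(s,x,y)\,\mathrm{d}y=\int_{\mathbb{R}}h(s,x,y)\,\mathrm{d}y$ on the integration locus, completing the proof.
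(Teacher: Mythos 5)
Your proposal follows the paper's proof essentially step for step: apply the Splitting Proposition, integrate the strongly integrable and monomial generators via Corollary \ref{cor: integration of strongly integrable} and Proposition \ref{prop: integration of monomial generators}, and identify the integration locus by the non-compensation result \cite[Proposition 3.4]{cmrs:mellin_constructible}. The only detail you leave implicit is what the "one final enlargement of $P'$" must remove, namely the finitely many $s$ at which $\lambda_i(s)=\lambda_j(s)$ for distinct indices with $\mu_i=\mu_j$ and $Q_i=Q_j$ (the set $P_A$ in the paper), without which the monomials are not pairwise distinct at fixed $s$ and the non-compensation argument cannot be applied coefficient by coefficient.
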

\begin{proof}
Apply Proposition \ref{prop: Splitting} to $h$: this produces a
closed discrete set $P'\subseteq\mathbb{C}$ containing $P$ and such
that $P'\setminus P$ is contained in a finitely generated $\mathbb{Z}$-lattice,
$d\in\mathbb{N}\setminus\left\{ 0\right\} $, finite sets $J_{\mathrm{int}},J_{\mathrm{mon}}\subseteq\mathbb{N}$,
a grid $\mathcal{G}$ and a $d$-cell decomposition, such that the
conclusion of the proposition holds. By linearity of the integral,
it suffices to prove the statement of the theorem for $h\restriction A$,
where $A$ is a cell of the decomposition which is open over $X$.
Recall Notation \ref{notation: cell bijection B_A} and note that
\[
\frac{\partial\Pi_{A}}{\partial y}\left(x,y\right)=\sigma_{A}\tau_{A}y^{\tau_{A}-1}.
\]
Thus, up to multiplying each $T_{j}$ in \eqref{eq: splitting} by
$\frac{\partial\Pi_{A}}{\partial y}\left(x,y\right)$, we may write
that for all $\left(s,x\right)\in\text{Int}\left(h\restriction A;\left(\Sigma\setminus P'\right)\times X\right),$
\[
\int_{A_{x}}h\left(s,x,y\right)\text{d}y=\int_{a_{A}\left(x\right)}^{b_{A}\left(x\right)}\left(\sum_{j\in J_{\mathrm{int}}}T_{j}\left(s,x,y\right)+\sum_{j\in J_{\mathrm{mon}}}T_{j}\left(s,x,y\right)\right)\text{d}y.
\]
If $B_{A}$ has bounded $y$-fibers, then we are done by Proposition
\ref{prop: strongly int on bounded cells} and Corollary \ref{cor: integration of strongly integrable}.

If $B_{A}$ has unbounded $y$-fibers, then for all $j\in J_{\text{mon}},\ T_{j}$
has the form $f_{j}\left(s,x\right)y^{\lambda_{j}\left(s\right)}\left(\log y\right)^{\mu_{j}}\text{e}^{\text{i}Q_{j}\left(x,y\right)}$,
with $\lambda_{j}\left(s\right)=\frac{\ell_{j}s+\eta_{j}}{d}$, and
for all $\mathcal{G}$-cell $S$ there is a set $J_{S}\subseteq J_{\text{mon}}$
such that for all $j\in J_{S},\ \text{Int}\left(T_{j}\chi_{B_{A}};\left(S\setminus P'\right)\times X\right)=\left\{ \left(s,x\right)\in\left(S\setminus P'\right)\times X:\ f_{j}\left(s,x\right)=0\right\} $
whereas for all $j\in J_{\text{mon}}\setminus J_{S},\ \text{Int}\left(T_{j}\chi_{B_{A}};\left(S\setminus P'\right)\times X\right)=\left(S\setminus P'\right)\times X$.
Thus, the set
\[
E:=\bigcap_{j\in J_{\text{int}}\cup J_{\text{mon}}}\text{Int}\left(T_{j}\chi_{B_{A}};\left(\Sigma\setminus P'\right)\times X\right)
\]
is of the form of the right hand side of \eqref{eq:int locus} and,
applying either Corollary \ref{cor: integration of strongly integrable}
or Proposition \ref{prop: integration of monomial generators} to
$T_{j}\chi_{B_{A}}$ and possibly enlarging $P'$, we find $H\in\mathcal{C}_{\Sigma}^{\mathcal{M},\mathcal{F}}\left(X\right)$
without poles outside $P'$ which interpolates the integral of $h\restriction A$
for all $\left(s,x\right)\in E$.

Note that $E\subseteq\mathrm{Int}\left(h\restriction A;\left(\Sigma\setminus P'\right)\times X\right)$.
It remains to show that, up to possibly enlarging $P'$, the set $E$
coincides with $\mathrm{Int}\left(h\restriction A;\left(\Sigma\setminus P'\right)\times X\right)$.
Let
\[
P_{A}=P'\cup\left\{ s\in\mathbb{C}:\ \exists i,j\in J_{\text{mon}}\text{ such that}\ i\not=j,\ \lambda_{i}\left(s\right)=\lambda_{j}\left(s\right),\ \mu_{i}=\mu_{j},\ Q_{i}=Q_{j}\right\} .
\]
By item (2.b) in Proposition \ref{prop: Splitting}, if $s\in\Sigma$
is such that $\lambda_{i}\left(s\right)=\lambda_{j}\left(s\right)$
for some $i\not=j$ such that $\mu_{i}=\mu_{j},\ Q_{i}=Q_{j}$, then
necessarily $\ell_{i}\not=\ell_{j}$, so $P_{A}\setminus P'$ is finite.

By definition of $P_{A}$, if $s\in\Sigma\setminus P_{A}$ and $i,j\in J_{\text{mon}}$
are such that $i\not=j,\ \mu_{i}=\mu_{j}$ and $\Re\left(\lambda_{i}\left(s\right)\right)=\Re\left(\lambda_{j}\left(s\right)\right)$,
then $Q_{i}=Q_{j}\Longrightarrow\Im\left(\lambda_{i}\left(s\right)\right)\not=\Im\left(\lambda_{j}\left(s\right)\right)$.

Let $\left(s_{0},x_{0}\right)\in\mathrm{Int}\left(h\restriction A;\left(\Sigma\setminus P_{A}\right)\times X\right)$
and let $S$ be the $\mathcal{G}$-cell to which $s_{0}$ belongs.
Define
\[
\rho_{j}=\Re\left(\lambda_{j}\left(s_{0}\right)\right),\ \sigma_{j}=\Im\left(\lambda_{j}\left(s_{0}\right)\right),\ p_{j}\left(y\right)=Q_{j}\left(x_{0},y\right)\in\mathbb{R}\left[y^{\frac{1}{d}}\right].
\]
Let $\left(r_{0},\nu_{0}\right)$ be the lexicographic maximum of
the set $\left\{ \left(\rho_{j},\mu_{j}\right):\ j\in J_{S}\right\} $
and let $J_{0}=\{j\in J_{S}:\ \left(\rho_{j},\mu_{j}\right)=\left(r_{0},\nu_{0}\right)\}$.
Then
\[
\sum_{j\in J_{S}}T_{j}\left(s_{0},x_{0},y\right)=y^{r_{0}}\left(\log y\right)^{\nu_{0}}\sum_{j\in J_{0}}f_{j}\left(s_{0},x_{0}\right)y^{\text{i}\sigma_{j}}\mathrm{e}^{\mathrm{i}p_{j}\left(y\right)}+\sum_{j\in J_{S}\setminus J_{0}}f_{j}\left(s_{0},x_{0}\right)y^{\rho_{j}+\mathrm{i}\sigma_{j}}\left(\log y\right)^{\mu_{j}}\mathrm{e}^{\mathrm{i}p_{j}\left(y\right)}.
\]
Since $\left(s_{0},x_{0}\right)\in\mathrm{Int}\left(h\restriction A;\left(\Sigma\setminus P_{A}\right)\times X\right)$,
it follows from \cite[Proposition 3.4 (1)]{cmrs:mellin_constructible}
that $\bigwedge_{j\in J_{0}}f_{j}\left(s_{0},x_{0}\right)=0$. By
repeating this procedure with the index set $J_{S}\setminus J_{0}$,
we end up proving that $\left(s_{0},x_{0}\right)\in E$.
\end{proof}
\begin{rem}
\label{rem: extension of the prep}Given $h\in\mathcal{C}_{\Sigma}^{\mathcal{M},\mathcal{F}}\left(X\times\mathbb{R}\right)$
and a strip $\Sigma'\supseteq\Sigma$, apply Proposition \ref{prop: Splitting}
to $h$ and consider the extension $h'$ of $h$ to $\Sigma'$. The
proof shows that Proposition \ref{prop: Splitting} applies to $h'$
with different generators $T_{j}'$ but with the same $d,\mathcal{G}$
and $P'$, by integrating by parts some of the transcendental elements
appearing in the strongly integrable generators $T_{j}$. For the
same reason, Theorem \ref{thm: interpolation} applies to $h'$ with
a different $H$ but the same $P',\mathcal{G}$.
\end{rem}

\section{Asymptotic expansions and limits\label{sec:Asymptotic-expansions-and}}

\subsection{Asymptotic expansions}\label{subsec:asympt}
In this section we study the behaviour of a function $h$, in $\mathcal{C}_{\Sigma}^{\mathcal{M},\mathcal{F}}\left(X\times\mathbb{R}\right)$,
and, in $\mathcal{C}^{\mathbb{C},\mathcal{F}}\left(X\times\mathbb{R}\right)$, 
seen as a function of the last variable $y$ with parameters ($s\in\Sigma$
and) $x\in X$. We are interested in \textquotedblleft the germ at
$+\infty$ in $y$\textquotedblright{} of $h$, hence we will work
in restriction to cells of the form \eqref{eq:A_theta=00003DB} with
unbounded $y$-fibers. As we are only interested in the behaviour
at $+\infty$ in $y$, we will often replace the cell $B$ by some
smaller cell $B'$, still of base $X$ and with unbounded $y$-fibers,
but whose lower boundary function is some analytic subanalytic function
$a'$ which satisfies that for all $x\in X,\ a\left(x\right)\leq a'\left(x\right)$.
As $X$ serves as a space of parameters, we will also often partition
$X$ into finitely many subanalytic cells and suppose, as we did in
the previous sections, that $X$ itself is one of the cells of the
partition. Finally, if $h\in\mathcal{C}_{\Sigma}^{\mathcal{M},\mathcal{F}}\left(X\times\mathbb{R}\right)$
has no poles outside some closed discrete set $P\subseteq\mathbb{C}$,
as $\Sigma$ also serves as a space of parameters, we will often replace
$P$ by some bigger closed discrete set $P'\subseteq\mathbb{C}$ such
that $P'\setminus P$ is contained in a finitely generated $\mathbb{Z}$-lattice.

Summing up, the sentence \textquotedblleft if $B$ is a cell of base
$X$ with unbounded $y$-fibers and $h\in\mathcal{C}_{\Sigma}^{\mathcal{M},\mathcal{F}}\left(X\times\mathbb{R}\right)$
has no poles outside some closed discrete set $P\subseteq\mathbb{C}$,
then, \emph{up to partitioning $X,$ shrinking $B$ and enlarging
$P$}, Property ({*}) holds for $h$\textquotedblright{} will be used
as a shorthand for the following: \emph{there are a finite partition
of $X$ into subanalytic cells $X'$ and a closed discrete set $P'\subseteq\mathbb{C}$
such that $P'\setminus P$ is contained in a finitely generated $\mathbb{Z}$-lattice,
and for each cell $X'$ there is a cell $B'\subseteq B$ of base $X'$
and with unbounded $y$-fibers such that Property ({*}) holds for
$h\restriction\left(\Sigma\setminus P'\right)\times B'$.}

\medskip{}

Our first result concerns the class $\mathcal{C}_{\Sigma}^{\mathcal{M},\mathcal{F}}\left(X\times\mathbb{R}\right)$.
\begin{thm}
\label{thm: summable}Let $B$ be as in \eqref{eq:A_theta=00003DB}
with unbounded $y$-fibers and $h\in\mathcal{C}_{\Sigma}^{\mathcal{M},\mathcal{F}}\left(B\right)$
be without poles outside some closed discrete set $P\subseteq\mathbb{C}$.
Up to partitioning $X$, shrinking $B$ and enlarging $P$, there
is a sequence $\left(T_{n}\right)_{n\in\mathbb{N}}\subseteq\mathcal{C}_{\Sigma}^{\mathcal{M},\mathcal{F}}\left(B\right)$
of generators which are monomial in $y$ such that:
\begin{enumerate}
\item For all $N\in\mathbb{N}$ there are $j_{N}\in\mathbb{N}$ and a function
$C_{N}:\left(\Sigma\setminus P\right)\times X\longrightarrow\left(0,+\infty\right)$
such that
\[
\forall\left(s,x,y\right)\in\left(\Sigma\setminus P\right)\times B,\ \left|h\left(s,x,y\right)-\sum_{j\leq j_{N}}T_{j}\left(s,x,y\right)\right|\leq C_{N}\left(s,x\right)y^{-N}.
\]
\item If $h$ is a finite sum of generators which are naive in $y$ then
we can choose the sequence $\left(T_{n}\right)_{n\in\mathbb{N}}$
so that the series $\sum_{j\in\mathbb{N}}T_{j}$ converges absolutely
to $h$.
\end{enumerate}
\end{thm}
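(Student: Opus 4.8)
The plan is to reduce, via the Preparation Theorem, to a single prepared generator on $B$, and then to read off its power-logarithmic expansion in $y$ by unwinding the reduction arguments of Section~\ref{sec: Preparation}. First I would apply Proposition~\ref{prop: preparation, 1st version} to $h\chi_{B}$ (extended by zero to $X\times\mathbb{R}$), obtaining a cell decomposition which we may take compatible with $B$. The cells with bounded $y$-fibres are irrelevant for the germ at $+\infty$ in $y$; the remaining ones, being open over $X$ with unbounded $y$-fibres and contained in $B=\{y>a(x)\}$, are of the form $\{y>f(x)\}$, so $\Pi_{A}=\mathrm{id}$ by Remark~\ref{rem: cell at infty}. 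Hence, after partitioning $X$ and shrinking $B$, we may assume that $h$ is a finite sum of prepared generators on $B$, and by linearity it suffices to produce the expansion for a single prepared generator $T$ as in Definition~\ref{def: prepared generator}.

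Consider first the naive case, which also yields~(2). If $h$ is a finite sum of generators naive in $y$, then so are the prepared generators above, and each (as in the proof of Proposition~\ref{prop: naive in y are sums of mon + ssint}) has the form $f(s,x)\,y^{\lambda(s)}(\log y)^{\mu}\mathrm{e}^{\mathrm{i}\varphi(x,y)}\Phi(s,x,y)$ with $f\in\mathcal{C}_{\Sigma}^{\mathcal{M},\mathcal{F}}(X)$. Writing $\varphi=Q+\varphi_{>}$ and, after shrinking $B$, assuming $\varphi_{>}$ is small, both $\mathrm{e}^{\mathrm{i}\varphi_{>}}$ and the nested form \eqref{eq:eq: nested prep form unb} of $\Phi$ are strongly convergent power series in $(a(x)/y)^{1/d}$; multiplying and reindexing gives
\[
T(s,x,y)=\sum_{k\in\mathbb{N}}f_{k}(s,x)\,y^{\lambda(s)-\frac{k}{d}}(\log y)^{\mu}\,\mathrm{e}^{\mathrm{i}Q(x,y)},
\]
where $f_{k}\in\mathcal{C}_{\Sigma}^{\mathcal{M},\mathcal{F}}(X)$ (using Corollary~\ref{cor: algebra}) and each summand is monomial in $y$ in the sense of Definition~\ref{def: superintegrable and monomial genrators}. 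The exponents $\lambda(s)-k/d$ have real part tending to $-\infty$ uniformly on $\Sigma$ (which has bounded width), the $f_{k}$ decay geometrically in $k$, and $(a(x)/y)^{1/d}<1$ on $B$, so the series converges absolutely on $(\Sigma\setminus P)\times B$; this proves~(2). For~(1) in this case, given $N$ put $j_{N}=k_{N}=d\big(N+\lceil\sup_{s\in\Sigma}\Re\lambda(s)\rceil+1\big)$, bound $(a(x)/y)^{k/d}<a(x)^{k_{N}/d}y^{-k_{N}/d}$ for $k>k_{N}$, and use $y^{-1}(\log y)^{\mu}\le C_{\mu}$ on $B$ (shrink $B$ so $a\ge\mathrm{e}$) to get the bound $C_{N}(s,x)y^{-N}$ for the tail. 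Folding in the prepared form of a power-constructible $g$ changes nothing, so this handles any finite sum of generators naive in $y$.

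For a general prepared generator $T$, and hence part~(1), I would iterate Lemma~\ref{lem: int by parts} and Proposition~\ref{prop: generator sum of naive and ssint} to rewrite $T$, up to partitioning $X$, shrinking $B$ and enlarging $P$, as a finite sum of generators naive in $y$ (expanded by the previous paragraph) plus a strongly integrable remainder $R$. The key point, to be extracted from the proof of Proposition~\ref{prop: generator sum of naive and ssint}, is that such remainders come from truncating the nested series of the parametric strong functions at a level $k_{0}$ (resp.\ from integrating the transcendental element by parts $N_{0}$ times), so the truncated tail is $O\big((a(x)/y)^{(k_{0}+1)/d}\big)$ and $R$ in fact satisfies $|R(s,x,y)|\le C_{N}(s,x)y^{-N}$ once $k_{0},N_{0}$ are large relative to $N$; moreover the cell decomposition and the maps $\psi,\varPsi$ are independent of $N$, and the monomial terms extracted at level $N+1$ refine those at level $N$, so they assemble into one sequence $(T_{n})$. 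Ordering these by $\Re(\lambda_{n}(s))$ (which tends to $-\infty$ uniformly on $\Sigma$) yields, for each $N$, an index $j_{N}$ beyond which the omitted monomial terms together with $R$ are bounded by $C_{N}(s,x)y^{-N}$, giving~(1). The main obstacle is this last, non-naive case: controlling uniformly in $(s,x)$ the asymptotics in $y$ of the transcendental element $\gamma(s,x,y)=\int_{\mathbb{R}}\chi_{D}\,t^{\varrho(s)}(\log t)^{\nu}\Phi(s,x,y,t)\,\mathrm{e}^{\sigma\mathrm{i}t}\,\mathrm{d}t$ requires the full case analysis of Proposition~\ref{prop: generator sum of naive and ssint} (bounded versus unbounded $t$-fibres, and the cases $\alpha=\beta=0$, $\alpha>0$, $\alpha=0<\beta$) together with repeated integration by parts, after which one must still verify the two facts above --- the $y^{-N}$ decay of the remainders and the coherence of the extracted monomial terms across truncation levels --- which are implicit in, but not stated by, the results of Section~\ref{sec: Preparation}.
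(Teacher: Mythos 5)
Your proposal is correct and follows essentially the same route as the paper's proof: first the naive case via the nested series expansion of the parametric strong function (giving both the absolute convergence in (2) and the truncation bound in (1)), then the general case by preparation and the $\alpha,\beta$ case analysis of Proposition \ref{prop: generator sum of naive and ssint}, with the integration-by-parts depth and truncation level chosen as functions of $N$ so that the remainder is $O(y^{-N})$. The two verifications you flag at the end (the $y^{-N}$ decay of the remainders and the coherence of the monomial terms across truncation levels) are exactly what the paper's proof carries out by revisiting that proposition with quantitative choices of $\ell_0$ and $k_0$.
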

\begin{proof}
We first prove the two statements for a function $h$ which is a finite
sum of generators which are naive in $y$: write $h=\sum_{i\in I}T_{i}$,
where $I$ is a finite index set and $T_{i}$ has the form \eqref{eq: naive in y in 5.7}.
Argueing as in the proof of Proposition \ref{prop: naive in y are sums of mon + ssint}
and using Remark \ref{rem: cell at infty}, up to partitioning $X$
and shrinking $B$, we may suppose that
\[
T_{i}\left(s,x,y\right)=f_{i}\left(s,x\right)y^{\lambda_{i}\left(s\right)}\left(\log y\right)^{\mu_{i}}\text{e}^{\text{i}Q_{i}\left(x,y\right)}\Phi_{i}\left(s,x,y\right),
\]
where the $Q_{i}\in\mathcal{S}\left(X\right)\left[y^{\frac{1}{d}}\right]$
satisfy items (2.a) and (2.b) of Proposition \ref{prop: Splitting}
and $\Phi_{i}$ is as in \eqref{eq:eq: nested prep form unb}. Using
the (absolutely convergent) series expansion of $\Phi_{i}$, we write
\begin{align}
h\left(s,x,y\right) & =\sum_{i\in I}f_{i}\left(s,x\right)\left(\log y\right)^{\mu_{i}}\text{e}^{\text{i}Q_{i}\left(x,y\right)}\sum_{k\in\mathbb{N}}\xi_{i,k}\left(s,x\right)\left(a\left(x\right)\right)^{\frac{k}{d}}y^{\lambda_{i}\left(s\right)-\frac{k}{d}}\nonumber \\
 & =\sum_{i\in I,k\in\mathbb{N}}f_{i,k}\left(s,x\right)y^{\lambda_{i,k}\left(s\right)}\left(\log y\right)^{\mu_{i}}\text{e}^{\text{i}Q_{i}\left(x,y\right)},\label{eq: convergent series}
\end{align}
where $f_{i,k}\left(s,x\right)=f_{i}\left(s,x\right)\xi_{i,k}\left(s,x\right)\left(a\left(x\right)\right)^{\frac{k}{d}}$
and $\lambda_{i,k}\left(s\right)=\lambda_{i}\left(s\right)-\frac{k}{d}$.
This proves the second statement of the theorem for $h$.

Fix $N\in\mathbb{N}$, let $\mu:=\max_{i\in I}\mu_{i},\ K:=\sup_{i\in I,s\in\Sigma}\left|\Re\left(\lambda_{i}\left(s\right)\right)\right|$
and choose $k_{0}\in\mathbb{N}$ such that $k_{0}\geq d\left(K+N+1\right)$.
Let
\begin{align*}
h_{\geq k_{0}}\left(s,x,y\right) & :=h\left(s,x,y\right)-\sum_{i\in I}f_{i}\left(s,x\right)\left(\log y\right)^{\mu_{i}}\text{e}^{\text{i}Q_{i}\left(x,y\right)}\sum_{k<k_{0}}\xi_{i,k}\left(s,x\right)\left(a\left(x\right)\right)^{\frac{k}{d}}y^{\lambda_{i}\left(s\right)-\frac{k}{d}}\\
 & =\sum_{i\in I}f_{i}\left(s,x\right)y^{\lambda_{i}\left(s\right)-\frac{k_{0}}{d}}\left(\log y\right)^{\mu_{i}}\text{e}^{\text{i}Q_{i}\left(x,y\right)}\sum_{k\geq0}\xi_{i,k+k_{0}}\left(s,x\right)\left(a\left(x\right)\right)^{\frac{k}{d}}y^{-\frac{k}{d}}.
\end{align*}
Setting $C_{N}\left(s,x\right)=\left(\frac{\left(\log a\left(x\right)\right)^{\mu}}{a\left(x\right)}+\frac{1}{\mathrm{e}}\right)\sum_{i\in I}\left|f_{i}\left(s,x\right)\right|$$\sum_{k\geq0}\left|\xi_{i,k+k_{0}}\left(s,x\right)\right|$,
by the choice of $k_{0}$ we have
\[
\left|h_{\geq k_{0}}\left(s,x,y\right)\right|\leq C_{N}\left(s,x\right)y^{-N},
\]
which proves the first statement of the theorem for $h$.

Suppose now that $h$ is not a finite sum of generators which are
naive in $y$. Apply Proposition \ref{prop: preparation, 1st version}
and Remark \ref{rem: cell at infty} to $h$: up to shrinking $B$,
this writes $h$ as a finite sum of prepared generators as in Definition
\ref{def: prepared generator}. Let $T$ be one such generator: for
our aim it is enough to show that, given $N\in\mathbb{N}$, we can
rewrite $T$ as a finite sum of generators which are either naive
in $y$ or such that we can control their asymptotics by $y^{-N}$.
For this, we revisit the proof of Proposition \ref{prop: generator sum of naive and ssint}
and argue according to the nature of the integration bounds in the
transcendental element of $T$.

Recall Notation \ref{notation: t-cell}.

If $\alpha=\beta=0$, then, up to partitioning $X,$ shrinking $B$
and enlarging $P$, we may rewrite $T$ as a a finite sum of generators
which are naive in $y$ plus a term of the form \eqref{eq: last term},
where we can expand $\Phi_{>k_{0}}$ as an absolutely convergent series
in the variable $y$ as in \eqref{eq: series expansion in y}. Permuting
integral and summation, we obtain that this last term can be written
as an absolutely convergent series of the form \eqref{eq: convergent series}.
Hence we can apply the first part of the proof to this last term.

If $\alpha>0$, then chose $\ell_{0}\in\mathbb{N}$ such that for
all $s\in\Sigma,\ \Re\left(\lambda\left(s\right)\right)+\frac{\alpha}{d}\left(\Re\left(\varrho\left(s\right)\right)-\ell_{0}+1\right)+\frac{\Delta}{d}<-\left(N+1\right)$.
If we integrate by parts as in Lemma \ref{lem: int by parts} $\ell_{0}$
times, then we create finitely many terms which are naive in $y$
and an integral rest of the form
\begin{equation}
R\left(s,x,y\right)=g\left(s,x\right)y^{\lambda\left(s\right)}\left(\log y\right)^{\mu}\text{e}^{\text{i}\varphi\left(x,y\right)}\int_{\widetilde{a}\left(x,y\right)}^{\widetilde{b}\left(x,y\right)}t^{\varrho\left(s\right)-\ell_{0}}\left(\log t\right)^{\nu}\Phi\left(s,x,y,t\right)\text{e}^{\sigma\text{i}t}\text{d}t,\label{eq:rest}
\end{equation}
where \eqref{eq:cond on exp 0} is satisfied.

If $\widetilde{b}=+\infty$, then
\begin{align*}
\left|R\left(s,x,y\right)\right| & \leq\widetilde{C}_{N}\left(s,x\right)y^{\Re\left(\lambda\left(s\right)\right)+\frac{\alpha}{d}\left(\Re\left(\varrho\left(s\right)\right)-\ell_{0}+1\right)}\left(\log y\right)^{\mu+\nu}\\
 & \leq C_{N}\left(s,x\right)y^{-N},
\end{align*}
for suitable positive functions $\widetilde{C}_{N},C_{N}$.

If $\widetilde{b}<+\infty$, then
\begin{align*}
\left|R\left(s,x,y\right)\right| & \leq\widetilde{C}_{N}\left(s,x\right)y^{\Re\left(\lambda\left(s\right)\right)+\frac{\alpha}{d}\left(\Re\left(\varrho\left(s\right)\right)-\ell_{0}\right)+\frac{\Delta}{d}}\left(\log y\right)^{\mu+\nu}\\
 & \leq C_{N}\left(s,x\right)y^{-N},
\end{align*}
for suitable positive functions $\widetilde{C}_{N},C_{N}$.

If $\alpha=0$ and $\beta>0$ then choose $k_{0}\in\mathbb{N}$ such
that for all $s\in\Sigma,\ \Re\left(\lambda\left(s\right)\right)-\frac{\beta}{d}\left(\frac{k_{0}+1}{d}\right)+\frac{\Delta}{d}<-\left(N+1\right)$
and $\ell_{0}\in\mathbb{N}$ such that $\ell_{0}>\Re\left(\varrho\left(s\right)\right)+\frac{k_{0}}{d}+1$.
Then \eqref{eq: condition on exponents} and \eqref{eq: cond on exp 2}
are satisfied if we replace $\varrho\left(s\right)$ by $\varrho\left(s\right)-\ell_{0}$.
If we integrate by parts $\ell_{0}$ times, then we create finitely
many terms which are naive in $y$ and an integral rest of the form
\eqref{eq:rest}. Proceeding as in the third part of the proof of
Proposition \ref{prop: generator sum of naive and ssint}, we are
left to deal with a term $R_{>k_{0}}$ of the form \eqref{eq: last term-1}
which satisfies
\begin{align*}
\left|R_{>k_{0}}\left(s,x,y\right)\right| & \leq\widetilde{C}_{N}\left(s,x\right)y^{\Re\left(\lambda\left(s\right)\right)-\frac{\beta}{d}\left(\frac{k_{0}+1}{d}\right)+\frac{\Delta}{d}}\left(\log y\right)^{\mu+\nu}\\
 & \leq C_{N}\left(s,x\right)y^{-N},
\end{align*}
for suitable positive functions $\widetilde{C}_{N},C_{N}$.
\end{proof}
Our next goal is to concentrate on the subclass $\mathcal{C}^{\mathbb{C},\mathcal{F}}\left(X\times\mathbb{R}\right)$
and deduce from Theorem \ref{thm: summable} a more precise result
on the asymptotic behaviour of $h$ in $y$, in the sense of \cite[Definition 7.1]{ccmrs:integration-oscillatory}
but uniformly in the variables $x\in X$. 

First, we restate and improve Theorem \ref{thm: summable} for functions
in the class $\mathcal{C}^{\mathbb{C},\mathcal{F}}\left(X\times\mathbb{R}\right)$.
\begin{defn}
\label{def: space of coefficients}Let $\mathscr{E}\subseteq\mathcal{C}^{\mathbb{C},\mathcal{F}}\left(X\times\left(0,+\infty\right)\right)$
be the $\mathbb{C}$-vector space of all functions of the form
\[
E\left(x,y\right)=\sum_{j\in J}f_{j}\left(x\right)\text{e}^{\text{i}\zeta_{j}\left(x,y\right)},
\]
where $J$ is a finite index set, $f_{j}\in\mathcal{C}^{\mathbb{C},\mathcal{F}}\left(X\right),\ \zeta_{j}\left(x,y\right)=\sigma_{j}\log y+Q_{j}\left(x,y\right)$
with $\sigma_{j}\in\mathbb{R},\ Q_{j}\in\mathcal{S}\left(X\right)\left[y^{\frac{1}{d}}\right]$.
We require moreover that for all $j\in J,\text{\ for all }x\in X,\ Q_{j}\left(x,0\right)=0,$
for all $i,j\in J$ , either $Q_{i}=Q_{j}$ on $X$ or $Q_{i}\not=Q_{j}$
on $X$ and if $i\not=j$ then for all $x\in X$, the functions $y\longmapsto\zeta_{i}\left(x,y\right)$
and $y\longmapsto\zeta_{j}\left(x,y\right)$ are distinct.
\end{defn}
\begin{rem}
\label{rem: space of coeff}By \cite[Proposition 3.4 (2)]{cmrs:mellin_constructible},
if $E\in\mathscr{E}\setminus\left\{ 0\right\} $ then for all $x\in X$,
either $y\longmapsto E\left(x,y\right)$ is identically zero or there
exist $\varepsilon\left(x\right)>0$ and a sequence $\left(y_{n}\right)_{n\in\mathbb{N}}$
such that $\lim_{n\longrightarrow+\infty}y_{n}=+\infty$ and for all
for all $n\in\mathbb{N},\ \left|E\left(x,y_{n}\right)\right|>\varepsilon\left(x\right)$.
\end{rem}
\begin{defn}
\label{def: asymptotic exp}A function $h\in\mathcal{C}^{\mathbb{C},\mathcal{F}}\left(X\times\mathbb{R}\right)$
has a \emph{power-log asymptotic expansion with coefficients in $\mathscr{E}$
}if there are a collection $\left\{ E_{n}:\ n\in\mathbb{N}\right\} \subseteq\mathscr{E}$,
a sequence $\left(r_{n},\nu_{n}\right)_{n\in\mathbb{N}}\subseteq\mathbb{R}\times\mathbb{N}$
which is strictly decreasing with respect to the lexicographic order,
a cell $B$ as in \eqref{eq:A_theta=00003DB} with unbounded $y$-fibers
and for all $N\in\mathbb{N}$, a function $C_{N}:X\longrightarrow\left(0,+\infty\right)$
such that, for all $\left(x,y\right)\in B$,
\begin{equation}
\left|h\left(x,y\right)-\sum_{n<N}E_{n}\left(x,y\right)y^{r_{n}}\left(\log y\right)^{\nu_{n}}\right|\leq C_{N}\left(x\right)y^{r_{N}}\left(\log y\right)^{\nu_{N}}.\label{eq:asymp exp}
\end{equation}
If moreover the series $\sum_{n\in\mathbb{N}}E_{n}\left(x,y\right)y^{r_{n}}\left(\log y\right)^{\nu_{n}}$
converges absolutely to $h$, then we say that $h$ has a \emph{convergent
}power-log asymptotic expansion with coefficients in $\mathscr{E}$.
\end{defn}
Note that the sequence of real functions $\left(g_{n}\left(y\right)\right)_{n\in\mathbb{N}}=\left(y^{r_{n}}\left(\log y\right)^{\nu_{n}}\right)_{n\in\mathbb{N}}$
forms an \emph{asymptotic scale} at $+\infty$ in the sense that,
for all $n\in\mathbb{N},$ $\lim_{y\longrightarrow+\infty}\frac{g_{n+1}\left(y\right)}{g_{n}\left(y\right)}=0$.

Recall the definition of the system $\mathcal{C}^{\mathbb{C}}$ of
power-constructible functions and that of the system $\mathcal{C}^{\mathbb{C},\text{i}\mathcal{S}}$,
given in Section \ref{subsec: Fourier transf}.
\begin{defn}
\label{def: naive in 7}Let $\mathcal{C}_{\text{naive}}^{\mathbb{C},\mathcal{F}}\left(X\times\mathbb{R}\right)$
be the additive group generated by the generators which are naive
in $y$, i.e. of the form
\[
\gamma g\text{e}^{\text{i}\varphi}\ \ \ \left(\gamma\in\mathcal{C}^{\mathbb{C},\mathcal{F}}\left(X\right),\ g\in\mathcal{C}^{\mathbb{C}}\left(X\times\mathbb{R}\right),\ \varphi\in\mathcal{S}\left(X\times\mathbb{R}\right)\right).
\]
\end{defn}
Note that $\mathcal{C}_{\text{naive}}^{\mathbb{C},\mathcal{F}}\left(X\times\mathbb{R}\right)$
is a $\mathbb{C}$-algebra and
\begin{equation}
\mathcal{C}^{\mathbb{C}}\left(X\times\mathbb{R}\right)\subseteq\mathcal{C}^{\mathbb{C},\text{i}\mathcal{S}}\left(X\times\mathbb{R}\right)\subseteq\mathcal{C}_{\text{naive}}^{\mathbb{C},\mathcal{F}}\left(X\times\mathbb{R}\right)\subseteq\mathcal{C}^{\mathbb{C},\mathcal{F}}\left(X\times\mathbb{R}\right).\label{eq: inclusions}
\end{equation}

\begin{thm}
\label{thm: asympt exp}Every $h\in\mathcal{C}^{\mathbb{C},\mathcal{F}}\left(X\times\mathbb{R}\right)$
has, up to partitioning $X$, a power-log asymptotic expansion with
coefficients in $\mathscr{E}$. If moreover $h\in\mathcal{C}_{\mathrm{naive}}^{\mathbb{C},\mathcal{F}}\left(X\times\mathbb{R}\right)$,
then such an asymptotic expansion is convergent.
\end{thm}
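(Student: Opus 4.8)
The plan is to reduce Theorem \ref{thm: asympt exp} to Theorem \ref{thm: summable} (specialised to the $s$-free setting, i.e. to the class $\mathcal{C}^{\mathbb{C},\mathcal{F}}$) by carefully bookkeeping which power-log monomials in $y$ have a common exponent, and gathering the corresponding coefficients into a single element of $\mathscr{E}$. First I would fix $h\in\mathcal{C}^{\mathbb{C},\mathcal{F}}(X\times\mathbb{R})$ and apply the analogue of Theorem \ref{thm: summable} for $\mathcal{C}^{\mathbb{C},\mathcal{F}}$: after partitioning $X$, shrinking $B$ (to a cell with unbounded $y$-fibers, which we may take of the form \eqref{eq:A_theta=00003DB} with lower bound $a$, and using Remark \ref{rem: cell at infty}), one obtains a sequence $(T_j)_{j\in\mathbb{N}}$ of generators which are monomial in $y$, so that each $T_j$ has the shape $f_j(x)\,y^{\lambda_j}(\log y)^{\mu_j}\mathrm{e}^{\mathrm{i}Q_j(x,y)}$ with $f_j\in\mathcal{C}^{\mathbb{C},\mathcal{F}}(X)$, $\lambda_j\in\mathbb{C}$, $\mu_j\in\mathbb{N}$, $Q_j\in\mathcal{S}(X)[y^{1/d}]$ with $Q_j(x,0)=0$, and with the approximation bound $|h-\sum_{j\le j_N}T_j|\le C_N(x)y^{-N}$ on $B$. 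Here I would split $\lambda_j=r_j+\mathrm{i}\sigma_j$ into its real and imaginary parts and absorb the oscillatory factor $y^{\mathrm{i}\sigma_j}=\mathrm{e}^{\mathrm{i}\sigma_j\log y}$, together with $\mathrm{e}^{\mathrm{i}Q_j(x,y)}$, into a phase $\zeta_j(x,y)=\sigma_j\log y+Q_j(x,y)$, so that $T_j(x,y)=f_j(x)\mathrm{e}^{\mathrm{i}\zeta_j(x,y)}\,y^{r_j}(\log y)^{\mu_j}$.

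Next I would organise the monomials by the asymptotic scale $(r,\nu)\in\mathbb{R}\times\mathbb{N}$ ordered lexicographically (decreasingly). The set of pairs $(r_j,\mu_j)$ occurring among the $T_j$ is, after the partition and shrinking, well-ordered in the reverse lexicographic order with no accumulation from above — this follows because the $r_j$ are of the form $\Re(\lambda)-k/d$ for finitely many base exponents $\lambda$ (coming from the finitely many prepared generators produced by Proposition \ref{prop: preparation, 1st version}) and $k\in\mathbb{N}$, with $\mu_j$ bounded; so one can enumerate the distinct pairs as $(r_0,\nu_0)>(r_1,\nu_1)>\cdots$. For each $n$, define $E_n(x,y)=\sum_{j:(r_j,\mu_j)=(r_n,\nu_n)}f_j(x)\mathrm{e}^{\mathrm{i}\zeta_j(x,y)}$. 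This is a finite sum (for fixed $(r_n,\nu_n)$ only finitely many $j$ contribute, since increasing $k$ strictly decreases $r_j$), hence $E_n\in\mathcal{C}^{\mathbb{C},\mathcal{F}}(X\times(0,+\infty))$, and after one further partition of $X$ (using Notation \ref{not : different polynomials} to arrange $Q_i=Q_j$ or $Q_i\ne Q_j$ on $X$, and merging any coincident phases) and discarding identically-zero terms, $E_n\in\mathscr{E}$. Then $\sum_{j\le j_N}T_j$ regroups as $\sum_{n<N'}E_n(x,y)y^{r_n}(\log y)^{\nu_n}$ for an appropriate $N'=N'(N)$, and the bound from Theorem \ref{thm: summable}(1) translates, after a routine check that $y^{-N}=o(y^{r_{N'}}(\log y)^{\nu_{N'}})$ for $N$ large relative to $N'$ and that the remaining finitely many $T_j$ with $j_N<j$ but appearing in blocks $<N'$ are negligible, into exactly the estimate \eqref{eq:asymp exp}. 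For the convergence statement, if $h\in\mathcal{C}_{\mathrm{naive}}^{\mathbb{C},\mathcal{F}}(X\times\mathbb{R})$ then by Theorem \ref{thm: summable}(2) the series $\sum_j T_j$ converges absolutely to $h$, and regrouping an absolutely convergent series into the blocks $E_n(x,y)y^{r_n}(\log y)^{\nu_n}$ preserves absolute convergence and the sum, giving the convergent power-log asymptotic expansion.

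The main obstacle I anticipate is the bookkeeping in the regrouping step: one must be sure that collecting together all $T_j$ sharing a given real exponent $r_n$ and log-power $\nu_n$ yields only a \emph{finite} sum (so that $E_n$ genuinely lies in $\mathscr{E}$, which by definition consists of finite sums), and simultaneously that the family of occurring pairs $(r_n,\nu_n)$ has no accumulation point from above in the lexicographic order, so that the asymptotic scale $(y^{r_n}(\log y)^{\nu_n})_n$ is legitimate and \eqref{eq:asymp exp} makes sense. Both facts hinge on the fact, visible in the proof of Theorem \ref{thm: summable} (and ultimately in Proposition \ref{prop: preparation, 1st version} and Remark \ref{rem: d}), that all the exponents arise from finitely many ``base'' exponents by subtracting nonnegative multiples of $1/d$ for a single fixed $d$, and that the log-powers stay bounded; I would make this explicit. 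A secondary point requiring care is the passage from the two indexing schemes — the partial sums $\sum_{j\le j_N}T_j$ indexed by the enumeration of generators versus the partial sums $\sum_{n<N}E_n y^{r_n}(\log y)^{\nu_n}$ indexed by the scale — which must be matched up so that the error term of one dominates the tail of the other; this is a routine but slightly fiddly interleaving argument, handled by choosing $j_N$ large enough that it includes all generators contributing to blocks $n\le N$.
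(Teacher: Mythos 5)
Your proposal is correct and follows essentially the same route as the paper: both reduce to Theorem \ref{thm: summable}, expand the naive generators into the absolutely convergent series \eqref{eq: convergent series}, and regroup the terms sharing a common pair $\left(r,\nu\right)$ into finite sums $E_{n}\in\mathscr{E}$ indexed decreasingly along the lexicographic order, with the non-naive case handled by the $y^{-N}$ remainder bound. The bookkeeping points you flag (finiteness of each block, order type $\omega$ of the scale, and matching the two partial-sum indexings) are exactly the ones the paper's proof relies on via the finitely generated $\mathbb{Z}$-lattice of exponents and conditions (2.a)--(2.b) of Proposition \ref{prop: Splitting}.
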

\begin{proof}
Suppose first that $h\in\mathcal{C}_{\text{naive}}^{\mathbb{C},\mathcal{F}}\left(X\times\mathbb{R}\right)$,
so that, up to partitioning $X$ and on some cell $B$ with unbounded
$y$-fibers, $h$ can be written as in \eqref{eq: convergent series},
where the functions $f_{i,k}$ only depend on the variables $x$ and
$\lambda_{i,k}=\lambda_{i}-\frac{k}{d}\in\mathbb{C}$. Let $\rho_{i,k}=\Re\left(\lambda_{i}\right)-\frac{k}{d}$,
$\sigma_{i}=\Im\left(\lambda_{i}\right)$ and define $\zeta_{i}\left(x,y\right)=\sigma_{i}\log y+Q_{i}\left(x,y\right)$.
Hence we can write $h$ as the sum of the absolutely convergent series
of functions
\[
\sum_{\left(i,k\right)\in I\times\mathbb{N}}f_{i,k}\left(x\right)y^{\rho_{i,k}}\left(\log y\right)^{\mu_{i}}\text{e}^{\text{i}\zeta_{i}\left(x,y\right)}.
\]
The set $\left\{ \rho_{i,k}:\ i\in I,k\in\mathbb{N}\right\} $ is
contained in a finitely generated $\mathbb{Z}$-lattice and, since
$I$ is finite, so is the set $\left\{ \mu_{i}:\ i\in I\right\} $.
Hence the set
\[
J=\left\{ \left(r,\nu\right):\ \exists\left(i,k\right)\in I\times\mathbb{N}\text{ s.t. }\left(\rho_{i,k},\mu_{i}\right)=\left(r,\nu\right)\right\}
\]
 is countable and, for $\left(r,\nu\right)\in J$, the set $J_{\left(r,\nu\right)}=\left\{ \left(i,k\right)\in I\times\mathbb{N}:\ \rho_{i,k}=r,\ \mu_{i}=\nu\right\} $
is finite. Fix a bijection
\[
\mathbb{N}\ni n\longmapsto\left(r_{n},\nu_{n}\right)\in J
\]
which is decreasing with respect to the lexicographic order and define
\[
E_{n}\left(x,y\right)=\sum_{\left(i,k\right)\in J_{\left(r_{n},\nu_{n}\right)}}f_{i,k}\left(x\right)\text{e}^{\text{i}\zeta_{i}\left(x,y\right)}.
\]
These are the coefficients of a convergent power-log asymptotic expansion
of $h$ in the asymptotic scale $\left\{ y^{r_{n}}\left(\log y\right)^{\nu_{n}}:\ n\in\mathbb{N}\right\} $.

Suppose now that $h\not\in\mathcal{C}_{\text{naive}}^{\mathbb{C},\mathcal{F}}\left(X\times\mathbb{R}\right)$.
Revisiting the proof of Theorem \ref{thm: summable}, given $N\in\mathbb{N}$,
we may write $h$ as a finite sum of generators which are either naive
in $y$ (and hence have a convergent power-log asymptotic expansion
in some common asymptotic scale $\left(y^{r_{n}}\left(\log y\right)^{\nu_{n}}\right)_{n\in\mathbb{N}}$
with coefficients $\left\{ E_{n}:\ n\in\mathbb{N}\right\} \subseteq\mathscr{E}$)
or whose module is bounded $C_{N}\left(x\right)y^{\lfloor r_{N}\rfloor-1}$,
where $C_{N}$ is some positive function in $\mathcal{C}^{\mathbb{C},\mathcal{F}}\left(X\right)$.
In particular, $h$ has a (not necessarily convergent) power-log asymptotic
expansion as in \eqref{eq:asymp exp}.
\end{proof}
\begin{rem}
\label{rem: unique asympt exp}Argueing as in \cite[Lemma 7.2]{ccmrs:integration-oscillatory}
and using Remark \ref{rem: space of coeff}, one sees that if $h$
has a power-log asymptotic expansion in a certain power-log asymptotic
scale and with coefficients in $\mathscr{E}$, then its coefficients
are uniquely determined. Note that the proof of Theorem \ref{thm: asympt exp}
shows that the power-log asymptotic scales $\left(y^{r_{n}}\left(\log y\right)^{\nu_{n}}\right)_{n\in\mathbb{N}}$
appearing in the asymptotic expansions of functions in $\mathcal{C}^{\mathbb{C},\mathcal{F}}$
have the property that the sequence $\left(r_{n},\nu_{n}\right)_{n\in\mathbb{N}}$
has the same order type as $\omega$. In particular, the union of
two such asymptotic scales is again an asymptotic scale of the same
type, so a function in $\mathcal{C}^{\mathbb{C},\mathcal{F}}$ cannot
have two different asymptotic expansions in two different power-log
asymptotic scales.
\end{rem}
\begin{cor}
\label{cor: naive not enough}The systems $\mathcal{C}^{\mathbb{C},\mathrm{i}\mathcal{S}}$
and $\mathcal{C}^{\mathcal{M},\mathrm{i}\mathcal{S}}$ are not stable
under parametric integration.
\end{cor}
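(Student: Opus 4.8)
The plan is to produce an explicit function in $\mathcal{C}^{\mathbb{C},\mathrm{i}\mathcal{S}}$ whose parametric integral has a \emph{divergent} power-log asymptotic expansion, and then invoke the asymptotic machinery of Section \ref{sec:Asymptotic-expansions-and}. Concretely, let $D=\{(y,t)\in\mathbb{R}^{2}:\ t>1\ \text{and}\ t>y\}$ and set
$$h(y,t)=\chi_{D}(y,t)\,t^{-2}\mathrm{e}^{\mathrm{i}t}.$$
Since $\chi_{D}t^{-2}\in\mathcal{C}^{\mathbb{C}}(\mathbb{R}^{2})$ (it is even semialgebraic) and $\mathrm{e}^{\mathrm{i}t}\in\mathrm{e}^{\mathrm{i}\mathcal{S}}(\mathbb{R}^{2})$, we have $h\in\mathcal{C}^{\mathbb{C},\mathrm{i}\mathcal{S}}(\mathbb{R}\times\mathbb{R})$; and for every $y$ the map $t\mapsto h(y,t)$ is $O(t^{-2})$ on $\{t>\max(1,y)\}$, hence $h\in\mathcal{C}^{\mathbb{C},\mathrm{i}\mathcal{S}}(\mathbb{R}\times\mathbb{R})_{\mathrm{int}}$ with integration locus all of $\mathbb{R}$. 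Its parametric integral over the last variable is $H(y)=\int_{\max(1,y)}^{+\infty}t^{-2}\mathrm{e}^{\mathrm{i}t}\,\mathrm{d}t$; note that $H=\mathfrak{f}\left[\chi_{D}t^{-2}\right]$, so $H\in\mathcal{C}^{\mathbb{C},\mathcal{F}}(\mathbb{R})$ already by Definition \ref{def: C^F} (reading $\mathbb{R}=\mathbb{R}^{0}\times\mathbb{R}$).

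Next I would analyse $H$ at $+\infty$. On the cell $B=(1,+\infty)$ one has $H(y)=\int_{y}^{+\infty}t^{-2}\mathrm{e}^{\mathrm{i}t}\,\mathrm{d}t$, and repeated integration by parts (integrating $\mathrm{e}^{\mathrm{i}t}$, differentiating the power) gives, for every $N$,
$$H(y)=\mathrm{e}^{\mathrm{i}y}\sum_{n<N}c_{n}y^{-n-2}+R_{N}(y),\qquad |R_{N}(y)|\le C_{N}\,y^{-N-2},$$
with $|c_{n}|=(n+1)!$ and $C_{N}$ a constant; here the remainder estimate uses the bound $\bigl|\int_{y}^{+\infty}t^{-k}\mathrm{e}^{\mathrm{i}t}\,\mathrm{d}t\bigr|\le 2y^{-k}$ (one further integration by parts), so the oscillation of $\mathrm{e}^{\mathrm{i}t}$ absorbs one power of $y$. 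Thus, taking $E_{n}(y)=c_{n}\mathrm{e}^{\mathrm{i}y}\in\mathscr{E}$ and $(r_{n},\nu_{n})=(-n-2,0)$, the function $H\in\mathcal{C}^{\mathbb{C},\mathcal{F}}(\mathbb{R}^{0}\times\mathbb{R})$ has a power-log asymptotic expansion with coefficients in $\mathscr{E}$ in the sense of Definition \ref{def: asymptotic exp}, and since $|c_{n}|=(n+1)!\to+\infty$, the series $\sum_{n}E_{n}(y)y^{r_{n}}(\log y)^{\nu_{n}}=\mathrm{e}^{\mathrm{i}y}\sum_{n}c_{n}y^{-n-2}$ diverges for every $y>1$.

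Now I would derive both contradictions. If $\mathcal{C}^{\mathbb{C},\mathrm{i}\mathcal{S}}$ were stable under parametric integration, then $H\in\mathcal{C}^{\mathbb{C},\mathrm{i}\mathcal{S}}(\mathbb{R})\subseteq\mathcal{C}_{\mathrm{naive}}^{\mathbb{C},\mathcal{F}}(\mathbb{R})$ by \eqref{eq: inclusions}, so by the second assertion of Theorem \ref{thm: asympt exp} $H$ would admit a \emph{convergent} power-log asymptotic expansion with coefficients in $\mathscr{E}$; by the uniqueness statement of Remark \ref{rem: unique asympt exp} this must be the expansion computed above, which is divergent — a contradiction. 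For $\mathcal{C}^{\mathcal{M},\mathrm{i}\mathcal{S}}$, regard the same $h$ as independent of $s$, so $h\in\mathcal{C}_{\Sigma}^{\mathcal{M},\mathrm{i}\mathcal{S}}(\mathbb{R}\times\mathbb{R})_{\mathrm{int}}$ for every strip $\Sigma$; stability would produce $\widetilde{H}\in\mathcal{C}_{\Sigma}^{\mathcal{M},\mathrm{i}\mathcal{S}}(\mathbb{R})$, with no poles, such that $\widetilde{H}(s,y)=H(y)$ for all $(s,y)$. Restricting to $B=(1,+\infty)$ and writing $\widetilde{H}\restriction B$ as a finite sum of functions $g\mathrm{e}^{\mathrm{i}\varphi}$ with $g\in\mathcal{C}_{\Sigma}^{\mathcal{M}}(B)$ and $\varphi\in\mathcal{S}(B)$ — each of which is a generator naive in $y$, with trivial transcendental element $1\in\Gamma_{\Sigma}$ — Theorem \ref{thm: summable}(2) yields monomial-in-$y$ generators $T_{n}$ with $\sum_{n}T_{n}=\widetilde{H}\restriction B$ absolutely convergent. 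Fixing any $s_{0}\in\Sigma$ and grouping the monomials $T_{n}(s_{0},\cdot)$ according to the pairs $\bigl(\Re\lambda_{n}(s_{0}),\mu_{n}\bigr)$ then exhibits a convergent power-log asymptotic expansion of $H$ with coefficients in $\mathscr{E}$, again contradicting Remark \ref{rem: unique asympt exp}.

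The only genuinely delicate point is the choice of example and the verification that its parametric integral is caught by the asymptotic-expansion results of Section \ref{sec:Asymptotic-expansions-and} in the precise sense of Definition \ref{def: asymptotic exp}; once $H$ is seen to possess a power-log asymptotic expansion with factorially growing coefficients (the estimate $|R_{N}(y)|\le C_{N}y^{-N-2}$ is the computational heart), the non-stability of both $\mathcal{C}^{\mathbb{C},\mathrm{i}\mathcal{S}}$ and $\mathcal{C}^{\mathcal{M},\mathrm{i}\mathcal{S}}$ follows immediately from the uniqueness of such expansions together with the ``naive $\Rightarrow$ convergent'' content of Theorems \ref{thm: asympt exp} and \ref{thm: summable}.
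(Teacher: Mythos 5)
Your proposal is correct and follows essentially the same route as the paper's own proof: the paper's second example is the sine integral $\mathrm{Si}$, obtained as a parametric integral of a function in $\mathcal{C}^{\mathbb{C},\mathrm{i}\mathcal{S}}\left(\mathbb{R}^{2}\right)$, whose well-known divergent power-log asymptotic expansion contradicts the ``naive $\Rightarrow$ convergent expansion'' content of Theorem \ref{thm: asympt exp} together with the uniqueness of Remark \ref{rem: unique asympt exp} — exactly your mechanism, applied to the closely analogous exponential-integral-type function $\int_{y}^{+\infty}t^{-2}\mathrm{e}^{\mathrm{i}t}\,\mathrm{d}t$, for which you compute the divergent expansion by hand instead of citing the literature. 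The only substantive difference is that the paper additionally exhibits $y\mapsto\mathrm{e}^{-\left|y\right|}$ as a second obstruction (an exponentially flat function cannot have a convergent power-log expansion), which your single example renders unnecessary.
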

\begin{proof}
We give two examples of functions which are in $\mathcal{C}^{\mathbb{C},\mathcal{F}}\left(\mathbb{R}\right)$
but not in $\mathcal{C}^{\mathcal{M},\text{i}\mathcal{S}}\left(\mathbb{R}\right)$.

The function $f:y\longmapsto\text{e}^{-\left|y\right|}$ belongs to
$\mathcal{C}^{\mathbb{C},\mathcal{F}}\left(\mathbb{R}\right)$, since
it can be obtained as a parametric integral of a function in $\mathcal{C}^{\mathbb{C},\mathrm{i}\mathcal{S}}\left(\mathbb{R}^{2}\right)$
(it is the inverse Fourier transform of the semialgebraic function
$t\longmapsto\frac{2}{1+4\pi^{2}t^{2}}$, see for example \cite{gasquet-witomski:fourier-analysis}).
If $f$ were in $\mathcal{C}^{\mathcal{M},\text{i}\mathcal{S}}\left(\mathbb{R}\right)$
then it would also be in $\mathcal{C}^{\mathbb{C},\text{i}\mathcal{S}}\left(\mathbb{R}\right)$
and by Theorem \ref{thm: asympt exp} $f$ would have a convergent
power-log asymptotic expansion with coefficients in $\mathscr{E}$.
Now, argueing as in \cite[Example 7.4]{ccmrs:integration-oscillatory}
and using Remark \ref{rem: space of coeff}, one sees that no exponentially
flat function can have such a convergent power-log asymptotic expansion.

Now consider the function
\[
\text{Si}\left(y\right)=\begin{cases}
\int_{0}^{y}\frac{\text{e}^{\text{i}t}-\text{e}^{-\text{i}t}}{2\text{i}t}\text{d}t & y>0\\
0 & y\leq0
\end{cases}
\]
which is obtained as a parametric integral of a function in $\mathcal{C}^{\mathbb{C},\mathrm{i}\mathcal{S}}\left(\mathbb{R}^{2}\right)$.
It is well-known that $\text{Si}$ has a divergent power-log asymptotic
expansion with coefficients in $\mathcal{E}$ (see \cite{abramowitz:handbook_mathematical_functions}
and \cite[Example 7.5]{ccmrs:integration-oscillatory}. By Theorem
\ref{thm: asympt exp} and Remark \ref{rem: unique asympt exp}, $\text{Si}\notin\mathcal{C}^{\mathcal{M},\text{i}\mathcal{S}}\left(\mathbb{R}\right)$.
\end{proof}
\begin{rem}
\label{rem:flat}Let $X\subseteq\mathbb{R}^{m}$ be a subanalytic
open set and $K\subseteq X$ be a compact subanalytic subset. It is
possible to construct a $C^{\infty}$ function $\eta\in\mathcal{C}^{\mathbb{C},\mathcal{F}}(X)$
such that $\eta\left(X\right)\subseteq\left[0,1\right]$ and $\eta\equiv1$
on a neighbourhood of $K$ in $X$ (in particular $\mathcal{C}^{\mathbb{C},\mathcal{F}}(X)$
contains smooth functions with compact support). One way to do this
is to consider the function
\[
\nu:x\mapsto%\frac{1}{\int_{\mathbb{R}^{n}}\ \mathrm{d}t}
f(1-\|x\|^{2}),
\]
where $\|\cdot\|$ is the Euclidean norm in $\mathbb{R}^{m}$ and
\[
f:t\mapsto\left\{ \begin{array}{ll}
\mathrm{e}^{-\frac{1}{t}} & \text{if \ensuremath{t>0}}\\
0 & \text{if \ensuremath{t\leq0}}
\end{array}\right..
\]
Note that, considering the first example in the proof of Corollary
\ref{cor: naive not enough} and using the stability under right-composition
with subanalytic functions observed in Remark \ref{rem: 1 is a transcendental},
we obtain that $\nu\in\mathcal{C}^{\mathbb{C},\mathcal{F}}(X)$. We
can then define $\eta$ as the convolution of $\nu$ with the subanalytic
characteristic function of a sufficiently small tubular neighbourhood
of $K$ in $X$ (see for instance \cite[Theorem 1.4.1]{Hor1}), and
thus obtain that $\nu\in\mathcal{C}^{\mathbb{C},\mathcal{F}}(X)$
by Theorem \ref{thm: stability of C^F}.
\end{rem}
We have at our disposal several results concerning the asymptotics
at infinity of integral transforms, and in particular of Fourier and
Mellin transforms, of functions with support in $[0,+\infty)$ having
an asymptotic expansions at the origin in the scale $\{x^{\alpha}\log^{\beta}:\alpha,\beta\in\mathbb{R}\}$
(see for instance \cite{BleiHand,Wong,WongLin}). In this situation,
the integral transforms have an asymptotic expansion at $+\infty$
in the same power-log scale. On the other hand, to our knowledge very
little known beyond this scale, in particular with respect to asymptotic
scales detecting exponentially small terms (see \cite{Lombardi}),
a question that is relevant to the class $\mathcal{C}^{\mathrm{exp}}$
of \cite{ccmrs:integration-oscillatory} and to our class $\mathcal{C}^{\mathbb{C},\mathcal{F}}$
by Remark \ref{rem:flat}, but which seems to require new tools.

\subsection{Pointwise limits}

In this section we prove the stability of the class $\mathbf{\mathcal{C}^{\mathbb{C},\mathcal{F}}}$
under pointwise limits.
\begin{notation}
\label{not: limits}For $X\subseteq{\mathbb{R}}^{m}$ and $h\colon X\times{\mathbb{R}}\to{\mathbb{C}}$,
let
\[
\text{Lim}\left(h,X\right):=\{x\in X:\ \text{\ensuremath{\lim_{y\to+\infty}h\left(x,y\right)\ }exists}\}.
\]
\end{notation}
\begin{thm}
\label{prop:limits} Let $h\in\mathcal{C}^{\mathbb{C},\mathcal{F}}\left(X\times{\mathbb{R}}\right)$. There exist $f,g\in\mathcal{C}^{\mathbb{C},\mathcal{F}}\left(X\right)$
such that
\[
\mathrm{Lim}\left(h,X\right)=\{x\in X:\ f\left(x\right)=0\}
\]
and such that for all $x\in\mathrm{Lim}\left(h,X\right)$,
\[
\lim_{y\to+\infty}h\left(x,y\right)=g\left(x\right).
\]
\end{thm}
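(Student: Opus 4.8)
The plan is to reduce the statement to the asymptotic expansion machinery already developed, in particular Theorem \ref{thm: asympt exp}, Remark \ref{rem: space of coeff} and the non-compensation Proposition 3.4 of \cite{cmrs:mellin_constructible}. First I would apply Theorem \ref{thm: asympt exp} to $h$: up to partitioning $X$ into finitely many subanalytic cells (which is harmless, since on each piece we may glue the resulting $f,g$ together using characteristic functions and the fact that $\mathcal{C}^{\mathbb{C},\mathcal{F}}$ is a $\mathbb{C}$-algebra stable under right-composition with subanalytic maps), $h$ has a power-log asymptotic expansion with coefficients in $\mathscr{E}$ on some cell $B$ with unbounded $y$-fibers, say
\[
\left|h\left(x,y\right)-\sum_{n<N}E_{n}\left(x,y\right)y^{r_{n}}\left(\log y\right)^{\nu_{n}}\right|\leq C_{N}\left(x\right)y^{r_{N}}\left(\log y\right)^{\nu_{N}},
\]
with $\left(r_{n},\nu_{n}\right)$ strictly lexicographically decreasing and $E_{n}\in\mathscr{E}$. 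Note that outside of $B$ there is nothing to compute, as for each fixed $x$ the limit at $+\infty$ only depends on the germ of $y\mapsto h(x,y)$ at $+\infty$, so the values of $h$ for $y\le a(x)$ are irrelevant.

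The next step is to analyze for which $x$ the limit exists and to identify it. For a fixed $x$, the germ at $+\infty$ of $y\mapsto h(x,y)$ has a limit iff the "non-decaying part" of the expansion tends to a constant. Concretely, let $n_{0}$ be the least index with $(r_{n_{0}},\nu_{n_{0}}) \le (0,0)$ in the lexicographic order; the terms with $n<n_{0}$ are those with either $r_n>0$, or $r_n=0$ and $\nu_n>0$, and the term $n=n_{0}$ (if it occurs) has $r_{n_0}=0,\nu_{n_0}=0$. All terms with $n>n_0$ tend to $0$ (their modulus is $\le C\,y^{r_n}(\log y)^{\nu_n}\to 0$). So $\lim_{y\to+\infty}h(x,y)$ exists iff: (i) for every $n<n_{0}$, $y\mapsto E_{n}(x,y)y^{r_{n}}(\log y)^{\nu_n}\to 0$, and, when the index $n_0$ is present, (ii) $y\mapsto E_{n_0}(x,y)$ has a limit at $+\infty$; in that case the limit of $h(x,\cdot)$ equals $\lim_{y}E_{n_0}(x,y)$. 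Now a function $E\in\mathscr{E}$, $E(x,y)=\sum_{j}f_j(x)\,\mathrm{e}^{\mathrm{i}\zeta_j(x,y)}$ with $\zeta_j(x,y)=\sigma_j\log y+Q_j(x,y)$, has, for fixed $x$, a limit at $+\infty$ iff all the oscillatory terms — those for which $y\mapsto\zeta_j(x,y)$ is non-constant, i.e. either $\sigma_j\ne 0$ or $Q_j(x,\cdot)\not\equiv 0$ — have vanishing coefficient $f_j(x)$; this is exactly the content of the non-compensation result, Remark \ref{rem: space of coeff} (based on \cite[Proposition 3.4]{cmrs:mellin_constructible}). When $E\in\mathscr{E}$, the non-oscillatory summands are those with $\zeta_j(x,\cdot)$ constant, i.e. $\sigma_j=0$ and $Q_j(x,\cdot)\equiv 0$; partitioning $X$ further (using Notation \ref{not : different polynomials}) so that each $Q_j$ is either identically $0$ on the piece or nowhere-identically-$0$, the "constant part" of $E$ becomes $\sum_{j\in J_{0}}f_j(x)$ where $J_0$ is a fixed index set, and this is the candidate value of $\lim_y E(x,y)$ provided the remaining $f_j$, $j\notin J_0$, all vanish at $x$.

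Putting this together: after finitely many partitions of $X$, for each piece we set
\[
g\left(x\right)=\sum_{j\in J_{0}}f_{n_{0},j}\left(x\right)\quad\text{(with }g\equiv 0\text{ if the index }n_{0}\text{ does not occur)},
\]
which lies in $\mathcal{C}^{\mathbb{C},\mathcal{F}}\left(X\right)$ since it is a finite sum of coefficient functions from the $E_n$'s, and we let $f$ be built so that $\{f=0\}$ equals the set of $x$ for which all the "obstruction" coefficients vanish. The obstruction coefficients are: the coefficients $f_{n,j}(x)$ appearing in $E_n$ for $n<n_0$ (all of them, since for $n<n_0$ the factor $y^{r_n}(\log y)^{\nu_n}$ grows, so even a non-oscillatory nonzero coefficient prevents convergence), together with, for $n=n_0$, the coefficients $f_{n_0,j}(x)$ for $j\notin J_0$ (the oscillatory ones). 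As $\mathcal{C}^{\mathbb{C},\mathcal{F}}\left(X\right)$ is a $\mathbb{C}$-algebra and closed under taking $|\cdot|^2=h\bar h$ (which is again of the form: real and imaginary parts are in the class, their squares sum), we may take $f$ to be the sum of the squared moduli of all obstruction coefficients — or more simply, invoking the algebra structure exactly as in the proof of Theorem \ref{thm: interpolation} where an analogous vanishing locus $\bigwedge_j f_j=0$ is packaged into a single function. The correctness of the description $\mathrm{Lim}(h,X)=\{f=0\}$ then follows from the "only if" direction, which again rests on Remark \ref{rem: space of coeff}: if some obstruction coefficient is nonzero at $x$, then the corresponding term contributes a genuinely oscillating or unboundedly growing summand which, by the non-compensation property, cannot be cancelled by the others nor absorbed into the $y^{r_N}(\log y)^{\nu_N}$ remainder for $N$ large, hence the limit fails to exist.

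The main obstacle I expect is the careful bookkeeping in the "only if" direction: one must argue that the finitely many leading terms genuinely obstruct convergence and cannot conspire to cancel. This is where the precise form of \cite[Proposition 3.4]{cmrs:mellin_constructible} is essential — it guarantees that a nonzero element of $\mathscr{E}$ (or, more to the point, a nonzero finite $\mathscr{E}$-combination of distinct power-log monomials) attains values bounded away from $0$ along a sequence $y_n\to+\infty$ — and the delicate point is to organize the leading block of the expansion (grouping by equal $(\Re\lambda,\mu)$ and then by the oscillation data $\zeta_j$, exactly as in the proof of Theorem \ref{thm: interpolation}) so that this non-compensation statement applies cleanly and the remainder estimate $C_N(x)y^{r_N}(\log y)^{\nu_N}$ is negligible against the obstructing block. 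Once that reduction to the finite leading block is in place, the rest is the routine algebra-of-functions argument and a finite partition of $X$.
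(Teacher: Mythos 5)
Your proposal is correct and follows essentially the same route as the paper: reduce to a finite leading block of monomial terms grouped by $\left(\Re\lambda,\mu\right)$, invoke the non-compensation result \cite[Proposition 3.4]{cmrs:mellin_constructible} to show that any nonzero growing or oscillatory coefficient obstructs convergence, and package the obstruction coefficients into $f=\sum\left|f_{j}\right|^{2}$ with $g$ the non-oscillatory constant coefficient. The only cosmetic difference is that you enter through Theorem \ref{thm: asympt exp} while the paper applies Proposition \ref{prop: Splitting} directly, which amounts to the same decomposition.
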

\begin{proof}
Apply Proposition \ref{prop: Splitting} to $h$ and concentrate on
a cell $A$ with unbounded $y$-fibers (so that, by Remark \ref{rem: cell at infty},
$A=B_{A}$ and $\Pi_{A}$ is the identity map). By condition (1),
the prepared generators $T_{j}$ which are strongly integrable tend
indeed to a limit and this limit is zero. Hence we may suppose that
$h=\sum_{i\in I}T_{i}$ for some finite index set $I$, where each
$T_{i}$ is a monomial generator of the form
\[
T_{i}\left(x,y\right)=f_{i}\left(x\right)y^{\lambda_{i}}\left(\log y\right)^{\mu_{i}}\text{e}^{\text{i}Q_{i}\left(x,y\right)},
\]
where $\lambda_{i}\in\mathbb{C}$ with $\Re\left(\lambda_{i}\right)\geq0$.
Write the finite set
\[
J=\left\{ \left(r,\nu\right)\in[0,+\infty)\times\mathbb{N}:\ \exists i\in I\text{ s.t. }\Re\left(\lambda_{i}\right)=r,\ \mu_{i}=\nu\right\}
\]
as
\[
J=\left\{ \left(r_{0},\nu_{0}\right),\ldots,\left(r_{N},\nu_{N}\right)\right\}
\]
for some $N\in\mathbb{N}$, and suppose that $\left(r_{0},\nu_{0}\right)>\ldots>\left(r_{N},\nu_{N}\right)$
with respect to the lexicographic order. For $j=0,\ldots,N$, define
\[
J_{j}=\left\{ i\in I:\ \Re\left(\lambda_{i}\right)=r_{j},\ \mu_{i}=\nu_{j}\right\} .
\]
Writing $\zeta_{i}\left(x,y\right)=\Im\left(\lambda_{i}\right)\log y+Q_{i}\left(x,y\right)$
and $E_{j}\left(x,y\right)=\sum_{i\in J_{j}}f_{i}\left(x\right)\text{e}^{\text{i}\zeta_{i}\left(x,y\right)}$,
we obtain that
\[
h\left(x,y\right)=\sum_{j\leq N}E_{j}\left(x,y\right)y^{r_{j}}\left(\log y\right)^{\nu_{j}}.
\]

Let $x\in\mathrm{Lim}\left(h,X\right)$. Suppose that there exists
$i\in J_{0}$ such that $f_{i}\left(x\right)=0$. Then, by Condition
(2.a) of Proposition \ref{prop: Splitting} and by \cite[Proposition 3.4(2)]{cmrs:mellin_constructible}
we have necessarily that $r_{0}=\nu_{0}=0$. Hence we may suppose
that $N=0$ and $h\left(x,y\right)=\sum_{i\in J_{0}}f_{i}\left(x\right)\text{e}^{\text{i}\zeta_{i}\left(x,y\right)}$.
If there exists $i\in J_{0}$ such that either $\Im\left(\lambda_{i}\right)\not=0$
or $Q_{i}\left(x,y\right)\not=0$, then by \cite[Proposition 3.4(3)]{cmrs:mellin_constructible}
we obtain that $f_{i}\left(x\right)=0$. Notice that there is at most
one index $i_{0}\in J_{0}$ such that $\zeta_{i_{0}}\left(x,y\right)=0$.
To conclude, we define and
\[
f\left(x\right)=\sum_{i\in\widetilde{I}}\left|f_{i}\left(x\right)\right|^{2}.
\]
As the class $\mathcal{C}^{\mathbb{C},\mathcal{F}}$ is clearly stable
under complex conjugation, $f$ belongs to $\mathcal{C}^{\mathbb{C},\mathcal{F}}$.
Finally, define $g\left(x\right)=f_{i_{0}}\left(x\right)$, if there
exists a (necessarily unique) index $i_{0}\in I$ such that $\Re\left(\lambda_{i_{0}}\right)=\mu_{i_{0}}=0\text{ and }\zeta_{i_{0}}\left(x,y\right)=0$,
and $g=0$ otherwise.
\end{proof}

\section{The  Fourier-Plancherel transform and $\text{L}^p$-limits\label{sec:Plancherel}}
 
We deal here with the question of parametric families of functions
of $\mathcal{C}^{\mathbb{C},\mathcal{F}}$, to provide non-compensation
arguments in this framework, useful for $\text{L}^{p}$-completeness
and the $\text{L}^{2}$-Fourier transform, also known as the Plancherel transform, or the Fourier-Plancherel transform.  In \cite[Section 8]{ccmrs:integration-oscillatory}, this is treated in
the case of the system $\mathcal{C}^{\mathrm{exp}}$, which we generalize to our setting of $\mathcal{C}^{\mathbb{C},\mathcal{F}}$. 

We recall from \cite{ccmrs:integration-oscillatory} what it means for a \emph{family} of functions to be \emph{continuously
uniformly distributed modulo 1}, which extends notions from \cite{Weyl:Gleichverteilung,kuipers_niederreiter:uniform_distributions_sequences}.

Let $X$ be a nonempty subset of $\mathbb{R}^{m}$, $N\in\mathbb{N}\setminus\{0\}$
and $\rho=\left(\rho_{1},\ldots,\rho_{N}\right):X\times[0,+\infty)\to\mathbb{R}^{N}$
be a map. If $I_{1},\ldots,I_{N}\subseteq\mathbb{R}$ are bounded
intervals with nonempty interior, we denote by $I$ the box $\prod_{j=1}^{N}I_{j}$
and, for $T\ge0$ and $x\in X$, we let
\[
W_{\rho,I,T}^{x}:=\{t\in[0,T]:\{\rho\left(x,t\right)\}\in I\},
\]
where $\{\rho\left(x,t\right)\}$ denotes the vector of fractional
parts $\left(\{\rho_{1}\left(x,t\right)\},\ldots,\{\rho_{N}\left(x,t\right)\}\right)$
of the components of $\rho$, that is to say for $x\in\mathbb{R}$,
${x}=x-\lfloor x\rfloor$.
\begin{defn}
\label{defn:CUD:param} With this notation, we say that the map $\rho$
is \emph{continuously uniformly distributed modulo $1$ on $X$} (abbreviated
as \emph{c.u.d.\ mod $1$ on $X$}) if for every box $I\subseteq[0,1)^{N}$,
\[
\lim_{T\to+\infty}\sup_{x\in X}\frac{\mathrm{vol}_{1}\left(W_{\rho,I,T}^{x}\right)}{T}=\mathrm{vol}_{N}\left(I\right).
\]
\end{defn}
We will use the c.u.d. mod 1 property in Lemma \ref{lem:Param:Dovetail}.
In our context we have to deal with sums of complex exponential functions,
with phase of type $\varphi(x,y)=\sigma\log y+p(x,y)$, where $p$
is a polynomial in $y$, or more exactly in $y^{\frac{1}{d}}$, for
some positive integer $d$, and with coefficients some functions of
the variable $x$. We cannot directly use the c.u.d. mod 1 property
for those phases, since $\log y$ is not a c.u.d. mod 1 function (although
$\varphi$ turns out to be c.u.d. mod 1 when $p$ is not constant).
To overcome this technical difficulty, we compose $\varphi$ with
$(x,y)=(x,\mathrm{e}^{t})$ to obtain a phase of type $\phi(x,t)=\sigma t+p(x,\mathrm{e}^{t})$.
Now we can use the c.u.d. mod 1 property, the change of variables
$y=\mathrm{e}^{t}$ being harmless in view of the conclusion of Lemma
\ref{lem:Param:Dovetail}.
\begin{prop}
\label{prop:WeylToCUD} Let $\ell,p\in\mathbb{N}$ and $X$ a compact
subset of $\mathbb{R}^{m}$. Consider a map $\rho=\left(\phi_{1},\cdots,\phi_{\ell},\rho_{1},\ldots,\rho_{p}\right):X\times[0,+\infty)\to\mathbb{R}^{\ell+p}$,
where for each $i\in\{1,\ldots,\ell\}$, for each $j\in\{1,\ldots,p\}$,
\[
\phi_{i}(x,t)=g_{i}\left(x\right)\mathrm{e}^{\frac{\delta_{i}}{d}t},\ \ \rho_{j}\left(x,t\right)=\sigma_{j}t
\]
for some continuous (nonzero) functions $g_{i}:X\to\mathbb{R}$, positive
integers $d$ and $\delta_{i}$, and for $\sigma_{j}$ real numbers.
Assume that for each $x\in X$, the functions $t\mapsto\phi_{1}(x,t),\ldots,t\mapsto\phi_{\ell}(x,t),t\mapsto\rho_{1}\left(x,t\right),\ldots,t\mapsto\rho_{p}\left(x,t\right)$
are linearly independent over $\mathbb{Q}$. Then $\rho$ is c.u.d.\ mod
$1$ on $X$.
\end{prop}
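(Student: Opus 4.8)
The plan is to reduce the statement to the classical Weyl criterion for continuous uniform distribution modulo one, in its parametric (uniform-in-$x$) form. Recall that $\rho$ is c.u.d.\ mod $1$ on $X$ if and only if for every nonzero integer vector $k=(k_1,\dots,k_\ell,m_1,\dots,m_p)\in\mathbb{Z}^{\ell+p}$ one has
\[
\lim_{T\to+\infty}\sup_{x\in X}\left|\frac{1}{T}\int_0^T \mathrm{e}^{2\pi\mathrm{i}\left(\sum_{i}k_i\phi_i(x,t)+\sum_j m_j\rho_j(x,t)\right)}\,\mathrm{d}t\right|=0.
\]
This uniform Weyl criterion is precisely the tool established in \cite[Section 8]{ccmrs:integration-oscillatory} (building on \cite{Weyl:Gleichverteilung,kuipers_niederreiter:uniform_distributions_sequences}); so the first step is to cite it and fix a nonzero $k$. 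Writing $\Theta_k(x,t)=\sum_i k_i g_i(x)\mathrm{e}^{\delta_i t/d}+\bigl(\sum_j m_j\sigma_j\bigr)t$, it suffices to show that $\frac1T\int_0^T\mathrm{e}^{2\pi\mathrm{i}\Theta_k(x,t)}\,\mathrm{d}t\to0$ uniformly in $x\in X$.

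Next I would split into two cases according to whether the exponential part $\sum_i k_i g_i(x)\mathrm{e}^{\delta_i t/d}$ is genuinely present. Let $\delta=\max\{\delta_i : k_i\neq 0\}$ (if all relevant $k_i$ vanish, the argument is different, see below) and let $x\in X$; since the functions $t\mapsto\phi_i(x,t)$ together with the $t\mapsto\rho_j(x,t)$ are $\mathbb{Q}$-linearly independent, whenever some $k_i\neq0$ the function $t\mapsto\Theta_k(x,t)$ is a nonconstant exponential polynomial and in particular $\Theta_k'(x,t)$ is eventually monotone and tends to $+\infty$ (or $-\infty$) at a definite exponential rate governed by $\delta$. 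The key estimate is the classical van der Corput / first-derivative bound: for a $C^1$ function $g$ with $g'$ monotone and $|g'|\geq\mu>0$ on an interval $J$, one has $\bigl|\int_J \mathrm{e}^{2\pi\mathrm{i}g}\bigr|\leq 1/(\pi\mu)$. Applied on $[T_0,T]$ with $T_0$ fixed and $\mu$ of order $\mathrm{e}^{\delta T_0/d}$ — here one uses compactness of $X$ and continuity (and nonvanishing) of the $g_i$ to get a lower bound on $|\Theta_k'(x,t)|$ that is \emph{uniform} in $x$, after possibly enlarging $T_0$ using that $\mathbb{Q}$-linear independence persists and the $g_i$ do not vanish — this gives $\bigl|\frac1T\int_0^T\mathrm{e}^{2\pi\mathrm{i}\Theta_k}\bigr|\leq \frac{T_0}{T}+\frac{C}{T}\to0$ uniformly. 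In the remaining case all $k_i=0$, so $\Theta_k(x,t)=\bigl(\sum_j m_j\sigma_j\bigr)t$ does not depend on $x$ at all; since $k\neq0$ and the $t\mapsto\rho_j(x,t)=\sigma_j t$ are $\mathbb{Q}$-linearly independent, $\sum_j m_j\sigma_j\neq0$, and $\frac1T\int_0^T\mathrm{e}^{2\pi\mathrm{i}ct}\,\mathrm{d}t = O(1/T)$ trivially, uniformly (indeed constantly) in $x$.

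The main obstacle I anticipate is the \emph{uniformity in $x$} of the lower bound on $|\Theta_k'(x,t)|$ on the tail $[T_0,+\infty)$: the hypothesis only gives $\mathbb{Q}$-linear independence pointwise in $x$, and the threshold $T_0(x)$ beyond which one exponential term dominates the others could a priori blow up as $x$ varies. The way I would handle this is to observe that, after grouping terms with equal $\delta_i$, the dominant group has the form $c(x)\mathrm{e}^{\delta t/d}$ with $c(x)=\sum_{\delta_i=\delta}k_i g_i(x)$ a continuous function on the compact set $X$; on the (open) set where $c(x)\neq0$ compactness yields a uniform lower bound $|c(x)|\geq\epsilon$ on compact sub-pieces, and one iterates the argument on the next-highest $\delta$ on the closed set where $c(x)=0$ — a finite induction on the (finitely many) exponents $\delta_i$. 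One must check that on the zero set of $c$ the remaining exponential polynomial is still nonconstant for every $x$, which is exactly where $\mathbb{Q}$-linear independence of the original system of functions is used again. Since $X$ is compact this stratification is finite and the resulting bound $T_0$ is uniform; alternatively, one can invoke directly the uniform version of Weyl's criterion for exponential-polynomial phases already isolated in \cite[Section 8]{ccmrs:integration-oscillatory}, of which the present proposition is essentially the parametric restatement after the substitution $y=\mathrm{e}^t$ discussed just before the statement.
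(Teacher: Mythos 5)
Your proposal is correct and follows essentially the same route as the paper: both verify the uniform (in $x$) Weyl criterion for a fixed nonzero integer vector, and both reduce the tail integral to a first\-/derivative oscillatory estimate in which the top exponential term $c(x)\mathrm{e}^{\delta t/d}$ dominates; the paper's normalization by $G(x)$ followed by the change of variables $u=H(x,t)$ and the Second Mean Value Theorem is exactly a hand\-/rolled van der Corput first\-/derivative test, so your invocation of that lemma is the same estimate. One point should be streamlined: the ``main obstacle'' you isolate dissolves, because the hypothesis, read levelwise as in Remark \ref{rem:independence through J_k}, says that for every $x$ the family $\left(g_{i}\left(x\right)\right)_{\delta_{i}=\delta}$ is linearly independent over $\mathbb{Q}$; hence the leading coefficient $c\left(x\right)=\sum_{\delta_{i}=\delta}k_{i}g_{i}\left(x\right)$ (with $\delta$ the largest exponent carrying a nonzero integer coefficient) vanishes nowhere on $X$, and compactness gives $\left|c\right|\geq\epsilon$ on all of $X$ at once. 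The stratification you describe is therefore vacuous, and it is just as well: as written (uniform lower bounds only on compact sub\-/pieces of the open set $\left\{ c\neq0\right\} $, with a separate induction on its closed complement) it would not by itself produce the single threshold $T_{0}$ valid for all $x\in X$ that the supremum in the Weyl criterion requires. With that observation inserted, your argument, including the trivial case where all the coefficients of the $\phi_{i}$ vanish, is complete.
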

Before proving the proposition, we make a remark.
\begin{rem}
\label{rem:independence through J_k} In the notation of Proposition
\ref{prop:WeylToCUD}, let $\delta=\max\{\delta_{1},\ldots,\delta_{\ell}\},$
and for each $k\in\{1,\ldots,\delta\}$, let $I_{k}=\{i\in\{1,\ldots,\ell\}:\delta_{i}=k\}.$
The assumption that $t\mapsto\phi_{1}(x,t),\ldots,t\mapsto\phi_{\ell}(x,t),t\mapsto\rho_{1}\left(x,t\right),\ldots,t\mapsto\rho_{p}\left(x,t\right)$
are linearly independent over $\mathbb{Q}$ for each $x\in X$ is
equivalent to saying that for each $k\in\{1,\ldots,\delta\}$ and
$x\in X$, the family of real numbers $\left(g_{i}\left(x\right)\right)_{i\in I_{k}}$
is linearly independent over $\mathbb{Q}$, and that the family of
real numbers $(\sigma_{1},\ldots,\sigma_{m})$ is linearly independent
over $\mathbb{Q}$.
\end{rem}
\begin{proof}[Proof of Proposition \ref{prop:WeylToCUD}]
We may assume that $\ell\ge1$, since if $\ell=0$ and the family
of linear maps $(t\mapsto\sigma_{1}t,\ldots,t\mapsto\sigma_{p}t)$
is linearly independent over $\mathbb{Q}$, then the map $\rho$ is
well-known to be c.u.d. mod 1 (see \cite[Exercise 9.27]{kuipers_niederreiter:uniform_distributions_sequences}).

Assuming $\ell\ge1$, the proof consists in satisfying the version
in families of the criterion \eqref{eq.uniform Weyl criterion} (see
\cite[Theorem 9.9]{kuipers_niederreiter:uniform_distributions_sequences}
for the basic case, and \cite[Proposition 8.7]{ccmrs:integration-oscillatory}
for the version in families): for any $h=(\alpha_{1},\ldots,\alpha_{\ell},\beta_{1},\ldots,\beta_{p})\in\mathbb{Z}^{\ell+p}$,
$h\not=0$, %%%%%
\begin{equation}
\lim_{T\to+\infty}\dfrac{1}{T}\int_{1}^{T}\mathrm{e}^{2\pi\mathrm{i}\langle h,\rho(x,t)\rangle}\ \mathrm{d}t=0,\label{eq.uniform Weyl criterion}
\end{equation}
%%%%%
uniformly in $x\in X$. We prove in fact that for some $T_{0}\ge1$,
$J(T)={\displaystyle \int_{T_{0}}^{T}\mathrm{e}^{2\pi\mathrm{i}\langle h,\rho(x,t)\rangle}\ \mathrm{d}t}$
is bounded from above by a constant not depending on $x\in X$. To
do this, we follow the proof of \cite[Proposition 3.4]{cmrs:mellin_constructible}:
we fix $h\in\mathbb{Z}^{\ell+p}$, define, in the notation of Remark
\ref{rem:independence through J_k}, $G(x)=\sum_{i\in I_{\delta}}\alpha_{i}g_{i}(x)$,
$G_{k}\left(x\right)=\sum_{j\in I_{k}}\alpha_{i}g_{i}(x)$, for $k\in\{1,\ldots,\delta-1\}$,
and $\sigma=\sum_{j=1}^{m}\beta_{j}\sigma_{j}$, and we write
\begin{equation}
H(x,t)=\frac{\langle h,\rho(x,t)\rangle}{G(x)}=\mathrm{e}^{\frac{\delta}{d}t}+\frac{G_{\delta-1}(x)}{G(x)}\mathrm{e}^{\frac{\delta-1}{d}t}+\cdots+\frac{G_{1}(x)}{G(x)}\mathrm{e}^{\frac{t}{d}}+\frac{\sigma t}{G(x)}.\label{eq.prod}
\end{equation}
For simplicity we assume that $I_{k}\not=\emptyset$, for $k=1,\ldots,\delta$,
which is harmless. Note that the continuous functions $G_{1},\ldots,G_{\delta-1}$
are bounded from above on $X$. By Remark \ref{rem:independence through J_k}
the function $G$ has no zero in $X$, since for each $x\in X$, the
components of $\rho$ are linearly independent over $\mathbb{Q}$,
and therefore, again by continuity on $X$, $\vert G\vert$ is bounded
below by a constant $C>0$ on $X$. It follows that we can fix $T_{0}$
sufficiently large so that, for each $x\in X$, $t\mapsto H(x,t)$
and $t\mapsto\dfrac{\partial H}{\partial t}(x,t)$ are strictly increasing
(to $+\infty$) on $[T_{0},+\infty)$, and we can assume that for
all $x\in X$, $\dfrac{\partial H}{\partial t}(x,T_{0})\ge1$.

Denoting for each $x\in X$, $t=V(x,u)$ the inverse of $u=H(x,t)$,
we perform the change of variables $u=H(x,t)$ in $J(T)$ to obtain
\[
J(T)=\int_{T_{0}}^{T}\mathrm{e}^{2\pi\mathrm{i}G(x)H(x,t)}\ \mathrm{d}t=\int_{H(x,T_{0})}^{H(x,T)}\dfrac{\mathrm{e}^{2\pi\mathrm{i}G(x)u}}{\frac{\partial H}{\partial t}(x,V(x,u))}\ \mathrm{d}u.
\]
Now, since $u\mapsto\dfrac{1}{\frac{\partial H}{\partial t}(x,V(x,u))}$
is monotonically decreasing on $[H(x,T_{0}),+\infty)$, by the Second
Mean Value Theorem for integrals applied to the real part of $J(T)$,
we have
\[
\Re(J(T))=\dfrac{1}{\frac{\partial H}{\partial t}(x,T_{0})}\int_{H(x,T_{0})}^{\tau}\cos(2\pi G(x)u)\ \mathrm{d}u,
\]
for some $\tau\in(H(x,T_{0}),H(x,T)]$. Since $u\mapsto\cos\left(2\pi G(x)u\right)$
has an antiderivative with period $\frac{1}{\left|G(x)\right|}$,
and since $\frac{1}{\left|G(x)\right|}\leq\frac{1}{C}$, the integral
on the right side may be replaced with an integral over an interval
of length at most $\frac{1}{C}$. From the fact that $\dfrac{\partial H}{\partial t}(x,T_{0})\ge1$,
for all $x\in X$, it follows that the real part of $J(T)$ is uniformly
bounded from above with respect to $x\in X$, and so is the imaginary
part of $J(T)$ by the same computation.
\end{proof}
We now introduce some notation for Lemma \ref{lem:Param:OscPrep}.
Consider a cell
\[
A=\{\left(x,t\right):x\in A_{0},t>a\left(x\right)\},
\]
where $A_{0}$ is connected and open in $\mathbb{R}^{m}$. Let $f:A\to\mathbb{C}$
be defined by
\[
f\left(x,t\right)=\sum_{j=1}^{n}f_{j}\left(x\right)\mathrm{e}^{\mathrm{i}(\sigma_{j}t+p_{j}(x,\mathrm{e}^{t}))},
\]
where $\sigma_{1},\ldots,\sigma_{n}$ are real numbers, $\left(f_{1},\ldots,f_{n}\right)$
is a family of (nonzero) analytic functions in $\mathcal{C}^{\mathbb{C},\mathcal{F}}\left(A_{0}\right)$,
$p_{1}(x,T),\ldots,p_{n}(x,T)$ are polynomials (in $T^{\frac{1}{d}}$,
for some positive integer $d$) of $\mathcal{S}(A_{0})\left[T^{\frac{1}{d}}\right]$,
with analytic coefficients in $\mathcal{S}\left(A_{0}\right)$, and
$p_{j}\left(x,0\right)=0$ for all $j\in\{1,\ldots,n\}$ and all $x\in A_{0}$.
We furthermore assume that for $j\not=j'$ in $\{1,\ldots,n\}$, $\sigma_{j}t+p_{j}(x,t)\not=\sigma_{j'}t+p_{j'}(x,t)$
(as functions).

%%%%%%%%%%%%%%%%%%%%%%%%%%%%
\begin{lem}
\label{lem:Param:OscPrep} In above notation, we may express $f$
on $A$ as
\[
f\left(x,t\right)=F\left(x,\rho\left(x,t\right)\right)
\]
where $\rho=\left(\phi_{1},\cdots,\phi_{\ell},\rho_{1},\ldots,\rho_{p}\right)$
for some $\ell,p\in\mathbb{N}$, and where for each $i\in\{1,\ldots,\ell\}$
and for each $j\in\{1,\ldots,p\}$,
\[
\phi_{i}(x,t)=g_{i}\left(x\right)e^{\frac{\delta_{i}}{d}t},\ \ \rho_{j}\left(x,t\right)=\sigma_{j}t
\]
for some analytic functions $g_{i}$ in $\mathcal{S}\left(A_{0}\right)$,
$\delta_{i}\in\mathbb{N}$, $\sigma_{j}\in\mathbb{R}$, and where
$F\left(x,z_{1},\ldots,z_{\ell+p}\right)$ is a Laurent polynomial
in the variables $\mathrm{e}^{2\pi\mathrm{i}z_{1}},\ldots,\mathrm{e}^{2\pi\mathrm{i}z_{\ell+p}}$
with analytic coefficients in $\mathcal{C}^{\mathbb{C},\mathcal{F}}\left(A_{0}\right)$.
If $n=1$ and if $\sigma_{1}=0$, $p_{1}=0$, then $\ell+p=0$ and
$F\left(x\right)=f_{1}\left(x\right)$. Otherwise we have $\ell+p>0$,
and
\begin{enumerate}
\item \label{F par non nul} there exists a set $A_{0}'\subseteq A_{0}$
such that $\mathrm{vol}_{m}\left(A_{0}\setminus A_{0}'\right)=0$
and for every $x\in A_{0}'$, $z\mapsto F\left(x,z\right)$ is nonconstant,
%%%%%%%%%%%%%%%%%%%%%%%%%%%
\item \label{Psi CUD} for every open set $\Omega\subseteq A_{0}$ and every
real number $\lambda<\mathrm{vol}_{m}\left(\Omega\right)$, there
exists a real number $T_{0}$ and a compact set $K\subseteq\Omega\cap A_{0}'$
such that $K\times[T_{0},+\infty)\subseteq A$, $\lambda\le\mathrm{vol}_{m}\left(K\right)\le\mathrm{vol}_{m}(\Omega)$,
and $\rho\restriction{K\times[T_{0},+\infty)}$ is c.u.d. mod 1 on
$K$.
\end{enumerate}
\end{lem}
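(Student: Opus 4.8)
The plan is to unravel $f$ into a product of elementary oscillatory factors and then to reduce the exponents to a $\mathbb{Q}$-linearly independent system \emph{at the level of coefficient functions} rather than pointwise, which is what keeps $F$'s coefficients in $\mathcal{C}^{\mathbb{C},\mathcal{F}}(A_0)$ globally on $A_0$. First I would write $p_j(x,T)=\sum_{k=1}^{K}c_{jk}(x)T^{k/d}$ with the $c_{jk}\in\mathcal{S}(A_0)$ analytic, so that $\mathrm{e}^{\mathrm{i}p_j(x,\mathrm{e}^{t})}=\prod_{k}\mathrm{e}^{\mathrm{i}c_{jk}(x)\mathrm{e}^{kt/d}}$. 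Let $\tau_1,\ldots,\tau_p$ be a $\mathbb{Z}$-basis of the subgroup of $\mathbb{R}$ generated by $\sigma_1,\ldots,\sigma_n$; it is $\mathbb{Q}$-linearly independent and $\sigma_j=\sum_l m_{jl}\tau_l$ with $m_{jl}\in\mathbb{Z}$. For each level $k$ at which the $c_{jk}$ are not all zero, let $g^{(k)}_1,\ldots,g^{(k)}_{r_k}$ be a $\mathbb{Z}$-basis of the subgroup generated by $c_{1k},\ldots,c_{nk}$ inside the $\mathbb{R}$-vector space of analytic functions on $A_0$ (this subgroup is finitely generated and torsion-free, hence free; and since $\mathbb{Z}$-combinations of analytic subanalytic functions are again analytic subanalytic, the $g^{(k)}_s$ lie in $\mathcal{S}(A_0)$ and are analytic); thus the $g^{(k)}_s$ are nonzero, $\mathbb{Q}$-linearly independent as functions, and $c_{jk}=\sum_s m^{(k)}_{js}g^{(k)}_s$ with $m^{(k)}_{js}\in\mathbb{Z}$. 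Putting $\phi_{(k,s)}(x,t)=\frac{1}{2\pi}g^{(k)}_s(x)\mathrm{e}^{kt/d}$, $\delta_{(k,s)}=k$, $\rho_l(x,t)=\frac{\tau_l}{2\pi}t$, relabelling the $\phi$'s as $\phi_1,\ldots,\phi_\ell$ and setting $\rho=(\phi_1,\ldots,\phi_\ell,\rho_1,\ldots,\rho_p)$, $N=\ell+p$, the identities $\mathrm{e}^{\mathrm{i}\sigma_j t}=\prod_l(\mathrm{e}^{2\pi\mathrm{i}\rho_l})^{m_{jl}}$ and $\mathrm{e}^{\mathrm{i}c_{jk}(x)\mathrm{e}^{kt/d}}=\prod_s(\mathrm{e}^{2\pi\mathrm{i}\phi_{(k,s)}})^{m^{(k)}_{js}}$ give $f(x,t)=F(x,\rho(x,t))$ with $F(x,z_1,\ldots,z_N)=\sum_{j=1}^{n}f_j(x)\prod_{i=1}^{N}\mathrm{e}^{2\pi\mathrm{i}M_{ji}z_i}$, $M_{ji}\in\mathbb{Z}$ — a Laurent polynomial in $\mathrm{e}^{2\pi\mathrm{i}z_1},\ldots,\mathrm{e}^{2\pi\mathrm{i}z_N}$ with coefficients $f_j\in\mathcal{C}^{\mathbb{C},\mathcal{F}}(A_0)$. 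When $n=1$, $\sigma_1=0$, $p_1=0$ this gives $N=0$ and $F(x)=f_1(x)$; otherwise the hypothesis that the $\sigma_jt+p_j$ are pairwise distinct functions forces some $\sigma_j\ne0$ or some $c_{jk}\ne0$, so $N>0$.

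Next I would note that the exponent vectors $M_j:=(M_{ji})_i$ are pairwise distinct: since the $\tau_l$, and for each $k$ the $g^{(k)}_s$, are $\mathbb{Q}$-linearly independent, $M_j=M_{j'}$ forces $\sigma_j=\sigma_{j'}$ and $p_j=p_{j'}$, hence $\sigma_jt+p_j(x,t)=\sigma_{j'}t+p_{j'}(x,t)$, which for $j\ne j'$ is excluded. In particular at most one $M_j$ vanishes, and when $N>0$ at least one is nonzero. I then define $A_0'\subseteq A_0$ as the complement in $A_0$ of the union of the zero sets of $\sum_s m_sg^{(k)}_s$ over all $k$ and all $(m_s)\in\mathbb{Z}^{r_k}\setminus\{0\}$, together with the zero sets of $f_j$ over all $j$ with $M_j\ne0$. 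Each of these functions is analytic and not identically zero on the connected open set $A_0$, so by the identity theorem its zero set is Lebesgue-null, whence $\mathrm{vol}_m(A_0\setminus A_0')=0$. For $x\in A_0'$ the characters $z\mapsto\mathrm{e}^{2\pi\mathrm{i}\langle M_j,z\rangle}$ are pairwise distinct and $f_j(x)\ne0$ for every $j$ with $M_j\ne0$, so by linear independence of distinct torus characters $z\mapsto F(x,z)$ is nonconstant, which is item~\ref{F par non nul}.

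For item~\ref{Psi CUD}, given an open $\Omega\subseteq A_0$ and $\lambda<\mathrm{vol}_m(\Omega)$, the nullity of $A_0\setminus A_0'$ gives $\mathrm{vol}_m(\Omega\cap A_0')=\mathrm{vol}_m(\Omega)>\lambda$, so by inner regularity of Lebesgue measure there is a compact $K\subseteq\Omega\cap A_0'$ with $\lambda\le\mathrm{vol}_m(K)\le\mathrm{vol}_m(\Omega)$. Since $a$ is continuous, $T_0:=1+\max_{x\in K}a(x)$ is finite and $K\times[T_0,+\infty)\subseteq A$. Finally, for each $x\in K\subseteq A_0'$ the numbers $g^{(k)}_s(x)$ ($1\le s\le r_k$) are $\mathbb{Q}$-linearly independent for every $k$ and the $\tau_l$ are $\mathbb{Q}$-linearly independent, so by Remark~\ref{rem:independence through J_k} the functions $t\mapsto\phi_i(x,t)$ and $t\mapsto\rho_l(x,t)$ are linearly independent over $\mathbb{Q}$; hence Proposition~\ref{prop:WeylToCUD} applies with $X=K$ and yields that $\rho\restriction K\times[T_0,+\infty)$ is c.u.d.\ mod~$1$ on $K$.

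The main obstacle is the reduction carried out in the first paragraph: one must perform the passage to a $\mathbb{Q}$-independent exponent system \emph{uniformly on $A_0$} — by taking $\mathbb{Z}$-bases of the groups generated by the coefficient functions, not of their values at individual points — so that $F$ retains coefficients in $\mathcal{C}^{\mathbb{C},\mathcal{F}}(A_0)$ and the factorization $f=F\circ(\mathrm{id},\rho)$ holds on all of $A$, while at the same time guaranteeing pointwise $\mathbb{Q}$-independence off a null set so that Proposition~\ref{prop:WeylToCUD} can be invoked on the compact pieces. The attendant bookkeeping — distinctness of the $M_j$ extracted from the hypothesis on the $\sigma_jt+p_j$, and the nullity of the exceptional set via the identity theorem on the connected $A_0$ — is the technical heart of the argument.
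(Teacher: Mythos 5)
Your proof is correct and follows essentially the same route as the paper's: both reduce the phases to a $\mathbb{Q}$-linearly independent system of coefficient functions uniformly on $A_{0}$ (you via a $\mathbb{Z}$-basis of the generated subgroup, the paper via a $\mathbb{Q}$-basis drawn from the $h_{j,k}$ themselves together with a common integer denominator $\eta$), deduce distinctness of the exponent vectors from the hypothesis that the phases are pairwise distinct, and remove a null exceptional set by the identity theorem on the connected open set $A_{0}$. Item (2) is then obtained exactly as in the paper, by inner regularity of Lebesgue measure and Proposition \ref{prop:WeylToCUD}.
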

\begin{proof}
The case $n=1$, $\sigma_{1}=0$ and $p_{1}=0$ being trivial, we
may assume that $\sigma_{1}\not=0$ or $p_{1}\not=0$. %In this case, since $\phi_j\left(x,0\right) = 0$ for all $j\in J$ and all $x\in \Pi_m\left(A\right)$,
%\[
%d := \max\{\deg \phi_j : j\in J\}
%\]
%is a positive integer.
For each $j\in\{1,\ldots,n\}$, we write
\[
\sigma_{j}t+p_{j}\left(x,\mathrm{e}^{t}\right)=\sigma_{j}t+\sum_{k=1}^{D}h_{j,k}\left(x\right)\mathrm{e}^{\frac{k}{d}t}
\]
with $D\in\mathbb{N}$ and $h_{j,k}\in\mathcal{S}\left(A_{0}\right)$.
For each $k\in\{1,\ldots,D\}$, fix $I_{k}\subseteq\{1,\ldots,n\}$
such that $\left(h_{i,k}\right)_{i\in I_{k}}$ is a basis of the $\mathbb{Q}$-vector
space generated by the family $\left(h_{j,k}\right)_{j\in\{1,\ldots,n\}}$
(as functions of $x$), and fix $Q\subseteq\{1,\ldots,n\}$ such that
$\left(\sigma_{q}\right)_{q\in Q}$ is a basis of the $\mathbb{Q}$-vector
space generated by the family $\left(\sigma_{j}\right)_{j\in\{1,\ldots,n\}}$.
We then set
\[
I=\{\left(i,k\right):k\in\{1,\ldots,D\},i\in I_{k}\}\subseteq\{1,\ldots,D\}\times\{1,\ldots,n\}.
\]
We fix a positive integer $\eta$ such that for each $\left(j,k\right)\in\{1,\ldots,n\}\times\{1,\ldots,D\}$,
\[
h_{j,k}=\sum_{i\in I_{k}}\frac{\alpha_{j;i,k}}{\eta}h_{i,k},\ \sigma_{j}=\sum_{q\in Q}\frac{\beta_{j;q}}{\eta}\sigma_{q}
\]
for unique tuples $\left(\alpha_{j;i,k}\right)_{i\in I_{k}}$ and
$(\beta_{j;q})_{q\in L}$ of elements of $\mathbb{Z}$. With this
notation we have
\begin{align*}
f\left(x,t\right) & =\sum_{j=1}^{n}f_{j}\left(x\right)\mathrm{e}^{\mathrm{i}\sigma_{j}t+\mathrm{i}\sum_{k=1}^{d}h_{j,k}\left(x\right)\mathrm{e}^{\frac{k}{d}t}}\\
 & =\sum_{j=1}^{n}f_{j}\left(x\right)\mathrm{e}^{\mathrm{i}\sum_{q\in Q}\frac{\beta_{j;q}}{\eta}\sigma_{q}t+\mathrm{i}\sum_{k=1}^{d}\sum_{i\in I_{k}}\frac{\alpha_{j;i,k}}{\eta}h_{i,k}\left(x\right)\mathrm{e}^{\frac{k}{d}t}}\\
 & =\sum_{j=1}^{n}f_{j}\left(x\right)\prod_{q\in Q}\left(\mathrm{e}^{2\pi\mathrm{i}\rho_{q}(t)}\right)^{\beta_{j;q}}\prod_{\left(i,k\right)\in I}\left(\mathrm{e}^{2\pi\mathrm{i}\phi_{i,k}\left(x,t\right)}\right)^{\alpha_{j;i,k}}=F\left(x,\left(\phi_{i,k}\left(x,t\right)\right)_{\left(i,k\right)\in I},(\rho_{q})_{q\in Q}\right)
\end{align*}
where for each $\left(i,k\right)\in I$, $\phi_{i,k}\left(x,t\right)=\frac{h_{i,k}\left(x\right)}{2\pi\eta}\mathrm{e}^{\frac{k}{d}t}$,
for each $q\in Q$, $\rho_{q}\left(t\right)=\frac{\sigma_{q}}{2\pi\eta}t$,
and
\[
F(x,(z_{i,k})_{(i,k)\in I},(z_{q})_{q\in Q})=\sum_{j=1}^{n}f_{j}\left(x\right)\prod_{q\in Q}\left(\mathrm{e}^{2\pi\mathrm{i}z_{q}}\right)^{\beta_{j;q}}\prod_{\left(i,k\right)\in I}\left(\mathrm{e}^{2\pi\mathrm{i}z_{i,k}}\right)^{\alpha_{j;i,k}}.
\]

For each $j\in\{1,\ldots n\}$, $f_{j}$ is a nonzero analytic function
on the connected and open set $A_{0}$, so the set
\[
U:=\{x\in A_{0}:\text{\ensuremath{f_{j}(x)\neq0} for all \ensuremath{j\in\{1,\ldots,n\}}}\}
\]
satisfies $\mathrm{vol}_{m}\left(A_{0}\setminus U\right)=0$. Denote
by $\mathcal{F}$ the Laurent polynomial associated to $F$
\[
\mathcal{F}(x,(Z_{i,k})_{(i,k)\in I},(Z_{q})_{q\in Q})=\sum_{j=1}^{n}f_{j}\left(x\right)\prod_{q\in Q}Z_{q}^{\beta_{j;q}}\prod_{\left(i,k\right)\in I}Z_{i,k}^{\alpha_{j;i,k}}.
\]
Note that $F(x,z)=F(x,(z_{i,k})_{(i,k)\in I},(z_{q})_{q\in Q})=\mathcal{F}(x,(\mathrm{e}^{2\pi\mathrm{i}z_{i,k}})_{(i,k)\in I},(\mathrm{e}^{2\pi\mathrm{i}z_{q}})_{q\in Q})$.

Since we assumed $\sigma_{1}\not=0$ or $p_{1}\not=0$, we can always
suppose $\sigma_{1}\in(\sigma_{q})_{q\in Q}$ or, for some $k$, $h_{1,k}\in(h_{i,k})_{i\in I_{k}}$,
respectively. Thus $\mathcal{F}$ certainly contains a term of the
form $f_{1}(x)Z_{1}$ or $f_{1}(x)Z_{1,k}$. Moreover, since for $j\not=j'$
in $\{1,\ldots,n\}$, $\sigma_{j}t+p_{j}(x,t)\not=\sigma_{j'}t+p_{j'}(x,t)$
(as functions), the monomial terms in the above expression of $\mathcal{F}$
cannot cancel out. It follows that for each $x\in U$, $\mathcal{F}$
is not constant as a Laurent polynomial, and in particular, for each
$x\in U$, not constant on the real torus $(S^{1})^{\vert Q\vert+\vert I\vert}$.
As a consequence, for each $x\in U$, the trigonometric polynomial
$z\mapsto F\left(x,z\right)$ is not constant.

Observe that since $\left(h_{i,k}\right)_{i\in I_{k}}$ is independent
over $\mathbb{Q}$ (as functions of $x$), for each $k\in\{1,\ldots,D\}$
and nonzero tuple $c=\left(c_{i}\right)\in\mathbb{Z}^{\vert I_{k}\vert}$,
$\sum_{i\in I_{k}}c_{i}h_{i,k}$ is a nonzero analytic function on
$A_{0}$, so the set $\left\{ x\in U:\sum_{i\in I_{k}}c_{i}h_{i,k}\left(x\right)=0\right\} $
cannot have positive measure, and the set
\[
A_{0}':=U\setminus\left(\bigcup_{k=1}^{D}\bigcup_{c\in\mathbb{Z}^{\vert I_{k}\vert}\setminus\{0\}}\left\{ x\in U:\sum_{i\in I_{k}}c_{i}h_{i,k}\left(x\right)=0\right\} \right)
\]
satisfies $\mathrm{vol}_{m}\left(A_{0}\setminus A_{0}'\right)=0$
as well. This gives \eqref{F par non nul}, for this set $A_{0}'\subset U$.

The set $A_{0}'$ is defined such that for each $k\in\{1,\ldots,D\}$,
for each $x\in A_{0}'$, the family of numbers $\left(h_{i,k}\left(x\right)\right)_{\left(i,k\right)\in I}$
is linearly independent over $\mathbb{Q}$. By Remark \ref{rem:independence through J_k},
for each $x\in A_{0}'$ the family of functions $\left(t\mapsto\phi_{i,k}\left(x,t\right)\right)_{\left(i,k\right)\in I}$
is also linearly independent over $\mathbb{Q}$. On the other hand
the family of functions $\left(t\mapsto\rho_{q}\left(t\right)\right)_{q\in Q}$
is linearly independent over $\mathbb{Q}$, since so is the family
of real numbers $(\sigma_{q})_{q\in Q}$. In particular, for each
$x\in A_{0}'$, the family of functions $t\mapsto\rho(x,t)=\left(\left(\phi_{i,k}\left(x,t\right)\right)_{\left(i,k\right)\in I},(\rho_{q}(t))_{q\in Q}\right)$
is linearly independent over $\mathbb{Q}$.

Given an open set $\Omega\subseteq A_{0}$ and a positive real number
$\lambda$ with $\lambda<\mathrm{vol}_{m}\left(\Omega\right)=\mathrm{vol}_{m}\left(\Omega\cap A_{0}'\right)$,
the inner regularity of the Lebesgue measure shows that we may fix
a compact set $K\subseteq\Omega\cap A_{0}'$ with $\mathrm{vol}_{m}\left(K\right)\geq\lambda$.
Since $K$ is compact and $a\left(x\right)$ is continuous, we may
fix $T_{0}$ sufficiently large so that $K\times[T_{0},+\infty)\subseteq A$.
Proposition \ref{prop:WeylToCUD} then shows that the restriction
of $\rho$ to $K\times[T_{0},+\infty)$ is c.u.d. mod $1$ on $K$,
which completes the proof of \eqref{Psi CUD}.
\end{proof}
Recall that
\[
f\left(x,y\right)=\sum_{j=1}^{n}f_{j}\left(x\right)y^{\mathrm{i}\sigma_{j}}\mathrm{e}^{\mathrm{i}p_{j}(x,y)},
\]
where $\sigma_{1},\ldots,\sigma_{n}$ are real numbers, $\left(f_{1},\ldots,f_{n}\right)$
is a family of (nonzero) analytic functions in $\mathcal{C}^{\mathbb{C},\mathcal{F}}\left(A_{0}\right)$,
$p_{1}(x,T),\ldots,p_{n}(x,T)$ are polynomials (in $T^{\frac{1}{d}}$,
for some positive integer $d$) of $\mathcal{S}(A_{0})\left[T^{\frac{1}{d}}\right]$,
with analytic coefficients in $\mathcal{S}\left(A_{0}\right)$, and
$p_{j}\left(x,0\right)=0$ for all $j\in\{1,\ldots,n\}$ and $x\in A_{0}$.
Furthermore we assume that for $j\not=j'$ in $\{1,\ldots,n\}$, $\sigma_{j}+p_{j}(x,y)\not=\sigma_{j'}+p_{j'}(x,y)$
(as functions).
\begin{lem}
\label{lem:Param:Dovetail}In the notation above, there exist $\varepsilon>0$,
$\Delta>0$, a strictly increasing sequence $\left(y_{j}\right)_{j\in\mathbb{N}}$
in $\mathbb{R}$ diverging to $+\infty$, a compact set $K\subset A_{0}$,
and a sequence $\left(X_{j}\right)_{j\in\mathbb{N}}$ of Lebesgue
measurable subsets of $K$, with, for all $j\in\mathbb{N}$, $\mathrm{vol}_{m}\left(X_{j}\right)\ge\Delta$,
$X_{2j+1}\subseteq X_{2j}$, and such that , for all $x_{0}\in X_{2j}$,
$x_{1}\in X_{2j+1}$,
\[
\vert f\left(x_{0},y_{2j}\right)\vert\ge\varepsilon\ \mathrm{and}\ \vert f\left(x_{0},y_{2j}\right)-f\left(x_{1},y_{2j+1}\right)\vert\ge\varepsilon.
\]
\end{lem}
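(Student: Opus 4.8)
The plan is to reduce the statement, via Lemma~\ref{lem:Param:OscPrep} applied to $\widehat f(x,t):=f(x,\mathrm e^{t})$, to a statement about a continuously uniformly distributed family of phases, and then to dovetail by a Fubini argument. (We may assume that we are not in the trivial case $\ell+p=0$ of Lemma~\ref{lem:Param:OscPrep}, where $f$ does not depend on $y$ and the situation does not arise in the applications.) First I would fix a bounded open set $\Omega\subseteq A_{0}$ with $\mathrm{vol}_{m}(\Omega)>0$ and apply Lemma~\ref{lem:Param:OscPrep} with $\lambda=\tfrac12\mathrm{vol}_{m}(\Omega)$ to obtain a real number $T_{0}$, a compact set $K_{0}\subseteq\Omega\cap A_{0}'$ with $\mathrm{vol}_{m}(K_{0})>0$ and $K_{0}\times[T_{0},+\infty)\subseteq A$, and a representation $\widehat f(x,t)=F(x,\rho(x,t))$ on $A$ such that $\rho\restriction K_{0}\times[T_{0},+\infty)$ is c.u.d.\ mod $1$ on $K_{0}$ and, for every $x\in K_{0}$, the function $z\mapsto F(x,z)$ is a non-constant $\mathbb Z^{N}$-periodic trigonometric polynomial ($N=\ell+p$), continuous jointly in $(x,z)$. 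Since $x\mapsto\max_{z}|F(x,z)|$ and $x\mapsto\operatorname{diam}\{F(x,z):z\in[0,1)^{N}\}$ are continuous and strictly positive on $K_{0}$, there are $\varepsilon_{*}>0$ and a compact $K_{1}\subseteq K_{0}$ with $\mathrm{vol}_{m}(K_{1})>0$ on which both are $\ge\varepsilon_{*}$.

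Second, the key step is to produce \emph{fixed} boxes $B_{1},B_{2}$ and a \emph{fixed} positive-measure parameter set on which $F$ separates values robustly in $x$. Cover $[0,1)^{N}\times[0,1)^{N}$ by finitely many product boxes $B_{1}^{(k)}\times B_{2}^{(k)}$ ($k=1,\dots,r$) of mesh fine enough that $|F(x,z)-F(x,z')|<\varepsilon_{*}/4$ whenever $z,z'\in\overline{B_{i}^{(k)}}$, for all $x\in K_{1}$ and $i=1,2$ (uniform continuity on $K_{1}\times[0,1]^{N}$). For each $x\in K_{1}$ choose $z_{1}(x)$ with $|F(x,z_{1}(x))|\ge\varepsilon_{*}$ and $z_{2}(x)$ with $|F(x,z_{1}(x))-F(x,z_{2}(x))|\ge\varepsilon_{*}$; the pair lies in some $B_{1}^{(k)}\times B_{2}^{(k)}$, so $K_{1}$ is the union of the closed sets
\[
K_{1}^{(k)}=\bigl\{x\in K_{1}:\ \textstyle\inf_{z\in\overline{B_{1}^{(k)}}}|F(x,z)|\ge\tfrac{\varepsilon_{*}}{2}\ \text{and}\ \inf_{z\in\overline{B_{1}^{(k)}},\,w\in\overline{B_{2}^{(k)}}}|F(x,z)-F(x,w)|\ge\tfrac{\varepsilon_{*}}{2}\bigr\},
\]
hence one of them, say $K_{1}^{(k_{0})}$, has positive measure; put $B_{1}=B_{1}^{(k_{0})}$, $B_{2}=B_{2}^{(k_{0})}$. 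Shrinking $K_{1}^{(k_{0})}$ to a compact $K$ of positive measure and small diameter, I may also assume $|F(x,z)-F(x',z)|<\varepsilon_{*}/4$ for all $x,x'\in K$ and $z\in[0,1)^{N}$. Setting $\varepsilon=\varepsilon_{*}/4$, we then have for all $x,x'\in K$, $z\in\overline{B_{1}}$, $w\in\overline{B_{2}}$ that $|F(x,z)|\ge\varepsilon$ and $|F(x,z)-F(x',w)|\ge\varepsilon$; and $\rho\restriction K\times[T_{0},+\infty)$ is still c.u.d.\ mod $1$ on $K$.

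Third I would build the times and sets by a dovetailing induction. By Definition~\ref{defn:CUD:param}, for any $s\ge0$ and any box $I\in\{B_{1},B_{2}\}$ there is $T_{1}(s)$ such that for all $T\ge T_{1}(s)$ and all $x\in K$, $\mathrm{vol}_{1}\{t\in[s,T]:\{\rho(x,t)\}\in I\}\ge\tfrac{T}{2}\mathrm{vol}_{N}(I)$; integrating this bound over any measurable $Y\subseteq K$ and applying Fubini (the set $\{(x,t):x\in Y,\{\rho(x,t)\}\in I\}$ is Borel, $\rho$ being continuous in $x$) yields, for $T$ large, a positive-measure set of times $t\in[s,T]$ with $\mathrm{vol}_{m}\{x\in Y:\{\rho(x,t)\}\in I\}\ge\tfrac14\mathrm{vol}_{N}(I)\,\mathrm{vol}_{m}(Y)$. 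Put $\Delta_{1}=\tfrac14\mathrm{vol}_{N}(B_{1})\mathrm{vol}_{m}(K)$ and $\Delta=\tfrac14\mathrm{vol}_{N}(B_{2})\Delta_{1}>0$. Choosing $t_{0}<t_{1}<t_{2}<\cdots$ inductively with $t_{2j}>\max(t_{2j-1},j,T_{0})$ and $t_{2j+1}>t_{2j}$ so that
\[
X_{2j}:=\{x\in K:\{\rho(x,t_{2j})\}\in B_{1}\},\qquad X_{2j+1}:=\{x\in X_{2j}:\{\rho(x,t_{2j+1})\}\in B_{2}\}
\]
satisfy $\mathrm{vol}_{m}(X_{2j})\ge\Delta_{1}$ (applying the above with $Y=K$, $I=B_{1}$) and $\mathrm{vol}_{m}(X_{2j+1})\ge\Delta$ (with $Y=X_{2j}$, $I=B_{2}$), and setting $y_{j}=\mathrm e^{t_{j}}$, all required properties follow: $y_{j}$ is strictly increasing to $+\infty$; the $X_{j}$ are Borel subsets of the compact set $K\subseteq A_{0}$ with $\mathrm{vol}_{m}(X_{j})\ge\Delta$ and $X_{2j+1}\subseteq X_{2j}$; and since $f(x,y_{j})=\widehat f(x,t_{j})=F(x,\{\rho(x,t_{j})\})$, for $x_{0}\in X_{2j}$ we get $|f(x_{0},y_{2j})|\ge\varepsilon$ (as $\{\rho(x_{0},t_{2j})\}\in\overline{B_{1}}$), and for $x_{0}\in X_{2j}$, $x_{1}\in X_{2j+1}$ we get $|f(x_{0},y_{2j})-f(x_{1},y_{2j+1})|\ge\varepsilon$ (as $\{\rho(x_{0},t_{2j})\}\in\overline{B_{1}}$ and $\{\rho(x_{1},t_{2j+1})\}\in\overline{B_{2}}$).

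I expect the main obstacle to be the second step: upgrading the pointwise non-constancy of $z\mapsto F(x,z)$ provided by Lemma~\ref{lem:Param:OscPrep} to a single pair of boxes $B_{1},B_{2}$, a single $\varepsilon>0$, and a single positive-measure compact $K$ on which $F$ simultaneously stays $\ge\varepsilon$ in modulus on $\overline{B_{1}}$ and takes $\varepsilon$-separated values on $\overline{B_{1}}$ and $\overline{B_{2}}$, robustly in $x$; this is where the finite covering, pigeonhole and joint continuity of $F$ are essential. By contrast, the passage from ``c.u.d.\ in $t$, uniformly in $x$'' to ``a positive-measure set of $x$ at one fixed time $t$'' is then a routine averaging/Fubini argument, and the two-times condition defining $X_{2j+1}$ causes no extra trouble precisely because c.u.d.\ holds uniformly over all of $K$, hence over the measurable subset $X_{2j}$.
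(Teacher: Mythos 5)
Your argument is correct and follows the same route as the paper's: after the substitution $y=\mathrm{e}^{t}$ and the reduction to $F\left(x,\rho\left(x,t\right)\right)$ via Lemma~\ref{lem:Param:OscPrep}, the paper simply invokes \cite[Lemma 8.10]{ccmrs:integration-oscillatory} for the dovetailing step, whereas your second and third steps reprove that lemma from scratch (covering/pigeonhole to get fixed boxes $B_{1},B_{2}$ and a uniform $\varepsilon$, then the Weyl-criterion/Fubini averaging to extract the times and the sets $X_{j}$). The only, harmless, quibble is in your constants: from $\operatorname{diam}\{F\left(x,z\right)\}\ge\varepsilon_{*}$ the triangle inequality only yields $\vert F\left(x,z_{1}\left(x\right)\right)-F\left(x,z_{2}\left(x\right)\right)\vert\ge\varepsilon_{*}/2$, so the mesh $\varepsilon_{*}/4$ and the thresholds $\varepsilon_{*}/2$ in your definition of $K_{1}^{(k)}$ need to be rescaled accordingly.
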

%%%%%%%%%%%%%
\begin{proof}
Let $\tilde{f}(x,t):=f(x,\mathrm{e}^{t})$ for any $(x,t)$ such that
$(x,\mathrm{e}^{t})\in A$. Then we can apply Lemma \ref{lem:Param:OscPrep}
to $\tilde{f}$, so that the hypothesis of \cite[Lemma 8.10]{ccmrs:integration-oscillatory}
is satisfied by $\tilde{f}$. It immediately follows that the conclusions
of our lemma are satisfied by $\tilde{f}$, for a sequence of real
numbers $(t_{j})_{j\in\mathbb{N}}$ diverging to $+\infty$. It now
suffices to set $y_{j}=\mathrm{e}^{t_{j}}$ to conclude the proof
of the lemma.
\end{proof}
\begin{defn}
\label{def:Cauchy} Let $X\subseteq\mathbb{R}^{m}$ and $f\colon X\times\mathbb{R}\to\mathbb{C}$
be Lebesgue measurable, and $p\in[1,+\infty]$. For each $y\in\mathbb{R}$,
define $f_{y}:X\to\mathbb{C}$ by $f_{y}\left(x\right)=f\left(x,y\right)$
for all $x\in X$. We say that the family of functions $\left(f_{y}\right)_{y\in\mathbb{R}}$
is {\emph{Cauchy in $L^{p}\left(X\right)$ as $y\to+\infty$}} if
for each $y\in\mathbb{R}$, $f_{y}\in L^{p}\left(X\right)$ and for
all $\varepsilon>0$ there exists $y_{0}\in\mathbb{R}$ such that
\[
\|f_{y}-f_{y'}\|_{p}<\varepsilon\quad\text{for all \ensuremath{y,y'\geq y_{0}}.}
\]
\end{defn}
\begin{thm}
\label{prop:complete} Let $p\in[1,+\infty]$ and $f\in\mathcal{C}^{\mathbb{C},\mathcal{F}}\left(X\times\mathbb{R}\right)$,
for some subanalytic set $X\subseteq\mathbb{R}^{m}$, and suppose
that $\left(f_{y}\right)_{y\in\mathbb{R}}$ is Cauchy in $L^{p}\left(X\right)$
as $y\to+\infty$. Then there exist $g\in\mathcal{C}^{\mathbb{C},\mathcal{F}}\cap L^{p}\left(X\right)$
and a subanalytic set $X_{0}\subseteq X$ such that $\mathrm{vol}_{m}\left(X\setminus X_{0}\right)=0$,
\[
\lim_{y\to+\infty}\|f_{y}-g\|_{p}=0,
\]
and
\[
\lim_{y\to+\infty}f\left(x,y\right)=g\left(x\right)\quad\text{for all \ensuremath{x\in X_{0}}.}
\]
\end{thm}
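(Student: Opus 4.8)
The plan is to combine the pointwise-limit result (Theorem \ref{prop:limits}), the completeness of $L^p$, and the continuous-uniform-distribution machinery of Lemmas \ref{lem:Param:OscPrep}--\ref{lem:Param:Dovetail}. We may assume $\mathrm{vol}_m(X)>0$. Since $(f_y)_y$ is Cauchy in the complete space $L^p(X)$, it converges in norm to some $g_0\in L^p(X)$, it is bounded in $L^p$, and along any sequence $y_n\to+\infty$ a subsequence of $(f_{y_n})$ converges to $g_0$ a.e. By Theorem \ref{prop:limits} there are $f_{\mathrm{lim}},g\in\mathcal{C}^{\mathbb{C},\mathcal{F}}(X)$ with $\mathrm{Lim}(f,X)=\{x:\ f_{\mathrm{lim}}(x)=0\}$ and $\lim_{y\to+\infty}f(x,y)=g(x)$ for all $x\in\mathrm{Lim}(f,X)$. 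So the only thing left to prove is the existence of a \emph{subanalytic} $X_0\subseteq\mathrm{Lim}(f,X)$ with $\mathrm{vol}_m(X\setminus X_0)=0$: once this is established, $f(x,y)\to g(x)$ for a.e.\ $x$, hence $g=g_0$ a.e., so $g\in\mathcal{C}^{\mathbb{C},\mathcal{F}}\cap L^p(X)$ and $\|f_y-g\|_p=\|f_y-g_0\|_p\to0$.

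To produce $X_0$ I would apply Proposition \ref{prop: Splitting} to $f$ (in the case without the variable $s$), obtaining a finite partition of $X$ into subanalytic cells; it suffices to treat each $m$-dimensional cell $X'$ (the lower-dimensional ones are $m$-null and can be dropped from $X_0$). Over such an $X'$ only the topmost cell $A$ with unbounded $y$-fibers governs the germ at $y=+\infty$, and by Remark \ref{rem: cell at infty} one may take $A=\{(x,y):x\in X',\ y>a(x)\}$ with $a\geq1$ and $\Pi_A$ the identity, so that on $A$, $f=\sum_{j\in J_{\mathrm{int}}}T_j+\sum_{j\in J_{\mathrm{mon}}}T_j$, with $T_j$ strongly integrable (hence $T_j(x,y)\to0$ uniformly on compacta, as $\Re(\lambda_j)<-1$) for $j\in J_{\mathrm{int}}$, and $T_j(x,y)=f_j(x)y^{\lambda_j}(\log y)^{\mu_j}\mathrm{e}^{\mathrm{i}Q_j(x,y)}$ monomial for $j\in J_{\mathrm{mon}}$, where $\Re(\lambda_j)\geq0$, $Q_j(x,0)=0$, and the tuples $(\lambda_j,\mu_j,Q_j)$ are pairwise distinct. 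Grouping monomials by $(\Re(\lambda_j),\mu_j)$ yields the power-log asymptotic expansion of Theorem \ref{thm: asympt exp}, and, exactly as in the proof of Theorem \ref{prop:limits} (using the non-compensation statements of \cite[Proposition 3.4]{cmrs:mellin_constructible} and the distinctness of the data), for a fixed $x\in X'$ the limit $\lim_{y\to+\infty}f(x,y)$ exists if and only if $f_j(x)=0$ for every index $j\in J_{\mathrm{mon}}$ that is \emph{non-trivial}, i.e.\ with $(\Re(\lambda_j),\mu_j)\neq(0,0)$ or with phase $\mathrm{Im}(\lambda_j)\log y+Q_j(x,y)$ not identically $0$ in $y$. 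The claim to be proved is that the Cauchy hypothesis forces $f_j\equiv0$ on $X'$ for every such non-trivial $j$; granting it, $X'\subseteq\mathrm{Lim}(f,X)$ and one takes $X_0:=\bigcup_{\dim X'=m}X'$, a finite union of subanalytic cells (hence subanalytic) with $\mathrm{vol}_m(X\setminus X_0)=0$.

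To prove the claim, suppose some non-trivial $f_j$ is not identically $0$ on $X'$. After a further subanalytic refinement one may assume all coefficients involved are analytic and each $f_j$ is either $\equiv0$ or nowhere zero on the resulting (connected, open) subcell $X''\subseteq X'$; work on such an $X''$. Let $(r^*,\nu^*)$ be the lexicographic maximum over the surviving non-trivial indices of $(\Re(\lambda_j),\mu_j)$, and let $E^*(x,y)=\sum f_j(x)y^{\mathrm{i}\,\mathrm{Im}(\lambda_j)}\mathrm{e}^{\mathrm{i}Q_j(x,y)}$ be the sum of the corresponding coefficients (including a possible $y$-independent term when $(r^*,\nu^*)=(0,0)$); then $f(x,y)=E^*(x,y)y^{r^*}(\log y)^{\nu^*}+R(x,y)$ on $X''$, with $R(x,y)=o\!\left(y^{r^*}(\log y)^{\nu^*}\right)$ uniformly on compacta. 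Now $E^*$ is precisely of the form handled by Lemma \ref{lem:Param:Dovetail} (nonzero analytic coefficients, pairwise distinct phases $\mathrm{Im}(\lambda_j)\log y+Q_j(x,y)$ with $Q_j(x,0)=0$, not reducible to a single $y$-independent term), which yields $\varepsilon,\Delta>0$, an increasing sequence $y_n\to+\infty$, a compact $K\subseteq X''$ and measurable sets $X_n\subseteq K$ with $\mathrm{vol}_m(X_n)\geq\Delta$, $X_{2n+1}\subseteq X_{2n}$, $|E^*(x,y_{2n})|\geq\varepsilon$ for $x\in X_{2n}$, and $|E^*(x,y_{2n})-E^*(x',y_{2n+1})|\geq\varepsilon$ for $x\in X_{2n}$, $x'\in X_{2n+1}$. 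If $(r^*,\nu^*)\neq(0,0)$ then $y_{2n}^{r^*}(\log y_{2n})^{\nu^*}\to+\infty$ and, absorbing $R$, $|f(x,y_{2n})|\geq\frac{\varepsilon}{2}y_{2n}^{r^*}(\log y_{2n})^{\nu^*}$ on $X_{2n}$ for $n$ large, so $\|f_{y_{2n}}\|_p\geq\frac{\varepsilon}{2}\Delta^{1/p}y_{2n}^{r^*}(\log y_{2n})^{\nu^*}\to+\infty$ (with the evident modification for $p=\infty$), contradicting the $L^p$-boundedness of $(f_y)$. If $(r^*,\nu^*)=(0,0)$, then taking $x=x'\in X_{2n+1}\subseteq X_{2n}$ and absorbing $R$ gives $|f(x,y_{2n})-f(x,y_{2n+1})|\geq\frac{\varepsilon}{2}$ for all $x\in X_{2n+1}$ and $n$ large, so $\|f_{y_{2n}}-f_{y_{2n+1}}\|_p\geq\frac{\varepsilon}{2}\Delta^{1/p}>0$ while $y_{2n},y_{2n+1}\to+\infty$, contradicting the Cauchy property. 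This proves the claim.

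The hard part will be this last step: extracting from the \emph{pointwise} non-compensation of Lemma \ref{lem:Param:Dovetail} a genuine obstruction to being $L^p$-Cauchy. This succeeds only because that lemma --- fed by the c.u.d.\ mod $1$ results, Proposition \ref{prop:WeylToCUD} and Lemma \ref{lem:Param:OscPrep} --- delivers \emph{nested} witness sets $X_{2n+1}\subseteq X_{2n}$ of uniformly positive measure on which a single argument $x$ separates $f_{y_{2n}}$ from $f_{y_{2n+1}}$. A secondary, more routine point is the cell-decomposition bookkeeping needed to make the coefficients analytic with null zero sets (so that ``$f_j\not\equiv0$'' becomes ``$f_j$ nowhere zero on a full-measure subcell'', as Lemma \ref{lem:Param:Dovetail} demands) and to confirm that the $s$-free version of Proposition \ref{prop: Splitting} used here is covered by the constructions of Sections \ref{sec: Strategy}--\ref{sec: stability}.
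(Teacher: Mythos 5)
Your proposal is correct and follows essentially the same route as the paper, whose proof simply defers to \cite[Proposition 8.2]{ccmrs:integration-oscillatory}: decompose $f$ into strongly integrable and monomial generators as in Theorem \ref{prop:limits}, and use the dovetailed witness sets of Lemma \ref{lem:Param:Dovetail} to show that the $L^p$-Cauchy hypothesis forces the non-trivial monomial coefficients to vanish on a full-measure union of cells. The only cosmetic slip is that you do not need (and cannot arrange) the coefficients to be \emph{nowhere} zero on a subanalytic subcell --- ``nonzero analytic on a connected open cell'' is exactly the hypothesis Lemma \ref{lem:Param:Dovetail} consumes, the null zero set being discarded inside that lemma's own construction of $K$ and the $X_n$.
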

%\footnote{Formulate the equivalent to Remark 5.3 in Duke for a fixed $s$, add
%the correct references and a few words on the ideas}As in the proof of Proposition 8.2 of \cite{ccmrs:integration-oscillatory}

\begin{proof}
Writing $f$ as a sum of generators as in Theorem \ref{prop:limits},
we proceed as in the proof of \cite[Proposition 8.2]{ccmrs:integration-oscillatory},
using Lemma \ref{lem:Param:Dovetail} instead of \cite[Lemma 8.10]{ccmrs:integration-oscillatory}.
\end{proof}
As a direct consequence of Proposition \ref{prop:complete} (see for
instance the proof of \cite[Theorem 8.3]{ccmrs:integration-oscillatory})
we obtain the following result.
\begin{thm}
\label{cor:Plancherel} Let $\widetilde{\hbox{\calli F}\ }$ be the
Fourier-Plancherel extension of the Fourier transform to $L^{2}\left(\mathbb{R}^{n}\right)$.
Then, the image of $\mathcal{C}^{\mathbb{C},\mathcal{F}}(\mathbb{R}^{n})\cap L^{2}\left(\mathbb{R}^{n}\right)$
under $\widetilde{\hbox{\calli F}\ }$ is $\mathcal{C}^{\mathbb{C},\mathcal{F}}(\mathbb{R}^{n})\cap L^{2}\left(\mathbb{R}^{n}\right)$. transform
\end{thm}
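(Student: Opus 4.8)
The plan is to deduce the statement from Theorem \ref{prop:complete} together with the classical density and continuity properties of the Fourier--Plancherel transform $\widetilde{\hbox{\calli F}\ }$, exactly along the lines of \cite[Theorem 8.3]{ccmrs:integration-oscillatory}. First I would recall that $\widetilde{\hbox{\calli F}\ }$ is the unique continuous extension to $L^2(\mathbb{R}^n)$ of the Fourier transform defined on $L^1\cap L^2$, that it is a unitary isomorphism of $L^2(\mathbb{R}^n)$, and that $\widetilde{\hbox{\calli F}\ }\circ\widetilde{\hbox{\calli F}\ }$ is the reflection $g\mapsto g(-\cdot)$; since $\mathcal{C}^{\mathbb{C},\mathcal{F}}(\mathbb{R}^n)$ is stable under the subanalytic change of variables $x\mapsto -x$ (Remark \ref{rem: 1 is a transcendental}), it suffices to prove one inclusion, namely that $\widetilde{\hbox{\calli F}\ }\bigl(\mathcal{C}^{\mathbb{C},\mathcal{F}}(\mathbb{R}^n)\cap L^2(\mathbb{R}^n)\bigr)\subseteq \mathcal{C}^{\mathbb{C},\mathcal{F}}(\mathbb{R}^n)\cap L^2(\mathbb{R}^n)$; applying this inclusion twice and composing with the reflection then gives the reverse inclusion and hence equality.

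Next I would fix $f\in\mathcal{C}^{\mathbb{C},\mathcal{F}}(\mathbb{R}^n)\cap L^2(\mathbb{R}^n)$ and approximate it in $L^2$ by truncations. Concretely, for $R>0$ let $f^{(R)}(x)=f(x)\,\chi_{\|x\|\le R}$; each $f^{(R)}$ lies in $\mathcal{C}^{\mathbb{C},\mathcal{F}}(\mathbb{R}^n)$ (the ball is subanalytic, and the class is an algebra by Corollary \ref{cor: algebra}) and is compactly supported, hence in $L^1\cap L^2$, so its Fourier transform is the honest parametric Fourier transform $\mathcal{F}[f^{(R)}]$, which by stability under the parametric Fourier transform (Theorem \ref{thm: stability of C^F}) again belongs to $\mathcal{C}^{\mathbb{C},\mathcal{F}}(\mathbb{R}^n)$. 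Since $f^{(R)}\to f$ in $L^2$ as $R\to+\infty$ and $\widetilde{\hbox{\calli F}\ }$ is an $L^2$-isometry, $\mathcal{F}[f^{(R)}]\to\widetilde{\hbox{\calli F}\ }f$ in $L^2(\mathbb{R}^n)$. The point is now to realize $\widetilde{\hbox{\calli F}\ }f$ itself as an $L^2$-limit of a single one-parameter family drawn from $\mathcal{C}^{\mathbb{C},\mathcal{F}}(\mathbb{R}^n\times\mathbb{R})$: setting $h(x,y)=\mathcal{F}\bigl[f\,\chi_{\|\cdot\|\le y}\bigr](x)=\int_{\|\xi\|\le y} f(\xi)\mathrm{e}^{-2\pi\mathrm{i}\langle x,\xi\rangle}\,\mathrm{d}\xi$, one checks that $h\in\mathcal{C}^{\mathbb{C},\mathcal{F}}(\mathbb{R}^n\times\mathbb{R})$ by stability under parametric integration (Theorem \ref{thm: stability of C^F}), and that $(h_y)_{y}$ is Cauchy in $L^2(\mathbb{R}^n)$ as $y\to+\infty$ (this is exactly the $L^2$-Cauchy property of the truncated Fourier integrals of an $L^2$ function). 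Theorem \ref{prop:complete}, applied with $p=2$, then yields $g\in\mathcal{C}^{\mathbb{C},\mathcal{F}}(\mathbb{R}^n)\cap L^2(\mathbb{R}^n)$ with $\|h_y-g\|_2\to0$; and since $h_y\to\widetilde{\hbox{\calli F}\ }f$ in $L^2$ by definition of the Fourier--Plancherel transform, we conclude $\widetilde{\hbox{\calli F}\ }f=g\in\mathcal{C}^{\mathbb{C},\mathcal{F}}(\mathbb{R}^n)\cap L^2(\mathbb{R}^n)$, which is the desired inclusion.

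I expect the main obstacle to be the bookkeeping needed to verify that $h(x,y)=\int_{\|\xi\|\le y}f(\xi)\mathrm{e}^{-2\pi\mathrm{i}\langle x,\xi\rangle}\mathrm{d}\xi$ genuinely lies in $\mathcal{C}^{\mathbb{C},\mathcal{F}}(\mathbb{R}^n\times\mathbb{R})$ in the precise sense required by Theorem \ref{prop:complete}: one must write the kernel $\mathrm{e}^{-2\pi\mathrm{i}\langle x,\xi\rangle}\chi_{\{\|\xi\|\le y\}}$ as a function in the class on $\mathbb{R}^n_x\times\mathbb{R}^n_\xi\times\mathbb{R}_y$ (the characteristic function of the subanalytic set $\{\|\xi\|\le y\}$ times a complex exponential of a subanalytic function is in $\text{e}^{\text{i}\mathcal{S}}$, hence in the class), and then invoke stability under parametric integration over the $\xi$-variables while being careful about integration loci — here one uses that $f\in L^2$ and the truncation is bounded, so the integrand is in $L^1_\xi$ for every $(x,y)$, i.e. it lies in the ``$\mathrm{int}$'' subclass. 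A secondary technical point, handled exactly as in \cite[Theorem 8.3]{ccmrs:integration-oscillatory}, is to pass from the almost-everywhere pointwise convergence furnished by Theorem \ref{prop:complete} to the identification of $g$ with $\widetilde{\hbox{\calli F}\ }f$ as $L^2$-elements, which is immediate from uniqueness of $L^2$-limits. All remaining steps are routine once these two points are in place.
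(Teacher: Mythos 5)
Your proposal is correct and follows essentially the same route as the paper, which simply invokes Theorem \ref{prop:complete} together with the argument of \cite[Theorem 8.3]{ccmrs:integration-oscillatory}: apply the $L^{2}$-completeness result to the family of truncated Fourier integrals $h(x,y)=\int_{\|\xi\|\le y}f(\xi)\mathrm{e}^{-2\pi\mathrm{i}\langle x,\xi\rangle}\,\mathrm{d}\xi$, which lies in $\mathcal{C}^{\mathbb{C},\mathcal{F}}(\mathbb{R}^{n}\times\mathbb{R})$ by stability under parametric integration and is Cauchy in $L^{2}$, then identify the limit with $\widetilde{\hbox{\calli F}\ }f$ and obtain surjectivity from $\widetilde{\hbox{\calli F}\ }\circ\widetilde{\hbox{\calli F}\ }=(\,\cdot\,)(-\,\cdot\,)$ and stability under the reflection. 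The details you supply (integrability of the truncated integrand via Cauchy--Schwarz, and the reduction of equality of images to one inclusion) are exactly the intended ones.
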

Stability under parametric Plancherel-Fourier transforms is formulated and shown similarly. % and follows in the same way.  
\bibliographystyle{alpha}

\begin{thebibliography}{CCMRS18}
	
	\bibitem[AC20]{AizC}
	A.~Aizenbud and R.~Cluckers.
	\newblock {WF}-holonomicity of {C$^{{\mathrm{exp}}}$}-class distributions on
	non-archimedean local fields.
	\newblock {\em Forum Math. Sigma}, 8(e35):1423--1456, 2020.
	
	\bibitem[ACRS23]{AizCRS}
	A.~Aizenbud, R.~Cluckers, M.~Raibaut, and T.~Servi.
	\newblock Real {C}$^{{\mathrm{exp}}}$-class distributions and their analytic
	holonomicity.
	\newblock (preprint), 2023.
	
	\bibitem[AS65]{abramowitz:handbook_mathematical_functions}
	M.~Abramowitz and I.~A. Stegun.
	\newblock {\em Handbook of mathematical functions with formulas, graphs, and
		mathematical tables}.
	\newblock Dover Publications Inc., 1965.
	
	\bibitem[BH86]{BleiHand}
	Norman Bleistein and Richard~A. Handelsman.
	\newblock {\em Asymptotic expansions of integrals}.
	\newblock Dover Publications, Inc., New York, second edition, 1986.
	
	\bibitem[CCMRS18]{ccmrs:integration-oscillatory}
	R.~Cluckers, G.~Comte, D.~J. Miller, J.-P. Rolin, and T.~Servi.
	\newblock Integration of oscillatory and subanalytic functions.
	\newblock {\em Duke Math. J.}, 167(7):1239--1309, 2018.
	
	\bibitem[CCRS23]{cmrs:mellin_constructible}
	R.~Cluckers, G.~Comte, J-P. Rolin, and T.~Servi.
	\newblock Mellin transforms of power-constructible functions.
	\newblock arXiv:2304.04538, 2023.
	
	\bibitem[CLR00]{clr}
	G.~Comte, J.-M. Lion, and J.-P. Rolin.
	\newblock Nature log-analytique du volume des sous-analytiques.
	\newblock {\em Illinois J. Math.}, 44(4):884--888, 2000.
	
	\bibitem[CM11]{cluckers-miller:stability-integration-sums-products}
	R.~Cluckers and D.~J. Miller.
	\newblock Stability under integration of sums of products of real globally
	subanalytic functions and their logarithms.
	\newblock {\em Duke Math. J.}, 156(2):311--348, 2011.
	
	\bibitem[CM12]{cluckers_miller:loci_integrability}
	R.~Cluckers and D.~J. Miller.
	\newblock Loci of integrability, zero loci, and stability under integration for
	constructible functions on {E}uclidean space with {L}ebesgue measure.
	\newblock {\em Int. Math. Res. Not. IMRN}, (14):3182--3191, 2012.
	
	\bibitem[DD88]{vdd:d}
	J.~Denef and {L. van den} Dries.
	\newblock $p$-adic and real subanalytic sets.
	\newblock {\em Ann. of Math. (2)}, 128(1):79--138, 1988.
	
	\bibitem[DMM94]{dmm:exp}
	{L. van den} Dries, A.~Macintyre, and D.~Marker.
	\newblock The elementary theory of restricted analytic fields with
	exponentiation.
	\newblock {\em Ann. of Math. (2)}, 140(1):\ 183--205, 1994.
	
	\bibitem[Dri99]{vdd:o_minimal_real_analytic}
	{L. van den} Dries.
	\newblock o-minimal structures and real analytic geometry.
	\newblock In {\em Current developments in mathematics, 1998 ({C}ambridge,
		{MA})}, pages 105--152. Int. Press, Somerville, MA, 1999.
	
	\bibitem[GW99]{gasquet-witomski:fourier-analysis}
	C.~Gasquet and P.~Witomski.
	\newblock {\em Fourier analysis and applications}, volume~30 of {\em Texts in
		Applied Mathematics}.
	\newblock Springer-Verlag, New York, 1999.
	
	\bibitem[H{\"{o}}r03]{Hor1}
	L.~H{\"{o}}rmander.
	\newblock {\em The analysis of linear partial differential operators. {I}}.
	\newblock Classics in Mathematics. Springer-Verlag, Berlin, 2003.
	\newblock Distribution theory and Fourier analysis, Reprint of the second
	(1990) edition [Springer, Berlin; MR1065993 (91m:35001a)].
	
	\bibitem[Kai13]{kaiser:integration_semialgebraic_nash}
	T.~Kaiser.
	\newblock Integration of semialgebraic functions and integrated {N}ash
	functions.
	\newblock {\em Math. Z.}, 275(1-2):349--366, 2013.
	
	\bibitem[Kai23]{kaiser2023periods}
	Tobias Kaiser.
	\newblock Periods, power series, and integrated algebraic numbers, 2023.
	
	\bibitem[KN74]{kuipers_niederreiter:uniform_distributions_sequences}
	L.~Kuipers and H.~Niederreiter.
	\newblock {\em Uniform distribution of sequences}.
	\newblock Pure and Applied Mathematics. Wiley-Interscience [John Wiley \&
	Sons], New York-London-Sydney, 1974.
	
	\bibitem[KZ01]{kontsevitch_zagier:periods}
	M.~Kontsevich and D.~Zagier.
	\newblock Periods.
	\newblock In {\em Mathematics unlimited---2001 and beyond}, pages 771--808.
	Springer, Berlin, 2001.
	
	\bibitem[Lom00]{Lombardi}
	E.~Lombardi.
	\newblock {\em Oscillatory integrals and phenomena beyond all algebraic
		orders}, volume 1741 of {\em Lecture Notes in Mathematics}.
	\newblock Springer-Verlag, Berlin, 2000.
	\newblock With applications to homoclinic orbits in reversible systems.
	
	\bibitem[LR97]{lr:prep}
	J.-M. Lion and J.-P. Rolin.
	\newblock Th{\'e}or{\`e}me de pr{\'e}paration pour les fonctions
	logarithmico-exponentielles.
	\newblock {\em Ann. Inst. Fourier (Grenoble)}, 47(3):859--884, 1997.
	
	\bibitem[LR98]{lr:int}
	J.-M. Lion and J.-P. Rolin.
	\newblock Int{\'e}gration des fonctions sous-analytiques et volumes des
	sous-ensembles sous-analytiques.
	\newblock {\em Ann. Inst. Fourier (Grenoble)}, 48(3):755--767, 1998.
	
	\bibitem[Mil06]{miller_dan:preparation_theorem_weierstrass_systems}
	D.~J. Miller.
	\newblock A preparation theorem for {W}eierstrass systems.
	\newblock {\em Trans. Amer. Math. Soc.}, 358(10):4395--4439 (electronic), 2006.
	
	\bibitem[Par94]{paru:lip1}
	A.~Parusi{\'n}ski.
	\newblock Lipschitz stratification of subanalytic sets.
	\newblock {\em Ann. Sci. {\'E}cole Norm. Sup. (4)}, 27(6):661--696, 1994.
	
	\bibitem[Par01]{parusinski:preparation}
	A.~Parusi{\'n}ski.
	\newblock On the preparation theorem for subanalytic functions.
	\newblock In {\em New developments in singularity theory ({C}ambridge, 2000)},
	volume~21 of {\em NATO Sci. Ser. II Math. Phys. Chem.}, pages 193--215.
	Kluwer Acad. Publ., Dordrecht, 2001.
	
	\bibitem[PT22]{pila2022axschanuel}
	J.~Pila and J.~Tsimerman.
	\newblock Ax-schanuel and exceptional integrability, 2022.
	
	\bibitem[Wey16]{Weyl:Gleichverteilung}
	H.~Weyl.
	\newblock \"{U}ber die {G}leichverteilung von {Z}ahlen mod. {E}ins.
	\newblock {\em Math. Ann.}, 77(3):313--352, 1916.
	
	\bibitem[WL78]{WongLin}
	R.~Wong and J.~F. Lin.
	\newblock Asymptotic expansions of {F}ourier transforms of functions with
	logarithmic singularities.
	\newblock {\em J. Math. Anal. Appl.}, 64(1):173--180, 1978.
	
	\bibitem[Won89]{Wong}
	R.~Wong.
	\newblock {\em Asymptotic approximations of integrals}.
	\newblock Computer Science and Scientific Computing. Academic Press, Inc.,
	Boston, MA, 1989.
	
\end{thebibliography}
\newcommand{\etalchar}[1]{$^{#1}$}
\def\cprime{$'$}

\end{document}